\newcommand{\mute}[1] {}
\newtheorem{theorem}[equation]{Theorem}
\newtheorem{lemma}[equation]{Lemma}
\newtheorem{claim}[equation]{Claim}
\newtheorem{proposition}[equation]{Proposition}
\newtheorem{corollary}[equation]{Corollary}
\newtheorem{conjecture}[equation]{Conjecture}
\newtheorem{noTitle}[equation]{}
\theoremstyle{definition}
\newtheorem{definition}[equation]{Definition}
\newtheorem{remark}[equation]{Remark}
\newtheorem{example}[equation]{Example}
\newtheorem*{assertion*}{Assertion}
\newtheorem{Setting}[equation]{Setting}
\newtheorem{Notation}[equation]{Notation}
\numberwithin{equation}{section}
\newcommand{\C}{\mathbb C}
\newcommand{\Z}{\mathbb Z}
\newcommand{\R}{\mathbb R}
\newcommand{\PP}{\mathbb P}
\newcommand{\CP}{\mathbb{C}P}
\newcommand{\SO}{\mathrm{SO}}
\newcommand{\U}{\mathrm{U}}
\newcommand{\ball}{\mathcal{B}}
\DeclareMathOperator{\PU}{PU}
\DeclareMathOperator{\Lie}{Lie}
\DeclareMathOperator{\id}{Id}
\DeclareMathOperator{\Morse}{Morse}
\DeclareMathOperator{\iso}{iso}
\DeclareMathOperator{\swap}{swap}
\DeclareMathOperator{\northpole}{northpole}
\DeclareMathOperator{\southpole}{southpole}
\DeclareMathOperator{\pt}{pt}
\DeclareMathOperator{\sph}{sph}
\newcommand{\acts}{\circlearrowleft}
\newcommand{\eps}{\varepsilon}
\begin{document}

\title[Semi-free Hamiltonian $S^1$-manifolds and fixed point data]{On isomorphisms of semi-free Hamiltonian $S^1$-manifolds and fixed point data}

\author{Liat Kessler}
\address{Department of Mathematics, Physics, and Computer Science, University of Haifa,
at Oranim, Tivon 36006, Israel}
\email{lkessler@math.haifa.ac.il}

\author{Nikolas Wardenski}
\address{Department of Mathematics, University of Haifa, Haifa 3498838, Israel}
\email{wardenski.math@gmail.com}

\begin{abstract}
   Following Gonzales, we answer the question of whether the isomorphism type of a semi-free Hamiltonian $S^1$-manifold of dimension six is determined by certain data on the critical levels.  
   We first give counter examples showing that Gonzales' assumptions are not sufficient for a positive answer. Then we prove that it is enough to further assume that the reduced spaces of dimension four are symplectic rational surfaces and the interior fixed surfaces are restricted to at most one level. The additional assumptions allow us to use results proven by $J$-holomorphic methods. Gonzales' answer was applied by Cho in proving that if the underlying symplectic manifold is positive monotone then the space is isomorphic to a Fano manifold with a holomorphic $S^1$-action. We show that our variation is enough for Cho's application.
\end{abstract}
\subjclass[2010]{53D35 (53D20, 58D19)}
\keywords{Hamiltonian circle actions, Symplectic Geometry, Semi-free circle actions, local-to-global, Fano manifolds, Positive monotone manifolds, Fine-Panov conjecture, Symplectic rational surfaces}

\maketitle

\setcounter{secnumdepth}{1}
\setcounter{tocdepth}{1}
\tableofcontents
\section{Introduction}\label{sec:intro}

 An effective action of $S^1$ on a symplectic manifold $(M,\omega)$ is \textbf{Hamiltonian} if it admits a \textbf{momentum map}: a smooth function $\mu\colon M\to \R \cong (\Lie(S^1))^{*}$ with 
\begin{equation} \label{eq:mu}
d\mu(\cdot)=-\omega(\xi,\cdot),
\end{equation} where $\xi$ is the fundamental vector field
of the action. 
The space $(M,\omega,\mu)$ 
is called a \textbf{Hamiltonian $S^1$-manifold}.
An \textbf{isomorphism} between Hamiltonian $S^1$-manifolds is an equivariant symplectomorphism that intertwines the momentum maps. 
The $S^1$-action is \textbf{semi-free} if all stabilizers are connected, i.e., they are either the circle or the trivial group.\\

Let $(M,\omega,\mu)$  be a connected semi-free Hamiltonian $S^1$-manifold.
 Assume that $\mu$ is {proper} and that 
the momentum image is bounded, so $\mu$ has finitely many critical values $\lambda_0<\hdots < \lambda_k$.
 By \eqref{eq:mu}, the set of critical points of $\mu$ coincides with the fixed point set $M^{S^1}$.
    The assumption that the $S^1$-action is semi-free implies that it is free on
    \[
    \mu^{-1}((-\infty,\lambda_0)) \cup \mu^{-1}((\lambda_0,\lambda_1))\cup \hdots \cup \mu^{-1}((\lambda_{k-1},\lambda_k)) \cup \mu^{-1}((\lambda_k,\infty)). 
    \]
    Therefore, one can think of $M$ as the union of the above sets with
    $\mu^{-1}((\lambda_0-\eps_0,\lambda_0+\eps_0))$, $\mu^{-1}((\lambda_1-\eps_1,\lambda_1+\eps_1))$, and so on, for positive $\eps_i$, $i=0,\hdots,k$, small enough such that $\lambda_i$ is the only critical value in 
    $(\lambda_i-\eps_i,\lambda_i+\eps_i)$. 
        This observation motivated Gonzales' definition in \cite{Go11} of {\bf local data}: an atlas of compatible Hamiltonian charts; see Appendix \ref{Local Data}.\\
 Gonzales observed that a 'rigidity assumption' is needed in order to recover the local data from the {\bf fixed point data}, as defined below. 
  The {rigidity assumption} 
     will ensure that the equivariant symplectomorphism type 
of $\mu_1^{-1}((\lambda,\lambda'))$, for $\lambda<\lambda'$ two consecutive critical values, is determined by the equivariant symplectomorphism type of $\mu_1^{-1}((\lambda,t))$ for arbitrary $t\in (\lambda,\lambda')$. 
 We will use the following notation and definitions.

\begin{Notation}\label{not:semifreeHamiltonian}
 For a regular value $t$ of the momentum map $\mu \colon M \to \R$,
   the level set
   $P_t:=\mu^{-1}(t)$
   is a manifold of dimension $\dim M-1$, by the implicit function theorem; compact since $\mu$ is proper. It is  connected because $\mu$
 is Morse-Bott with even indices and $M$ is connected \cite{At82}.
  Since the $S^1$-action on $P_t$ is free, the {\em orbit space} $M_t:=P_t/S^1$ is a manifold 
   of dimension $\dim M-2$ and 
   $S^1\to P_t\to M_t$ is a principal $S^1$-bundle. Since $P_t$ is compact, there is a unique closed form $\omega_t$ on $M_t$ such that $\pi_{P_t \to M_t}^{*}{\omega_t}=\iota_{P_t \hookrightarrow M}^{*}\omega$. The form $\iota_{P_t \hookrightarrow M}^{*}\omega$ is basic, since $\omega$ is invariant and ${\iota_{P_t \hookrightarrow M}^{*}\omega}(\xi,\cdot)=d \mu \circ {\iota_{P_t \hookrightarrow M}}=0$. Moreover, the {\em reduced form} $\omega_t$ is symplectic by \cite{MW74}.

    If $M$ is of dimension six  and the action is semi-free, it turns out that even for non-extremal critical values $\lambda$, the orbit space $\mu^{-1}(\lambda)/S^1$, which we also call $M_{\lambda}$, can be given a smooth structure such that the symplectic form on $M$ descends to a symplectic form on the four-dimensional manifold $M_{\lambda}$.
    The case in which the fixed points at $\lambda$ are isolated is proven in \cite[Section 3.2]{Mc09}. The case in which there are also fixed surfaces 
    at $\lambda$ is in \cite[Section 3.3.1]{Go11}. 

    If $\lambda$ is an extremal critical value, then $M_{\lambda}$ coincides with the fixed point set $F$ at level $\lambda$; it is again connected. The symplectic form $\omega_{\lambda}$ is then the restriction of the symplectic form on $M$ to $F$.

    We endow the manifold $M_t$ with the orientation induced by the symplectic form $\omega_t$, for $t$ regular or critical.

    \end{Notation}

 \begin{definition}\label{def:rigid}\cite[Definition 1.4]{Go11}.
        Let $B$ be a smooth manifold and $\{\omega_t\}$ a smooth family of symplectic forms on $B$, parametrized by real values $t\in I=[t_0,t_1]$ in a closed interval. We say that $(B,\{\omega_t\})$ is \textbf{rigid} if
        \begin{itemize}
            \item Symp$(B,\omega_{t})\cap \text{Diff}_{0}(B)$ is path-connected for all $t\in I$, where $\text{Diff}_0(B)$ is the identity component of the diffeomorphism group of $B$.
            \item Any deformation between any two cohomologous symplectic forms  that are symplectic deformation equivalent to $\omega_{t_0}$
             on $B$ may be homotoped through symplectic deformations with fixed endpoints into an isotopy, i.e., a symplectic deformation through cohomologous forms.
        \end{itemize}
    \end{definition}
   Let $I=[t_0,t_1]\subset \mu(M)$ be an interval of regular values. Using the normalized gradient flow of $\mu$, we obtain a smooth family of diffeomorphisms $M_{t_0}\cong M_t$, $t\in I$, and use this family to view all reduced forms $\omega_t$ of $M_t$ to be defined on $M_{t_0}$.
    \begin{definition}\label{def:rigidityassumption}
        We say that $M$ satisfies the \textbf{rigidity assumption} if for all closed intervals $I=[t_0,t_1]\subset \R$ of regular values, $(M_{t_0},\{\omega_t\})$ is rigid in the sense of \Cref{def:rigid}.
    \end{definition}
     
     \begin{definition}\label{def:fixedpointdata}
   \cite[Definition 3.9]{Go11}.
 Assume that $\lambda$ is a common critical value of $M^1=(M^1,\omega^1,\mu_1)$ and $M^2=(M^2,\omega^2,\mu_2)$, non-extremal for both or extremal for both.  If $\lambda$ is non-extremal, assume moreover that $\lambda$ is \emph{simple}, meaning that all fixed point components at $\lambda$ have the same index.
         $M^1$ and $M^2$ have the \textbf{same fixed point data at a non-extremal critical value $\lambda$} if there is a symplectomorphism $f\colon M^1_{\lambda}\to M^2_{\lambda}$  between the reduced spaces at $\lambda$ that
        \begin{itemize}
            \item[(i)] sends the fixed point set of $M^1$ at $\lambda$ to the fixed point set of $M^2$ at $\lambda$;
            \item[(ii)] intertwines the index function on the fixed point sets; 
           \item[(iii)] intertwines $e(P_{\lambda}^-)$ \footnote{As we will see, there is a map $M_{\lambda-\eps}\to M_{\lambda}$ for $\eps>0$ small, under which the Euler class of the $S^1$-bundle over $\mu^{-1}(\lambda-\eps)/S^1=M_{\lambda-\eps}$ has a unique preimage, called $e(P_{\lambda}^-)$. For details, see \Cref{not:elambda}.}.
        \end{itemize}
        $M^1$ and $M^2$ have the \textbf{same fixed point data at an extremal critical value $\lambda$} if there is a symplectomorphism between the
        the corresponding extremal fixed point sets with the restrictions of the symplectic forms that intertwines the symplectic normal bundles in $M^1$ and $M^2$.\\
        We say that $M^1$ and $M^2$ have the \textbf{same fixed point data}\footnote{This data is different from the fixed point data that Hui Li defines in \cite[Definition 2]{Li03}. In Li's definition, and article, the emphasis is placed on the diffeomorphism type of the underlying symplectic manifold as opposed to its equivariant symplectomorphism type.} if they have the same critical values and the same fixed point data at each critical value.
    \end{definition}
 
Moreover, Gonzales argued \cite[Theorem 1.6]{Go11} that, in dimension six, 
the fixed point data can be further reduced to the {\bf small fixed point data}, if the fixed point sets at each critical level are either surfaces or isolated fixed points, 
assuming rigidity as above.
For $M^1$ and $M^2$ to have the {\bf same small fixed point data}, the requirement at a common non-extremal critical value $\lambda$ is that there is a diffeomorphism $f\colon M^1_{\lambda}\to M^2_{\lambda}$ that
        preserves the fixed point sets, the index function and the symplectic form on them (see \cite[Definition 3.11]{Go11}); the requirement at the maximum is that the maxima are symplectomorphic (\cite[Definition 1.3]{Go11}); the requirement at the minimum is as in the definition of $M^1$ and $M^2$ having the same fixed point data (\cite[Definition 1.3]{Go11}).\\

In this paper we follow Gonzales' vision. However, we show that for the fixed point data, or local data,  to determine the isomorphism type of a semi-free Hamiltonian $S^1$-manifold, more assumptions are required. 
We first highlight the subtleties in gluing Hamiltonian charts when more than one non-extremal critical level occurs. We give an example of closed, semi-free Hamiltonian $S^1$-manifolds of dimension six,  with only one fixed component at each critical level,  
that have the same local and fixed point data, and satisfy the rigidity assumption, but are not isomorphic. Thus, it is a counter example to \cite[Theorems 1.5, 1.6, and 2.6]{Go11}. Indeed, these manifolds are not even {\bf $\mu-S^1$-diffeomorphic}, meaning that there is no equivariant diffeomorphism between them that intertwines the momentum maps.
See \Cref{examp:main} and the following discussion. 

We then give an example of semi-free Hamiltonian $S^1$-manifolds of dimension six that satisfy the rigidity assumption, have the same small fixed point data, are isomorphic below a critical level $\lambda$, but for which there is no isomorphism of preimages of neighborhoods of the critical value.
This contradicts \cite[Lemma 3.13]{Go11}, used prominently in the proof of \cite[Theorem 1.6]{Go11}. See \Cref{ex:openDpoly}. The counter example shows that, even under Gonzales' assumptions, the small fixed point data do not necessarily determine the local data.

\begin{noTitle}\label{nt:proof-problem}
We sketch the problem in the proof of \cite[Lemma 3.13]{Go11}. Assume that we have an isomorphism $f$ between two semi-free Hamiltonian manifolds $M^1$ and $M^2$ below a common critical level $\lambda$. In order to extend the isomorphism over the critical level $\lambda$, Gonzales removes neighborhoods $U^1$ and $U^2$ around the fixed points of $M^1$ resp.\ $M^2$ at level $\lambda$ such that the flow of the gradient vector field of the momentum map induces a well defined map on $\mu_1^{-1}((\lambda-\eps,\lambda+\eps))\smallsetminus U^1$ and on $\mu_2^{-1}((\lambda-\eps,\lambda+\eps))\smallsetminus U^2$. If $f$ maps the neighborhoods $U^1_t:=U^1\cap \mu_1^{-1}(t)$ and $U^2_t:=U^2\cap \mu_2^{-1}(t)$ into each other, one could indeed obtain a $\mu-S^1$-diffeomorphism
\[
\mu_1^{-1}((\lambda-\eps,\lambda+\eps))\smallsetminus U^1 \cong
\mu_2^{-1}((\lambda-\eps,\lambda+\eps)) \smallsetminus U^2.
\]
However, $f$ does not need to map $U^1_t$ and $U^2_t$ into each other, nor does there need to exist an isotopy from $f$ through $\mu-S^1$-diffeomorphisms to a map $f'$ that intertwines the neighborhoods.
There might be topological obstructions to do so. For example, $U^1_t/S^1$ and $U^2_t/S^1$
might be 
neighborhoods of embedded 2-spheres in $M^1_t$ and $M^2_t$,
as
happens if some fixed point component at $\lambda$ is a $2$-sphere  but also when there is an isolated fixed point at $\lambda$ with  Morse index $4$, and $U^1_t/S^1$ and $f^{-1}(U^2_t/S^1)$ 
represent different homology classes in $M^1_t$.
\end{noTitle}

We prove a variation of \cite[Lemma 3.13]{Go11}. 
We assume that reduced spaces below $\lambda$ are symplectic rational surfaces, as defined in \Cref{not:rationalmanifold}. In such symplectic manifolds, we have a characterization of exceptional classes using the theory of $J$-holomorphic curves, as in  \cite[Lemma 2.12 and Theorem 3.12]{KK17}.  
 We apply these results to show that $f \colon {\mu_1^{-1}(t)}/{S^1} \to {\mu_2^{-1}(t)}/{S^1}$ intertwines the sets of classes of the spheres that are sent to fixed points of Morse index $4$ at $\lambda$.
Further assuming that the fixed points at a non-extremal critical level are isolated, we deduce that $f$
can be isotoped through $\mu-S^1$-diffeomorphisms to a $\mu-S^1$-diffeomorphism that maps $U_t$ into $U'_t$, thus avoiding the problem described in \ref{nt:proof-problem}.\\
However, even if it is possible to extend $f$ as a $\mu-S^1$-diffeomorphism, it is not clear why it can be extended as an isomorphism. For that, the assumption that reduced spaces are symplectic rational surfaces also comes in handy: we will find an isomorphism $g$ between neighborhoods of the critical sets at $\lambda$ whose induced map on homology of the reduced spaces right below $\lambda$ agrees with the induced map of $f$. Then, we apply results on symplectic rational surfaces and the rigidity assumption on $M$ to conclude that these symplectomorphisms are isotopic through symplectomorphisms, which allows us to piece $g$ and $f$ together.

\begin{Notation}\label{not:rationalmanifold}
We consider the smooth manifold $\C \PP^2 \# k \overline{\C \PP^2}$ as the
manifold obtained from the complex projective plane $\C \PP^2$ by complex blowups at distinct points $q_1, \ldots , q_k$ in $\C \PP^2$. 
We have a
decomposition
$$H_2(\C \PP^2 \# k \overline{\C \PP^2};\Z) = \Z L \oplus \Z E_1 \oplus \cdots \oplus \Z E_k$$
where $L$ is the image of the homology class of a line $\C \PP^1$ in $\C \PP^2$ under the inclusion
map $H_2(\C \PP^2;\Z) \hookrightarrow H_2(\C \PP^2 \# k \overline{\C \PP^2};\Z)$ and $E_1, \ldots , E_k$ are the homology classes of the
exceptional divisors. 
A {\bf blowup form} on $\C \PP^2 \# k \overline{\C \PP^2}$ is a symplectic form for which there
exist pairwise disjoint embedded symplectic spheres in the classes 
$L, E_1, \ldots , E_k$.

If a symplectic manifold $(B,\omega)$ is symplectomorphic to $S^2\times S^2$ endowed with a positive multiply of the form $\omega^{\lambda}_{S^2 \times S^2}:=(1+\lambda)\omega_{\operatorname{SF}}\oplus \omega_{\operatorname{SF}}$ for $\lambda \geq 0$\footnote{By 
\cite{Gr85,LL95,LM96,Mc90,Ta95,Ta00}, every symplectic form on $S^2 \times S^2$  is of this form, up to rescaling.} or to some $\C \PP^2 \# k \overline{\C \PP^2}$ endowed with a blowup form, we say that $(B,\omega)$ is a \textbf{symplectic rational surface}. 
\end{Notation}
We relate isolated fixed points of Morse index $4$ and fixed spheres in the reduced space of an interior critical value $\lambda$ to spheres in the reduced space at a regular level below $\lambda$. For that, we use the \textit{Morse flow} $f_{\Morse}\colon M_{\lambda-r}\to M_{\lambda}$ induced from the flow of the normalized gradient vector field of the momentum map, defined in \S \ref{rem:criticalvalue}. We denote by $C'\subset M^i_{\lambda-r}$ the preimage of a fixed sphere $C\subset M^i_{\lambda}$ under $f^{i}_{\Morse}$. Also, we define $\mathcal{D}^i_{\sph}\subset H_2(M^i_{\lambda-r})$ to be the set \footnote{There will be no double count of classes due to our assumption that all fixed spheres are exceptional.} of homology classes corresponding to the set of spheres $C'\subset M^i_{\lambda-r}$.\\

We weaken the "same small fixed point data" (at a critical value $\lambda$) of $M^1$ and $M^2$ to the {\bf same $*$-small fixed point data} (at a critical value $\lambda$) of $M^1$ and $M^2$, as follows:
\begin{itemize}
    \item If $\lambda$ is extremal, we assume that the dimensions of the corresponding fixed point sets are the same.
    \item If $\lambda$ is not extremal, we require that there is a diffeomorphism $\eta_{\lambda}$ between the fixed point sets at level $\lambda$ that intertwines the index function (at level $\lambda$).
\end{itemize}
Moreover, we no longer assume that a non-extremal $\lambda$ is simple.
\begin{Setting}
\label{set:intro}

Let $(M,\omega,\mu)$ be a connected semi-free Hamiltonian manifold of dimension six whose momentum map $\mu$ is proper and with a bounded image, and $\lambda$ a critical value of $\mu$.
 We assume that
 \begin{itemize}
    \item for all $t$  below or equal to $\lambda$, the reduced space $(M_t,\omega_t)$ is a symplectic rational surface
    whenever it is of dimension four;

    \item   for any interval $I=[t_0,t_1]$ of regular values below $\lambda$,
    the family $(M_{t_0},\omega_{t\in I})$ is rigid.
 \end{itemize}

\end{Setting}
\begin{theorem}\label{thm:extending-g}
    For $i=1,2$, let $M^i=(M^i,\omega^i,\mu_i)$ and $\lambda$
 be as in 
    \Cref{set:intro}. Assume that $M^1$ and $M^2$ have the same $*$-small fixed point data at the critical value $\lambda$, and that $\lambda$ is the only critical value of $\mu_i$ for $i=1,2$.
    Assume that
 \begin{itemize}       
     \item[(i)] if $\lambda$ is {non-extremal}, then 
there are only isolated fixed points and exceptional spheres in $M^i_{\lambda}$.
\item[(ii)] if $\lambda$ is maximal and the fixed point set  at $\lambda$ in $M^i$ is of $\dim<4$, then it is either a point or a sphere.
    \end{itemize}
  Let $r>0$ be such that there is no critical value in $[\lambda-r,\lambda)$ with respect to both $\mu_1$ and $\mu_2$. Consider an isomorphism 
 $$f \colon \mu_1^{-1}((-\infty,\lambda-r]) \to \mu_2^{-1}((-\infty,\lambda-r])$$
 such that $f_{\lambda-r}$ sends $\mathcal{D}^1_{\sph}$ bijectively into $\mathcal{D}^2_{\sph}$.\\
 Then there is $\eps'>0$ that can be chosen arbitrarily small, such that $f$ restricted to $\mu_1^{-1}((-\infty,\lambda-(\eps'+r)])$ extends over the level $\lambda$ as an isomorphism, meaning that there is $\delta>0$ and an isomorphism
 $$h\colon \mu_1^{-1}((-\infty,\lambda+\delta)) \to \mu_2^{-1}((-\infty,\lambda+\delta))$$
 such that $h=f$ on $\mu_1^{-1}((-\infty,\lambda-(r+\eps')])$.\\
 The statement is true if we replace $-\infty$ everywhere with $\alpha$ for any $\alpha<\lambda-r$.
\end{theorem}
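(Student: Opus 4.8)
The argument runs entirely on a thin slab $\mu_i^{-1}((\lambda-r-\eps'',\lambda+\delta))$ around the sole critical level $\lambda$: since $\lambda$ is the only critical value of $\mu_i$, the free part on either side of $\lambda$ is a trivial bundle cobordism, and all the geometry is concentrated near the fixed components at $\lambda$. The plan is to (a) manufacture, from the local models at $\lambda$, an isomorphism $g$ on a neighbourhood $\mu_i^{-1}((\lambda-\eps,\lambda+\eps))$ of the critical level, with $0<\eps<r$; (b) show that after an isotopy of $f$, carried out through isomorphisms and supported above level $\lambda-(r+\eps')$, the maps $f$ and $g$ agree on the overlap of their domains; and (c) set $h:=f\cup g$, so that $\delta=\eps$. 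The collar of width $\eps'$ below $\lambda-r$ is only needed so that the modification of $f$ can be cut off smoothly and joined to the prescribed $f$, and it may be taken arbitrarily small; since no step refers to $M^i$ below $\lambda-r-\eps''$, replacing $-\infty$ by any $\alpha<\lambda-r$ is then automatic. I treat $\lambda$ interior first and return to the extremal cases at the end.

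Matching the descending spheres is the heart of the argument and the main obstacle. When $\lambda$ is interior, hypothesis (i) says the fixed components of $M^i$ at $\lambda$ are isolated points of Morse index $2$ or $4$ together with exceptional $2$-spheres (of index $2$). Through the Morse flow $f^i_{\Morse}\colon M^i_{\lambda-r}\to M^i_{\lambda}$ of \S \ref{rem:criticalvalue}, an index-$4$ point contributes an exceptional sphere $D'\subset M^i_{\lambda-r}$ whose reduced area tends to $0$ as $\eps\to 0$, a fixed sphere contributes an exceptional sphere $C'\subset M^i_{\lambda-r}$ whose reduced area tends to the $\omega$-area of the fixed sphere and is therefore bounded away from $0$, while an index-$2$ point contributes merely a point. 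Using \Cref{not:rationalmanifold} and the description of exceptional classes in a symplectic rational surface by $J$-holomorphic methods (\cite[Lemma 2.12 and Theorem 3.12]{KK17}), one checks that for $\eps$ small and measured at level $\lambda-\eps$ the exceptional classes of area below a fixed threshold are exactly the $[D']$; as the Morse flow induces canonical identifications $M^i_{\lambda-r}\cong M^i_{\lambda-\eps}$, this singles out the finite set $\{[D']\}\subset H_2(M^i_{\lambda-r})$ homologically. Since $f$ is an isomorphism below $\lambda-r$, the symplectomorphism $f_{\lambda-r}$ preserves both the reduced cohomology class and the Euler class of the principal $S^1$-bundle, hence — through the Duistermaat–Heckman relation — the reduced cohomology class at level $\lambda-\eps$; by the deformation-to-isotopy clause of the rigidity assumption (\Cref{def:rigid}) it can then be upgraded, without altering its effect on homology, to a symplectomorphism $M^1_{\lambda-\eps}\to M^2_{\lambda-\eps}$, which now preserves exceptional classes and reduced areas and so matches $\{[D']\}$ of $M^1$ with that of $M^2$. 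Combining this with the hypothesis that $f_{\lambda-r}$ carries $\mathcal{D}^1_{\sph}$ bijectively onto $\mathcal{D}^2_{\sph}$ — which takes care of the (not necessarily small-area) classes $[C']$ — and with the bijection of index-$4$ points provided by the diffeomorphism $\eta_\lambda$ of the $*$-small fixed point data, we conclude that $f$ matches the entire collection of descending sphere classes of $M^1_{\lambda-r}$ with that of $M^2_{\lambda-r}$. Finally, since two embedded symplectic spheres in a fixed exceptional class of a rational surface are symplectically isotopic, and using the equivariant symplectic neighbourhood theorem fibrewise, we may isotope $f$ — through isomorphisms, fixed below $\lambda-(r+\eps')$ — so that near each descending sphere and each index-$2$ descending point it coincides with a standard local model. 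This is exactly the step that removes the topological obstruction of \ref{nt:proof-problem}.

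Next I would build $g$ and glue. Near a fixed component of $M^i$ at $\lambda$, apply the Marle–Guillemin–Sternberg normal form: an isolated point of index $2k$, $k\in\{1,2\}$, has a neighbourhood equivariantly symplectomorphic to a ball in $(\C^3,\omega_{\mathrm{std}})$ on which $S^1$ acts with $k$ weights $-1$ and $3-k$ weights $+1$ and $\mu=\lambda-|z_1|^2-\cdots-|z_k|^2+|z_{k+1}|^2+\cdots+|z_3|^2$, a model depending only on $k$ and hence matched by $\eta_\lambda$; an index-$2$ fixed sphere has a neighbourhood of its zero section in a sum $E^+\oplus E^-$ of Hermitian line bundles of weights $\pm 1$, whose invariants — the $\omega$-area of the sphere and $\deg E^\pm$, constrained by $\deg E^+-\deg E^-=-1$ because $C'$ is exceptional — are recovered from the area and self-intersection of the corresponding $C'$ and so are matched by $f$. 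Off the fixed components the models glue canonically onto $M^i_{\lambda\pm\eps}$: below $\lambda$ these are identified by the descending-sphere–compatible extension of $f$, and across $\lambda$ by blowing down and then up along the matched descending spheres, the positions of the index-$2$ points being immaterial up to symplectomorphism of the rational surface $M^i_\lambda$. This produces an isomorphism $g\colon\mu_1^{-1}((\lambda-\eps,\lambda+\eps))\to\mu_2^{-1}((\lambda-\eps,\lambda+\eps))$ whose effect on $H_2(M^i_{\lambda-\eps})$ agrees with that of $f$ on all descending classes. Now $f_{\lambda-\eps}$ and $g_{\lambda-\eps}$ are symplectomorphisms $M^1_{\lambda-\eps}\to M^2_{\lambda-\eps}$ inducing the same map on homology, so their composite is a symplectomorphism of $M^1_{\lambda-\eps}$ inducing the identity on $H_2$; it therefore lies in $\mathrm{Symp}(M^1_{\lambda-\eps},\omega_{\lambda-\eps})\cap\mathrm{Diff}_0$ and, by the path-connectedness clause of the rigidity assumption, is isotopic to the identity through symplectomorphisms. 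Lifting this isotopy through the bundle $P_{\lambda-\eps}\to M_{\lambda-\eps}$ (whose Euler class is preserved, since $f$ is an isomorphism below $\lambda-r$ and $g$ one by construction) and invoking the deformation-to-isotopy clause once more to absorb the residual discrepancy between the two reduced forms into a Moser isotopy, we homotope $f$ near level $\lambda-\eps$, through isomorphisms fixed below $\lambda-(r+\eps')$, into agreement with $g$ there. Piecing $f$ (below) and $g$ (near and above $\lambda$) together yields the isomorphism $h$ onto $\mu_2^{-1}((-\infty,\lambda+\delta))$ with $\delta=\eps$.

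For the extremal cases: if $\lambda$ is maximal, the fixed set $F^i$ at $\lambda$ is the reduced space $M^i_\lambda$ and the Morse flow identifies each $M^i_t$ ($t<\lambda$) diffeomorphically with $F^i$ without creating new spheres, so $f_{\lambda-r}$, together with a Moser correction supplied by the rigidity assumption, transports to an isomorphism of neighbourhoods of the maxima matching the local normal-form models; here the normal bundle of $F^i$ in $M^i$ is a weight-$(-1)$ Hermitian bundle whose remaining invariant (a Chern class, with rank fixed by hypothesis (ii) when $\dim F^i<4$ and a line bundle when $\dim F^i=4$) is again read off from the reduced form and $f$ below $\lambda-r$, so $g$ is built near $\lambda$ as before and the gluing proceeds as above, now with $\mu_i^{-1}((\lambda,\lambda+\delta))$ empty so that $h$ is a global isomorphism. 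If $\lambda$ is minimal there is nothing below $\lambda$, the hypothesis on $f$ is vacuous, and there is nothing to extend. Every construction above takes place in $\mu_i^{-1}((\lambda-r-\eps'',\lambda+\delta))$, which gives the final assertion of the theorem.
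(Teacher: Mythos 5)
Your overall strategy mirrors the paper's: push $f$ up to a level arbitrarily close to $\lambda$ using rigidity, identify the descending classes homologically via the $J$-holomorphic characterization of exceptional classes, normalize $f$ near the descending spheres and points, build an isomorphism $g$ over the critical level, and glue $f$ and $g$ after matching them at a regular level by a homology argument, rigidity, and Moser. However, as written there is a genuine gap at the central step, the construction of $g$ and its compatibility with $f$. Your $g$ is assembled from local normal forms near the fixed components and a blow-down/blow-up across $\lambda$, and you then assert that $f_{\lambda-\eps}$ and $g_{\lambda-\eps}$ "induce the same map on homology." But your construction only controls the action of $g$ on the descending classes (and on the pieces identified with $f$ away from the fixed set); agreement on those classes does not give agreement on all of $H_2(M^1_{\lambda-\eps})$, which is exactly what you need for $g_{\lambda-\eps}^{-1}\circ f_{\lambda-\eps}$ to act trivially on homology. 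If instead you intend $g$ to literally restrict to (a modification of) $f$ below $\lambda$ away from the fixed components — which would give full homology agreement — then the unproved point becomes that the resulting map is symplectic across the critical level: extending $f$ over $\lambda$ by the Morse flow only yields a $\mu$-$S^1$-diffeomorphism, and turning it into an equivariant symplectomorphism near $\lambda$ is the hard part. The paper resolves this dichotomy by constructing $g'$ from $f$ via the Morse flow (so homology actions agree everywhere, \Cref{lem:nonextremal-2}), checking that $g'_\lambda$ preserves $[\omega_\lambda]$ and $e_-$, replacing it by an honest symplectomorphism of the reduced spaces at $\lambda$ with the same action on homology via \Cref{thm:cohomologousformsarediffeomorphic}, normalizing it near the fixed set (\Cref{lem:normalizationspheres}), and then lifting it to an isomorphism of six-dimensional neighborhoods by \Cref{lem:go10}; your sketch skips this chain, and in particular never arranges that the reduced symplectomorphism at $\lambda$ is the identity near $F_1$ and intertwines $e_-$, both of which are needed for the lift.

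Two further points need justification. First, your claim that a symplectomorphism of $M^1_{\lambda-\eps}$ acting as the identity on $H_2$ "therefore lies in $\mathrm{Symp}\cap\mathrm{Diff}_0$" is not automatic in the smooth category: the paper invokes Gromov for $S^2\times S^2$ and \cite[Theorem A.1]{LLW22} for blowups of $\C\PP^2$ to get smooth isotopy to the identity, and only then applies the path-connectedness clause of rigidity; moreover, in the maximal case with a fixed sphere the relevant connectedness is proved directly from the Hirzebruch-surface structure (\Cref{lem:connected<4}), since rigidity is only assumed at regular levels below $\lambda$. Second, your identification of the small-area exceptional classes with the descending classes of the index-$4$ points is stated as a routine check, but ruling out extraneous small exceptional classes is the content of \Cref{prop:characterizationD} and \Cref{lem:E'} and requires positivity of intersections (\Cref{lem:dis-rep}) together with the convergence of reduced volumes and cohomology classes under the Morse flow; without this, the matching of descending data under $f$ is incomplete.
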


 Note that in our counter example, \Cref{ex:openDpoly}, all the assumptions but (i) hold.\\
 
We deduce a variation of \cite[Theorem 1.6]{Go11}.

\begin{theorem}\label{thm:mainresult}
 Let $M^1=(M^1,\omega^1,\mu_1)$ and $M^2=(M^2,\omega^2,\mu_2)$ 
 be compact, simply-connected semi-free Hamiltonian manifolds of dimension six. Assume that $M^1$ and $M^2$ have the same $*$-small fixed point data, and that for every critical value $\lambda$ and $i=1,2$, $M^i$ and $\lambda$ are as in \Cref{set:intro}.
  Suppose that one of the following is true. 
 \begin{itemize}
    \item[(i)] $M^1$ and $M^2$ contain non-extremal fixed surfaces,  these surfaces are all mapped to the same $\lambda_S$ by $\mu_1$ and $\mu_2$, and $M^1$ and $M^2$ have the same fixed point data at $\lambda_S$. In that case, assume that $\lambda_S$ is simple. 
   \item[(ii)] All non-extremal fixed points of $M^1$ and $M^2$ are isolated, and $M^1$ and $M^2$ have the same fixed point data at some non-extremal critical value $\lambda_S$. In that case, $\lambda_S$ does not have to be simple.
   \item[(iii)] All non-extremal fixed points of $M^1$ and $M^2$ are isolated, and neighborhoods of the minima of $M^1$ and $M^2$ are equivariantly symplectomorphic. In that case, we call the minimal level $\lambda_S$.
   \end{itemize}
   Then $M^1$ and $M^2$ are isomorphic.
\end{theorem}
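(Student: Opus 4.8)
The plan is to build the isomorphism $M^{1}\cong M^{2}$ out of a local isomorphism near the distinguished critical level $\lambda_{S}$ — the one level at which the hypotheses supply more than the weak $\ast$-small data — and then to propagate it across every other critical level by iterating \Cref{thm:extending-g}. To propagate \emph{downward} I would run \Cref{thm:extending-g} for the data $(M^{i},\omega^{i},-\mu_{i})$ with the inverted circle actions: an equivariant symplectomorphism of these inverted structures intertwining $-\mu_{1}$ with $-\mu_{2}$ is literally an isomorphism of $(M^{1},\omega^{1},\mu_{1})$ with $(M^{2},\omega^{2},\mu_{2})$, and the conditions of \Cref{set:intro}, the $\ast$-small fixed point data, and extremality of a critical value are all preserved under $\mu_{i}\mapsto -\mu_{i}$ (the minimal and maximal levels exchanging roles), so \Cref{thm:extending-g} is legitimate in that direction too. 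Since $M^{1}$ and $M^{2}$ share the same critical values $\lambda_{0}<\dots<\lambda_{k}$, reaching both the minimal and the maximal level produces an isomorphism of all of $M^{1}$.

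\textbf{The seed.} First I would produce an isomorphism $h_{S}\colon \mu_{1}^{-1}(J)\to\mu_{2}^{-1}(J)$ for a short interval $J$ about $\lambda_{S}$. In case (iii) this is immediate: $\lambda_{S}$ is the minimal level, $J=[\lambda_{S},\lambda_{S}+\delta)$, and the assumed equivariant symplectomorphism of neighborhoods of the minima (which, once the additive constant is pinned by the common minimal value, intertwines the momentum maps) restricts to such an $h_{S}$. In cases (i) and (ii), $\lambda_{S}$ is non-extremal and the assumed coincidence of fixed point data at $\lambda_{S}$ furnishes a symplectomorphism $g\colon M^{1}_{\lambda_{S}}\to M^{2}_{\lambda_{S}}$ matching the fixed point sets, the index functions, and $e(P^{-}_{\lambda_{S}})$ — hence also $e(P^{+}_{\lambda_{S}})$, which is determined by $e(P^{-}_{\lambda_{S}})$ and the local normal weights at $\lambda_{S}$. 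Around a single critical level $M^{i}$ is reconstructed from $M^{i}_{\lambda_{S}}$ together with exactly this data — by \cite[Section 3.2]{Mc09} in the isolated case of (ii), and by \cite[Section 3.3.1]{Go11} in the presence of the (simple) non-extremal fixed surfaces of (i) — so $g$ lifts to the desired $h_{S}$ on $J=(\lambda_{S}-\delta,\lambda_{S}+\delta)$. This reconstruction at one level is untouched by the gap of \ref{nt:proof-problem}, which concerns only the gluing of consecutive levels; in case (ii) with $\lambda_{S}$ non-simple one reads ``same fixed point data at $\lambda_{S}$'' as the existence of such a seed.

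\textbf{Propagation.} Starting from $h_{S}$ I would extend downward one critical level at a time across $\lambda_{j-1},\dots,\lambda_{1}$ and then past the minimum $\lambda_{0}$ (running \Cref{thm:extending-g} for $-\mu_{i}$; since $\mu_{1}^{-1}((-\infty,\lambda_{0}))=\emptyset$, this produces a genuinely left-infinite domain), obtaining an isomorphism $h_{-}$ over $\mu_{1}^{-1}((-\infty,c))$ with $c>\lambda_{S}$, and then extend $h_{-}$ upward across $\lambda_{j+1},\dots,\lambda_{k}$ and past the maximum $\lambda_{k}$; since $\mu_{1}$ is bounded with maximum $\lambda_{k}$, this last map is defined on all of $M^{1}$ (and, in case (iii), one starts the upward extension directly from $h_{S}$). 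At each step the hypotheses of \Cref{thm:extending-g} must be checked: the reduced-space and rigidity conditions are \Cref{set:intro}; the $\ast$-small fixed point data are assumed; hypothesis (i) holds because every non-extremal fixed surface lies at $\lambda_{S}$ (vacuously in (ii) and (iii)), so all levels crossed here carry only isolated fixed points; hypothesis (ii), at $\lambda_{0}$ and at $\lambda_{k}$, holds because a reduced space immediately adjacent to a $2$-dimensional extremal fixed component $F$ is a $\C\PP^{1}$-bundle over $F$, which is a symplectic rational surface — as \Cref{set:intro} requires — only when $F\cong S^{2}$; and the requirement that the reduced map carry $\mathcal{D}^{1}_{\sph}$ onto $\mathcal{D}^{2}_{\sph}$ is vacuous, since $\mathcal{D}^{i}_{\sph}$ is read off from the non-extremal fixed surfaces at the level being crossed and there are none. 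It then remains to organize the collar widths so that the successive one-sided extensions splice: by the last clause of \Cref{thm:extending-g} each extension agrees with the previous isomorphism on a sub-collar lying above the level just crossed, which suffices to glue the two into one isomorphism on the larger region; simple-connectedness of $M^{i}$ enters only through the cited local models and \Cref{thm:extending-g}.

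\textbf{Main obstacle.} The analytic content is carried by \Cref{thm:extending-g}; what is delicate in the present argument is (a) the seed of case (i), where a symplectomorphism of reduced spaces must be lifted across a level carrying fixed surfaces, and (b) keeping the bookkeeping of the iterated one-sided extensions consistent, which I expect to be the fussiest part of the write-up. The only hypothesis-check that is not pure bookkeeping is the reduction, via \Cref{set:intro}, of a low-dimensional extremal fixed component to a point or a sphere. It is instructive to see where difficulty would re-enter if the hypotheses were relaxed: allowing a non-extremal fixed surface at a level $\lambda\ne\lambda_{S}$ would force us, at the step crossing $\lambda$, to verify that the reduced map carries $\mathcal{D}^{1}_{\sph}$ onto $\mathcal{D}^{2}_{\sph}$, and that is precisely the point where the $J$-holomorphic characterization of exceptional classes of \cite{KK17} would have to be invoked afresh — the mechanism whose use \Cref{thm:extending-g} already confines to a single passage. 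Localizing all non-extremal fixed surfaces at $\lambda_{S}$ is what lets \Cref{thm:mainresult} avoid re-running that argument.
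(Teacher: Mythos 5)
Your proposal is correct and follows essentially the same route as the paper: produce a seed isomorphism on a neighborhood of $\lambda_S$ (by assumption in case (iii), and from the fixed point data at $\lambda_S$ via the local uniqueness results of McDuff and of Guillemin--Sternberg in cases (i)--(ii)), then iterate \Cref{thm:extending-g} over the remaining critical values, handling the critical values below $\lambda_S$ by reversing the momentum map. The differences are cosmetic --- the paper flips $(\omega,\mu)\mapsto(-\omega,-\mu)$ and reduces the downward direction wholesale to the upward one instead of propagating downward first, and it leaves implicit the hypothesis checks (vacuousness of the $\mathcal{D}_{\sph}$ condition at the levels being crossed, two-dimensional extremal fixed sets being spheres) that you spell out.
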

The additional assumptions in \Cref{thm:mainresult}, compared to \cite[Theorem 1.6]{Go11}, are that reduced spaces are symplectic rational surfaces  and the restriction on interior fixed surfaces.  However, we dropped the assumption of \cite[Theorem 1.6]{Go11} that the fixed point sets at each critical
level are either surfaces or isolated fixed points; we allow the extrema to be four-dimensional. We note that in our counter example, \Cref{examp:main}, 
all the theorem's assumptions except for the restriction on the interior fixed surfaces hold. 
 \begin{proof}[Proof of \Cref{thm:mainresult}, assuming 
  \Cref{thm:extending-g}]
In either case (i) or (ii) or (iii), we find $\eps>0$ and an isomorphism $$f \colon \mu_1^{-1}((\lambda_S-\eps,\lambda_S+\eps)) \to \mu_2^{-1}((\lambda_S-\eps,\lambda_S+\eps))$$
of open sets in $(M^1,\omega^1,\mu_1)$ and $(M^2,\omega^2,\mu_2)$. 
This is clear by assumption in case (iii), is by \cite[Lemma 3.4]{Mc09} in case (ii), and by \cite[Theorem 13.1]{GS89} whenever $\lambda_S$ is simple.\\
    
    In any case, it is enough to extend the isomorphism iteratively over the critical values of $\mu_i$ above $\lambda_S$ to an isomorphism
    \[
    \mu_1^{-1}((\lambda_S-\eps,\infty))\to \mu_2^{-1}((\lambda_S-\eps,\infty)).
    \]
    This is since extending the isomorphism over the critical values of $\mu_i$ below $\lambda_S$, if they exist, amounts to extending $f$ as an isomorphism 
    $$(-\mu_1)^{-1}((-\lambda_S-\eps,-\lambda_S+\eps)) \to (-\mu_2)^{-1}((-\lambda_S-\eps,-\lambda_S+\eps))$$
    of open sets in $(M^1,-\omega^1,-\mu_1)$ and $(M^2,-\omega^2,-\mu_2)$
    over the critical values of $-\mu_i$ above $-\lambda_S$ to an isomorphism 
   \[
    (-\mu_1)^{-1}((-\lambda_S-\eps,\infty))\to (-\mu_2)^{-1}((-\lambda_S-\eps,\infty)).
    \]
    
  So let $\lambda>\lambda_S$ be a critical value such that there is no critical value in $(\lambda_S,\lambda)$. By 
    \Cref{thm:extending-g},
    we find $\delta>0$ and an isomorphism
    \[
     \mu_1^{-1}((\lambda_S-\eps,\lambda+\delta))\to \mu_2^{-1}((\lambda_S-\eps,\lambda+\delta))
    \]
    extending the isomorphism $f$.
    If $\lambda$ is maximal, we are done. If $\lambda$ is not maximal, we let $\lambda'$ be a critical value such that there is no critical value in $(\lambda,\lambda')$ and apply \Cref{thm:extending-g} to extend the isomorphism to $\mu_1^{-1}((\lambda_S-\eps,\lambda'+\delta'))\to \mu_2^{-1}((\lambda_S-\eps,\lambda'+\delta'))$ with $\delta'>0$. We repeat this argument till we reach the maximal value. 
    \end{proof}

 \subsection{Application to Cho's classification of positive monotone semi-free Hamiltonian $S^1$-manifolds}
 Gonzales' results were pivotal in Cho's classification 
 of six-dimensional positive monotone symplectic manifolds admitting semi-free Hamiltonian circle actions. 
 A compact symplectic manifold $(M,\omega)$ is called \textbf{positive monotone} if the cohomology class $[\omega]$ is a multiple by a positive real number of the first Chern class of $TM$ with respect to an almost complex structure compatible with $\omega$.
A positive monotone symplectic manifold is the symplectic analogue of a \textbf{Fano manifold}: a compact complex manifold $X$ whose anticanonical line bundle $K_X^{-1}$ is ample.
The ampleness of $K_X^{-1}$ means that there is a holomorphic embedding
$ i \colon X \hookrightarrow \mathbb{C}P^N $  such that $(K_X^{-1})^k=i^* \mathcal{O} (1)$ for some $N>0$ and $k>0$.
The almost complex structure induced 
by the complex analytic atlas on $X$ is compatible with the symplectic form $i^*(\omega_{FS})$.
Since $c_1(TX)=c_1(K_X^{-1})$ and $c_1(i^* \mathcal{O} (1))=[i^*(\omega_{FS})]$,  the symplectic manifold $(X, i^*(\omega_{FS}))$ is positive monotone. 
 
In dimensions two and four, it was proven that a  positive monotone symplectic manifold is symplectomorphic to a Fano manifold (with a positive multiple of $i^{*}\omega_{FS}$) \cite{Gr85, Mc90, Ta00}.
In dimension greater than or equal to twelve, there are examples of positive monotone symplectic manifolds that are not simply connected \cite{FP10}. 
Since Fano manifolds are simply connected \cite[Corollary 6.2.18]{IP99}, 
these examples are not even homotopy equivalent to Fano manifolds. 
In dimensions six, eight, and ten, it is not known if any positive monotone symplectic manifold is diffeomorphic, or homotopy equivalent, to a Fano manifold.
However, if the {\bf complexity} $\frac{1}{2} \dim M-\dim T$ of a positive monotone Hamiltonian $T$-space is $0$, then it
is equivariantly symplectomorphic to a Fano manifold with a holomorphic torus action, as follows from \cite{De88}.
In higher complexity the question is still open.

\begin{conjecture}Fine-Panov 2015 \cite{FP15}.
Let $(M, \omega)$ be a positive monotone symplectic manifold of dimension six that admits a Hamiltonian circle action.
Then $M$ is diffeomorphic to a Fano manifold.
\end{conjecture}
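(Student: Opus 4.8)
\medskip
\noindent\textbf{Towards a proof.}
The conjecture is open in general, so the plan is to settle it for all \emph{semi-free} Hamiltonian circle actions by combining \Cref{thm:mainresult} with Cho's classification, and then to pinpoint what must be added to handle actions that are not semi-free.

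Suppose first that $S^1 \acts (M,\omega)$ is semi-free; being compact, $M$ carries a proper momentum map $\mu$ with bounded image. The first step is to check that the hypotheses of \Cref{thm:mainresult} hold in the positive monotone case. By Cho's analysis of six-dimensional positive monotone semi-free Hamiltonian $S^1$-manifolds, $M$ is simply connected, every four-dimensional reduced space $(M_t,\omega_t)$ below a given critical value is a symplectic rational surface in the sense of \Cref{not:rationalmanifold}, the interior fixed surfaces are exceptional spheres lying over a single critical level, and the structural alternatives (i)--(iii) of \Cref{thm:mainresult} are satisfied; moreover symplectic rational surfaces satisfy the rigidity assumption of \Cref{def:rigidityassumption}, by the structure of their symplectomorphism groups and the uniqueness of a symplectic form in a fixed cohomology class -- the same inputs as in \cite{KK17}. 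Hence \Cref{thm:mainresult} applies to $M$.

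Positive monotonicity normalizes symplectic areas and pins down Chern numbers, so the $*$-small fixed point data of $M$ ranges over an explicit finite list; for each item of the list one produces a Fano threefold $X$ with a holomorphic $\C^\times$-action whose associated Hamiltonian $S^1$-manifold $(X, i^*\omega_{FS}, \mu_X)$ realizes that same data -- either by reading off the classification of smooth Fano threefolds carrying $\C^\times$-actions or by building $X$ as an iterated equivariant blow-up of $\C \PP^3$, quadrics, and products. Then \Cref{thm:mainresult} applied to $M$ and $X$ gives an isomorphism of Hamiltonian $S^1$-manifolds $M \cong X$; in particular $M$ is diffeomorphic to the Fano threefold $X$, as claimed.

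The genuine obstacle is dropping semi-freeness. For a general Hamiltonian circle action there are symplectic submanifolds $M^{\Z/m}$, $m\ge 2$, on which the action has nontrivial finite stabilizer, and near them the singular symplectic reductions acquire orbifold loci, so the McDuff--Gonzales smooth structure on the reduced spaces and the $J$-holomorphic characterization of exceptional classes on rational surfaces -- both of which underlie \Cref{thm:extending-g} -- cease to apply verbatim. One would have either to trade the action for a semi-free one by an equivariant modification (equivariant blow-ups and -downs along the $M^{\Z/m}$, controlled by a weighted normal-form argument) that does not change the diffeomorphism type, or to extend \Cref{thm:extending-g} to reductions that are symplectic rational \emph{orbifold} surfaces while controlling, under the monotonicity constraint, how the non-free strata meet the critical levels of $\mu$. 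Everything outside this last point is an application of the machinery assembled above.
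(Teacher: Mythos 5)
There is a genuine gap, and it is worth being precise about what it is. The statement you were asked to prove is the Fine--Panov conjecture, which the paper itself does not prove: it is recorded as an open conjecture, and what the paper establishes (as an application of \Cref{thm:mainresult}, together with Cho's classification) is only the \emph{semi-free} case, namely Cho's theorem that a positive monotone, compact, connected six-dimensional symplectic manifold with a semi-free Hamiltonian $S^1$-action is equivariantly symplectomorphic to a Fano manifold. Your proposal, by its own admission, does the same: the first two paragraphs treat the semi-free case along essentially the same route as the paper (check the hypotheses of \Cref{thm:mainresult}, match the fixed point data of $M$ with that of a Fano model carrying a holomorphic $\C^{\times}$-action, conclude an isomorphism), while the final paragraph on dropping semi-freeness is a list of possible strategies, not an argument. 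Since for non-semi-free actions the reduced spaces acquire orbifold singularities and no analogue of \Cref{thm:extending-g} or of the $J$-holomorphic characterization of exceptional classes is available, the conjecture itself is not established by what you wrote; what you have is (at best) a re-derivation of the paper's Theorem on Cho's classification, not a proof of the conjecture.

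Even within the semi-free case there is a step that fails as written: you assert that symplectic rational surfaces satisfy the rigidity assumption ``by the structure of their symplectomorphism groups,'' but \Cref{thm:rigid} gives rigidity only for $S^2\times S^2$ and for $k$-fold blowups of $\C\PP^2$ with $0\le k\le 4$; rigidity in the sense of \Cref{def:rigid} is not known for larger blowups. This matters because in Cho's list the type \textbf{(II-1-4.$k$)} spaces with $k\ge 5$ have reduced spaces that are larger blowups, and the paper must handle them separately by appealing to Cho's direct argument, which avoids Gonzales' result altogether; your blanket rigidity claim silently skips this case. Relatedly, the production of a Fano model realizing each admissible fixed point datum is not a routine finite check but is exactly the content of Cho's classification (his Sections 6--8 and the verification that topological fixed point data determine fixed point data); your sketch should invoke that input explicitly rather than treat it as an exercise in equivariant blow-ups of $\C\PP^3$ and quadrics.
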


  In \cite{Ch19}, \cite{Ch21.1} and \cite{Ch21.2}, Cho classified the so-called 'topological fixed point data' of {positive monotone} semi-free Hamiltonian $S^1$-spaces of dimension six. Cho showed that these data determine the fixed point data as in \Cref{def:fixedpointdata}, and then used  \cite[Theorem 1.5]{Go11} in order to conclude that all of these spaces  are isomorphic as Hamiltonian $S^1$-manifolds to Fano manifolds with holomorphic $S^1$-actions. 
   The fact that \cite[Theorem 1.5]{Go11} is not correct, as we show in \Cref{examp:main}, raises question on the validity of Cho's result.
    Nevertheless, we deduce from \Cref{thm:mainresult} that in the examples that occur in Cho's classification, the fixed point data \textbf{do} determine the isomorphism type of the underlying Hamiltonian $S^1$-manifold. \\

The key is the implication of the positive monotone assumption on the distribution of fixed point components.
We assume that the positive monotone symplectic manifold 
is \textbf{normalized}, i.e., $[\omega]=c_1(TM)$.
For a normalized positive monotone Hamiltonian $S^1$-space,  there is a momentum map
$\mu \colon M \rightarrow \R$ such that 
$$\mu(p)=-(\alpha^1_p+\alpha^2_p+\alpha^3_p)$$ for any fixed point $p$, where the $\alpha^i_p$ 
are the weights of the $T$-representation 
on the tangent space $T_pM$ \cite[Proposition 3.5]{CSS23}. 
Combining with the local normal form for a semi-free Hamiltonian $S^1$-action in dimension six, recalled in \Cref{nt:local6}, we conclude the following lemma.
    \begin{lemma} \label{lem:keym}
        Let $M$ be a normalized positive monotone semi-free Hamiltonian $S^1$-manifold of dimension six.
        Then \begin{itemize}
        \item all non-extremal fixed point components of dimension zero are located at level $\pm 1$;
        \item there are at most two critical values larger than $0$;
        \item all non-extremal fixed point components of dimension two are located at level $0$ and have the same index.
    \end{itemize}
    \end{lemma}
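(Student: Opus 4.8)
The plan is to combine two ingredients: the weight/momentum relation for normalized positive monotone Hamiltonian $S^1$-spaces and the local normal form for semi-free actions in dimension six. Recall from \cite[Proposition 3.5]{CSS23} that for a normalized positive monotone Hamiltonian $S^1$-manifold there is a momentum map with $\mu(p)=-(\alpha^1_p+\alpha^2_p+\alpha^3_p)$ at every fixed point $p$, where the $\alpha^i_p$ are the weights of the $S^1$-representation on $T_pM$. Since the action is semi-free, each weight $\alpha^i_p$ lies in $\{-1,0,1\}$. The Morse index of $p$ is twice the number of negative weights, and the number of nonzero weights among the $\alpha^i_p$ equals the codimension of the fixed component through $p$ (that is, $6 - \dim F = $ number of nonzero weights). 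With these constraints I would simply enumerate the possible weight triples.

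For an isolated fixed point $p$ (codimension six, so all three weights nonzero), the triple $(\alpha^1_p,\alpha^2_p,\alpha^3_p)$ is a sequence in $\{-1,1\}^3$, giving $\mu(p)\in\{-3,-1,1,3\}$. The extremal values $\mu(p)=\pm 3$ occur only at the global minimum (all weights $+1$, index $0$) and global maximum (all weights $-1$, index $6$); hence every \emph{non-extremal} isolated fixed point has $\mu(p)=\pm 1$, proving the first bullet. Moreover $\mu(p)=1$ forces weights $(1,1,-1)$ up to order, i.e.\ index $2$, and $\mu(p)=-1$ forces weights $(1,-1,-1)$, i.e.\ index $4$; so the interior isolated fixed points at level $1$ all have index $2$ and those at level $-1$ all have index $4$. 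For a fixed surface (codimension four, so exactly two nonzero weights), the nonzero weights are $\pm 1$ and the tangential weight is $0$, so $\mu = -(\alpha^1+\alpha^2)\in\{-2,0,2\}$; a surface at level $2$ would have both nonzero weights $+1$, hence index $0$ — but by connectedness of the fixed locus at an extremal level and the structure at the minimum this surface would have to be the minimum, contradicting that it is non-extremal — and similarly $\mu=-2$ is excluded. Thus every non-extremal fixed surface sits at level $0$, with one weight $+1$ and one weight $-1$, hence index $2$; in particular all such surfaces have the same index, giving the third bullet. (Here I am using the local normal form recalled in \S\ref{nt:local6} to pass from the weight data to the index and to the dimension of the fixed component.)

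For the second bullet, note that by the analysis above the only critical values of $\mu$ that can appear are $-3,-1,0,1,3$ (isolated fixed points contribute $\pm 3,\pm 1$, and fixed surfaces contribute $0$). Among these, the only values strictly greater than $0$ are $1$ and $3$. Hence there are at most two critical values above $0$, as claimed.

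The routine part is the enumeration of weight triples; the only genuine point requiring care is the exclusion of non-extremal fixed components at the ``outermost'' levels (isolated points at $\pm 3$, surfaces at $\pm 2$), which I would handle by invoking that the fixed point set at an extremal value of $\mu$ is connected (as recorded in \Cref{not:semifreeHamiltonian} via \cite{At82}) together with the local normal form: an isolated fixed point with all weights equal, or a surface with both nonzero weights equal, is a local extremum of $\mu$, so it lies in the extremal level set, which is then exactly that connected fixed component. Thus such a component cannot be non-extremal. I expect the write-up to be short, essentially a table of cases with this one connectedness observation.
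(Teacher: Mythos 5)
Your overall strategy is exactly the one the paper intends (the paper gives no separate proof: it derives the lemma by combining the weight formula $\mu(p)=-(\alpha^1_p+\alpha^2_p+\alpha^3_p)$ from \cite[Proposition 3.5]{CSS23} with the list of possible semi-free weight triples in \S\ref{nt:local6}), and your treatment of the first and third bullets is essentially correct, including the use of connectedness of level sets to upgrade ``local extremum'' to ``extremal''. However, your argument for the second bullet has a genuine gap: the claim that ``the only critical values of $\mu$ that can appear are $-3,-1,0,1,3$'' is false, because you only accounted for isolated fixed points and \emph{non-extremal} fixed surfaces. Extremal fixed surfaces (weights $\pm(0,1,1)$) sit at levels $\mp 2$, and extremal fixed four-manifolds (weights $\pm(0,0,1)$) sit at levels $\mp 1$; the lemma does not exclude these (indeed the paper explicitly allows four-dimensional extrema). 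So the possible positive critical values are $1$, $2$ and $3$, and ``at most two'' does not follow from the enumeration alone. To close the gap, observe that a critical value $3$ can only come from an isolated point with weights $(-1,-1,-1)$ and a critical value $2$ only from a surface with weights $(0,-1,-1)$ (and $1$ can also come from a four-manifold with weights $(0,0,-1)$); each of these is a local maximum, hence by connectedness the global maximum, so at most one of the levels $2$ and $3$ can occur (and if the maximum is at level $1$ or $2$, level $3$ is excluded, etc.). Hence the set of positive critical values is contained in $\{1,3\}$, $\{1,2\}$ or $\{1\}$, giving the bound.

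Two minor sign slips, which do not affect the lemma's statement but should be corrected in a write-up: with $\mu(p)=-(\alpha^1_p+\alpha^2_p+\alpha^3_p)$, the level $+1$ corresponds to weights $(-1,-1,1)$ (two negative weights, Morse index $4$) and level $-1$ to $(1,1,-1)$ (Morse index $2$), the opposite of what you wrote; likewise a surface at level $+2$ would have both nonzero weights equal to $-1$ (a local maximum), not $+1$. Also, the number of nonzero weights is half the codimension of the fixed component, not the codimension itself.
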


 We will also use the fact that rigidity holds for a big family of symplectic manifolds in dimension four. The next theorem is a collection of many results, see (\cite{Gr85}, \cite{AM00}, \cite{LP04}, \cite{Pin08}, \cite{Ev11},  \cite{LLW15}).
    \begin{theorem}\label{thm:rigid}
        Let $(B,\{\omega_t\}_{t \in I})$ be $S^2\times S^2$ or a $k$-fold blowup of $\C \PP^2$ with $0\leq k\leq 4$, endowed with a family of symplectic forms smoothly parametrized by $t$. Then $(B,\{\omega_t\}_{t \in I})$ is rigid.
    \end{theorem}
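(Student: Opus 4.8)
The plan is to prove \Cref{thm:rigid} by \emph{assembling} the two defining properties of rigidity from existing literature, checking each bullet of \Cref{def:rigid} for each diffeomorphism type $B\in\{S^2\times S^2\}\cup\{\CP^2\#k\overline{\CP^2}:0\le k\le 4\}$, and observing that both properties hold \emph{uniformly} over a smooth family $\{\omega_t\}_{t\in I}$ because the relevant results hold for \emph{every} symplectic form on these manifolds and the conditions being verified are pointwise in $t$. Concretely: first I would recall that on these small rational surfaces every symplectic form is, up to diffeomorphism and scaling, a standard (blow-up or product) form, so the symplectic mapping class group and the space of symplectic forms are well understood — this is where \cite{Gr85,AM00,LP04,Pin08,Ev11,LLW15} enter. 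The first bullet, path-connectedness of $\mathrm{Symp}(B,\omega_t)\cap\mathrm{Diff}_0(B)$ for each fixed $t$, I would extract as follows: for $S^2\times S^2$ and $\CP^2\#k\overline{\CP^2}$ with $k\le 4$ the group $\mathrm{Symp}(B,\omega)$ is connected (for $S^2\times S^2$ and $\CP^2\#\overline{\CP^2}$ by Gromov and Abreu--McDuff \cite{Gr85,AM00}; for $k=2$ by \cite{LP04,Ev11}; for $k=3,4$ by \cite{Ev11,LLW15} in the relevant monotone/generic chambers, and by \cite{Pin08} for the homotopy type), hence its identity component is the whole group and the intersection with $\mathrm{Diff}_0(B)$ is automatically path-connected. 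Since this holds for an arbitrary symplectic form on $B$, it holds for $\omega_t$ for every $t\in I$, which is exactly the first bullet of \Cref{def:rigid}.

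For the second bullet — that any deformation between cohomologous forms deformation-equivalent to $\omega_{t_0}$ can be homotoped rel endpoints to an isotopy — I would invoke the uniqueness of symplectic forms in a fixed cohomology class and deformation class on these manifolds, which is again known for $S^2\times S^2$ and $\CP^2\#k\overline{\CP^2}$, $k\le 4$ (McDuff, Lalonde--McDuff, and the later works \cite{LP04,LLW15}; for $S^2\times S^2$ and one blow-up this is classical \cite{Gr85,AM00}). The point is that the space of symplectic forms in a given cohomology class is connected, and more: the natural map from this space to the space of all symplectic forms in the given \emph{deformation} class has the right homotopy-lifting behaviour, so a path of cohomologous forms and a path through the deformation class with the same endpoints are homotopic rel endpoints. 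I would phrase this as: the inclusion of the space of symplectic forms cohomologous to $\omega_t$ into the space of all symplectic forms symplectic-deformation-equivalent to $\omega_t$ induces an isomorphism on $\pi_0$ and is injective on $\pi_1$ (or simply that both spaces are path-connected and the larger one is simply connected, which for these small rational surfaces follows from the cited works). Again this is a statement about the symplectic form $\omega_t$ alone, so it holds for every $t\in I$, giving the second bullet.

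The routine but necessary bookkeeping is to match each case $k=0,1,2,3,4$ (and $S^2\times S^2$) to the precise reference that covers it, and to note that the cases $k=3,4$ require care because $\mathrm{Symp}$ is connected only for forms in certain chambers of the symplectic cone; however, the reduced forms $\omega_t$ arising from our Hamiltonian $S^1$-manifolds (and the forms relevant to \Cref{thm:mainresult} and Cho's application) lie in these chambers, or one simply restricts \Cref{thm:rigid} to the forms where connectedness is known — the statement as used in this paper only needs rigidity for the families that actually occur. The main obstacle, such as it is, is not a new argument but the verification that the \emph{uniform-in-$t$} versions of these statements follow formally from the \emph{form-by-form} versions in the literature: one must check that no monodromy in the parameter $t$ can spoil path-connectedness or the homotopy of deformations. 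This is handled by the observation that $I$ is an interval (contractible), so a smooth family over $I$ carries no monodromy, and each condition in \Cref{def:rigid} is a closed condition that, being satisfied at every $t$, is satisfied by the family; thus \Cref{thm:rigid} reduces to the collection of results already in the literature, and there is nothing further to prove.
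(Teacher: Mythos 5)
The paper itself offers no proof of \Cref{thm:rigid}: it is presented as ``a collection of many results'' with the citations \cite{Gr85,AM00,LP04,Pin08,Ev11,LLW15}, so your strategy of assembling the two bullets of \Cref{def:rigid} from the literature is, at the level of strategy, the same as the paper's. For the first bullet your reduction is essentially fine, but the hedge for $k=3,4$ is both unnecessary and undesirable: \cite{LLW15} proves that the Torelli symplectic mapping class group of $\C\PP^2\#k\overline{\C\PP^2}$ is trivial for $k\le 4$ for \emph{every} symplectic form, and since elements of $\mathrm{Symp}(B,\omega_t)\cap\mathrm{Diff}_0(B)$ act trivially on homology, this already yields the first bullet in all cases; proposing to restrict \Cref{thm:rigid} to ``chambers where connectedness is known'' would change the statement being proved, which is asserted for arbitrary symplectic forms on these manifolds.

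The genuine gap is in your treatment of the second bullet. What is needed is that every path of symplectic forms in the deformation class $\mathcal{S}$ joining two cohomologous forms is homotopic \emph{rel endpoints} into the subspace $\mathcal{S}_{[c]}$ of cohomologous forms. Granting path-connectedness of $\mathcal{S}_{[c]}$ (McDuff's deformation-to-isotopy theorem, \cite{Mc96}, which is the relevant reference here even though it is not in the list quoted for \Cref{thm:rigid}), the remaining condition is \emph{surjectivity} of $\pi_1(\mathcal{S}_{[c]})\to\pi_1(\mathcal{S})$, i.e.\ vanishing of the relative $\pi_1$ of the pair; injectivity on $\pi_1$, which is what you wrote, does not give this. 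Your fallback --- that $\mathcal{S}$ itself is simply connected --- would indeed suffice, but it does not ``follow from the cited works'': \cite{Gr85,AM00,LP04,Pin08,Ev11,LLW15} determine (components of) symplectomorphism groups and spaces attached to a fixed cohomology class, and none of them computes $\pi_1$ of the space of all deformation-equivalent symplectic forms on these manifolds. The second bullet is established for rational surfaces not by comparing homotopy groups of spaces of forms but by attacking the two-parameter problem directly with McDuff's inflation technique: starting from the given deformation $\{\omega_s\}$ one inflates along suitable $J$-holomorphic curves to push each class $[\omega_s]$ back to the fixed class, producing exactly the homotopy with fixed endpoints through symplectic deformations demanded by \Cref{def:rigid}; this is the content of \cite{Mc96} and is how rigidity of these spaces is argued in \cite{Go11}. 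As written, your proof of the second bullet rests on a misstated homotopy-theoretic reduction and an unsupported simple-connectivity claim, so it does not go through.
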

    We deduce that Cho's theorem \cite[Theorem 1.2]{Ch19} still holds. 
    
\begin{theorem} \cite[Theorem 1.2]{Ch19}.
    Any  positive monotone, compact,  connected six-dimensional symplectic manifold $M$ with a semi-free Hamiltonian $S^1$-action is equivariantly symplectomorphic to a Fano manifold $M'$ with a positive multiple of $i^{*}\omega_{FS}$ and a Hamiltonian $S^1$-action induced from a holomorphic $\C^*$-action.
\end{theorem}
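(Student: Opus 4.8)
The plan is to apply \Cref{thm:mainresult} to $M$ together with a suitable Fano model, using Cho's classification to supply the fixed point data and \Cref{lem:keym} and \Cref{thm:rigid} to supply the hypotheses of \Cref{set:intro}.

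First I would normalize: after rescaling $\omega$ by a positive constant we may assume $[\omega]=c_1(TM)$, so that the momentum map is the one of \Cref{lem:keym}; an equivariant symplectomorphism produced for the normalized form, composed with the inverse rescaling, gives the conclusion for the original $\omega$ with $M'$ carrying a positive multiple of $i^{*}\omega_{FS}$. Then, by Cho's classification \cite{Ch19,Ch21.1,Ch21.2}, the topological fixed point data of $M$ lie in an explicit finite list and, for each entry, there is a Fano manifold $M'$ with a holomorphic $\C^{*}$-action whose restriction to $S^1$ is semi-free and Hamiltonian and realizes the same topological fixed point data; moreover Cho shows that the topological fixed point data determine the fixed point data of \Cref{def:fixedpointdata}, hence a fortiori the $*$-small fixed point data. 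Both $M$ and $M'$ are compact and six-dimensional; $M'$ is simply connected as a Fano manifold \cite[Corollary 6.2.18]{IP99}, and $M$ is simply connected because $\mu$ is Morse--Bott with even indices that are at least $2$ away from the minimum, so $\pi_1(M)\cong\pi_1(F_{\min})$, and by the classification the minimal fixed component $F_{\min}$ is a point, a $2$-sphere, or a symplectic rational surface, hence simply connected in every case.

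Next I would check, for each critical value $\lambda$ and each $M^i\in\{M,M'\}$, the two bullets of \Cref{set:intro} and then one of the alternatives (i)--(iii) of \Cref{thm:mainresult}. Running the normalized gradient flow across a critical level identifies the adjacent four-dimensional reduced spaces $M_t$ by a blowdown at each isolated fixed point of Morse index $4$, a blowup at each isolated fixed point of Morse index $2$, and a blowup or blowdown along a fixed $2$-sphere; by \Cref{lem:keym} all fixed $2$-spheres sit at level $0$ with a common index, and Cho's classification shows them to be exceptional. Starting from the reduced space just above the minimum --- which is $\C\PP^2$ if $F_{\min}$ is a point, is diffeomorphic to $F_{\min}$ if $F_{\min}$ is four-dimensional, and is a $\C\PP^1$-bundle over $F_{\min}$, hence $S^2\times S^2$ or $\C\PP^2\#\overline{\C\PP^2}$, if $F_{\min}$ is a sphere --- and using that \Cref{lem:keym} bounds the number of positive (and, symmetrically, negative) critical values by two, one reads off that every four-dimensional $M_t$ with $t$ below or equal to $\lambda$ is $S^2\times S^2$ or a $k$-fold blowup of $\C\PP^2$ with $k\leq 4$, hence a symplectic rational surface; \Cref{thm:rigid} then gives rigidity of $(M_{t_0},\{\omega_t\}_{t\in I})$ for every interval $I$ of regular values, which is the second bullet of \Cref{set:intro}. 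For the alternatives: if $M$ (equivalently $M'$) has non-extremal fixed surfaces, then by \Cref{lem:keym} they all lie at $\lambda_S:=0$ and share an index, so $0$ is simple, and equality of the fixed point data at $0$ is part of Cho's conclusion, giving case (i); if all non-extremal fixed points are isolated and a non-extremal critical value exists (necessarily $\pm 1$ by \Cref{lem:keym}), take $\lambda_S$ to be such a value and invoke Cho's equality of fixed point data there, giving case (ii); and if there are no non-extremal fixed points at all, then equality of the extremal fixed point data at the minimum together with the equivariant symplectic neighborhood theorem makes neighborhoods of the minima equivariantly symplectomorphic, giving case (iii). The dimension constraints in \Cref{thm:extending-g}(ii) and in \Cref{thm:mainresult} on extrema of dimension less than four are satisfied because, by the classification, such extrema are points or spheres.

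Granting all of this, \Cref{thm:mainresult} yields an isomorphism $M\cong M'$ of Hamiltonian $S^1$-manifolds, and undoing the normalization completes the proof. The hard part will be the bookkeeping in the previous paragraph: extracting from Cho's topological fixed point data that every four-dimensional reduced space below a critical level is a rational surface obtained from $\C\PP^2$ by at most four blowups --- so that \Cref{thm:rigid} applies --- and that the non-extremal fixed $2$-spheres are exceptional, together with the verification that $M$ is simply connected and the treatment of the low-dimensional extremal cases. Everything else is a direct appeal to \Cref{thm:mainresult}, \Cref{lem:keym}, \Cref{thm:rigid}, and Cho's classification.
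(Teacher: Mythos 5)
Your overall strategy is the paper's: invoke Cho's classification to produce a Fano model $M'$ with the same (topological, hence actual, hence $*$-small) fixed point data, verify the hypotheses of \Cref{set:intro} via \Cref{lem:keym} and \Cref{thm:rigid}, and conclude with \Cref{thm:mainresult}. But there is a genuine gap in the step you yourself flag as "bookkeeping": your claim that every four-dimensional reduced space below a critical level is $S^2\times S^2$ or an at most $4$-fold blowup of $\C\PP^2$ is false in general. Nothing in \Cref{lem:keym} bounds the \emph{number} of isolated non-extremal fixed points at level $\pm 1$ (equivalently, the number of blowups performed when crossing that level), and Cho's classification in fact contains the types \textbf{(II-1-4.$k$)} with $k$ up to $8$, whose monotone reduced space at level $0$ is a $k$-fold blowup of $\C\PP^2$. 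For $k\geq 5$ the rigidity statement \Cref{thm:rigid} does not apply (it covers only $S^2\times S^2$ and blowups of $\C\PP^2$ at up to $4$ points), so the second bullet of \Cref{set:intro} cannot be verified and \Cref{thm:mainresult} is unavailable; your argument simply breaks down for these types rather than "reading off" $k\leq 4$.

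The paper closes exactly this hole with a case distinction you are missing: if $M$ is of type \textbf{(II-1-4.$k$)} with $k\geq 5$, then the proof of \cite[Theorem 1.2]{Ch19} establishes $M\cong M'$ \emph{without} appealing to Gonzales' theorem at all, so no rigidity input is needed; only when $M$ is not of one of these types does one use the statement after \cite[Theorem 9.1]{Ch19} that all four-dimensional reduced spaces are $S^2\times S^2$ or at most $4$-fold blowups of $\C\PP^2$, and then proceed as you do via \Cref{thm:rigid}, \Cref{lem:keym} and \Cref{thm:mainresult}. To repair your proposal, replace the blowup-counting argument by this dichotomy (citing Cho for both halves); the remaining items you verify — normalization, simple connectedness, the location and exceptionality of the fixed spheres, and the choice among alternatives (i)--(iii) — are consistent with the paper's intent.
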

\begin{proof}
    By \cite[Section 6,7,8]{Ch19}, given a positive monotone, compact, connected six-dimensional symplectic manifold $M$ with a semi-free Hamiltonian $S^1$-action, there is a Fano manifold $M'$ with a positive multiple of $i^{*}\omega_{FS}$ and a Hamiltonian $S^1$-action induced from a holomorphic $\C^{*}$-action such that $M$ and $M'$  have the same {topological fixed point data}, as defined in \cite[Definition 5.7]{Ch19}. Moreover, the topological fixed point data already determine the fixed point data (see \cite[Lemma 9.7]{Ch19} and the proof of \cite[Theorem 1.2]{Ch19}). If, in Cho's notation, $M$ is of type \textbf{(II-1-4.$k$)} with $k\geq 5$, then the proof of \cite[Theorem 1.2]{Ch19} shows that $M$ and $M'$ are isomorphic without using the result of Gonzales.

    If $M$ is not of type \textbf{(II-1-4.$k$)} with $k\geq 5$, then each four-dimensional reduced space of $M$ is either $S^2\times S^2$ or an $m$-fold blowup of $\C \PP^2$ with $0\leq m\leq 4$ (see the statement right after \cite[Theorem 9.1]{Ch19}). It remains to check that $M$ and $M'$ satisfy the assumptions needed to apply \Cref{thm:mainresult}, since then $M$ and $M'$ would be equivariantly symplectomorphic.
    Indeed, that the rigidity assumption on $M$ (or $M'$) holds follows from \Cref{thm:rigid}; the fact that all fixed surfaces not corresponding to an extremal critical value are mapped to the same value $\lambda_S$ is by \Cref{lem:keym}. 
\end{proof}

{\bf The structure of the paper.} 
In \Cref{sec:examples}, we construct two counter examples showing that \cite[Theorem 1.5]{Go11} and \cite[Lemma 3.13]{Go11} are incorrect as stated. We then proceed towards proving \Cref{thm:extending-g}.  In \Cref{sec:pre} we describe the effect on the smooth and symplectic structures on the reduced spaces when 'flowing into' a critical level by the map induced from the gradient flow of the momentum map. In \Cref{sec:almostsymplectic} we describe piecing  together of isomorphisms, and, for that, introduce the notion of almost symplectic $\mu-S^1$-diffeomorphism. In \Cref{sec:neighborhoods} we establish the implications
of having the same $*$-small fixed point data on isomorphisms of neighborhoods of critical
levels, assuming that reduced spaces of dimension four are symplectic rational surfaces. Finally, in \Cref{sec:extend} we prove the theorem.
In general, we try to avoid repetition of Gonzales' arguments and proofs, but we fill in details when they are required. 

\subsection*{Funding} 
This work was supported by the National Science Foundation and the Binational Science Foundation [grant number 2021730].
\subsection*{Acknowledgements} 
We were motivated by Sue Tolman's objection to the assertion that the global isomorphism type of a semi-free Hamiltonian $S^1$-manifold is determined  by the local data; we are grateful to Sue for her guidance towards a counter example. We thank Martin Pinsonnault for answering our questions about configurations of exceptional spheres in symplectic rational surfaces, and Yael Karshon and Isabelle Charton for helpful discussions. 
\section{Non-isomorphic manifolds with the same fixed point data}\label{sec:examples}

We give an example of non-isomorphic closed semi-free Hamiltonian $S^1$-manifolds with the same fixed and local data. We further show that two semi-free Hamiltonian six-dimensional $S^1$-manifolds that have the same small fixed point data at a critical level and are isomorphic below it might not be isomorphic near that level. In both examples, the rigidity assumption is satisfied.

\subsection*{An example of non-isomorphic closed semi-free Hamiltonian $S^1$-manifolds with the same fixed and local data} \label{subsec:counterexample}

\begin{example} \label{examp:main}
Let $\eps>0$ and consider the standard product action of $T^3=(S^1)^3$ on $$M:=S^2\times S^2\times S^2 \text{ with }\omega^M:=((1+\eps)\omega, \omega, \omega),$$
where $\omega$ is the Fubini-Study form on $S^2$. Consider the diagonal embedding
$S^1\hookrightarrow S^1\times S^1 \times \{e\}$ and its induced (semi-free!) action $\rho_{S^1}$ on $M$ with momentum map $\mu$. The fixed point components of this action are the four spheres $\{a\} \times \{b\} \times S^2$ for $a,b \in \{\southpole,\northpole\}$.

One can think of $S^1 \acts (M,\omega^M)$ as the product of $(S^2,\omega)$ endowed with the trivial action and $$N:=(S^2\times S^2,((1+\eps)\omega, \omega)$$  endowed with the diagonal action $\rho_{S^1}$
and momentum map $\mu_N$. 
Since $\eps$ is positive, the fixed point components of the $S^1$-action on $N$ are mapped to different values under $\mu_N$. We denote by $\lambda$ the lowest interior critical value and by $\lambda'$ the other one. The decorated graph of $\rho_{S^1}$ on $N$ is given on the left of \Cref{fig:1}. 
We will also view $N$ as a symplectic toric manifold with the canonical $S^1\times S^1$-action $\rho_1$ with momentum map $\mu_1$ whose image is given on the right of  \Cref{fig:1}. The momentum map $\mu_N$ of the $S^1$-action on $N$ is obtained by composing the projection on $(1+\eps)y=x$ with the momentum map $\mu_1$ of the $S^1 \times S^1$-action. 
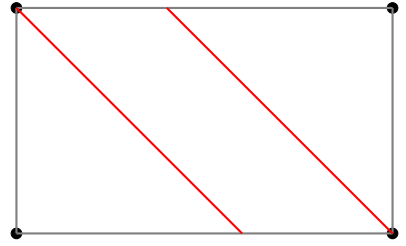
\begin{figure}[h]
	\begin{center}
 \begin{tikzpicture}
 \hspace{-1cm}
                \filldraw[black] (0,0) circle (2pt);
			\filldraw[black] (0,1.3) circle (2pt);
			\filldraw[black] (0,1.7) circle (2pt);
			\filldraw[black] (0,3) circle (2pt);
   \node (m) at (2.6,0) {$\mu_N(\southpole,\southpole)$};
   \node (t) at (3,1.3) {$\lambda{=\mu_N(\southpole,\northpole)}$};
   \node (t') at (3,1.7) {$\lambda'{=\mu_N(\northpole,\southpole)}$};
    \node (M) at (2.6,3) {$\mu_N(\northpole,\northpole)$};
 \end{tikzpicture}
		\begin{tikzpicture}
  \hspace{2.5cm}
			\filldraw[black] (0,0) circle (2pt);
			\filldraw[black] (5,0) circle (2pt);
			\filldraw[black] (5,3) circle (2pt);
			\filldraw[black] (0,3) circle (2pt);
			\draw[gray, thick] (0,0) -- (5,0);
			\draw[gray, thick] (0,0) -- (0,3);
			\draw[gray, thick] (0,3) -- (5,3);
			\draw[gray, thick] (5,0) -- (5,3);
   \draw[red, thick] (0,3) -- (3,0);
   \draw[red, thick] (2,3) -- (5,0);
		\end{tikzpicture}
	\end{center}
 \caption{On the left: the images of the fixed points of $N$ under $\mu_N$.\\
 On the right: the toric momentum image of $N$, where the red lines represent
 the level sets $\lambda$ and $\lambda'$ of $\mu_N$.\label{fig:1}}
 \end{figure}
 
We will construct a new semi-free Hamiltonian $S^1$-manifold $M'$ by gluing $\mu^{-1}((-\infty,\lambda'))$ and $\mu^{-1}((\lambda,\infty)$ along $\mu^{-1}((\lambda,\lambda'))$ by a gluing map that is an equivariant symplectomorphism on $\mu^{-1}((\lambda,\lambda'))$ and is not the identity. To define the gluing map, we will first reinterpret $\mu^{-1}((\lambda,\lambda'))$ and the symplectic form and $S^1$-action on it. 

For that, consider $N':=\mu_N^{-1}((\lambda,\lambda'))$ with the symplectic form and $S^1 \times S^1$-action induced from the symplectic form and action $\rho_1$ on $N$. The open symplectic toric manifold $N'$ is the $\mu_1$-preimage of the interior of the red lines in \Cref{fig:1}.
Using the automorphism of $T^2$ given by
\[
A=\begin{pmatrix}
1 & 0 \\
1 & 1 
\end{pmatrix},
\]
we define another $T^2$-action $\rho_2$ on $N'$ by $\rho_2:=\rho_1\circ A$. See \Cref{fig:2} for the effect of composing with $A$ on the momentum map image.
\begin{figure}[h]
	\begin{center}
		\begin{tikzpicture}
  \hspace{1cm}
			\filldraw[black] (5,0) circle (2pt);
			\filldraw[black] (0,3) circle (2pt);
			\draw[gray, thick] (1.5,0) -- (5,0);
			\draw[gray, thick] (0,1.5) -- (0,3);
			\draw[gray, thick] (0,3) -- (3.5,3);
			\draw[gray, thick] (5,0) -- (5,1.5);
   \draw[red, thick] (0,3) -- (3,0);
   \draw[red, thick] (2,3) -- (5,0);
   \draw[->, thick] (6,1.5) -- (8,1.5);
   \hspace{-2.5cm}
			\filldraw[black] (13,3) circle (2pt);
			\filldraw[black] (15,0) circle (2pt);
			\draw[gray, thick] (11.5,0) -- (15,0);
			\draw[gray, thick] (15,0) -- (16.5,1.5);
			\draw[gray, thick] (16.5,3) -- (13,3);
			\draw[gray, thick] (11.5,1.5) -- (13,3);
   \draw[red, thick] (13,3) -- (13,0);
   \draw[red, thick] (15,3) -- (15,0);
		\end{tikzpicture}
	\end{center}
 \caption{\label{fig:2} On the left, the momentum polytope of the con-compact symplectic toric manifold we started with, on the right the momentum polytope of the same manifold after twisting the action.}
 \end{figure}
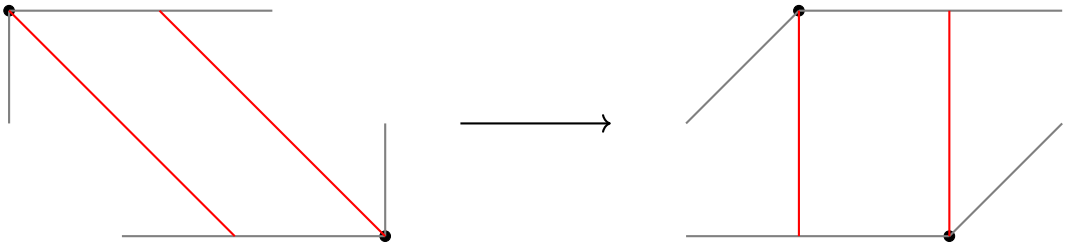
 
By the classification of non-compact symplectic toric manifolds in \cite[Theorem 1.3.2]{KL15},
$\rho_2$ on $N'$ is $T^2$-equivariantly symplectomorphic to $((\lambda,\lambda')\times S^1)\times S^2$ endowed with the standard $T^2$-action, and with the standard symplectic form
$(dt\wedge d\varphi,\omega)$, since the two have the same momentum map image.  The circle action of $S^1\times \{e\}$ on $N'$ induced by $\rho_2$ acts on the $(\lambda,\lambda')\times S^1$-factor only. Note that it corresponds to the action $\rho_{S^1}$ on $N'$.

As a consequence, the open symplectic toric manifold $$U:=\mu^{-1}((\lambda,\lambda'))=N'\times S^2$$  is, up to automorphism of $T^3$,
$T^2$-equivariantly symplectomorphic to $((\lambda,\lambda')\times S^1)\times S^2\times S^2)$ endowed with the 
standard $T^3$-action, and with the standard symplectic form $(dt\wedge d\varphi,\omega, \omega)$. Under that identification, the $S^1$-action $\rho_{S^1}$ is the standard action on the first factor. 
An ordinary symplectomorphism $f$ on $(S^2\times S^2,(\omega,\omega))$ induces naturally a symplectomorphism on $U$ that is equivariant w.r.t.\ the latter $S^1$-action and preserves the momentum map, and therefore also a symplectomorphism that is equivariant w.r.t.\ the original $S^1$-action and preserves $\mu$. We call the latter map $f_{U}$.

Now consider the two sets $M_-:=\mu^{-1}((-\infty,\lambda'))$ and $M_+:=\mu^{-1}((\lambda,\infty))$
endowed with the symplectic forms  ${\omega^M}|_{M_-}$ and ${\omega^M}|_{M_{+}}$, resp.\
Their union is equal to $M$, and their overlap is precisely $U$. Take the
symplectomorphism $g^{\swap}$  of $(S^2\times S^2,(\omega,\omega))$ given by $(p_1,p_2)\mapsto (p_2,p_1)$. Define
\[
M':= M_-\cup_{(g^{\swap}_U,U)} M_+,
\]
 that is 
$$M_{-} \sqcup M_{+}/\sim \text{ where }x \sim y \text{ iff }g^{\swap}_{U}(x)=y.$$
The tangent bundle of $TM'$ is then given by $$TM_{-} \sqcup TM_{+}/\sim \text{ where }(x,v) \sim (y,w) \text{ iff }g^{\swap}_{U}(x)=y \text{ and }dg^{\swap}_{U}(v)=w.$$
Endow $M'$ with the gluing form $\omega_{M'}$, that is, the unique form whose restrictions to $M_-$ and $M_+$ are  ${\omega^M}|_{M_-}$ and ${\omega^M}|_{M_{+}}$. The form $\omega_{M'}$ is well defined since $${\omega^M}_{x}(v_1,v_2)={\omega^M}_{g^{\swap}_{U}(x)}(dg^{\swap}_{U}(v_1),dg^{\swap}_{U}(v_2)).$$ It is also symplectic. The $S^1$-action $\rho_{S^1}$ on $M$ induces an $S^1$-action on $M'$, well defined since $g^{\swap}_{U}$ is equivariant. The obtained action is again semi-free; the set of fixed points of $M$ does not intersect the overlap $U$ of $M_{-}$ and $M_{+}$. Moreover, since $g^{\swap}_{U}$ preserves the momentum map, the map $\mu'([x])=\mu(x)$ is well defined; it is a momentum map for the $S^1$-action on $M'$.
\end{example}

\begin{remark}\label{rem:homology}
As in \Cref{examp:main}, consider the symplectic toric manifold $N=(S^2\times S^2,((1+\eps)\omega, \omega))$ with the canonical $S^1 \times S^1$-action and momentum map $\mu_1$ whose image is the rectangle in \Cref{fig:2.3}.
The symplectic manifold $N$ is also endowed with the diagonal $S^1$-action and its momentum map $\mu_N$.
As before, we identify the open symplectic toric manifold $N':=\mu_N^{-1}((\lambda,\lambda'))$ with 
$((\lambda,\lambda')\times S^1)\times S^2$ with the standard symplectic form and $T^2$-action. This allows us to consider any slice $\{s\}\times S^1\times S^2$ as a subset of $N$ via the inclusion $N'\hookrightarrow N$: its image under $\mu_1$ is the red line in \Cref{fig:2.3}. Such a slice can be seen as the boundary of tubular neighborhoods 
$$N_-:=\mu_N^{-1}((-\infty,s]) \text{ and }N_+:=\mu_N^{-1}([s,\infty))$$ of the spheres $\{\southpole\}\times S^2$ and $\{\northpole\}\times S^2$ corresponding to the the left-most and right-most vertical lines in \Cref{fig:2.3}.
Both $N_-$ and $N_+$ are diffeomorphic to $S^2\times D^2$. Indeed, we can view $N_{\pm}$ as the standard tubular neighborhood (the green area in \Cref{fig:2.3}) with the $D^2$-fiber rescaled according to the height of the base point in $S^2$.

   We deduce results on the maps on $H_{2}$ coming from the inclusions $N' \hookrightarrow N$ and $N_{\pm}\hookrightarrow N$. 
    \begin{itemize}
        \item The map $H_2(N')\to H_2(N_{\pm})$ is an isomorphism.
        \item The map $H_2(N_{\pm})\to H_2(N)$ is injective.
    \end{itemize}
    Since, for the momentum map $\mu$ of the $S^1$-action $\rho_{S^1}$ on $M$ and any subset $O$ of $\R$, we have $\mu^{-1}(O)=\mu_N^{-1}(O)\times S^2$, it holds that $U=\mu^{-1}((\lambda,\lambda'))=N'\times S^2$ and $M_{\pm}=N_{\pm}\times S^2$. Therefore for the maps on $H_{2}$ coming from the inclusions $M_{\pm}\hookrightarrow M$ and $U \hookrightarrow M_{\pm}$,
    \begin{itemize}
        \item the map $\iota_*^{\pm}\colon H_2(U)\to H_2(M_{\pm})$ 
        is an isomorphism,
        \item The map $H_2(U)\to H_2(M)$ is injective.
    \end{itemize}
    The Mayer Vietoris sequence
    $$\hdots \to H_2(U)\to H_2(M_-)\oplus H_2(M_+)\to H_2(M')\to \hdots $$
    then gives that $H_2(M_+)\to H_2(M')$ is injective, too, because both maps $H_2(U)\to H_2(M_{\pm})$ are isomorphisms. Note, however, that only one of the maps $H_2(U)\to H_2(M_{\pm})$ in the above sequence is given by $\iota_*^{\pm}$; the other one is given by $\iota_*^{\pm}$ precomposed with $(g_U^{\swap})_*\colon H_*(U)\to H_*(U)$.
\end{remark}

\begin{figure}[h]
	\begin{center}
		\begin{tikzpicture}
  \draw [fill=green] (0,0) rectangle (2.5,3);
			\filldraw[black] (0,0) circle (2pt);
			\filldraw[black] (5,0) circle (2pt);
			\filldraw[black] (5,3) circle (2pt);
			\filldraw[black] (0,3) circle (2pt);
			\draw[gray, thick] (0,0) -- (5,0);
			\draw[gray, thick] (0,0) -- (0,3);
			\draw[gray, thick] (0,3) -- (5,3);
			\draw[gray, thick] (5,0) -- (5,3);
   \draw[red, thick] (1,3) -- (4,0);
		\end{tikzpicture}
	\end{center}
 \caption{\label{fig:2.3}}
 \end{figure}

We will show that $(S^1 \acts M,\mu)$ and $(S^1 \acts M',\mu')$ are not \textbf{$\mu-S^1$-diffeomorphic}, meaning that there is no equivariant diffeomorphism between the spaces that respects the momentum maps. In particular, $(S^1 \acts M,\omega^M,\mu)$ and $(S^1 \acts M',\omega_{M'},\mu')$ can not be isomorphic as Hamiltonian $S^1$-manifolds. 
The main point is that the two fixed spheres at level $\lambda$ and $\lambda'$ in $M$ represent the same homology in $H_2(M)$, whereas they do not in $M'$.

\begin{lemma}\label{lem:counterexample}
    $M=(S^1 \acts M,\omega^M,\mu)$ and $M'=(S^1 \acts M',\omega_{M'},\mu')$ are not equivariantly symplectomorphic.
    In fact, they are not even $\mu-S^1$-diffeomorphic.
\end{lemma}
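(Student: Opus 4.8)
The plan is to distinguish $M$ and $M'$ by a homological invariant of the $\mu-S^1$-diffeomorphism type: the homology classes in $H_2$ of the ambient manifold carried by the two non-extremal fixed spheres, located at the critical levels $\lambda$ and $\lambda'$. Any $\mu-S^1$-diffeomorphism $\Phi \colon M \to M'$ must carry the fixed point set of $M$ at a critical value $c$ to the fixed point set of $M'$ at $c$ (fixed points go to fixed points, and $\mu$ is intertwined, so critical values are preserved with multiplicities). Hence $\Phi$ would induce a bijection between the fixed spheres of $M$ and those of $M'$, matched level by level, and in particular an isomorphism $\Phi_* \colon H_2(M) \to H_2(M')$ sending the class of each fixed sphere of $M$ to the class of the corresponding fixed sphere of $M'$. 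So it suffices to prove: in $M$ the two fixed spheres $S_\lambda$ and $S_{\lambda'}$ (sitting over the two interior critical values) are homologous in $H_2(M)$, whereas in $M'$ the two corresponding fixed spheres $S'_\lambda$ and $S'_{\lambda'}$ are \emph{not} homologous in $H_2(M')$. Since $\Phi_*$ is an isomorphism, this is a contradiction.

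For $M = N \times S^2$ with $N = (S^2\times S^2,((1+\eps)\omega,\omega))$: the two interior fixed spheres of the diagonal action on $N$ are $\{\southpole\}\times S^2$ (over $\lambda$, using the weight conventions from Figure~\ref{fig:1}) and $S^2\times\{\southpole\}$-type spheres — more precisely they are the two spheres corresponding to the two non-extremal vertices in the decorated graph, which in $H_2(S^2\times S^2)=\Z A \oplus \Z B$ are the classes $A$ and $B$. These are not equal in $H_2(N)$; however, the fixed spheres of the action \emph{on $M$} are $\{a\}\times\{b\}\times S^2$, and the relevant claim is about the spheres $S_\lambda, S_{\lambda'}$ that appear as fixed \emph{surfaces}. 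Here I should be careful: in $M = S^2\times S^2\times S^2$ the fixed components are $\{a\}\times\{b\}\times S^2$ for $a,b\in\{\southpole,\northpole\}$, all in the class $[\,\pt\times\pt\times S^2\,] =: f_3$, hence all mutually homologous in $H_2(M)$ — in particular $S_\lambda$ and $S_{\lambda'}$ are homologous. This is the easy half and follows directly from the product structure.

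For $M'$ I would argue with Mayer–Vietoris exactly as set up in Remark~\ref{rem:homology}. Write $M' = M_- \cup_{g^{\swap}_U} M_+$ with overlap $U$. The fixed sphere $S'_\lambda$ lies in $M_+$ (it is the fixed sphere of $M$ at level $\lambda$, which survives in $M_+ = \mu^{-1}((\lambda,\infty))$ as the limit of the gradient flow), and the fixed sphere $S'_{\lambda'}$ lies in $M_-$. By Remark~\ref{rem:homology} the maps $H_2(U)\to H_2(M_\pm)$ are both isomorphisms, so $H_2(M_-)\oplus H_2(M_+)\cong H_2(U)\oplus H_2(U)$, and the Mayer–Vietoris map $H_2(U)\to H_2(M_-)\oplus H_2(M_+)$ is $v\mapsto (\iota^-_*(g^{\swap}_U)_* v, \iota^+_* v)$ (up to sign), which is injective; hence $H_2(M_+)\to H_2(M')$ (and likewise $H_2(M_-)\to H_2(M')$) is injective, and $H_2(M')\cong \operatorname{coker}$ of that injective map. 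Now I compare $[S'_\lambda]$ and $[S'_{\lambda'}]$ in $H_2(M')$: pulling both back to $H_2(U)\cong H_2(N')\times$ (the $S^2$ factor), the class of $S'_{\lambda'}$ is represented by a slice sphere that, under $\iota^-_*\circ$ (flow to $M_-$), maps to the fixed sphere $S^2\times\{\pt\}$-type class; the class of $S'_\lambda$, under $\iota^+_* $, maps to the $\{\pt\}\times S^2$-type fixed class — but because the gluing is twisted by $g^{\swap}_U = \operatorname{swap}$ on the two $S^2$-factors of $U = N'\times S^2 = ((\lambda,\lambda')\times S^1)\times S^2\times S^2$, the identification of the $M_-$-side copy of $H_2(U)$ with the $M_+$-side copy is composed with $(g^{\swap}_U)_*$, which swaps the two relevant sphere classes. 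Tracing this through the Mayer–Vietoris presentation, one finds $[S'_\lambda]$ and $[S'_{\lambda'}]$ become distinct primitive generators of $H_2(M')$ — they are \emph{not} equal — whereas without the twist (i.e.\ in $M$) they would coincide.

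The main obstacle I anticipate is the bookkeeping in this last step: correctly identifying which sphere classes in $H_2(U)$ are hit by $S'_\lambda$ from the $M_+$ side and by $S'_{\lambda'}$ from the $M_-$ side, and checking that $(g^{\swap}_U)_*$ genuinely interchanges them (rather than fixing them, which would kill the argument). This requires unwinding the toric identifications of Figures~\ref{fig:1}--\ref{fig:2.3}: the slice $\{s\}\times S^1\times S^2 \subset N'$ bounds tubular neighborhoods of \emph{different} fixed spheres of $N$ on its two sides, and these two spheres — the left-most and right-most vertical edges of the rectangle in Figure~\ref{fig:2.3} — are the classes $A$ and $B$ in $H_2(S^2\times S^2)$; after crossing with the extra $S^2$ and performing the swap on the two $S^2$-factors of $U$, the generator of $H_2$ coming from $\{s\}\times S^1\times S^2 \times \{\pt\}$ and the one from $\{s\}\times S^1\times \{\pt\}\times S^2$ get interchanged, which is exactly what produces the discrepancy. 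Once this identification is pinned down, the contradiction with $\Phi_*$ being an isomorphism of $H_2$ is immediate, and since any equivariant symplectomorphism is in particular a $\mu-S^1$-diffeomorphism, the stronger statement of the lemma follows.
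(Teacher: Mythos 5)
Your proposal is correct and follows essentially the same route as the paper's proof: any $\mu-S^1$-diffeomorphism must match the interior fixed spheres level by level, the two spheres are homologous in $M$ by the product structure, and the Mayer--Vietoris argument of \Cref{rem:homology} together with the twist by $g^{\swap}_U$ shows they are not homologous in $M'$. The only slip is bookkeeping: the level-$\lambda$ sphere lies in $M_-=\mu^{-1}((-\infty,\lambda'))$ and the level-$\lambda'$ sphere in $M_+=\mu^{-1}((\lambda,\infty))$, not the other way around, but since the roles are symmetric this does not affect the argument, and the tracing you defer is exactly the computation the paper carries out (distinct preimages $[\{\pt\}\times S^2\times \{\pt\}]$ and $[\{\pt\}\times \{\pt\}\times S^2]$ under the injective map $H_2(M_-)\to H_2(M')$).
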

\begin{proof}
    Denote by $S^2_{+}$ the unique interior  $\rho_{S^1}$-fixed $S^2$ at level set $\lambda$ and by $S^2_{-}$ the unique interior  $\rho_{S^1}$-fixed $S^2$ at level set $\lambda'$. Any 
    $\mu-S^1$-diffeomorphism between $M$ and $M'$ would  send the sphere $S^{2}_{\pm}$ in $M$ 
    to the sphere $S^{2}_{\pm}$ in $M'$, respectively.
    Clearly, both the $S^2_{\pm}$ in $M$ share the same homology class. We show that this is not true for the $S^2_{\pm}$ in $M'$.
 
    From now on, we identify $H_2(M_{\pm})$ with $H_2(U)$ using the embeddings
    $U\hookrightarrow M_{\pm}$ and the induced isomorphism on the second homology groups (see \Cref{rem:homology}). Under this identification, the class of $S^{2}_{\pm}$ in $H_2(M_{\pm})$ corresponds to the class
    $$[\{\pt\}\times \{\pt\}\times S^2]\in H_2(((\lambda,\lambda')\times S^1)\times S^2\times S^2)=H_2(U).$$
    The gluing map $g^{\swap}_U$ used to construct $M'$ sends the class
    $[\{\pt\}\times S^2\times \{\pt\}]\in H_2(U)$
    to the class
    $[\{\pt\}\times \{\pt\} \times S^2]\in H_2(U)$ (and the other way around). Therefore, the class 
$[\{\pt\}\times S^2\times \{\pt\}]\in H_2(M_-)$
    is sent by the map 
    $$H_2(M_-)= H_2(U)\overset{(g^{\swap}_U)_*}{\to} H_2(U)= H_2(M_+)$$
    to the class of $S^{2}_+$ in $H_2(M_+)$. Thus, by the Mayer Vietoris sequence
    $$\hdots \to H_2(U) \overset{k}{\to} H_2(M_-)\oplus H_2(M_+)\to H_2(M')\to \hdots $$
    (where $k=\iota_*^{-} \; \oplus \; \iota_*^{+}\circ (g^{\swap}_U)_*$,  see \Cref{rem:homology}),
    the classes of $S^2_-$ and $S^2_+$ in $H_2(M')$ are in the image of $H_2(M_-)\to H_2(M')$, and their preimages can be chosen to be different, namely $[\{\pt\}\times S^2\times \{\pt\}]$ for $S^2_+$ and $[\{\pt\}\times \{\pt\} \times S^2]$ for $S^2_-$. 
    Since $H_2(M_-)\to H_2(M')$ is injective (see \Cref{rem:homology}), it follows that the classes of $S^2_-$ and $S^2_+$ in $H_2(M')$ are different as well.
\end{proof}
However, the manifolds have the same fixed point and local data.
\begin{lemma}\label{lem:mm'local}
 $M$ and $M'$ have the same fixed point data. Moreover, 
 $M$ and $M'$  have the same local data (see \Cref{def:localdata}).
    \end{lemma}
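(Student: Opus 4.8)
The plan is to use the fact that $M'$ is assembled from two \emph{open subsets of $M$} glued over the \emph{free} region $U=\mu^{-1}((\lambda,\lambda'))$, so that near every critical level $M'$ is just a copy of $M$; the whole content of the lemma is then that the single twist that occurs, namely by $g^{\swap}_U$ over $U$, is invisible both to the fixed point data and to the local data, even though, by \Cref{lem:counterexample}, it is not invisible globally. As a first step I would record the structural facts: $M'=M_-\cup_{(g^{\swap}_U,U)}M_+$ carries canonical embeddings $j_\pm\colon M_\pm\hookrightarrow M'$ that are isomorphisms of Hamiltonian $S^1$-manifolds onto open sets covering $M'$, with $j_-(M_-)\cap j_+(M_+)=j_\pm(U)$ and $j_-=j_+\circ g^{\swap}_U$ on $U$. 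Post-composing the inclusions $\iota_\pm\colon M_\pm\hookrightarrow M$ with $j_\pm^{-1}$ then yields, for every open $O\subseteq(-\infty,\lambda')$ and for every open $O\subseteq(\lambda,\infty)$, an isomorphism of Hamiltonian $S^1$-manifolds $(\mu')^{-1}(O)\cong\mu^{-1}(O)$.

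For the fixed point data I would observe that the fixed set $M^{S^1}$, being the union of the four spheres $\{a\}\times\{b\}\times S^2$, is disjoint from $U$; since the $j_\pm$ are equivariant and intertwine the momentum maps, $\mu'$ and $\mu$ have the same critical values $c_0<\lambda<\lambda'<c_3$ (with $c_0,c_3$ extremal), the two lowest carried by $M_-$ and the two highest by $M_+$, and $\lambda,\lambda'$ each simple. Choosing $\eps>0$ small enough that the intervals $(-\infty,\lambda-\eps)$, $(\lambda+\eps,\lambda'-\eps)$, $(\lambda'+\eps,\infty)$ separate the critical values, every critical value $c$ has a neighbourhood with $\mu^{-1}((c-\eps,c+\eps))$ contained in $M_-$ or in $M_+$, so that $j_\pm$ induces an isomorphism $(\mu')^{-1}((c-\eps,c+\eps))\cong\mu^{-1}((c-\eps,c+\eps))$. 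This isomorphism intertwines every ingredient of \Cref{def:fixedpointdata}: at the non-extremal values $\lambda,\lambda'$, the reduced space, the fixed point set with its index function, and the class $e(P_c^-)$ (which is determined by $\mu^{-1}((c-\eps,c))$, cf.\ \Cref{not:elambda}); at the extremal values $c_0,c_3$, the extremal fixed surface with its symplectic form and symplectic normal bundle. Since the critical values coincide, $M$ and $M'$ have the same fixed point data.

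For the local data I would exhibit an atlas of compatible Hamiltonian charts of $M$ consisting of the four charts $\mu^{-1}((-\infty,\lambda-\eps))$, $\mu^{-1}((c_0+\eps,\lambda'-\eps))$, $\mu^{-1}((\lambda+\eps,c_3-\eps))$, $\mu^{-1}((\lambda'+\eps,\infty))$ (each containing exactly one critical value), together with the reduced spaces over the three intervening regular intervals and the identity gluing maps, and the analogous atlas for $M'$ built with $\mu'$. The first two charts of $M'$ lie in $j_-(M_-)$ and the last two in $j_+(M_+)$, so $j_\pm$ identify each chart, and each intervening reduced space, of $M'$ with the corresponding object of $M$. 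Consecutive charts overlap over the three free regions $\mu^{-1}((c_0+\eps,\lambda-\eps))$, $\mu^{-1}((\lambda+\eps,\lambda'-\eps))$ and $\mu^{-1}((\lambda'+\eps,c_3-\eps))$; over the first and the third, the identifications carried by $j_-$ and $j_+$ restrict to the same map, while over the middle one, which lies inside $U$, they differ precisely by the $\mu-S^1$-equivariant symplectomorphism $g^{\swap}_U$. As the latter is an admissible gluing datum over a free region, \Cref{def:localdata} is satisfied and $M$ and $M'$ have the same local data.

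The step I expect to be the crux is this last one: one must argue carefully that the local data in the sense of \Cref{def:localdata} records the individual charts and the reduced spaces over the free regions, but \emph{not} a rigid choice of how a free region is attached to the two charts on either side, so that replacing the identity by $g^{\swap}_U$ on the single middle overlap does not change the local data. This is precisely the phenomenon that makes \Cref{examp:main} a counterexample to \cite[Theorem 1.5]{Go11}: by \Cref{lem:counterexample} the twist $g^{\swap}_U$ cannot be absorbed into a global isomorphism---to do so it would have to extend as a $\mu-S^1$-equivariant symplectomorphism over one of the two adjacent charts, each of which contains a fixed surface whose homology class $g^{\swap}_U$ moves---yet it affects neither the local nor the fixed point data.
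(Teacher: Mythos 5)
Your treatment of the fixed point data is exactly the paper's argument: since $M'$ is glued from the open pieces $M_\pm\subset M$ along the fixed-point-free region $U$, each critical level of $\mu'$ has a preimage neighborhood lying entirely in $j_-(M_-)$ or $j_+(M_+)$, hence isomorphic as a Hamiltonian $S^1$-manifold to the corresponding neighborhood in $M$, and this determines the fixed point data at that level. The local-data half also follows the paper's route (same charts, same slices, the only discrepancy being the twist by $g^{\swap}_U$ on the middle overlap).

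However, the step you yourself flag as the crux is left as an assertion, and the justification you sketch for it is not the right one: it is not true that the local data ``does not record how a free region is attached to the charts on either side''---the gluing classes $\Phi_i^\pm$ of \Cref{def:localdata} are precisely such attachment data, recorded up to the equivalence of \Cref{def:gluing}. What actually closes the argument (and is the one-line content of the paper's proof) is that the twisted gluing map is \emph{equivalent} to the untwisted one in the sense of \Cref{def:gluing}: one must produce isomorphisms $f$ of the cobordism around $\lambda'$ and $g$ of the regular slice $U$ with $g\circ\phi=\phi'\circ f$ on the overlap, and one takes $f=\id$ and $g=g^{\swap}_U$. This choice is available exactly because $g^{\swap}_U$ is an isomorphism of the \emph{entire} slice $U$, not merely of the collar $\mu^{-1}((\lambda'-\delta,\lambda'))$; an automorphism of the whole slice is absorbed by the equivalence relation, so the gluing class, and hence the local data, is unchanged. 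With that substitution your argument coincides with the paper's proof.
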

    \begin{proof}
    It is clear that $M$ and $M'$ have the same critical levels, and that the fixed point data of $M$, for example, at any critical level $\tilde{\lambda}$ is determined by the isomorphism type of $\mu^{-1}((\tilde{\lambda}-\eps,\tilde{\lambda}+\eps))$ for any $\eps>0$. Since $M'$ was obtained by gluing $M_-$ and $M_+$ together along $\mu^{-1}((\lambda,\lambda'))$, it is immediate that for each critical level $\tilde{\lambda}$ there is $\eps>0$ such that $\mu^{-1}((\tilde{\lambda}-\eps,\tilde{\lambda}+\eps))$ and $(\mu')^{-1}((\tilde{\lambda}-\eps,\tilde{\lambda}+\eps))$ are isomorphic. So $M$ and $M'$ have the same fixed point data.

     Let us now show that they have the same local data. Again, the way we obtained $M'$ from $M$ makes it clear that we only have to check that the gluing map 
$$g^{\swap}_U \colon (\mu^{-1}(\lambda'-\delta,\lambda'),\omega^M,\mu) \to (\mu^{-1}(\lambda'-\delta,\lambda'),\omega^M,\mu)$$
between the overlap of $U=\mu^{-1}(\lambda,\lambda')$
and $\mu^{-1}(\lambda'-\delta,\lambda'+\delta)$  belongs to the same gluing class as the identity
$$\id \colon (\mu^{-1}(\lambda'-\delta,\lambda'),\omega^M,\mu) \to (\mu^{-1}(\lambda'-\delta,\lambda'),\omega^M,\mu).$$
That is, we need to find isomorphisms
        \[
        f\colon \mu^{-1}(\lambda'-\delta,\lambda'+\delta) \to \mu^{-1}(\lambda'-\delta,\lambda'+\delta), \quad g\colon U\to U
        \]
        such that $g=g^{\swap}_U\circ f$ on $\mu^{-1}(\lambda'-\eps,\lambda')$. We may choose $f$ to be the identity and $g$ to be $g^{\swap}_U$,
        since $g^{\swap}_U$ was initially defined on $U$. 
    \end{proof}

    By \Cref{lem:mm'local} and \Cref{lem:counterexample}, the closed semi-free Hamiltonian $S^1$-manifolds $M$ and $M'$ constructed in \Cref{examp:main} have the same local data but are not isomorphic. This contradicts \cite[Theorem 2.6]{Go11}.
  Furthermore, \Cref{examp:main} is a counter example to the following assertion.

    \begin{assertion*}\label{thm:fixedpointdata}
    \cite[Theorem 1.5]{Go11}.
 Let $M^1$ and $M^2$ be compact, connected semi-free Hamiltonian $S^1$-manifolds of dimension six.
   Assume that for each non-extremal critical value $\lambda$ of $M^1$ and $M^2$ the corresponding fixed point components have the same index.
        Assume that $M^1$ and $M^2$ have the same fixed point data, and that for any two consecutive critical values $\lambda$ and $\lambda'$, any closed interval $I\subset (\lambda,\lambda')$ and any $t'\in I$, the pair $(M^1_{t'},\{\omega_t\}_{t \in I})$ is rigid. Then $M^1$ and $M^2$ are equivariantly symplectomorphic.
    \end{assertion*}
  
Indeed,  there is only one fixed point component  at each critical level, and, by  \Cref{lem:mm'local}, $M$ and $M'$ have the same fixed point data.  Moreover, since every reduced space at a regular level is diffeomorphic to $S^2 \times S^2$, \Cref{thm:rigid} implies that the rigidity assumption of \cite[Theorem 1.5]{Go11} holds for the  manifolds $M$ and $M'$ as well. 
    However, by \Cref{lem:counterexample}, the manifolds are not equivariantly symplectomorphic.

\begin{remark}\label{rem:proofthm2.6}
In the proof of \cite[Theorem 2.6]{Go11},
it is argued that the uniqueness of the isomorphism type of  a Hamiltonian $S^1$-manifold obtained from the set of local data of $M$
follows in the same way as the uniqueness of the isomorphism type of $Y \cup_{(\phi,\delta)} Z$ was deduced 
in \cite[Lemma 2.5]{Go11}. This argument is not taking into account that the situation changes when more than one critical level occurs.
Assume, for the sake of simplicity, that $(M,\omega,\mu)$ has exactly two critical levels,
at $0$ and at $1$. Let $Y_0=\mu^{-1}(0-\delta_0,0+\delta_0)$ and $Y_1=\mu^{-1}(1-\delta_1,1+\delta_1)$ be the cobordisms around these critical levels.
Let $Z=\mu^{-1}(0,1)$
be the regular slice corresponding to $(0, 1)$. Denote by $\phi_0(\colon Y_0 \to Z)$ the gluing map
corresponding to $Y_0$ and by $\phi_1$ the gluing map corresponding to $Y_1$. 
We denote by $Y'_0$, and so on, other choices representing the same local data. See Appendix \ref{Local Data} for the definitions of the terms cobordism, regular slice, gluing map, and equivalence.
Now,  the fact that $\phi_0$ and $\phi'_0$ are in the same equivalence class gives us, by definition, isomorphisms
\[
f_0\colon Y_0\to Y'_0, \quad g_0\colon Z\to Z'
\]
such that $g_0 \circ \phi_0=\phi'_0 \circ f_0$ on $\mu^{-1}(0,0+\delta_0'')$.
We obtain an isomorphism
\[
(f_0,g_0)\colon Y_0\cup_{(\phi_0,\delta_0)} Z\to Y'_0\cup_{(\phi'_0,\delta'_0)} Z'
\]
precisely as in \cite[Lemma 2.5]{Go11}.
If we now glue $Y_1$ and $Y_1'$ into those spaces using $\phi_1$ and $\phi'_1$, we could try to extend $(f_0,g_0)$ to
be an isomorphism between the spaces $M= Y_0\cup_{(\phi_0,\delta_0)} Z \cup_{(\phi_1,\delta_1)} Y_1$ and $M'= Y'_0\cup_{(\phi'_0,\delta'_0)} Z' \cup_{(\phi'_1,\delta'_1)} Y'_1$.
But there is no clear way
to do this. While,  since the gluing maps $\phi_1$ and $\phi'_1$ are equivalent, we do know that some isomorphism $g_1: Z \to Z'$ extends to 
$Z \cup_{(\phi_1,\delta_1)} Y_1$,
we do not know that \emph{specifically} $g_0$ extends.

Of course, that does not necessarily mean that $M$ and $M'$ cannot be isomorphic, but it indicates that more information than local data is required to determine if they are.
\end{remark}

\begin{remark}
In \cite{Ka99}, Karshon defines the decorated graph associated to a closed Hamiltonian $S^1$-manifold of dimension four and shows that it determines the isomorphism type. It follows from the definition of the decorated graph (see \cite[p.6-7]{Ka99}) that in the semi-free case
the small fixed point data
determine the 
decorated graph: the semi-free assumption implies that there are no edge-labels, so the graph is determined by the critical levels and the genus and size of the fixed surfaces, if exist; see \Cref{fig:1} on the left for an example of a decorated graph in that case. Hence, dimension six is the lowest dimension in which there can be non-isomorphic closed Hamiltonian $S^1$-manifolds with the same small fixed point data.
\end{remark}

\subsection*{An example of manifolds with the same small fixed point data at a critical level and no isomorphism between neighborhoods of that level}

The example that we give contradicts \cite[Lemma 3.13]{Go11}. 

\begin{assertion*}\label{lem:smallfixedpointdata}
\cite[Lemma 3.13]{Go11}. 
Assume that $M^1$ and $M^2$ are semi-free Hamiltonian $S^1$-manifolds  whose momentum maps are proper and have bounded images. Assume that $\lambda$ is a common critical value of $\mu_1$ and $\mu_2$ with only fixed point components of index $1$ \footnote{For us, the index of a fixed point (component) is the number of negative weights.}.
    Suppose further that
    \begin{itemize}
        \item $M^1$ and $M^2$ have the same small fixed point data at  $\lambda$.
        \item there is a regular $t_0$ right below $\lambda$ and a symplectomorphism $f_0\colon M^1_{t_0}\to M^2_{t_0}$ that respects the Euler classes of the principal bundles $S^1\to \mu_i^{-1}(t_0)\to M^i_{t_0}$.
        \item for any closed interval $t_0\in I\subset [t_0,\lambda)$, the pair $(M^1_{t_0},\{\omega_t\}_{t\in I})$ is rigid.
    \end{itemize}
    Then there is $\eps>0$ such that $\mu^{-1}_1((\lambda-\eps,\lambda+\eps))$ and $\mu^{-1}_2((\lambda-\eps,\lambda+\eps))$ are isomorphic.
\end{assertion*}

\begin{example} \label{ex:openDpoly}
We give two open Delzant polytopes in $\R^3$, which are the momentum images of non-compact symplectic toric manifolds, whose vertices are located in the plane $y=0$. We will make sure that these polytopes agree in the open half space $y<0$, but differ for $y>0$.

    For that, consider the points $(0,0,0)$, $(0,0,2)$, $(2,0,2)$, $(2,0,1)$ and $(1,0,0)$ at the plane $y=0$ in $\R^3$. Let $P_{\eps}$, $\eps>0$, be the convex hull of the following lines:
    \begin{enumerate}
    \item $L_1=s(1,1,0)+(0,0,0)$, $s\in (-\eps,\eps)$.
    \item $L_2=s(1,1,1)+(0,0,2)$, $s\in (-\eps,\eps)$.
    \item $L_3=s(0,1,1)+(2,0,2)$, $s\in (-\eps,\eps)$.
    \item $L_4=s(0,1,0)+(2,0,1)$, $s\in (-\eps,\eps)$.
    \item $L_5=s(0,1,0)+(1,0,0)$, $s\in (-\eps,\eps)$.
\end{enumerate}
For $\eps$ sufficiently small, $P_{\eps}$ is an open convex polytope with edges $L_1,\hdots,L_5$ as specified above. 
We will now obtain two different open polytopes by chopping $P_{\eps}$ with two different hyperplanes. We define $H_1$ and $H_2$ by
\[
H_1=(0,0,2)+s_1(0,0,1)+s_2(0,1,0), \quad s_1,s_2\in \R
\]
\[
H_2=(0,0,2)+s_1(2,1,0)+s_2(0,0,1), \quad s_1,s_2\in \R.
\]
Denote by $P^1_{\eps},P^2_{\eps}$ the open sets obtained from $P_{\eps}$ by chopping along $H^1,H^2$. Again, for $\eps>0$ sufficiently small, $P^1:=P^1_{\eps},P^2:=P^2_{\eps}$ are open convex polytopes. They have the following properties:
\begin{enumerate}
    \item $P^1_{\eps}$ has vertices $v_1$ and $v_2$ located at $(0,0,2)$ and $(2,0,2)$. The edges adjacent to $v_1$ are $(1,0,0)$, $(1,1,0)$ and $(-1,-1,-1)$, the edges adjacent to $v_2$ are $(-1,0,0)$, $(0,1,0)$ and $(0,-1,-1)$. The remaining edges of $P^1_{\eps}$, not adjacent to any vertex, are $L_1$, $L_4$ and $L_5$.
    \item $P^2_{\eps}$ has vertices $v_1$ and $v_2$ located at $(0,0,0)$ and $(0,0,2)$. The edges adjacent to $v_1$ are $(0,0,1)$, $(-1,-1,0)$ and $(2,1,0)$, the edges adjacent to $v_2$ are $(0,0,-1)$, $(-1,-1,-1)$ and $(2,1,1)$. The remaining edges of $P^2_{\eps}$, not adjacent to any vertex, are $L_3$, $L_4$ and $L_5$.
\end{enumerate}
The Delzant condition for both these open convex polytopes hold, that is, at each vertex the edges form a basis for $\Z^3$. Therefore, both $P^1$ and $P^2$ represent open symplectic toric manifolds $Y_1$ and $Y_2$. Moreover, $Y_1$ and $Y_2$ are $T^3$-equivariantly symplectomorphic below level $y=0$ \footnote{Without precomposing any of the actions with an automorphism of $T^3$.}, as follows from the fact that their momentum images coincide below $y=0$ and  \cite[Theorems 1.3.1 and 1.3.2]{KL15}. When restricting the $T^3$-action to the circle corresponding to the $y$-coordinate, we obtain two $S^1$-manifolds $M^1$ and $M^2$ that agree below level $0$. Both $S^1$-actions are semi-free, because an entry in the second coordinate of any edge is contained in $\{0,-1,1\}$.\\
Note further that each non-empty cross section $y=const.$ (including $y=0$) looks like the $T^2$-momentum image of one blowup of $S^2\times S^2$, which is a two blowup of $\C \PP^2$, see \Cref{fig:cropped-two-cuts}. So the family $(S^2\times S^2 \# \overline{\C \PP}^2,\omega^1_t)$, $t<0$ is rigid by \Cref{thm:rigid}.\\
The only fixed point component of this $S^1$-action on $M^1$ is a sphere; it belongs to the edge between $(0,0,2)$ and $(2,0,2)$ in the toric $Y_1$. Similarly, the only fixed point component of the $S^1$-action on $M^2$ is a sphere, belonging to the edge between $(0,0,0)$ and $(0,0,2)$ in the toric $Y_2$. 
Accordingly, the index of both fixed spheres is $1$, and the only critical level, $0$, is simple. See \Cref{fig:reducedspace0} for the $T^2$-momentum images of the reduced spaces; it becomes clear that the self-intersection of both fixed spheres in their respective reduced spaces is $0$.\\
Moreover, there is a diffeomorphism (which even preserves the symplectic form) between the reduced spaces at level $0$ that maps the fixed point components into each other. Indeed, by flipping their $T^2$-momentum images at a vertical line (which corresponds to precomposing the $T^2$-action with the automorphism $(z_1,z_2)\mapsto (\bar{z}_1,z_2)$ of $T^2$), it becomes clear that the reduced spaces are each a blowup of the symmetric $S^2\times S^2$ by the same size at the point $\{\southpole\}\times \{\southpole\}$ performed in the embedded closed ball $\mathcal{B}$ indicated in \Cref{fig:ball}. The symplectomorphism of $S^2\times S^2$ given by $(p_1,p_2)\mapsto (p_2,p_1)$ preserves $\mathcal{B}$ and hence induces the desired symplectomorphism between the reduced spaces.\\
We conclude that $M^1$ and $M^2$ have the same small fixed point data.
However, 
although we can separately identify the reduced spaces at level $0$ and the $S^1$-manifolds below level $0$, 
there is no 
isomorphism $f$ of the momentum-map preimages in $M^1$ and $M^2$ of $(-\eps_0,\eps_0)$ for some $\eps_0>0$. 
For $0<\delta<\eps_0$, denote by $S_i$ the
fundamental class of the sphere that is the preimage of the fixed sphere under the map 
$M^{i}_{-\delta} \to M^{i}_{0}$ induced from the flow of the gradient vector field of the momentum map w.r.t.\  an invariant metric, see \S \ref{rem:criticalvalue} for the description of the map. 
The isomorphism $f$ would need to send 
$S_1$ to $S_2$, because it maps the fixed spheres at $y=0$ into each other. However, the symplectic form evaluates on the class $S_1$ differently than it does on $S_2$ (see \Cref{fig:reducedspacebefore0}). We get a contradiction to the assertion of \cite[Lemma 3.13]{Go11}.

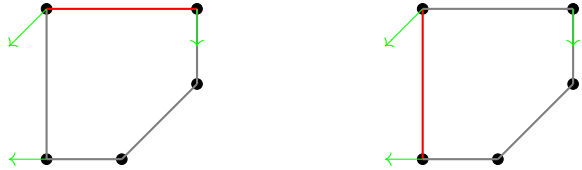
\begin{figure}[h]
	\begin{center}
		\begin{tikzpicture}
  \hspace{-2.5cm}
			\filldraw[black] (0,0) circle (2pt);
			\filldraw[black] (0,2) circle (2pt);
			\filldraw[black] (2,2) circle (2pt);
			\filldraw[black] (2,1) circle (2pt);
                \filldraw[black] (1,0) circle (2pt);
			\draw[gray, thick] (0,0) -- (0,2);
			\draw[red, thick] (0,2) -- (2,2);
			\draw[gray, thick] (2,2) -- (2,1);
			\draw[gray, thick] (1,0) -- (2,1);
                \draw[gray, thick] (1,0) -- (0,0);
                \draw[green, ->] (0,0) -- (-.5,0);
                \draw[green, ->] (0,2) -- (-.5,1.5);
                \draw[green, ->] (2,2) -- (2,1.5);
		\hspace{5cm}
			\filldraw[black] (0,0) circle (2pt);
			\filldraw[black] (0,2) circle (2pt);
			\filldraw[black] (2,2) circle (2pt);
			\filldraw[black] (2,1) circle (2pt);
                \filldraw[black] (1,0) circle (2pt);
			\draw[red, thick] (0,0) -- (0,2);
			\draw[gray, thick] (0,2) -- (2,2);
			\draw[gray, thick] (2,2) -- (2,1);
			\draw[gray, thick] (1,0) -- (2,1);
                \draw[gray, thick] (1,0) -- (0,0);
                \draw[green, ->] (0,0) -- (-.5,0);
                \draw[green, ->] (0,2) -- (-.5,1.5);
                \draw[green, ->] (2,2) -- (2,1.5);
		\end{tikzpicture}
	\end{center}
 \caption{The $T^2$-momentum images of the reduced spaces at $y=0$ for both open symplectic toric manifolds. The red lines correspond to the fixed spheres in the respective symplectic toric manifolds and are of the same length. The arrows indicate how the cross section changes as $y$ decreases.}\label{fig:reducedspace0}
 \end{figure}
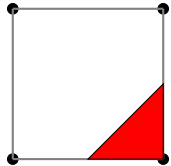
\begin{figure}[h]
	\begin{center}
		\begin{tikzpicture}
			\filldraw[black] (0,0) circle (2pt);
			\filldraw[black] (0,2) circle (2pt);
			\filldraw[black] (2,2) circle (2pt);
			\filldraw[black] (2,0) circle (2pt);
			\draw[gray, thick] (0,0) -- (0,2);
			\draw[gray, thick] (0,2) -- (2,2);
			\draw[gray, thick] (2,2) -- (2,0);
			\draw[gray, thick] (2,0) -- (0,0);
            \draw[fill=red]  (1,0) -- (2,1) -- (2,0) -- cycle;
		\end{tikzpicture}
	\end{center}
 \caption{The preimage of the red region under the momentum map from the toric symmetric $S^2\times S^2$ to $\R^2$ is the embedded closed ball $\mathcal{B}$.}\label{fig:ball}
 \end{figure}
 
\begin{figure}[h]
	\begin{center}
		\begin{tikzpicture}
			\filldraw[black] (-.3,0) circle (2pt);
			\filldraw[black] (-.3,1.6) circle (2pt);
			\filldraw[black] (2,1.6) circle (2pt);
			\filldraw[black] (2,1) circle (2pt);
                \filldraw[black] (1,0) circle (2pt);
			\draw[blue, thick] (-.3,0) -- (-.3,1.6);
			\draw[red, thick] (-.3,1.6) -- (2,1.6);
			\draw[gray, thick] (2,1.6) -- (2,1);
			\draw[gray, thick] (1,0) -- (2,1);
                \draw[gray, thick] (1,0) -- (-.4,0);
		\end{tikzpicture}
	\end{center}
 \caption{The $T^2$-momentum images of the reduced space at $y=-0.4$ for both open symplectic toric manifolds. The red line corresponds to the class $S_1$, on which the symplectic form evaluates to $1+0.4=1.4$, and the blue line corresponds to the class $S_2$, on which the form evaluates to $1-0.4=0.6$.}\label{fig:reducedspacebefore0}
 \end{figure}
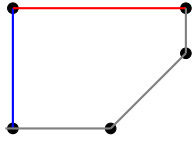
\end{example}

\newpage

\section{Preliminaries: the Morse flow}\label{sec:pre}
We describe the map from a reduced space at a regular value to a reduced space at a non-extremal critical value above it, induced from the flow of the gradient vector field of the momentum map w.r.t.\ an invariant metric. We call it the Morse flow. It will play an important role in extending an isomorphism beyond a non-extremal critical level.\\

 Let $(M,\omega,\mu)$ be a connected, semi-free Hamiltonian $S^1$-manifold. Assume that $\mu$ is proper and its image is bounded.

\begin{noTitle}\label{nt:morse1}
     Let $\lambda$ be a critical value of the momentum map $\mu$. Let $\eps>0$ be such that there is no critical value in $[\lambda-\eps,\lambda)$. The normalized flow $\Phi_t$ of the gradient vector field of $\mu$ with respect to some invariant metric gives an equivariant diffeomorphism
     \begin{equation} \label{eq:phiti}
     \mu^{-1}(\lambda-\eps)\times [\lambda-\eps,\lambda)\to \mu^{-1}([\lambda-\eps,\lambda)), \quad (p,t)\mapsto \Phi_t(p)
     \end{equation}
     under which $\mu$ pulls back to $
     (p,t)\mapsto t.$  

     Choosing a different invariant metric does not change the equivariant isotopy type of the  equivariant diffeomorphism \eqref{eq:phiti}.
\end{noTitle}  

To define the Morse flow, we first review the implications of the local normal form for a Hamiltonian $S^1$-action in case  the action is semi-free and the manifold is of dimension six. 
Recall that we use the convention that the \textbf{index} of $\mu$ at a fixed point is the number of negative weights \footnote{This convention differs from that of \cite{Go11}, where the index is the usual index of a Morse-Bott function, that is, double our index.}. The {\bf co-index} is the number of positive weights.

\begin{noTitle}\label{nt:localnormalform}{\bf Local normal form for a Hamiltonian $S^1$-action on $(M^{2n},\omega)$.} For an integer $w$, denote by $\C_w$ the $S^1$-representation $\chi$ on $\C$ given by $\chi(t)(z)=t^w\cdot z$. By the local normal form theorem for Hamiltonian group actions, there is a local chart around any fixed point $p$ given by $n$ complex coordinates $(z_1,\hdots,z_n)$ defined on a suitably small ball ${\ball}^{2n} \subset \C^n$ centered at the origin, and $n$ integers $w_1,\hdots,w_n$ called \textbf{the weights of the action at $p$}, such that the pullback of the $S^1$-action is given by
       \[
      {\ball}^{2n}\subset \C_{w_1}\oplus \hdots \oplus \C_{w_n},
      \]
the pullback of $\omega$ is given by the standard form on  
 ${\ball}^{2n}$, and the pullback of the momentum map is the corresponding standard momentum map of said representation, up to adding a constant.
      \end{noTitle}

      \begin{noTitle}\label{nt:local6}{\bf Local normal form for a semi-free Hamiltonian $S^1$-action on $(M^6,\omega)$.}
      Using the local normal form, we see that for an effective Hamiltonian circle action in dimension six, fixed point components are symplectic submanifolds of dimension $0$, $2$, or $4$.
      If the action is semi-free, weights can only be $-1$, $0$ or $1$. We list the possible triples of weights, up to ordering.
      \begin{itemize}
          \item For isolated fixed points, the weights are $(1,1,1)$ for a minimal, $(-1,-1,-1)$ for a maximal, or $\pm (1,1,-1)$ for a non-extremal fixed point.
          \item For fixed surfaces, the weights are either $\pm (0,1,1)$ corresponding to an extremal fixed point, or $\pm (0,1,-1)$ corresponding to non-extremal fixed points.
          \item For fixed four-manifolds, the weights are given by $\pm(0,0,1)$. In particular, a fixed four-manifold is always extremal.
      \end{itemize}
      \end{noTitle}

   In the rest of the section, $\lambda$ is a non-extremal critical value and $M$ is of dimension six.
\begin{noTitle}\label{rem:criticalvalue}
Since $\mu$ is a Morse-Bott function, for a fixed point component $C$ at the level $\lambda$, we can describe the corresponding \textbf{stable submanifold}, defined by
$$\{p\in M \,|\, \lim_{t \to \infty} \Phi_{t}(p) \in C\}.$$
Consider an isolated fixed point $p$ with index $1$ and the local normal form ${\ball}^6\subset \C_{-1}\oplus \C_{1}\oplus \C_1$, with the standard momentum map
    $$(z_1,z_2,z_3) \mapsto -|z_1|^2+|z_2|^2+|z_3|^2$$
    and the standard metric $g$, around it.
    The time-$T$-flow $\Phi_T$ beginning in $\mu^{-1}(\lambda-\eps)\cap {\ball}^6$, $T\geq \eps$, is well-defined in
    $$U:={\ball}^6\cap (\mu^{-1}(\lambda-\eps)\smallsetminus \{\mu^{-1}(\lambda-\eps)\cap (\C_{-1}\oplus \{0\}\oplus \{0\})\}).$$
    We define $f_{\Morse}(\eps)\colon {\ball}^6\cap \mu^{-1}(\lambda-\eps)\to {\ball}^6\cap \mu^{-1}(\lambda)$ by $\Phi_{\eps}$ on $U$ and by $z\mapsto p$ on $\C_{-1}\oplus \{0\}\oplus \{0\}$.\\
    In other words, we define $f_{\Morse}(\eps)$ as the limit of $\Phi_{\delta}$ as $\delta$ goes to $\eps$. This converges uniformly, since for all $p$ in the domain, $d(\Phi_{\delta}(p),f_{\Morse}(\eps)(p))\leq \eps-\delta$, where $d$ is the metric on $M$ defined by geodesic distance with respect to $g$. Therefore $f_{\Morse}(\eps)$ is continuous and clearly equivariant.

    Similarly, for an isolated fixed point $p$ with index $2$, and the local normal form ${\ball}^{6} \subset \C_{-1}\oplus \C_{-1}\oplus \C_1$, with the standard momentum map
    $$(z_1,z_2,z_3) \mapsto -|z_1|^2-|z_2|^2+|z_3|^2$$
    and the standard metric $g$, around it,
    we set 
     $$U:={\ball}^6\cap (\mu^{-1}(\lambda-\eps)\smallsetminus \{\mu^{-1}(\lambda-\eps)\cap (\C_{-1}\oplus \C_{-1} \oplus \{0\})\}).$$
    We define $f_{\Morse}(\eps)\colon {\ball}^6\cap \mu^{-1}(\lambda-\eps)\to {\ball}^6\cap \mu^{-1}(\lambda)$ by $\Phi_{\eps}$ on $U$ and by $z\mapsto p$ on $\C_{-1}\oplus \C_{-1} \oplus \{0\}$. 
    Again, $f_{\Morse}(\eps)$ is continuous and equivariant.\\
    
    In case there is a fixed surface $\Sigma$ at $\lambda$, an explicit description like that is not possible anymore. But here, by \cite[Proposition 3.2]{AB95}, the unstable submanifold belonging to $\Sigma$ is indeed a smooth submanifold. This is necessarily $S^1$-invariant if the metric is. In fact, if there is a symplectic $G$-action on a neighborhood of $\Sigma$, for any compact Lie group $G$, then the unstable submanifold is also (locally) $G$-invariant. Further, the map $u(\Sigma)\colon \mathcal{U}(\Sigma)\to \Sigma$ mapping a point to the limit of its flow under $\Phi$ is smooth and defines a fiber bundle near $\Sigma$ whose fiber is a ball of dimension twice the index of $\Sigma$; the fiberwise $S^1$-action gives a fiberwise complex structure on that bundle.\\
    In our case, the fiber is of dimension two, so we call this bundle the \textbf{negative normal bundle}; similarly, we define the positive normal bundle.
    In both cases, we define $e({\Sigma})_-$ resp.\ $e(\Sigma)_+$ to be the Euler class of the negative resp.\ positive normal bundle of $\Sigma$.
    The intersection $S_{-\eps}$ of $\mathcal{U}(\Sigma)$ with a level set $\lambda-\eps$ is then an $S^1$-bundle over $\Sigma$, so that its orbit space $S_{-\eps}/S^1$ is diffeomorphic to $\Sigma$. A diffeomorphism is given by restricting $u(\Sigma)$ to $S_{-\eps}/S^1$, so that $S_{-\eps}/S^1$ is a symplectic submanifold of $M_{\lambda-\eps}$ for $\eps$ small enough. Note that then $e(\Sigma)_-$ pulls back to the restriction of the Euler class of $S^1\to \mu^{-1}({\lambda-\eps})\to M_{\lambda-\eps}$ to $\Sigma_{-\eps}$ under $\Sigma_{-\eps}\to \Sigma$.\\
    For the same reasons as above, we can define (near $\Sigma$) the continuous, equivariant map $f_{\Morse}(\eps)\colon \mu^{-1}({\lambda-\eps})\to \mu^{-1}(\lambda)$ to be the limit of $\Phi_{\delta}$ as $\delta$ goes to $\eps$.\\
    
    All in all, for a choice of metric that, near the fixed components, is as specified above and $\eps>0$ small enough, we get a well-defined, equivariant continuous map 
    by    
    \begin{equation} \label{eq:morse}
         f_{\Morse}=f_{\Morse}(\eps)\colon \mu^{-1}({\lambda-\eps})\to \mu^{-1}({\lambda}), \quad f_{\Morse}(p)=\lim\limits_{\delta \to \eps_-} \Phi_{\delta}(p).
    \end{equation}
    We call this map the {\bf Morse flow}. Since this map is equivariant, it descends to a continuous map $M_{\lambda-\eps}\to M_{\lambda}$ that we will also call $f_{\Morse}$.\\
    For a fixed point of index $1$ considered to be in $M_{\lambda}$, the preimage under $f_{\Morse}(\eps)$ in $M_{\lambda-\eps}$ is a point, whereas for a fixed point $p$ of index $2$, the preimage in $M_{\lambda-\eps}$ is an embedded symplectic 2-sphere $S_{-\eps}$ of size $\eps$ and self-intersection $-1$. 
    As the preimages of different points, the spheres corresponding to the fixed points of index $2$ are pairwise disjoint.
    The preimage of a fixed surface $\Sigma\subset M_{\lambda}$ is an embedded symplectic surface $\Sigma_{-\eps}$ of the same genus. 
    \end{noTitle}

\begin{Notation}\label{not:FF'}
    Denote by $F$ the set of fixed points in $M_{\lambda}$, by $F_{\iso}$ its subset of isolated fixed points, and by $F_{\iso, 2}$ its subset of isolated fixed points of index $2$. 
Denote by $F'$, $F'_{\iso}$ and $F'_{\iso,2}$ the preimages of $F$, $F_{\iso}$ and $F_{\iso,2}$ in $M_{\lambda-\eps}$ under $f_{\Morse}(\eps)$.
\end{Notation}

Since the restriction of $f_{\Morse}$ to $M_{\lambda-\eps}\smallsetminus F'$ coincides with the gradient flow $\Phi_t$ of $\mu$, we get the following corollary.
\begin{corollary}\label{cor:restdiffeo}
    The restriction of $f_{\Morse}(\eps)$ to $M_{\lambda-\eps}\smallsetminus F'$ induces a diffeomorphism
      \begin{equation} \label{eq:morsediffeo}
        M_{\lambda-\eps}\smallsetminus F'\hookrightarrow M_{\lambda}\smallsetminus F.
    \end{equation}
\end{corollary}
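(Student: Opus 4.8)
\textbf{Proof proposal for Corollary \ref{cor:restdiffeo}.}

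The plan is to identify the map \eqref{eq:morsediffeo} explicitly on the level sets before passing to orbit spaces, and then use the already-established freeness of the $S^1$-action on regular and punctured critical level sets. First I would recall from \ref{rem:criticalvalue} that on $\mu^{-1}(\lambda-\eps)\smallsetminus (\text{preimage of }F)$ the Morse flow $f_{\Morse}(\eps)$ agrees with the time-$\eps$ gradient flow $\Phi_\eps$, which by \ref{nt:morse1} (more precisely by the equivariant diffeomorphism \eqref{eq:phiti}, restricted and then flowed a little further past $\lambda-\eps$ is not needed — one only needs the open interval version) is a genuine equivariant diffeomorphism onto its image. The key point is that the complement of the union of stable submanifolds of the fixed components at $\lambda$ is exactly the set of points whose forward flow reaches the regular level $\mu^{-1}(\lambda)\smallsetminus F$, and the flow restricts there to a diffeomorphism. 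Concretely, for an isolated fixed point of index $1$ the removed stable set meets $\mu^{-1}(\lambda-\eps)$ in a single $S^1$-orbit (giving the removed point in $M_{\lambda-\eps}$), for an index-$2$ fixed point it meets it in an $S^1$-bundle over $S^2_{-\eps}$, and for a fixed surface $\Sigma$ in the $S^1$-bundle $S_{-\eps}$; after removing precisely the orbit-space images $F'$ the flow is a bijective local diffeomorphism onto $M_\lambda\smallsetminus F$.

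So the key steps, in order, would be: (1) observe that $f_{\Morse}(\eps)$ restricted to $\mu^{-1}(\lambda-\eps)\smallsetminus(\text{stable sets})$ equals $\Phi_\eps$ and hence is a smooth equivariant map; (2) check it is a diffeomorphism onto $\mu^{-1}(\lambda)\smallsetminus F$ — injectivity because distinct flow lines stay distinct, surjectivity because every point of the regular level $\mu^{-1}(\lambda)\smallsetminus F$ lies on a gradient trajectory that is defined (backwards) down to level $\lambda-\eps$ since there is no other critical value in $[\lambda-\eps,\lambda)$, and the inverse is smooth because $\Phi$ is a smooth flow; (3) note that the $S^1$-actions on both $\mu^{-1}(\lambda-\eps)\smallsetminus F_{\text{stable}}$ and $\mu^{-1}(\lambda)\smallsetminus F$ are free — on $\mu^{-1}(\lambda-\eps)$ because $\lambda-\eps$ is regular, on $\mu^{-1}(\lambda)\smallsetminus F$ because $F=M^{S^1}\cap\mu^{-1}(\lambda)$ and the action is semi-free, so off the fixed set it is free; (4) conclude that the equivariant diffeomorphism descends to a diffeomorphism of the quotient manifolds $M_{\lambda-\eps}\smallsetminus F'\to M_\lambda\smallsetminus F$, using that a free $S^1$-action gives a principal bundle and an equivariant diffeomorphism of total spaces induces a diffeomorphism of bases.

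The only mildly delicate point — and the one I would expect to need the most care — is matching the removed sets correctly: one must verify that the orbit-space image in $M_{\lambda-\eps}$ of the union of the stable submanifolds' intersections with $\mu^{-1}(\lambda-\eps)$ is precisely $F'$ as defined in \ref{not:FF'} (and that $F\subset M_\lambda$ is removed on the target side, including the subtlety that for fixed surfaces $F$ already sits inside $M_\lambda$ as a submanifold via the smooth structure on $M_\lambda$ discussed in \Cref{not:semifreeHamiltonian}). This is essentially bookkeeping using the explicit local normal forms in \ref{rem:criticalvalue}: index-$1$ points contribute a point, index-$2$ points contribute an $S^2_{-\eps}$, fixed surfaces contribute $\Sigma_{-\eps}$, and in each case this matches the corresponding component of $F$ downstairs under $f_{\Morse}$. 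Once that identification is pinned down, the rest is the formal statement that a fiber-preserving equivariant diffeomorphism of free $S^1$-spaces descends to the quotients, which is routine.
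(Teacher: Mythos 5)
Your proposal is correct and follows the same route as the paper, which simply observes that away from $F'$ the map $f_{\Morse}(\eps)$ coincides with the gradient flow $\Phi_t$, an equivariant diffeomorphism onto $\mu^{-1}(\lambda)\smallsetminus F$, and hence descends to the orbit spaces. Your additional bookkeeping (matching the removed stable sets with $F'$ and $F$, and using semi-freeness for the free action off the fixed set) is exactly the implicit content of the paper's one-line justification.
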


Moreover, the definition of $f_{\Morse}$ also implies the following corollary.

\begin{corollary}\label{cor:reshomeo}
The restrictions of $f_{\Morse}(\eps)$ to $M_{\lambda-\eps}\smallsetminus F'_{\iso}$ and to $M_{\lambda-\eps}\smallsetminus F'_{\iso,2}$ induce homeomorphisms 
     \begin{equation} \label{eq:morsehomeo1}
        M_{\lambda-\eps}\smallsetminus F'_{\iso}\to M_{\lambda}\smallsetminus F_{\iso}
    \end{equation}
    and
    \begin{equation} \label{eq:morsehomeo}
        M_{\lambda-\eps}\smallsetminus F'_{\iso,2}\to M_{\lambda}\smallsetminus F_{\iso,2}.
    \end{equation}
\end{corollary}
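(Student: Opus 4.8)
The plan is to derive the corollary from \Cref{cor:restdiffeo} together with a soft point-set argument that exploits the compactness of the reduced spaces, rather than doing any chart-by-chart analysis near the fixed points. Since $\mu$ is proper, the level sets $\mu^{-1}(\lambda-\eps)$ and $\mu^{-1}(\lambda)$ are compact (see \Cref{not:semifreeHamiltonian}), hence so are $M_{\lambda-\eps}$ and $M_{\lambda}$, and $M_{\lambda}$ is in particular Hausdorff, being a smooth manifold. Therefore the continuous map $f_{\Morse}(\eps)\colon M_{\lambda-\eps}\to M_{\lambda}$ is closed; its image is then closed and contains the dense open set $M_{\lambda}\smallsetminus F$ by \Cref{cor:restdiffeo}, so $f_{\Morse}(\eps)$ is onto $M_{\lambda}$.

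The next step is to record the fibres of $f_{\Morse}(\eps)$, reading them off from the local description in \S\ref{rem:criticalvalue}: over a point of $M_{\lambda}\smallsetminus F$ the fibre is a single point by \Cref{cor:restdiffeo}; over an isolated fixed point of index $1$ it is a single point; over an isolated fixed point of index $2$ it is an embedded $(-1)$-sphere; and over a point of a non-extremal fixed surface $\Sigma$ it is a single point, because the only orbits flowing into $\Sigma$ lie on its unstable manifold, so $f_{\Morse}(\eps)^{-1}(\Sigma)=\Sigma_{-\eps}$ and $f_{\Morse}(\eps)$ carries $\Sigma_{-\eps}$ to $\Sigma$ by the diffeomorphism $u(\Sigma)|_{\Sigma_{-\eps}}$. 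In particular, every fibre of $f_{\Morse}(\eps)$ over $M_{\lambda}\smallsetminus F_{\iso}$, and every fibre over $M_{\lambda}\smallsetminus F_{\iso,2}$, is a single point.

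By the very definitions of $F'_{\iso}$ and $F'_{\iso,2}$ in \Cref{not:FF'}, we have $f_{\Morse}(\eps)^{-1}(F_{\iso})=F'_{\iso}$ and $f_{\Morse}(\eps)^{-1}(F_{\iso,2})=F'_{\iso,2}$, so $M_{\lambda-\eps}\smallsetminus F'_{\iso}$ and $M_{\lambda-\eps}\smallsetminus F'_{\iso,2}$ are exactly the $f_{\Morse}(\eps)$-preimages of $M_{\lambda}\smallsetminus F_{\iso}$ and $M_{\lambda}\smallsetminus F_{\iso,2}$. I would then invoke the elementary fact that the restriction of a closed map to the preimage of any subset is closed onto that subset: if $g\colon X\to Y$ is closed and $B\subseteq Y$, and $A$ is relatively closed in $g^{-1}(B)$, then $A=\overline A\cap g^{-1}(B)$, whence $g(A)=g(\overline A)\cap B$ is closed in $B$. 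Consequently the two restrictions of $f_{\Morse}(\eps)$ appearing in the statement are closed continuous surjections, and by the fibre analysis of the previous paragraph they are injective. A closed continuous bijection is a homeomorphism (its inverse sends closed sets to closed sets), so \eqref{eq:morsehomeo1} and \eqref{eq:morsehomeo} follow.

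The only genuinely non-formal ingredient is the fibre computation of the second paragraph — i.e., identifying which $S^1$-orbits at level $\lambda-\eps$ flow into each fixed component at level $\lambda$ — and this is precisely what \S\ref{rem:criticalvalue} supplies: the explicit models ${\ball}^6\subset\C_{-1}\oplus\C_{\pm 1}\oplus\C_{1}$ for isolated fixed points (whose stable manifolds are $\C_{-1}$ and $\C_{-1}\oplus\C_{-1}$ respectively, giving a point and a $(-1)$-sphere after reduction), and, for a fixed surface, the smooth disk-bundle structure of its unstable manifold from \cite[Proposition 3.2]{AB95} together with the diffeomorphism $u(\Sigma)|_{\Sigma_{-\eps}}$. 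No local homeomorphism argument at the fixed points themselves is required, which is what makes the compactness route preferable to working chart by chart.
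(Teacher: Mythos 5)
Your proof is correct, and at bottom it runs on the same engine as the paper's: compactness of $M_{\lambda-\eps}$ together with Hausdorffness of $M_{\lambda}$ is what forces continuity of the inverse. The packaging differs slightly. The paper takes bijectivity and continuity as clear, gets continuity of the inverse away from $F$ from the diffeomorphism of \Cref{cor:restdiffeo}, and then handles the remaining points of $F\smallsetminus F_{\iso}$ (resp.\ $F\smallsetminus F_{\iso,2}$) by restricting $f_{\Morse}(\eps)$ to the complement of a small open neighborhood $U'$ of the deleted set and invoking ``continuous bijection from a compact space to a Hausdorff space is a homeomorphism.'' You instead observe once and for all that $f_{\Morse}(\eps)$ is a closed map, restrict it to the saturated subsets $f_{\Morse}(\eps)^{-1}(M_{\lambda}\smallsetminus F_{\iso})=M_{\lambda-\eps}\smallsetminus F'_{\iso}$ (and likewise for $F_{\iso,2}$) via the standard lemma that a closed map stays closed on preimages, and conclude from ``closed continuous bijection $\Rightarrow$ homeomorphism.'' What your route buys is that the bijectivity, which the paper dismisses as clear, is made explicit: the fibre over a point of a non-extremal fixed surface $\Sigma$ is a single point because the reduction of the sphere bundle of the negative normal bundle maps to $\Sigma$ by the diffeomorphism $u(\Sigma)$ of \S\ref{rem:criticalvalue}, and surjectivity follows from closedness of the image plus density of $M_{\lambda}\smallsetminus F$; both of these checks are correct (the only cosmetic slip is the stable/unstable naming, which matches the paper's own convention anyway). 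The paper's version is marginally more hands-on (pointwise continuity of the inverse near $F\smallsetminus F_{\iso}$), yours is marginally more uniform; they are interchangeable.
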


\begin{proof}
It is clear that the maps are bijective and continuous.
    The inverse restricted to $M_{\lambda}\smallsetminus F$ is continuous since  \eqref{eq:morsediffeo} is a diffeomorphism. Moreover, for a small open neighborhood $U=U'_{\iso}$ or $U'=U'_{\iso,2}$ around $F'_{\iso}$ and $F'_{\iso,2}$, resp., the restriction of $f_{\Morse}(\eps)$ to $M_{\lambda-\eps}\smallsetminus U' \to M_{\lambda}\smallsetminus f_{\Morse}(\eps)(U')$
    is a bijective, continuous map from a compact space into a Hausdorff space, hence a homeomorphism. In particular the inverse of  each of the maps \eqref{eq:morsehomeo1} and \eqref{eq:morsehomeo} is continuous at each point in $F \smallsetminus F_{\iso}$ and in  $F \smallsetminus F_{\iso,2}$, respectively.
\end{proof}
    We call the topological connected sum of a topological manifold $M$ of dimension four with $\overline{\C\PP}^2$ the \textbf{blowup in the topological category} at a point. 
    The following claim is immediate from the definitions.
    \begin{claim}\label{cl:top-bup}
     The Morse flow $f_{\Morse} \colon M_{\lambda-\eps} \to M_{\lambda}$ induced by  \eqref{eq:morse} 
      coincides in the topological category with the map $M_{\lambda-\eps} \to M_{\lambda}$ of the blowup at the isolated fixed points of co-index $1$. 
      \end{claim}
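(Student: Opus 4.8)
The plan is to verify that $f_{\Morse}$ has the three defining properties of a topological blowdown map $M_\lambda\#k\overline{\C\PP}^2\to M_\lambda$, where $k$ is the number of isolated fixed points of co-index $1$ at the level $\lambda$ --- equivalently, by \S\ref{nt:local6}, the isolated fixed points of index $2$, with local model $\ball^6\subset\C_{-1}\oplus\C_{-1}\oplus\C_1$. Recall that, writing $B\#k\overline{\C\PP}^2$ for the topological blowup of a smooth four-manifold $B$ at distinct points $q_1,\dots,q_k$, the blowdown map $\pi$ is characterized by: $(\alpha)$ it restricts to a homeomorphism from the complement of $k$ pairwise disjoint embedded $2$-spheres $E_1,\dots,E_k$ of self-intersection $-1$ onto $B\smallsetminus\{q_1,\dots,q_k\}$; $(\beta)$ $\pi(E_i)=q_i$; and $(\gamma)$ each $E_i$ has a neighborhood homeomorphic to a neighborhood of the zero section in $\Oh(-1)\to\C\PP^1$ --- that is, to $\overline{\C\PP}^2$ with an open ball removed --- on which $\pi$ agrees with the standard contraction $\Oh(-1)\to\C^2$ of the zero section. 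So it suffices to match $(\alpha)$--$(\gamma)$ against $f_{\Morse}$.

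First I would set $\{p_1,\dots,p_k\}=F_{\iso,2}$ and $S^{(i)}_{-\eps}:=f_{\Morse}^{-1}(p_i)$. By \S\ref{rem:criticalvalue} and \Cref{not:FF'}, for $\eps>0$ small each $S^{(i)}_{-\eps}$ is an embedded $2$-sphere of self-intersection $-1$ in $M_{\lambda-\eps}$, the $S^{(i)}_{-\eps}$ are pairwise disjoint, and $F'_{\iso,2}=\bigsqcup_i S^{(i)}_{-\eps}$. Properties $(\alpha)$ and $(\beta)$ then follow from \Cref{cor:reshomeo}: $f_{\Morse}$ restricts to a homeomorphism $M_{\lambda-\eps}\smallsetminus F'_{\iso,2}\to M_\lambda\smallsetminus F_{\iso,2}$ sending $S^{(i)}_{-\eps}$ to $p_i$ (a diffeomorphism away from $F'$, by \Cref{cor:restdiffeo}); note that the fixed surfaces and the isolated fixed points of index $1$ have their preimages in the locus $M_{\lambda-\eps}\smallsetminus F'_{\iso,2}$ on which $f_{\Morse}$ is a homeomorphism, so they do not contribute to the blowup, exactly as the statement demands. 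For $(\gamma)$ I would read off the local picture at each $p_i$ from the normal form $\ball^6\subset\C_{-1}\oplus\C_{-1}\oplus\C_1$ with momentum map $-|z_1|^2-|z_2|^2+|z_3|^2$: the level $\mu^{-1}(\lambda-\eps)$ meets the stable manifold $\C_{-1}\oplus\C_{-1}\oplus\{0\}$ in a $3$-sphere whose $S^1$-quotient is $S^{(i)}_{-\eps}\cong\C\PP^1$, a neighborhood of $S^{(i)}_{-\eps}$ in $M_{\lambda-\eps}$ is diffeomorphic to a neighborhood of the zero section of $\Oh(-1)$, and there $f_{\Morse}$ (being $\Phi_\eps$ off the slice $\{z_3=0\}$ and $z\mapsto p_i$ on it) is precisely the contraction of that zero section onto its image, the ball-neighborhood $\ball^6\cap\mu^{-1}(\lambda)/S^1$ of $p_i$ in $M_\lambda$, i.e.\ the standard map $\Oh(-1)\to\C^2$.

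Finally I would assemble the global identification: choosing $\eps$ small enough that the neighborhoods $\nu_i\subset M_{\lambda-\eps}$ of the $S^{(i)}_{-\eps}$ are disjoint and each maps under $f_{\Morse}$ into a ball-neighborhood $B_i$ of $p_i$ in $M_\lambda$, I would use $f_{\Morse}$ itself to identify $M_{\lambda-\eps}\smallsetminus\bigcup_i\nu_i$ with $M_\lambda\smallsetminus\bigcup_i B_i$ and the local models of $(\gamma)$ to identify each $\nu_i$ with $\overline{\C\PP}^2$ minus an open ball; this yields a homeomorphism $M_{\lambda-\eps}\cong M_\lambda\#k\overline{\C\PP}^2$ carrying $f_{\Morse}$ to $\pi$. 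The one point that deserves a sentence --- and the only place I expect any friction at all --- is checking that the two identifications of the collar $\nu_i\smallsetminus S^{(i)}_{-\eps}$ with $B_i\smallsetminus\{p_i\}$, one coming from $f_{\Morse}$ off the sphere and one from the $\Oh(-1)$-model, agree on the overlap and so glue; but both of these are literally $f_{\Morse}$ restricted to that collar, so they coincide and no gluing obstruction arises. Hence the argument reduces to bookkeeping around \S\ref{rem:criticalvalue}, \Cref{cor:restdiffeo} and \Cref{cor:reshomeo}, which is why the claim is immediate from the definitions.
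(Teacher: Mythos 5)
Your proposal is correct and follows exactly the route the paper intends: the paper offers no written proof beyond "immediate from the definitions," and your verification of the blowdown properties via \Cref{cor:restdiffeo}, \Cref{cor:reshomeo}, and the local normal form $\ball^6\subset\C_{-1}\oplus\C_{-1}\oplus\C_1$ (where the reduced spaces at levels $\lambda-\eps$ and $\lambda$ are the $\Oh(-1)$-neighborhood and a ball, with $f_{\Morse}$ the contraction of the zero section) is precisely the unpacking of those definitions. Nothing is missing; the gluing point you flag resolves exactly as you say, since both collar identifications are restrictions of $f_{\Morse}$ itself.
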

     
    \begin{remark} 
    The definition and properties of $f_{\Morse}$ presented above 
    still hold if we let $\mu$ be an $S^1$-invariant Morse-Bott function whose critical set is precisely the set of $S^1$-fixed points and assume that in a neighborhood of the fixed point set, $\mu$ is a momentum map for the $S^1$-action on $(M,\omega)$. Of course, $M_t$ is to be understood as the 'reduced space' with respect to the Morse-Bott function $\mu$. 
    To justify this, we note that for a critical level $\lambda$ of $\mu$, $M_{\lambda}$ can still be given a smooth structure using the same arguments as for an actual momentum map. This is because $M_{\lambda}\smallsetminus F$ is clearly smooth, and $\mu$ is a momentum map near $F$.
\end{remark}

     Note that $f_{\Morse}(\eps) \colon M_{\lambda-\eps} \to M_{\lambda}$ is orientation-preserving w.r.t.\ the orientation induced by the symplectic forms $\omega_{\lambda-\eps}$ and $\omega_{\lambda}$.
      We will further look at the effect of the Morse flow on the symplectic form in case $\mu$ is a momentum map. First we strengthen \Cref{cor:restdiffeo}; again, this is since the
   restriction of $f_{\Morse}$ to $M_{\lambda-\eps}\smallsetminus F'$ coincides with the gradient flow of the momentum map.

\begin{corollary}\label{cor:restdiffeo1}
For the diffeomorphism \eqref{eq:morsediffeo} induced by the
   restriction of $f_{\Morse}(\eps)$ to $M_{\lambda-\eps}\smallsetminus F'$, the form $(f_{\Morse}(\eps)|_{M_{\lambda-\eps}\smallsetminus F'})_{*}{\omega_{\lambda-\eps}}$, as a form on $M_{\lambda}\smallsetminus F$, converges to $\omega_{\lambda}$ as $\eps$ goes to $0$.

       In particular, 
        the symplectic volume of $(M_{\lambda-\eps},\omega_{\lambda-\eps})$ approaches that of $(M_{\lambda},\omega_{\lambda})$ as $\eps$ approaches $0$.
      
\end{corollary}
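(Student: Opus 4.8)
The plan is to transfer the statement to the level sets of $\mu$, where reduced forms are just pullbacks of $\iota^{*}\omega$ and where the Morse flow becomes, away from the fixed locus, a genuine normalized gradient flow depending smoothly on $\eps$ and tending to the identity as $\eps\to0$. By \Cref{cor:restdiffeo}, $g_{\eps}:=f_{\Morse}(\eps)|_{M_{\lambda-\eps}\smallsetminus F'}$ is a diffeomorphism onto $M_{\lambda}\smallsetminus F$, and on this locus $f_{\Morse}(\eps)$ is the normalized gradient flow of $\mu$ from level $\lambda-\eps$ up to level $\lambda$. Write $\psi_{\eps}$ for the reverse construction on $M$: the partially defined, equivariant map obtained by following the normalized negative gradient flow from level $\lambda$ down to level $\lambda-\eps$; it is defined on the open set $W_{\eps}\subset\mu^{-1}(\lambda)$ of points whose trajectory stays off the fixed set, it restricts to a map $\mu^{-1}(\lambda)\supset W_\eps\to\mu^{-1}(\lambda-\eps)$, and $g_{\eps}^{-1}$ is its descent.

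First I would record a uniformity statement: for every compact $K\subset M_{\lambda}\smallsetminus F$ there is $\eps_{0}>0$ such that $\widetilde K:=\pi_{P_{\lambda}\to M_{\lambda}}^{-1}(K)$ is contained in $W_{\eps}$ for all $\eps<\eps_{0}$, with trajectories remaining off the fixed locus throughout. This is because a point of $\mu^{-1}(\lambda)$ over $M_{\lambda}\smallsetminus F$ is neither fixed nor on the unstable submanifold of a fixed component at level $\lambda$: since $\mu$ strictly decreases along backward flow lines, those unstable submanifolds meet $\mu^{-1}(\lambda)$ only inside the fixed set. Hence the backward normalized trajectory of such a point reaches level $\lambda-\eps$ after a finite amount of flow; compactness of $\widetilde K$ and smooth dependence of the flow then give the uniform $\eps_{0}$, and shrinking it keeps the trajectories free.

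Next I would use the defining property of reduced forms, $\pi_{P_{t}\to M_{t}}^{*}\omega_{t}=\iota_{P_{t}\hookrightarrow M}^{*}\omega$ on the free loci for $t=\lambda-\eps$ and $t=\lambda$, together with $\iota_{P_{\lambda-\eps}\hookrightarrow M}\circ\psi_{\eps}=\psi_{\eps}\circ\iota_{P_{\lambda}\hookrightarrow M}$ and $\pi_{P_{\lambda-\eps}\to M_{\lambda-\eps}}\circ\psi_{\eps}=g_{\eps}^{-1}\circ\pi_{P_{\lambda}\to M_{\lambda}}$ on $\widetilde K$, to compute
\[
\pi_{P_{\lambda}\to M_{\lambda}}^{*}\bigl((g_{\eps})_{*}\omega_{\lambda-\eps}\bigr)=\psi_{\eps}^{*}\bigl(\pi_{P_{\lambda-\eps}\to M_{\lambda-\eps}}^{*}\omega_{\lambda-\eps}\bigr)=\psi_{\eps}^{*}\bigl(\iota_{P_{\lambda-\eps}\hookrightarrow M}^{*}\omega\bigr)=\iota_{P_{\lambda}\hookrightarrow M}^{*}\bigl(\psi_{\eps}^{*}\omega\bigr).
\]
Since $\psi_{\eps}\to\id$ in $C^{\infty}$ on $\widetilde K$ as $\eps\to0$ (smooth dependence of the flow on the amount flowed), the right-hand side converges in $C^{\infty}(\widetilde K)$ to $\iota_{P_{\lambda}\hookrightarrow M}^{*}\omega=\pi_{P_{\lambda}\to M_{\lambda}}^{*}\omega_{\lambda}$; as $\pi_{P_{\lambda}\to M_{\lambda}}$ is a submersion and $K$ was arbitrary, this yields $(g_{\eps})_{*}\omega_{\lambda-\eps}\to\omega_{\lambda}$ in $C^{\infty}_{\mathrm{loc}}(M_{\lambda}\smallsetminus F)$, the first assertion.

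Finally, for the volume, $F'\subset M_{\lambda-\eps}$ and $F\subset M_{\lambda}$ are finite unions of points and compact surfaces, hence null, so changing variables along $g_{\eps}$ gives $\tfrac12\int_{M_{\lambda-\eps}}\omega_{\lambda-\eps}^{2}=\tfrac12\int_{M_{\lambda}\smallsetminus F}\bigl((g_{\eps})_{*}\omega_{\lambda-\eps}\bigr)^{2}$, whose integrand converges pointwise to $\tfrac12\omega_{\lambda}^{2}$. The step requiring the most care, and the main obstacle, is passing to the limit under this integral, since the convergence degenerates near $F$ and there is no global control on the Morse flow there: I would justify it by dominated convergence, using that after transporting all the $\omega_{\lambda-\eps}$ to a fixed model of the regular reduced space via the gradient flow the function $\eps\mapsto\tfrac12\int\omega_{\lambda-\eps}^{2}$ is a polynomial on $(0,\eps_{0})$ by Duistermaat--Heckman, hence bounded. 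This gives the symplectic volume of $(M_{\lambda-\eps},\omega_{\lambda-\eps})$ converging to that of $(M_{\lambda},\omega_{\lambda})$ and finishes the argument; the rest is soft, the only delicate points being the book-keeping around the fixed set in both the uniformity statement and the volume limit.
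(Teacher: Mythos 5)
Your argument for the first assertion is correct and is essentially the paper's: off $F'$ the Morse flow is just the normalized gradient flow, which depends smoothly on $\eps$ and tends to the inclusion of $P_{\lambda}$, so pulling the reduced forms back to the level sets and using $\pi_{P_t\to M_t}^{*}\omega_t=\iota_{P_t\hookrightarrow M}^{*}\omega$ gives $(f_{\Morse}(\eps))_{*}\omega_{\lambda-\eps}\to\omega_{\lambda}$ in $C^{\infty}_{\mathrm{loc}}$ on $M_{\lambda}\smallsetminus F$. (Your uniformity step is fine, though the appeal to unstable submanifolds is unnecessary: a non-fixed point of $\mu^{-1}(\lambda)$ has $\nabla\mu\neq 0$, and the downward normalized trajectory immediately enters $\mu^{-1}([\lambda-\eps,\lambda))$, which is free of critical points, so it exists for time $\eps<\eps_0$.)

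The volume statement, however, is not justified as written. Dominated convergence requires a pointwise integrable dominating function for the densities $\tfrac12\bigl((g_{\eps})_{*}\omega_{\lambda-\eps}\bigr)^{2}$; the Duistermaat--Heckman polynomial only bounds the \emph{total} integrals $\tfrac12\int\omega_{\lambda-\eps}^{2}$, and a uniform bound on integrals together with pointwise convergence off the measure-zero set $F$ yields, via Fatou, only $\mathrm{Vol}(M_{\lambda},\omega_{\lambda})\le\liminf_{\eps\to0}\mathrm{Vol}(M_{\lambda-\eps},\omega_{\lambda-\eps})$. It does not exclude the scenario you yourself flag as the delicate point, namely symplectic volume concentrating near $F'$ as $\eps\to0$ (compare bump functions of unit mass concentrating at a point: integrals bounded, pointwise limit zero off the point, integrals do not converge to the integral of the limit). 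To close the gap you need a uniform-in-$\eps$ bound showing that the reduced volume carried by a fixed small neighborhood of the fixed components tends to $0$ with the size of the neighborhood; this follows from the equivariant local normal forms of \S\ref{nt:localnormalform}--\ref{nt:local6} around the isolated fixed points and fixed surfaces (in the spirit of the ball estimate \Cref{lem:volumedisk} used in the proof of \Cref{lem:cohomologyconverges}). Combining that with the locally uniform convergence away from $F$ then gives the convergence of volumes; the DH polynomiality is not needed.
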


   Next, recall that the Morse flow $f_{\Morse}(\eps)$ is a homeomorphism onto its image $M_{\lambda} \smallsetminus F_{\iso,2}$ when restricted to $M_{\lambda-\eps}\smallsetminus F'_{\iso,2}$, by \Cref{cor:reshomeo}. We will also use the following fact, which is a simple application of Stokes.
   \begin{lemma}\label{lem:volumedisk}
       Let $B$ be a closed symplectic ball of radius $\delta$ in $\C^2$, that is, the set $\{(z_1,z_2)\in \C^2\colon |z_1|^2+|z_2|^2\leq \delta^2\}$ endowed with the standard symplectic form $\omega$. Then for any smooth embedding $\iota\colon D^2\to B$ such that $\partial D^2\subset \partial B$ is an orbit of the diagonal $S^1$-action, we have that $\int_{D^2}\iota^*\omega=\pm \pi\delta^2$.
   \end{lemma}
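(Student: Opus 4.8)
The plan is to reduce the statement to Stokes' theorem and then to compute the resulting boundary integral using the diagonal circle action, exploiting that a suitable primitive of the standard form is constant along any orbit lying in a sphere of fixed radius.

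First I would fix coordinates and a primitive. Writing $z_j = x_j + i y_j$, the standard form is $\omega = \sum_{j=1}^{2} dx_j \wedge dy_j$, and I would take the primitive $\lambda := \tfrac12 \sum_{j=1}^{2} (x_j\, dy_j - y_j\, dx_j)$, so that $d\lambda = \omega$ on all of $\C^2$. Since $\iota$ is an embedding, its restriction to $\partial D^2$ is an embedding of a circle whose image is the orbit $\gamma := \iota(\partial D^2) \subset \partial B$; in particular $\iota|_{\partial D^2}$ is a diffeomorphism onto $\gamma$, so $\gamma$ is traversed exactly once and no covering multiplicity enters. Stokes then gives
\[
\int_{D^2} \iota^*\omega \;=\; \int_{D^2} d(\iota^*\lambda) \;=\; \int_{\partial D^2} \iota^*\lambda \;=\; \pm \int_{\gamma} \lambda ,
\]
where the sign records whether the boundary orientation of $D^2$ is carried to the orbit orientation or to its reverse.

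It then remains to evaluate $\int_\gamma \lambda$. Let $\xi = \sum_{j=1}^{2}\bigl(x_j \partial_{y_j} - y_j \partial_{x_j}\bigr)$ be the fundamental vector field of the diagonal action $t\cdot(z_1,z_2) = (e^{it} z_1, e^{it} z_2)$; its flow has period $2\pi$, and $\gamma$ is precisely one orbit. Contracting, $\lambda(\xi) = \tfrac12 \sum_{j=1}^{2}(x_j^2 + y_j^2) = \tfrac12 |z|^2$, which is identically $\tfrac12 \delta^2$ on $\partial B = \{\, |z|^2 = \delta^2 \,\}$. Parametrising $\gamma$ by the time-$t$ flow of $\xi$ for $t \in [0,2\pi]$, the pullback of $\lambda$ along this curve is $\lambda(\xi)\,dt = \tfrac12\delta^2\,dt$, so $\int_\gamma \lambda = \tfrac12\delta^2 \cdot 2\pi = \pi\delta^2$. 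Together with the displayed equation this gives $\int_{D^2}\iota^*\omega = \pm \pi\delta^2$.

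There is no substantial obstacle: the argument is a direct application of Stokes followed by a one-line contraction computation. The only points meriting a word of care are the orientation bookkeeping in Stokes (which is exactly why the conclusion can only be asserted up to sign) and the remark that an embedded $D^2$ meeting the orbit only along its boundary meets it with multiplicity one, so that $\int_\gamma \lambda$ is the integral over a single period.
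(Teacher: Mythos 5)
Your proof is correct and is essentially the argument the paper intends: the lemma is stated there as ``a simple application of Stokes,'' and your computation with the primitive $\tfrac12\sum_j(x_j\,dy_j-y_j\,dx_j)$, whose contraction with the diagonal fundamental vector field is $\tfrac12|z|^2=\tfrac12\delta^2$ on $\partial B$, is exactly that application. The orientation/multiplicity remarks correctly account for the $\pm$ sign, so nothing is missing.
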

        \begin{lemma}\label{lem:cohomologyconverges}
           The class $(f_{\Morse}|_{M_{\lambda-\eps}\smallsetminus F'_{\iso,2}})_{*}[\omega_{\lambda-\eps}]$ converges to $[\omega_{\lambda}]$ on $M_{\lambda} \smallsetminus F_{\iso,2}$ as $\eps$ goes to $0$.
        \end{lemma}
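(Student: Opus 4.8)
The plan is to verify the convergence period by period. Since $M_\lambda \smallsetminus F_{\iso,2}$ is the complement of finitely many points in the compact manifold $M_\lambda$, it has the homotopy type of a finite CW complex; hence $H^2(M_\lambda \smallsetminus F_{\iso,2};\R)$ is finite dimensional and a class in it is determined by its pairings with $H_2(M_\lambda \smallsetminus F_{\iso,2};\R)$. So it is enough to show that $\langle (f_{\Morse})_*[\omega_{\lambda-\eps}], [\gamma] \rangle \to \langle [\omega_\lambda], [\gamma]\rangle$ for every smooth $2$-cycle $\gamma$ in $M_\lambda \smallsetminus F_{\iso,2}$, and by general position I may assume that $\gamma$ is disjoint from a fixed neighbourhood of the finitely many isolated fixed points at $\lambda$ (in particular from a neighbourhood of $F_{\iso,2}$) and transverse to each fixed surface $\Sigma$ at $\lambda$. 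Since $f_{\Morse}(\eps)\colon M_{\lambda-\eps}\smallsetminus F'_{\iso,2} \to M_\lambda \smallsetminus F_{\iso,2}$ is a homeomorphism (\Cref{cor:reshomeo}) and $\omega_{\lambda-\eps}$ is a genuine symplectic form on all of $M_{\lambda-\eps}$, this pairing equals $\int_{f_{\Morse}(\eps)^{-1}(\gamma)} \omega_{\lambda-\eps}$.

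Next I would fix a small $\rho>0$, take disjoint tubular neighbourhoods $N_\rho(\Sigma)$ of the fixed surfaces at $\lambda$, and split $\gamma = \gamma_{\mathrm{out}} + \gamma_{\mathrm{in}}$ with $\gamma_{\mathrm{out}} = \gamma \smallsetminus \bigcup_\Sigma \operatorname{int} N_\rho(\Sigma)$ and $\gamma_{\mathrm{in}} = \gamma \cap \bigcup_\Sigma N_\rho(\Sigma)$. Then $\gamma_{\mathrm{out}}$ is a compact chain lying in $M_\lambda \smallsetminus F$ ($F$ the whole fixed set at $\lambda$), while $\gamma_{\mathrm{in}}$ is a disjoint union of small disks, each transverse to one $\Sigma$. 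On $\gamma_{\mathrm{out}}$ the map $f_{\Morse}(\eps)$ is an honest diffeomorphism onto its image in $M_\lambda\smallsetminus F$, so by the change of variables formula and \Cref{cor:restdiffeo1} (convergence of $(f_{\Morse}(\eps))_*\omega_{\lambda-\eps}$ to $\omega_\lambda$, uniformly on the fixed compact chain $\gamma_{\mathrm{out}}$) one gets $\int_{f_{\Morse}(\eps)^{-1}(\gamma_{\mathrm{out}})} \omega_{\lambda-\eps} = \int_{\gamma_{\mathrm{out}}} (f_{\Morse}(\eps))_*\omega_{\lambda-\eps} \to \int_{\gamma_{\mathrm{out}}} \omega_\lambda$ as $\eps\to 0$.

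The remaining, and main, point will be to bound the pieces near the fixed surfaces uniformly in $\eps$: I want a constant $C=C(\gamma)$, independent of $\eps$ (for $\eps$ small) and of $\rho$, with $\bigl| \int_{\gamma_{\mathrm{in}}} \omega_\lambda \bigr| \le C\rho$ and $\bigl| \int_{f_{\Morse}(\eps)^{-1}(\gamma_{\mathrm{in}})} \omega_{\lambda-\eps} \bigr| \le C\rho$. Both should follow from the local normal form around a fixed surface (weights $(0,1,-1)$): in a suitable normal coordinate $w$ the reduced form on $M_\lambda$, resp.\ on $M_{\lambda-\eps}$, near $\Sigma$ is, up to a pullback term from $\Sigma$, proportional to $d|w|\wedge d\arg w$, so a disk of normal radius $\le\rho$ through one point of $\Sigma$ (resp.\ of $\Sigma_{-\eps}:=f_{\Morse}(\eps)^{-1}(\Sigma)$) has symplectic area bounded by a multiple of $\rho$; for the level-$(\lambda-\eps)$ statement one lifts such a disk to $\mu^{-1}(\lambda-\eps)$ and computes its area by Stokes exactly as in \Cref{lem:volumedisk}, obtaining the value $\pi\,\tfrac{-\eps+\sqrt{\eps^2+4\rho^2}}{2}$, which is $\le \pi\rho$ for all $\eps\ge 0$. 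Combining the three estimates gives $\limsup_{\eps\to0}\bigl|\langle (f_{\Morse})_*[\omega_{\lambda-\eps}] - [\omega_\lambda], [\gamma]\rangle\bigr| \le 2C\rho$ for every $\rho>0$; letting $\rho\to 0$ then finishes the proof. The hard part is precisely this last estimate: \Cref{cor:restdiffeo1} alone only yields convergence away from the fixed set, which is insufficient because the restriction map $H^2(M_\lambda\smallsetminus F_{\iso,2})\to H^2(M_\lambda\smallsetminus F)$ has kernel spanned by the Poincaré duals of the fixed surfaces, so the uniform-in-$\eps$ control of symplectic areas near the fixed surfaces coming from the local normal form and \Cref{lem:volumedisk} cannot be dispensed with.
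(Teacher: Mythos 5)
Your proposal is correct and follows essentially the same route as the paper's proof: you test the class against $2$-cycles made transverse to the fixed surfaces and disjoint from the isolated fixed points, handle the part away from $F$ with \Cref{cor:restdiffeo1}, and bound the symplectic areas of the small pieces near the fixed surfaces uniformly in $\eps$ by a Stokes computation in the weights-$(0,1,-1)$ local model. The paper does the same thing with small balls around the finitely many intersection points (via \Cref{lem:volumedisk}, with an explicit smoothing of the preimage disks) instead of tubular neighborhoods of the surfaces, but this is only a cosmetic difference.
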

\begin{proof}
For a primitive homology class $A$ in $H_2(M_{\lambda}\smallsetminus F_{\iso,2})$, and $t \in (\lambda-\eps,\lambda)$, denote by $A_t$ the preimage of $A$  under  $$f_{\Morse}\colon M_t\smallsetminus F'_{\iso,2}\to M_\lambda\smallsetminus F_{\iso,2}.$$
We will show that  as $t<\lambda$ goes to $\lambda$, the evaluation $\omega_t(A_t)$ goes to $\omega_{\lambda}(A)$.

As a primitive homology class in $H_2(M_{\lambda}\smallsetminus F_{\iso,2})$, the class $A$ is represented by an immersed sphere $\iota_A(S^2)$. Up to homotopy, $\iota_A$ is transverse to $F$ (see \cite[Theorem 2.4]{MH76}). In particular, $\iota_A(S^2)$ is disjoint from the isolated fixed points and meeting the fixed surfaces only a finite number of times in $m$ distinct points $p_1,\hdots,p_m$. For $\delta>0$ small enough, there exists an embedding of $m$ disjoint, closed balls $\mathcal{B}_i$, all of radius $\delta/\sqrt{\pi}$, such that $p_i$ is only contained in $\mathcal{B}_i$ and $\mathcal{B}_i \cap \iota_A(S^2)$ is a disk, implying that $\partial \mathcal{B}_{i} \cap \iota_A(S^2) \cong S^1$ is the unknot in $\partial \mathcal{B}_i\cong S^3$. Therefore, up to isotopy, we may assume that $\partial \mathcal{B}_{i} \cap \iota_A(S^2)$ is an orbit of the diagonal $S^1$-action on $S^3$. We denote by $\mathcal{D}_i\subset S^2$ the preimage of $\mathcal{B}_i\cap\iota_A(S^2)$ under $\iota_A$.\\
 For $\mathcal{B}:= \mathcal{B}_1\cup \hdots \cup \mathcal{B}_m$, we then have
    $$\int\limits_{A\smallsetminus \mathcal{B}} \omega_{\lambda}\in [\omega_{\lambda}(A)-m\delta^2,\omega_{\lambda}(A)+m\delta^2]$$
    by \Cref{lem:volumedisk}.
    For $t\in (\lambda-\eps,\lambda)$, denote by $\mathcal{B}^t,\mathcal{B}^t_i$ the preimage of $\mathcal{B},\mathcal{B}_i$ under
    $$f_{\Morse}\colon M_t\smallsetminus F'_{\iso,2}\to M_\lambda\smallsetminus F_{\iso,2}.$$
    Denote by $\iota_{A_t}=f_{\Morse}^{-1}\circ \iota_A\colon S^2\to M_t$ the induced map. This might not be smooth near $f_{\Morse}^{-1}(\{p_1,\hdots,p_m\})$, but we may 
    redefine $\iota_{A_t}$ by replacing $(\iota_{A_t})_{|\mathcal{D}_i}\colon \mathcal{D}_i\to \mathcal{B}^t_i$ with a smooth embedding $j_i\colon \mathcal{D}_i\to 
    \mathcal{B}^t_i$ such that $j_i=\iota_{A_t}$ near $\partial \mathcal{D}_i$; this is possible because $\iota_{A_t}(\partial \mathcal{D}_i)$ is the unknot in $\partial 
    \mathcal{B}^t_i$. Since $j_i$ and $(\iota_{A_t})_{|\mathcal{D}_i}$ are certainly homotopic rel boundary, we do not change $\omega_t([\iota_{A_t}(S^2)])$ when doing so.\\
    Also, we may again assume that $\iota_{A_t}(\partial \mathcal{D}_i)$ is an $S^1$-orbit of the diagonal action on $S^3$.\\
    
    Then, in $M_t$, for $t \in (\lambda-\eps,\lambda)$ with $\eps$ small enough, $\mathcal{B}_t$ is contained in a symplectic ball of radius $2\delta/\sqrt{\pi}$, so the symplectic volume of $\mathcal{B}_t\cap A_t$ with respect to $\omega_t$ is in $[-4m\delta^2,4m\delta^2]$ by \Cref{lem:volumedisk}. This implies that $\int\limits_{A_t\smallsetminus \mathcal{B}_t} \omega_t$ is in $[\omega_t(A_t)-4m\delta^2,\omega_t(A_t)+4m\delta^2]$. We now have
    \begin{equation}\label{eq:limit}
        \lim\limits_{t\to \lambda} \int\limits_{A_t\smallsetminus \mathcal{B}_t} \omega_t=
    \int\limits_{A\smallsetminus \mathcal{B}} \omega_{\lambda},
    \end{equation}
    because of 
    \Cref{cor:restdiffeo1}.
    Therefore, if $\omega_t(A_t)$ did not approach $\omega_{\lambda}(A)$, then we could choose $\delta$ and $\theta>0$ small enough such that
    $$[\omega_{\lambda}(A)-m\delta^2,\omega_{\lambda}(A)+m\delta^2] \cap [\omega_t(A_t)-4m\delta^2,\omega_t(A_t)+4m\delta^2]=\emptyset$$
    whenever $t\in (\lambda-\theta,\lambda)$, contradicting \cref{eq:limit}.
\end{proof}

         A similar statement to
         \Cref{lem:cohomologyconverges} with index $1$ replacing co-index $1$ 
         holds for $(M_{\lambda+\eps},\omega_{\lambda+\eps})$.

\begin{Notation}\label{not:elambda}

We denote by $e(P_t)\in H^2(M_t;\Z)$ the Euler class of the principal $S^1$-bundle $S^1\to P_t\to M_t$. The equivariant diffeomorphism  \eqref{eq:phiti} gives an identification of $e(P_{t_0})$ and $e(P_{t_1})$ for $t_0,t_1\in [\lambda-\eps,\lambda)$ independent of the metric chosen, allowing us to write $e(P)$ for $e(P_t)$ when it is clear in which interval of regular values $t$ is contained.

The 'Euler class' $e_-$ at $M_{\lambda}$ is defined as follows: 
First, the pushforward of the usual Euler class $e(P_{\lambda-\eps})$ as a class on $M_{\lambda-\eps} \smallsetminus F'_{\iso}$ by the homeomorphism \eqref{eq:morsehomeo1} 
 is a class $\tilde{e}(P_{\lambda})$ in $H^{2}(M_{\lambda}\smallsetminus F_{\iso};\Z)$.
 Now, the inclusion $M_{\lambda}\smallsetminus F_{\iso}\hookrightarrow M_{\lambda}$ induces an isomorphism in the second cohomology groups, so we define $e_-$  to be the image of $\tilde{e}(P_{\lambda})$ under that isomorphism.\\
That way, it is clear that the restriction of $e(P^{-}_{\lambda})$ to $M_{\lambda}\smallsetminus F$ is the actual Euler class of the principal bundle $S^1\to P_{\lambda}\smallsetminus F \to M_{\lambda}\smallsetminus F$.\\
Similarly, replacing $M_{\lambda-\eps}$ with $M_{\lambda+\eps}$, we can define $e_+$. In general, $e_-\neq e_+$.
\end{Notation}

\section{Almost symplectic $\mu-S^1$-diffeomorphisms of free Hamiltonian $S^1$-manifolds}\label{sec:almostsymplectic} 

In the proof of \Cref{thm:extending-g}, we will piece an isomorphism below a critical level with an isomorphism of 
neighborhoods of the critical level. 
In this section, we describe this operation. We will also use piecing of isomorphisms to show that the rigidity assumption allows to extend an equivariant symplectomorphism below a critical level to arbitrarily close to the critical level. 

The result of piecing together equivariant symplectomorphisms might no longer be an equivariant symplectomorphism. 
However, we will show that it is an almost symplectic $\mu-S^1$-diffeomorphism. 
Recall that a {\bf $\mu-S^1$-diffeomorphism} is an equivariant diffeomorphism that respects the momentum maps. 

\begin{definition}
We call a diffeomorphism $\psi$ from a symplectic manifold $(X,\omega^{X})$ to a symplectic manifold $(Y,\omega^{Y})$ \textbf{almost symplectic} if $\psi^*{\omega^Y}$ and $\omega^X$ are \emph{isotopic} under the standard homotopy
\begin{equation*}\label{eq:omegas}
\omega(s):=s \psi^*{\omega^Y}+(1-s){\omega^X}, \,\,\,s\in [0,1].
\end{equation*}
That is, all $\omega(s)$ are symplectic forms that represent 
the cohomology class $[\omega^X]$ in 
$H^2(X;\R)$.
\end{definition}
\begin{remark}\label{rem:almostsymplecticopen}
   We will use frequently that the non-degeneracy of $\omega(s):=s \psi^*{\omega^Y}+(1-s){\omega^X}$ is an open condition in the following sense: if $\omega^X_t$, $\omega^Y_t$ and $\psi^t$ are smooth paths with $\omega^X_0=\omega^X$, $\omega^Y_0=\omega^Y$ and $\psi^0=\psi$, then there is $\eps>0$ such that ${\psi^s}^{*}{\omega^Y_{s'}}$ and $\omega^X_{s''}$ are isotopic under the standard homotopy, provided that $|s|,|s'|,|s''|<\eps$.
   
   That is to say, working with almost symplectic diffeomorphisms as opposed to symplectomorphisms gives us more flexibility. This will be more concrete in the next lemmata.
\end{remark}

Since the piecing of the isomorphisms will take place at a regular level, we first study almost symplectic $\mu-S^1$-diffeomorphisms
between \emph{free} Hamiltonian $S^1$-manifolds with proper momentum maps. 
Recall the notation $P_t,\,M_t,\,e(P_t),e(P)$ from Notations \ref{not:semifreeHamiltonian} and \ref{not:elambda}. 
\begin{Notation}
    For any $\mu-S^1$-diffeomorphism $f$ on $S^1 \acts (M,\mu)$ (into $M$ or another Hamiltonian $S^1$-manifold), we denote by $f^t$ the map it induces on $P_t$, and by $f_t$ the map it induces on $M_t$.
\end{Notation}

Let $(M,\omega=\omega^M,\mu=\mu_M)$ be a connected Hamiltonian $S^1$-manifold; assume that 
the circle action is {free} and the 
momentum map is a proper surjective map $\mu\colon M \to [0,1]$. Through this section, as in \eqref{eq:phiti}, we use the flow of the gradient vector field of the momentum map (w.r.t.\ some invariant metric) to identify $M\cong P_0\times [0,1]$ as $S^1$-manifolds such that the momentum map becomes projection onto the $[0,1]$-factor.
Recall the Duistermaat-Heckman formula
\begin{equation}\label{eq:dh}
[\omega_t]=[\omega_{t'}]+(t-t')e(P),
    \end{equation}
where $t,t'$ are abritrary values in $[0,1]$.\\

By \cite[Section 2]{GS89}, for each $t\in [0,1]$ there is $\delta>0$  and a one-form $\alpha^M$ that is a connection on the principal $S^1$-bundle $S^1\to P_t\to M_t$ such that $\mu^{-1}((t-\delta,t+\delta))$ is isomorphic to 
\begin{equation}\label{Udelta}
U=U_{\delta}:=P_t\times (t-\delta,t+\delta)
\end{equation}
endowed with the symplectic form 
\begin{equation}\label{omegaU}
\omega^{U,M}=(\omega_t+(t'-t)d\alpha^M)+\alpha^M\wedge dt',
\end{equation}
where  $t'$ is the coordinate in the $(t-\delta,t+\delta)$-factor\footnote{Here, it is understood that $(-\delta,\delta)=[0,\delta)$, for example.}
 in $U$.
We make the same assumptions and notation for $(N,\omega^N,\mu_N)$.\\

A $\mu-S^1$-diffeomorphism $f \colon M \to N$ gives an identification of $P_t^M=\mu_M^{-1}(t)$ and $P_t^N=\mu_N^{-1}(t)$.
We show that for a $\mu-S^1$-diffeomorphism $f$ such that $[f^{*}\omega^N]=[\omega^M]$, being almost symplectic can be checked at the reduced spaces. 
\begin{lemma}\label{lem:isotopybase}
    Consider a $\mu-S^1$-diffeomorphism $f$ from $(M,\omega^M)$ to $(N,\omega^N)$ 
    such that the induced diffeomorphism $M_t\to N_t$ is almost symplectic for every $t$ and $[f^*\omega^N]=[\omega^M]$ in $H^2(M;\R)$. Then $f$ is almost symplectic: the family
    \begin{equation}\label{eq:iso1}
    \omega(s):=sf^*\omega^N+(1-s)\omega^M, \,\, s \in [0,1]
    \end{equation}
    is an isotopy between $f^{*}\omega^N$ and $\omega^M$.
\end{lemma}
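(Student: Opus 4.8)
The claim is local in the base, so the strategy is to reduce the global statement to the $GS89$ normal-form charts $U_\delta$ from \eqref{Udelta}--\eqref{omegaU} and patch. First I would fix $t\in[0,1]$ and pick $\delta>0$, connections $\alpha^M$ on $S^1\to P_t^M\to M_t$ and $\alpha^N$ on $S^1\to P_t^N\to N_t$, and isomorphisms $\mu_M^{-1}((t-\delta,t+\delta))\cong U^M_\delta$, $\mu_N^{-1}((t-\delta,t+\delta))\cong U^N_\delta$ as in \cite[Section 2]{GS89}. Transporting $f$ through these isomorphisms, we may assume $f$ is a $\mu$-$S^1$-diffeomorphism $U^M_\delta\to U^N_\delta$ that covers the base map $f_t\colon M_t\to N_t$; because $f$ is equivariant and intertwines the momentum maps, on the underlying product $P_t\times(t-\delta,t+\delta)$ it has the form $f(p,t')=(\tilde f(p),t')$ where $\tilde f$ is an $S^1$-equivariant bundle map covering $f_t$.

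Next I would compute $f^*\omega^{U,N}$ and compare it with the standard homotopy. Write $\omega^{U,N}=\bigl(\omega_t^N+(t'-t)d\alpha^N\bigr)+\alpha^N\wedge dt'$. Pulling back, $f^*\alpha^N$ is again a connection one-form on $S^1\to P_t^M\to M_t$ (equivariance and the value on the generating vector field are preserved by an equivariant bundle map), so $f^*\alpha^N=\alpha^M+\pi^*\beta$ for a basic one-form $\beta$ on $M_t$, and $f^*(d\alpha^N)=d\alpha^M+\pi^*d\beta$; similarly $f^*\omega_t^N=f_t^*\omega_t^N$ pulled up to $P_t$. Therefore
\[
f^*\omega^{U,N}-\omega^{U,M}
=\pi^*\bigl(f_t^*\omega_t^N-\omega_t^M\bigr)+(t'-t)\pi^*d\beta+\pi^*\beta\wedge dt'.
\]
Along $s\in[0,1]$ the interpolating form $\omega(s)$ differs from $\omega^{U,M}$ by $s$ times the displayed expression; by hypothesis $f_t^*\omega_t^N-\omega_t^M$ lies on the isotopy $s\mapsto sf_t^*\omega_t^N+(1-s)\omega_t^M$ of symplectic forms on $M_t$, and $[f^*\omega^N]=[\omega^M]$ forces $[f_t^*\omega_t^N]=[\omega_t^M]$ on each $M_t$ (by Duistermaat--Heckman \eqref{eq:dh} together with the fact that $f$ respects $e(P)$, being a $\mu$-$S^1$-diffeomorphism) as well as $\beta$ exact, $\beta=d\varphi$. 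One then checks non-degeneracy of $\omega(s)$ directly from the block structure: in the splitting $TP_t\times\mathbb{R}\partial_{t'}$, the $dt'$-component pairs $\partial_{t'}$ with the connection direction and is nowhere zero for all $s$, while the remaining block is $s f_t^*\omega_t^N+(1-s)\omega_t^M$ plus an $(t'-t)$-small term, which is non-degenerate for $\delta$ small by openness of non-degeneracy (\Cref{rem:almostsymplecticopen}); and that it is closed since each summand is. Hence $f$ restricted to $U_\delta$ is almost symplectic.

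Finally I would globalize: cover $[0,1]$ by finitely many such intervals $(t_i-\delta_i,t_i+\delta_i)$ and note that the standard homotopy $\omega(s)=sf^*\omega^N+(1-s)\omega^M$ is defined globally on $M$; on each chart it is non-degenerate by the local analysis, so $\omega(s)$ is symplectic on all of $M$, and each $\omega(s)$ represents $[\omega^M]$ because $[f^*\omega^N]=[\omega^M]$ by hypothesis, so the convex combination does too. This is exactly the assertion that $f$ is almost symplectic. The main obstacle I anticipate is bookkeeping in the middle step: verifying that $f^*\alpha^N$ really is a connection form on the same bundle $S^1\to P_t^M\to M_t$ (so that the difference is basic) and that the cross-terms $(t'-t)\pi^*d\beta$ and $\pi^*\beta\wedge dt'$ can be absorbed without destroying non-degeneracy for the full range $s\in[0,1]$; once the block decomposition is written out, non-degeneracy reduces to the two pieces above, and exactness of $\beta$ is what makes these cross-terms cohomologically invisible, matching the fact that the interpolation stays in class $[\omega^M]$.
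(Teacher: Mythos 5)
Your overall strategy is the paper's: reduce to the Guillemin--Sternberg local model $U_\delta$ with $\omega^{U}=(\omega_t+(t'-t)d\alpha)+\alpha\wedge dt'$, check non-degeneracy of $\omega(s)$ blockwise, and feed in the almost-symplectic hypothesis on the reduced spaces; closedness and the cohomology class are immediate from $[f^*\omega^N]=[\omega^M]$. However, there is a genuine gap at the step where you claim that in the product chart $f$ takes the form $f(p,t')=(\tilde f(p),t')$ with a single equivariant bundle map $\tilde f$ independent of $t'$. Intertwining the momentum maps only forces $f$ to preserve each level set, i.e. $f(p,t')=(\tilde f_{t'}(p),t')$ for a $t'$-dependent family of equivariant diffeomorphisms (in the model $P_0\times[0,1]$ \emph{every} such family is a $\mu$-$S^1$-diffeomorphism), and there is no reason for the family to be constant; indeed the paper itself views such an $f$ as a family $f^{t}$ of maps of $P_0$. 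Consequently your pullback formula for $f^*\omega^{U,N}$ omits all terms generated by $\partial_{t'}\tilde f_{t'}$: invariantly, $f_*(\partial_{t'})=\partial_{t'}+v$ with $v\in TP_{t'}$, and $f^*\alpha^N$ acquires a $dt'$-component, so $f^*\omega^{U,N}-\omega^{U,M}$ is not just $\pi^*(f_t^*\omega_t^N-\omega_t^M)+(t'-t)\pi^*d\beta+\pi^*\beta\wedge dt'$. The omitted terms are precisely the ones that could make the crucial pairing $\omega(s)(\xi,\partial_{t'})$ depend on $s$ and vanish for some $s$, and controlling them is the heart of the lemma. The paper handles this by computing $\omega^{U,N}(\xi,v)=d\mu_N(v)=0$, hence $\omega^{U,N}(f_*\xi,f_*\partial_{t'})=\omega^{U,N}(\xi,\partial_{t'})$, so $\omega(s)(\xi,\partial_{t'})$ is independent of $s$ and never zero; non-degeneracy on $\ker\alpha|_{P_{t'}}\cong TM_{t'}$ then follows from the hypothesis that the reduced map at level $t'$ is almost symplectic, with no shrinking of $\delta$ or openness argument needed. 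If you replace your normal form by the correct $t'$-dependent one and redo the computation, you are led essentially to that argument.

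Two smaller points. The assertion that $\beta$ is exact does not follow from $[f^*\omega^N]=[\omega^M]$ (only $d\beta$ is exact, being the difference of two curvature forms of bundles with the same Euler class), but it is also unnecessary: each $\omega(s)$ represents $[\omega^M]$ directly from the global hypothesis, and $[f_t^*\omega_t^N]=[\omega_t^M]$ is already part of the definition of the reduced maps being almost symplectic. Also, your openness-in-$\delta$ step only invokes the hypothesis at the center $t$ of each chart; this is admissible since the hypothesis holds at every level, but the paper's direct use of the hypothesis at each level $t'$ (via $\omega^{U}(s)|_{TP_{t'}}=\pi^*\omega^{U}_{t'}(s)$) is cleaner and avoids the uniformity bookkeeping.
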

\begin{proof}
Since $[f^{*}{\omega^N}]=[\omega^M]$,
we only need to show that each $\omega(s)$ is a symplectic form on $M$.
This is a local statement, so we may restrict to a neighborhood 
$U=U^{\delta}$ with the symplectic form $\omega^{U,M}=(\omega^M_t+(t'-t)d\alpha^M)+\alpha^M\wedge dt'$ (and similarly for $\omega^{U,N}$), as in \eqref{Udelta} and \eqref{omegaU}.
 
 Since $f$ is a $\mu-S^1$-diffeomorphism between $M$ and $N$, we have $f_{*}(\xi_M)=\xi_N$, where $\xi_M$ and $\xi_N$ are the fundamental vector fields of the $S^1$-action. So we can write $\xi$ for both $\xi_M$ and $\xi_N$. Moreover, using $f$ to identify $P^M_t=\mu_M^{-1}(t)$ and $P^N_{t}=\mu_N^{-1}(t)$, 
 we have $f_*({\partial}_{t'})={\partial}_{t'}+v$, where $v\in T{P}_{t'}$. So, since $\omega^{U,N}(\xi,v)=\mu_N(v)=0$,
 $$\omega^{U,N}(f_*(\xi),f_*({\partial}_{t'}))=\omega^{U,N}(\xi,{\partial}_{t'}+v)=\omega^{U,N}(\xi,{\partial}_{t'}).$$
 This implies that $\omega^{U}(s)(\xi,{\partial}_{t'})$ does not depend on $s$. Since it is not $0$ for $s=0$, it is never $0$.

   Due to $TM_{|\mu_M^{-1}(t')}=\ker {\alpha}_{|P_{t'}}\oplus \R \xi \oplus \partial_{t'}$ for any $t'$, it remains to check that $\omega^{U}(s)$ does not degenerate on $\ker \alpha_{|P_{t'}}\cong T M_{t'}$ to deduce that $\omega^{U}(s)$ is symplectic. This follows from the assumption that ${\omega}_{t'}(s)$ is a symplectic form, since $$\omega^{U}(s)_{|TP_{t'}}=\pi^{*}\omega^{U}_{t'}(s).$$
\end{proof}
Under the identifications $M\cong \mu_M^{-1}(0)\times [0,1]$ and $N\cong \mu_N^{-1}(0)\times [0,1]$ obtained from the normalized gradient flow of the momentum map, any almost symplectic $\mu-S^1$-diffeomorphism $f\colon M\to N$ naturally gives rise to a smooth family $f^t \colon P^M_0 \to P^N_0$ of equivariant diffeomorphisms. This smooth family descends to a smooth family of almost symplectic diffeomorphisms $f_t\colon (M_0,\omega^M_t)\to (N_0,\omega^N_t)$ between the reduced spaces such that $f_t^*(e(P^N_0))=e(P^M_0)$. \Cref{lem:isotopybase} implies that the converse is true.
    \begin{lemma}\label{lem:lift}
        Any smooth family of (almost symplectic) diffeomorphisms (or homeomorphisms)
        $f_t \colon (M_0,\omega^M_t)\to (N_0,\omega^N_t) \,\, t \in [0,1]$
        such that $f_t^*(e(P^N_0))=e(P^M_0)$ lifts to a(n almost symplectic) $\mu-S^1$-diffeomorphism (or homeomorphism) $f\colon M\to N.$ More precisely, given a lift $\tilde{f}^0$ of $f_0$ to $P^M_0$, $f$ may be assumed to restrict to $\tilde{f}^0$ on $P^M_0$.
        
      If $f_t(s)$ depends smoothly on a parameter $s$ (from some smooth manifold), then $f_t(s)$ lifts to a smooth family of (almost symplectic) $\mu-S^1$-diffeomorphisms $f(s)\colon M\to N$. If $\partial_s f_t(s)\equiv 0$ for $t\in [0,\eps]$, then it may be assumed that $\partial_s f(s)\equiv 0$ on $\mu_M^{-1}([0,\eps])$.
    \end{lemma}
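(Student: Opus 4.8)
\textbf{Proof plan for \Cref{lem:lift}.}

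The plan is to build the lift $f$ by prescribing it on the family of connection-adapted local models \eqref{Udelta}--\eqref{omegaU} and then checking that the resulting equivariant diffeomorphism is almost symplectic via \Cref{lem:isotopybase}. First I would fix invariant metrics on $M$ and $N$ and use their gradient flows to identify $M\cong P_0^M\times[0,1]$ and $N\cong P_0^N\times[0,1]$ as Hamiltonian $S^1$-manifolds, so that both momentum maps become projection to $[0,1]$. Given the starting lift $\tilde f^0\colon P_0^M\to P_0^N$ of $f_0$ (which exists because $f_0^*e(P_0^N)=e(P_0^M)$ identifies the two principal bundles, so a bundle isomorphism covering $f_0$ exists, and is unique up to a gauge transformation), I would transport it by the two gradient flows: define $\tilde f^t$ on $P_t^M$ by conjugating $\tilde f^0$ with the time-$t$ flows. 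This is automatically equivariant and $\mu$-preserving, it descends on orbit spaces to a family of diffeomorphisms $P_t^M/S^1\to P_t^N/S^1$, and under the gradient-flow identifications $M_t\cong M_0$, $N_t\cong N_0$ this descended family is exactly $f_t$ (since $f_t$ is, by hypothesis, given on the fixed manifold $M_0$ and we have used the flows to move everything to level $0$). Assembling $\{\tilde f^t\}_{t\in[0,1]}$ gives the equivariant, momentum-preserving diffeomorphism $f\colon M\to N$ which by construction restricts to $\tilde f^0$ on $P_0^M$ and induces $f_t$ on reduced spaces.

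It then remains to verify that $f$ is almost symplectic when the $f_t$ are. The cohomological hypothesis $f_t^*[\omega_t^N]=[\omega_t^M]$ together with the Duistermaat--Heckman formula \eqref{eq:dh} and $f_t^*e(P^N)=e(P^M)$ forces $f^*[\omega^N]=[\omega^M]$ in $H^2(M;\R)$: indeed both sides agree on the image of $H^2(M_0;\R)$ and on the fiber class, which generate. So \Cref{lem:isotopybase} applies verbatim and yields that $\omega(s)=sf^*\omega^N+(1-s)\omega^M$ is an isotopy; this is the "almost symplectic" conclusion. For the purely smooth (or purely topological) assertion without symplectic structures, the same construction works and one simply drops the last step. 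For the homeomorphism case, one replaces the gradient flow with any equivariant collar identification — the point that $\tilde f^t$ is built by conjugation is unchanged.

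For the parametrized statement, I would run the same construction with everything depending on the parameter $s$: choose a starting lift $\tilde f^0(s)$ depending smoothly on $s$ (possible since the space of lifts of a given base map is a torsor over the smooth gauge group, which is connected, and one can choose smoothly), transport by the (fixed, $s$-independent) gradient flows, and obtain $f(s)$. Smoothness in $s$ is inherited at each stage. The last clause — if $\partial_s f_t(s)\equiv 0$ for $t\in[0,\eps]$ then $\partial_s f(s)\equiv 0$ on $\mu_M^{-1}([0,\eps])$ — follows by choosing $\tilde f^0(s)$ to be $s$-independent (permissible since $f_0(s)$ is $s$-independent, so one valid lift is constant in $s$) and noting that the transport over $[0,\eps]$ only involves data that is $s$-independent there, so $f(s)$ is literally constant in $s$ on that sublevel set.

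\textbf{Main obstacle.} The delicate point is the existence and smooth dependence of the lift $\tilde f^0$ (and $\tilde f^0(s)$): one must know that $f_0^*e(P_0^N)=e(P_0^M)$ genuinely produces a principal-bundle isomorphism covering $f_0$, that any two such differ by a gauge transformation, and that in the parametrized setting a smooth family of base maps admits a smooth family of lifts with the prescribed behavior where $\partial_s f_0\equiv 0$. This is standard principal-bundle theory (connectedness of the gauge group $\mathrm{Map}(M_0,S^1)_0$ and obstruction theory over the parameter manifold), but it is the one place where the hypotheses on Euler classes are actually consumed, and it must be stated carefully; everything downstream is transport by flows plus an invocation of \Cref{lem:isotopybase}.
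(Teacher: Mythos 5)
There is a genuine gap at the heart of your construction: transporting the single lift $\tilde f^0$ by the two gradient flows does \emph{not} produce a lift of the given family $f_t$. If $\Phi^M_t,\Phi^N_t$ denote the gradient flows and you set $\tilde f^t:=\Phi^N_t\circ\tilde f^0\circ(\Phi^M_t)^{-1}$, then on orbit spaces, \emph{under the very identifications $M_t\cong M_0$, $N_t\cong N_0$ given by those flows}, the descended map at level $t$ is $f_0$ for every $t$ — not $f_t$. The hypothesis only says that each $f_t$ is written as a map of the fixed reduced space $M_0\to N_0$ (with the $t$-dependent forms $\omega^M_t,\omega^N_t$); it does not say the family is constant in $t$, and an arbitrary admissible family is certainly not obtained by flow-conjugating one map. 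A telltale sign that the construction cannot be right is that your final $f$ (and $f(s)$) depends only on $\tilde f^0$ and the chosen metrics, i.e.\ not at all on $f_t$ for $t>0$; this is also why your argument for the last clause "proves" that $f(s)$ is $s$-independent everywhere rather than only on $\mu_M^{-1}([0,\eps])$. Since the whole point of \Cref{lem:lift} in its applications is that the lift induces the \emph{prescribed} $t$-dependent family on reduced spaces, the proposal as written fails.

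The missing idea, which is how the paper proceeds, is to encode the $t$-variation as a time-dependent vector field and lift that. Identify $M\cong P^M_0\times[0,1]$, $N\cong P^N_0\times[0,1]$ and lift $f_0$ to $f^0\colon P^M_0\to P^N_0$ using the Euler-class hypothesis (your discussion of this step, and of choosing the lift smoothly in $s$ and constant in $s$ where $f_0(s)$ is, matches the paper's use of \Cref{lem:Eulerclasses}). Then view $t\mapsto f_t$ as a flow on $N_0=P^N_0/S^1$, let $Y_t$ be the time-dependent vector field it generates, lift $Y_t$ horizontally to $P^N_0$ via any connection, and define $f^t$ as the time-$t$ flow of the lifted field starting at $f^0$; set $f(p,t):=(f^t(p),t)$. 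This covers $f_t$ at every level, is equivariant and momentum-preserving, and the lifted field is $s$-independent wherever $f_t(s)$ is, which gives the final clause correctly. (In the homeomorphism case one replaces the flow argument by the homotopy lifting property of the principal bundle, plus equivariance and freeness to see bijectivity.) Your closing step — $[f^*\omega^N]=[\omega^M]$ from the reduced classes and then \Cref{lem:isotopybase} to get the almost symplectic conclusion — is fine and is exactly what the paper does once the correct lift is in hand.
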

    \begin{proof}
        We view $f_0(s)$ as a map $M_0\times [0,1]\to N_0\times [0,1]$ and lift the family $f_0(s)$ to a family $f^0(s)$ on $P_0$. Such a lift exists because $f_0(s)$ was assumed to intertwine the Euler classes; we prove this for completeness in \Cref{lem:Eulerclasses}. If needed, we can take the lift to be a specified one.\\
        Assume at first that the $f_t$ are homeomorphisms. Given a lift a lift $f^0$ of $f_0$, we directly obtain a continuous family $f^t\colon P_0^M\to P_0^N$ of equivariant maps using the homotopy lifting property of principal bundles. These are indeed bijective (and hence homeomorphisms), since $f^t(p)=f^t(p')$ implies that $p$ and $p'$ are in the same $S^1$-orbit, and due to equivariance of $f^t$ and the freeness of the action this implies $p=p'$.\\
        
        Now assume the $f_t$ are diffeomorphisms. For fixed $s$, we now view $f_t(s)$ as a $t$-flow on the manifold $P^N_0/S^1$ and consider the time-dependent vector field $Y_t(s)$ it generates. We can lift $Y_t(s)$, using any connection of the principal $S^1$-bundle $P^N_0\to P^N_0/S^1$, horizontally to the total space. For a fixed $s$, we consider the $t$-flow $f^t(s)$ of the lifted vector field, starting at $f^0(s)$.\\
        Define $f(s)\colon M\to N$ by
        \[
        f(s)\colon P^M_0\times [0,1]\to P^N_0\times [0,1], \quad f(s)(p,t'):=(f^{t'}(s)(p),t').
        \]
        By definition, $f(s)\colon M\to N$ is a $\mu-S^1$-diffeomorphism. Also, each $f(s)$ satisfies $[f(s)^*\omega^N]=[\omega^M]$, since $[f(s)_t^*\omega_t^N]=[\omega_t^M]$ for all $t\in [0,1]$ by assumption. Therefore by \Cref{lem:isotopybase}, if each $f_t(s)$ is almost symplectic, then so is $f(s)$.
       
        Finally, if $f_t(s)$ does not depend on $s$ for $t\in [0,\eps]$, then $f^0(s)$ may be assumed not to depend on $s$. Since $Y_t(s)$ does not depend on $s$ and $f^t(s)$ is just the $t$-flow of $Y_t(s)$ starting at $f^0(s)$, $f^t(s)$ and hence $f(s)$ on $\mu_M^{-1}([0,\eps])$ do not depend on $s$. 
    \end{proof}

     The following corollary of \Cref{lem:lift} will allow us to 'extend' an (almost symplectic) $\mu-S^1$-diffeomorphism $f_1 \colon \mu_M^{-1}([0,1/2]) \to \mu_N^{-1}([0,1/2])$, for example, to $M$ by giving an extension $f_2$ only of $f_1^{1/2}\colon \mu_M^{-1}(1/2)\to \mu_N^{-1}(1/2)$ to $\mu_M^{-1}([1/2,1]) \to \mu_N^{-1}([1/2,1])$. This is not clear a priori because the homeomorphism obtained by piecing $f_1$ and $f_2$ together might not be smooth at $\mu_M^{-1}(1/2)$.\\
    We will formulate this corollary in a more general setting than in the rest of this section.
  
\begin{corollary}\label{cor:smoothing}
    Let $M$ and $N$ be connected semi-free Hamiltonian $S^1$-manifolds with proper momentum maps onto $[0,1]$, possibly with fixed points.
    For some $0<t<1$ and arbitrarily small $\eps>0$ such that $[t-\eps,t+\eps]$ is a regular interval for $M$, let $f_1$ and $f_2$ be (almost symplectic) $\mu-S^1$-diffeomorphisms 
    $$f_1 \colon \mu_M^{-1}([0,t+\eps/2]) \to \mu_N^{-1}([0,t+\eps/2]),$$
    $$f_2 \colon \mu_M^{-1}([t+\eps/2,1]) \to \mu_N^{-1}([t+\eps/2,1]).$$
    Assume that $(f_1)_{t+\eps/2}$ and $(f_2)_{t+\eps/2}$ are isotopic on $M_{t+\eps/2}$ through (almost symplectic) diffeomorphisms. Assume also that $M_t$ and hence any reduced space is simply-connected.
    
    Then
    there exists a $\mu$-$S^1$-diffeomorphism $f \colon M \to N$ that agrees with $f_1$ on $\mu_M^{-1}([0,t])$ and with $f_2$ on $\mu_M^{-1}([t+\eps,1])$.
    Moreover, in case $f_1$ and $f_2$ are almost symplectic, $sf^*\omega_N+(1-s)\omega_M$ is non-degenerate for all $s\in [0,1]$. 
  In that case, $f$ is almost symplectic, that is, also $[f^*\omega_N]=[\omega_M]$.

\end{corollary}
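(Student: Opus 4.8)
The plan is to leave $f$ equal to $f_1$ on $\mu_M^{-1}([0,t])$ and equal to $f_2$ on $\mu_M^{-1}([t+\eps,1])$, and to perform all of the gluing inside a short regular subinterval around the overlap level $s_0:=t+\eps/2$; I fix $0<\eta<\eps/2$, to be shrunk later as needed. On $\mu_M^{-1}([s_0-\eta,s_0])$ we already have the $\mu-S^1$-diffeomorphism $f_1$ and on $\mu_M^{-1}([s_0,s_0+\eta])$ the $\mu-S^1$-diffeomorphism $f_2$; the only obstruction to concatenating them into one smooth $\mu-S^1$-diffeomorphism is that their reduced-level families need not agree at $s_0$, and the isotopy hypothesis is exactly what bridges this. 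First I would fix an invariant metric and use the normalized gradient flows of $\mu_M$ and $\mu_N$ to identify $\mu_M^{-1}([s_0-\eta,s_0+\eta])\cong P^M_{s_0}\times[s_0-\eta,s_0+\eta]$ and likewise for $N$, with $\mu$ becoming the projection. Under these identifications $f_1$ and $f_2$ become $(p,t')\mapsto(\widehat f_i^{\,t'}(p),t')$ for smooth families $\widehat f_i^{\,t'}\colon P^M_{s_0}\to P^N_{s_0}$ of equivariant diffeomorphisms with $\widehat f_1^{\,s_0}=f_1^{s_0}$ and $\widehat f_2^{\,s_0}=f_2^{s_0}$.

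Next I would lift the isotopy. Let $g_s$, $s\in[0,1]$, be the given isotopy of (almost symplectic) diffeomorphisms $M_{s_0}\to N_{s_0}$ with $g_0=(f_1)_{s_0}$ and $g_1=(f_2)_{s_0}$. Each $g_s$ is isotopic to $g_0$, hence intertwines the Euler classes $e(P^M_{s_0})$ and $e(P^N_{s_0})$ (as $g_0$ does, being covered by the bundle map $f_1^{s_0}$), so — following the method of the proof of \Cref{lem:lift} — it lifts to an equivariant diffeomorphism of the total spaces, smoothly in $s$, and I can start the lift at $f_1^{s_0}$. This gives a smooth family $\widetilde g_s\colon P^M_{s_0}\to P^N_{s_0}$ with $\widetilde g_0=f_1^{s_0}$ and $\widetilde g_1$ some equivariant lift of $(f_2)_{s_0}$. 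Because $M_{s_0}$ is simply connected, $\widetilde g_1$ and $f_2^{s_0}$ differ by a null-homotopic $S^1$-valued function on $M_{s_0}$, so $\widetilde g_1$ is joined to $f_2^{s_0}$ through equivariant lifts of $(f_2)_{s_0}$; absorbing that path, I may assume $\widetilde g_1=f_2^{s_0}$.

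Then I would concatenate the three smooth families $\{\widehat f_1^{\,t'}\}_{t'\in[s_0-\eta,s_0]}$, $\{\widetilde g_s\}_{s\in[0,1]}$ and $\{\widehat f_2^{\,t'}\}_{t'\in[s_0,s_0+\eta]}$ — which match at $\widehat f_1^{\,s_0}=\widetilde g_0$ and at $\widetilde g_1=\widehat f_2^{\,s_0}$ — and reparametrize the result over $[s_0-\eta,s_0+\eta]$ with cutoffs, so that the resulting smooth family $F^{t'}\colon P^M_{s_0}\to P^N_{s_0}$ of equivariant diffeomorphisms equals $\widehat f_1^{\,t'}$ for $t'$ near $s_0-\eta$ and $\widehat f_2^{\,t'}$ for $t'$ near $s_0+\eta$, with matching derivatives at the seams. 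Transporting back through the gradient flows, $(p,t')\mapsto(F^{t'}(p),t')$ is a $\mu-S^1$-diffeomorphism of $\mu_M^{-1}([s_0-\eta,s_0+\eta])$ onto $\mu_N^{-1}([s_0-\eta,s_0+\eta])$ agreeing with $f_1$ near the bottom and with $f_2$ near the top; gluing it with $f_1$ on $\mu_M^{-1}([0,s_0-\eta])$ and with $f_2$ on $\mu_M^{-1}([s_0+\eta,1])$ yields the desired $\mu-S^1$-diffeomorphism $f\colon M\to N$, which (as $\eta<\eps/2$) equals $f_1$ on $\mu_M^{-1}([0,t])$ and $f_2$ on $\mu_M^{-1}([t+\eps,1])$. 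For the almost symplectic refinement I would observe that every reduced map of $F^{t'}$ lies in the compact family built from the reductions of the $\widehat f_i^{\,\cdot}$ and of $\widetilde g_\cdot$, each almost symplectic for reduced forms at levels within $2\eta$ of $s_0$; since the reduced forms vary smoothly in the level (with $[\omega^M_{t'}]=[\omega^M_{s_0}]+(t'-s_0)e(P^M)$, and similarly for $N$), \Cref{rem:almostsymplecticopen} and compactness give, after shrinking $\eta$, that every reduced map of $F^{t'}$ is almost symplectic for $(\omega^M_{t'},\omega^N_{t'})$. Since $H^2$ of the subcollar is carried by a single slice and the reduced maps are almost symplectic, $[f^*\omega_N]=[\omega_M]$ holds there, so \Cref{lem:isotopybase} applies and $f$ is almost symplectic on the subcollar; together with $f=f_1$ below and $f=f_2$ above, $sf^*\omega_N+(1-s)\omega_M$ is non-degenerate on all of $M$, and $[f^*\omega_N-\omega_M]$ vanishes in $H^2$ of each member of the open cover $\mu_M^{-1}([0,s_0))$, $\mu_M^{-1}((s_0-\eta,s_0+\eta))$, $\mu_M^{-1}((s_0,1])$; as each pairwise intersection deformation-retracts onto a (simply connected) reduced space, hence has vanishing $H^1$, the local primitives glue, giving $[f^*\omega_N]=[\omega_M]$.

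The main obstacle will be tracking the equivariant lifts carefully enough that the interpolating family $F^{t'}$ agrees with $f_1$, respectively $f_2$, on whole neighborhoods of the two seam levels — not merely at those levels — since this is precisely what makes the glued $f$ smooth and is the reason \Cref{cor:smoothing} is not immediate; in the almost symplectic version the second delicate point is choosing $\eta$ small enough that non-degeneracy of the straight-line homotopy survives uniformly along the entire interpolating family, for which the openness of non-degeneracy (\Cref{rem:almostsymplecticopen}) and compactness of the parameter space are the essential tools.
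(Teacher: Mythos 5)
Your construction of the glued $\mu$-$S^1$-diffeomorphism and the non-degeneracy of the straight-line homotopy are sound and close in spirit to the paper's proof: where the paper composes the reduced maps of $f_1$ with the cut-off isotopy, lifts via \Cref{lem:lift}, removes the residual $S^1$-gauge discrepancy using simple connectivity, and makes both maps product-like near the seam with \Cref{lem:smoothing}, you instead lift the isotopy horizontally on the level set and concatenate-and-reparametrize; both routes rest on the same openness/compactness argument (\Cref{rem:almostsymplecticopen}) and on \Cref{lem:isotopybase} for the collar, so up to that point your proposal is fine.

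The genuine gap is the last step, the global identity $[f^*\omega_N]=[\omega_M]$. Your Mayer--Vietoris argument rests on the claim that the pairwise overlaps have vanishing $H^1$ because they ``deformation-retract onto a (simply connected) reduced space.'' They do not: they retract onto the level set $P_{s_0}=\mu_M^{-1}(s_0)$, a circle bundle over the reduced space, and by the Gysin sequence $H^1(P_{s_0};\R)=0$ only when $e(P_{s_0})$ is non-torsion; if $e(P_{s_0})=0$ (e.g.\ $M=S^2\times B$ with the circle rotating the $S^2$ factor) then $H^1(P_{s_0};\R)\neq 0$. More importantly, vanishing of $[f^*\omega_N-\omega_M]$ on the two pieces $\mu_M^{-1}([0,s_0+\eta))$ and $\mu_M^{-1}((s_0-\eta,1])$ does not, by topology alone, force vanishing on $M$: Mayer--Vietoris only places the class in the image of the connecting map from $H^1$ of the overlap, and that image can be nonzero when fixed points lie above or below the regular window (for $S^2\times S^2$ with rotation on one factor, the pullback of the area form of that factor is closed, exact on both pieces, but not exact on $M$). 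Closing this step requires the extra structure that $\Omega=f^*\omega_N-\omega_M$ is invariant with $\iota_\xi\Omega=0$ because $f$ intertwines the momentum maps: in the free case the Duistermaat--Heckman formula does it, and in the presence of fixed points the paper invokes \Cref{lem:cohomologyclass}, whose proof uses equivariant extensions and Kirwan injectivity $H^*_{S^1}(M;\R)\to H^*_{S^1}(F;\R)$. Your argument as written omits exactly this ingredient, and it cannot be replaced by the purely topological gluing you propose.
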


\begin{proof}
Denote by  ${\{\Psi'_{r}\}_{r \in [0,1]}}$  the given isotopy from $(f_1)_{t+\eps/2}$ to $(f_2)_{t+\eps/2}$.
Let $\Psi_r\colon N_{t+\eps/2}\to N_{t+\eps/2}$ be the flow defined by $\{\Psi'_{r}\}$, that is, $$\Psi_r:=\Psi'_r\circ (\Psi'_0)^{-1}.$$
First, we extend $\Psi$  to  a path of isotopies $\{F_{s}\}_{s \in[0,t+\eps/2]}$ between $(f_1)_{s}$ and $(f'_1)_{s}$ for  a $\mu-S^1$-diffeomorphism 
\begin{equation}\label{eq:f'1}
f'_1 \colon \mu_{M}^{-1}([0,t+\eps/2]) \to \mu_{N}^{-1}([0,t+\eps/2]) \text{ such that }{f_{1}}|_{\mu_{M}^{-1}([0,t])}={f'_{1}}|_{\mu_{M}^{-1}([0,t])},
\end{equation}
and $f'_1$ is almost symplectic if $f_1$ and $f_2$ are.

For $0<\delta<\eps/2$, define a monotone smooth function 
$$\rho=\rho(\delta)\colon [0,t+\eps/2]\to [0,1]$$
that equals $1$ near $t+\frac{\eps}{2}$ and $0$ on $[0,t+\eps/2-\delta]$. Under the identification $M_s\cong M_{t+\eps/2}$ resp.\ $N_s\cong N_{t+\eps/2}$, $s\in [0,t+\eps/2]$ 
obtained from the normalized gradient flow of the momentum map,
we set
$$(f_1)'_{s}:=\Psi_{\rho(s)} \circ (f_1)_{s}\colon M_s\to N_s.$$
If $f_1$, $f_2$, and the $\Psi'_{r}$s are almost symplectic, the forms 
\[
    \omega(a)=a(f_1)_s^* \Psi_{\rho(s)}^* \omega^N_s+(1-a)\omega^M_s, \quad a\in [0,1],
    \]
    are non-degenerate for $s \in [0,t+\eps/2]$. Moreover,
    this is true for
    \[
    a(f_1)_{t+\eps/2}^* \Psi_{s'}^*\omega^N_{t+\eps/2}+(1-a)\omega^M_{t+\eps/2}, \quad a\in [0,1],
    \]
    and arbitrary $s'$ in $[0,1]$. Hence, since non-degeneracy is an open condition (see \Cref{rem:almostsymplecticopen}), there is $\delta>0$ small enough so that
    \[
    a(f_1)_{s}^* \Psi_{s'}^*\omega^N_{s}+(1-a)\omega^M_{s}, \quad a\in [0,1],
    \]
    is non-degenerate for all $s\in [t+\eps/2-\delta,t+\eps/2]$ and $s'$ in $[0,1]$.
    \\
    Further, there is an isotopy $F_s(s')$, $s'\in [0,1]$, between $(f_1)_s$ and $(f_1)'_s$ given by
    \[
    F_s(s'):=\Psi_{s'\rho(s)}\circ (f_1)_s.
    \]
     So, by \Cref{lem:lift}, we can lift $(f_1)'_{s}$ to an (almost symplectic) $\mu-S^1$-diffeomorphism $f'_1$ as in \eqref{eq:f'1}.
The  maps $f'_1$ and $f_2$ differ on $\mu_M^{-1}(t+\eps/2)$ only by a map $M_{t+\eps/2}\to S^1$, but since $M_{t+\eps/2}$ is simply-connected, any such map is nullhomotopic via some homotopy $h_s$. We define $f_1''$ by
    $$f_1''(x):=h_{\rho(\mu_M(x))}\cdot f_1'(x).$$ 
 
Next,  we apply the same fact on non-degeneracy and \Cref{lem:isotopybase} to show that we can locally modify both $f_1''$ and $f_2$ to (almost symplectic) $\mu-S^1$-diffeomorphisms whose restriction to $V_{\delta}:=P_{t+\eps/2} \times (t+\eps/2-\delta,t+\eps/2+\delta)$, for some $\delta>0$, factors as 
$(p,t') \mapsto ({{f_2}|_{P_{t+\eps/2}}}(p),t')$.
See \Cref{lem:smoothing} below.
This allows us to paste the maps to get a $\mu-S^1$-diffeomorphism $f \colon M \to N$.

Moreover, if $f_1$ and $f_2$ are almost symplectic, then we get  a $\mu-S^1$-diffeomorphism $f$ such that $sf^*\omega_N+(1-s)\omega_M$ is non-degenerate for all $s\in [0,1]$. 
If the $S^1$-action on $M$ and $N$ is free, the Duistermaat-Heckman formula implies that $[f^{*}\omega_N]=[\omega_M]$. Otherwise, the equality $[f^{*}\omega_N]=[\omega_M]$ is by \Cref{lem:cohomologyclass}, saying that for two symplectic forms to be cohomologous on $M$, it is enough that they are cohomologous on   $\mu_M^{-1}([0,t+\eps/2])$ and on $\mu_M^{-1}([t+\eps/2,1])$.
\end{proof}

In the proof, we used the following lemma.
\begin{lemma}\label{lem:smoothing}
Let $(M,\omega^M,\mu_M)$ and $(N,\omega^N,\mu_N)$ be connected Hamiltonian $S^1$-manifolds; assume that the circle action is {free} and the 
momentum maps are proper and onto $[0,1]$.
    Let $f \colon M \to N$ be a(n almost symplectic) $\mu-S^1$-diffeomorphism. For any $t\in [0,1]$ there are $0<\delta<\eps$
    and a(n almost symplectic) $\mu-S^1$-diffeomorphism $g\colon M\to N$ such that
    \begin{itemize}
        \item $g$ and $f$ agree outside $V_{\eps}:=P^M_t\times (t-\eps,t+\eps)$;
        \item when restricted to $P^M_t$, $f$ and $g$ are equal to the same map $h\colon P^M_t\to P^N_t$;
        \item $g$ is of the form $g(p,t')=(h(p),t')$ on $V_{\delta}:=P^M_t\times (t-\delta,t+\delta)$.
    \end{itemize}
\end{lemma}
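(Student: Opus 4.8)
\emph{Proof proposal.} The plan is to build $g$ from $f$ by a reparametrization in the momentum direction. Using the normalized gradient flow of $\mu_M$ (resp.\ $\mu_N$), identify $M\cong P^M_t\times[0,1]$ and $N\cong P^N_t\times[0,1]$ as $S^1$-manifolds so that $\mu_M$ and $\mu_N$ become projection onto the $[0,1]$-factor; in these coordinates $f$ takes the form $f(p,t')=(f^{t'}(p),t')$ for a smooth family of equivariant diffeomorphisms $f^{t'}\colon P^M_t\to P^N_t$, and $h:=f|_{P^M_t}$ equals $f^{t}$. Fix $0<\delta<\eps$, with $\eps$ small (constrained below), and a smooth $\tau\colon[0,1]\to[0,1]$ with $\tau\equiv t$ on $[t-\delta,t+\delta]$, $\tau=\mathrm{id}$ on $\{|t'-t|\ge\eps\}$ (all derivatives of $\tau-\mathrm{id}$ vanishing at $t\pm\eps$), and $|\tau(t')-t'|\le\eps$, $|\tau'|\le2$ throughout. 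Define
\[
g(p,t'):=(f^{\tau(t')}(p),t').
\]
Writing $g=\Theta\circ f$ with $\Theta(q,t'):=(f^{\tau(t')}\circ(f^{t'})^{-1}(q),t')$, the map $\Theta$ is the identity on $\{|t'-t|\ge\eps\}$ and hence extends to a $\mu$-$S^1$-diffeomorphism of $N$, so $g$ is a $\mu$-$S^1$-diffeomorphism of $M$ onto $N$. By construction $g=f$ outside $V_\eps$; since $\tau(t)=t$, $g|_{P^M_t}=f^{t}=h$; and on $V_\delta$ one has $\tau\equiv t$, so $g(p,t')=(h(p),t')$. This already gives the bare $\mu$-$S^1$-diffeomorphism statement, with no condition on $\eps$.

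For the almost symplectic case I would invoke \Cref{lem:isotopybase}, which reduces showing that $g$ is almost symplectic to showing that $[g^{*}\omega^N]=[\omega^M]$ and that the induced map $g_{t'}$ on reduced spaces is almost symplectic for every $t'$. For the cohomology class: $\Theta$ is isotopic to $\mathrm{id}_N$ through $\mu$-$S^1$-diffeomorphisms (replace $\tau$ by $r\tau+(1-r)\mathrm{id}$, $r\in[0,1]$), so $[\Theta^{*}\omega^N]=[\omega^N]$ and hence $[g^{*}\omega^N]=[f^{*}\Theta^{*}\omega^N]=[f^{*}\omega^N]=[\omega^M]$, the last equality because $f$ is almost symplectic. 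For the reduced maps: under the gradient-flow identification of all reduced spaces with $M_t$ and $N_t$, the map $g_{t'}$ is the descent $\overline{f^{\tau(t')}}$ of $f^{\tau(t')}$, viewed as a map with domain form $\omega^M_{t'}$ and target form $\omega^N_{t'}$. Since $f$ is almost symplectic, $\overline{f^{s}}^{*}\omega^N_{s}$ is isotopic to $\omega^M_{s}$ for every $s$; since $f$ is a $\mu$-$S^1$-diffeomorphism, $\overline{f^{s}}^{*}e(P^N)=e(P^M)$. Using $|\tau(t')-t'|\le\eps$ together with the Duistermaat--Heckman formula \eqref{eq:dh} for $N$ and for $M$ (and intertwining of Euler classes),
\[
[\overline{f^{\tau(t')}}^{*}\omega^N_{t'}]=[\overline{f^{\tau(t')}}^{*}\omega^N_{\tau(t')}]+(t'-\tau(t'))\,e(P^M)=[\omega^M_{\tau(t')}]+(t'-\tau(t'))\,e(P^M)=[\omega^M_{t'}],
\]
so the two forms are cohomologous. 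Finally, $M_t$ is compact (as $\mu_M$ is proper onto a compact interval) and $\omega^M_s,\omega^N_s,\overline{f^{s}}$ depend smoothly on $s$, so for $\eps$ small enough the path $a\mapsto a\,\overline{f^{\tau(t')}}^{*}\omega^N_{t'}+(1-a)\omega^M_{t'}$ is a $C^0$-small perturbation, uniformly in $t'$, of the non-degenerate path $a\mapsto a\,\overline{f^{s}}^{*}\omega^N_{s}+(1-a)\omega^M_{s}$ at $s=\tau(t')$, and hence is itself non-degenerate for all $a$ and $t'$. Thus every $g_{t'}$ is almost symplectic, and \Cref{lem:isotopybase} yields that $g$ is.

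The construction of $\tau$ and the verification that $g$ is a $\mu$-$S^1$-diffeomorphism with the three listed properties are routine. The step requiring care is the last one: one must choose $\eps$ small enough, uniformly over $t'$ and over $a\in[0,1]$, so that the reparametrized reduced forms remain non-degenerate, which is where the Duistermaat--Heckman correction of the cohomology class and the openness of non-degeneracy over the compact reduced space (cf.\ \Cref{rem:almostsymplecticopen}) both come in. The boundary cases $t=0$ and $t=1$ need no change beyond reading $V_\eps,V_\delta$ as the corresponding one-sided neighborhoods.
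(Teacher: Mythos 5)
Your proposal is correct and follows essentially the same route as the paper: the same reparametrization $g(p,t')=(f^{\tau(t')}(p),t')$ with a cutoff function equal to $t$ near $t$ and the identity outside $(t-\eps,t+\eps)$, followed by an appeal to \Cref{lem:isotopybase} after checking, via openness of non-degeneracy and compactness (uniform smallness of $\eps$), that each reduced map stays almost symplectic. Your explicit Duistermaat--Heckman verification of the cohomology conditions is a detail the paper leaves implicit, but it is the same argument in substance.
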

  Again, it is understood that $(0-\eps,0+\eps)=[0,\eps)$ and $P^M_0\times (0-\eps,0+\eps)=P^M_0\times [0,\eps)$, for example.

  We will only prove the
  case that $f$ is also almost symplectic. 
  The arguments required for the case that the maps are $\mu-S^1$-diffeomorphisms 
are included in the proof.
\begin{proof}

  For some $0<\delta<\eps$, let $\rho\colon [0,1]\to [0,1]$ be a monotone smooth function with the properties:
    \begin{itemize}
        \item $\rho(t')=t$ for all $t'\in [t-\delta,t+\delta]$,
        \item $\rho$ is the identity outside $[t-\eps,t+\eps]$.
    \end{itemize}
    Note that $\rho$ can be chosen arbitrarily close under the maximum norm to the identity map when $\eps$ is chosen to be small enough.\\
   Viewing $f$ as a smooth family $f^s$, $s\in [0,1]$, of $S^1$-equivariant diffeomorphisms $P^M_s\to P^N_s$, we define $g$ to be
    \[
    g\colon M\to N, \quad g(p,s)=(f^{\rho(s)}(p),s).
    \]
   For fixed $s$, $f_s^*\omega^N_s$ and $\omega^M_s$ are isotopic under the standard homotopy, so there is $r>0$ such that $f_s^*\omega^N_{s'}$ and $\omega^M_{s'}$ are isotopic under the standard homotopy, provided that $|s-s'|<r$. Since $s\in [0,1]$, we may choose $r$ to be universal for all $s$. So if we choose $\eps$ such that $\rho$ is closer than $r$ to the identity with respect to the maximum norm, then each $$g_s\colon (M_s,\omega^M_s)\to (N_s,\omega^N_s)$$ on the reduced spaces at level $s$ has the property that $g_s^*(\omega^N_s)$ is isotopic to $\omega^M_s$ under the standard homotopy. We are done in view of \Cref{lem:isotopybase}.
\end{proof}

The next proposition highlights the main application of rigidity in the proof of \Cref{thm:mainresult}. It will allow us to extend an isomorphism below a critical level to a level arbitrarily close to the critical level.

\begin{proposition}\label{prop:extension}
 Assume that $M$ and $N$ are connected free Hamiltonian $S^1$-manifolds with proper momentum maps onto $[0,1]$.
 The following statements hold.
 \begin{itemize}
     \item Given any $\mu-S^1$-diffeomorphism $f\colon \mu_M^{-1}([0,\eps])\to \mu_N^{-1}([0,\eps])$ for any $\eps\in (0,1)$, we find a $\mu-S^1$-diffeomorphism $M\to N$ that agrees with $f$ near the $0$-level set.
     \item Assume that $(M_0,\omega^M_{t\in [0,1]})$ is rigid (as in \Cref{def:rigid}). Given any isomorphism $f\colon \mu_M^{-1}([0,\eps])\to \mu_N^{-1}([0,\eps])$ for any $\eps\in (0,1)$, we find an isomorphism $M\to N$ that agrees with $f$ near the $0$-level set.
 \end{itemize}
\end{proposition}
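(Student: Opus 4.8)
Throughout, identify $M\cong P^M_0\times[0,1]$ and $N\cong P^N_0\times[0,1]$ by the normalized gradient flows of the momentum maps, write $B:=M_0$, and let $f_t\colon M_t\to N_t$ be the map induced by $f$ on reduced spaces; on $[0,\eps]$ these are diffeomorphisms (symplectomorphisms, in the second statement) intertwining the Euler classes $e(P^M_t)$, $e(P^N_t)$, which under the identifications are the $t$-independent classes $e(P^M)$, $e(P^N)$ of \Cref{not:elambda}. In both statements the plan is to produce a smooth family $g_t\colon M_t\to N_t$ over all of $[0,1]$, equal to $f_t$ for $t$ near $0$, intertwining the Euler classes, and — in the second statement — symplectic; then to lift it by \Cref{lem:lift} over a subinterval $[\eps',1]$ and to paste the lift onto $f$ near the regular level $\eps'$ using the local smoothing of \Cref{lem:smoothing}.

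For the first statement there is no symplectic constraint, so fix $0<\eps'<\eps$ and a smooth nondecreasing surjection $\rho\colon[\eps',1]\to[\eps',\eps]$ that is the identity near $\eps'$ and constant near $1$, set $g_t:=f_{\rho(t)}$ for $t\in[\eps',1]$, and apply \Cref{lem:lift} to the family $\{g_t\}_{t\in[\eps',1]}$ to obtain a $\mu$-$S^1$-diffeomorphism $\mu_M^{-1}([\eps',1])\to\mu_N^{-1}([\eps',1])$ that restricts to $f|_{P^M_{\eps'}}$ over level $\eps'$; its Euler-class hypothesis holds because $e(P^M), e(P^N)$ are $t$-independent and $f$ intertwines them. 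This map and $f|_{\mu_M^{-1}([0,\eps'])}$ coincide as maps of $P^M_{\eps'}$, so \Cref{lem:smoothing} lets us arrange both to factor as $(p,t')\mapsto(f|_{P^M_{\eps'}}(p),t')$ near level $\eps'$, and they then paste to a $\mu$-$S^1$-diffeomorphism $M\to N$ equal to $f$ near level $0$.

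In the second statement each $f_t$ is a symplectomorphism $(M_t,\omega^M_t)\to(N_t,\omega^N_t)$, so $(f_0)_*[\omega^M_0]=[\omega^N_0]$ and $(f_0)_*e(P^M)=e(P^N)$; by the Duistermaat--Heckman formula \eqref{eq:dh} it follows that $(f_0)_*[\omega^M_t]=[\omega^N_t]$ for every $t\in[0,1]$, not just for $t\le\eps$. Hence on $B$ the two smooth families of symplectic forms $\{\omega^M_t\}$ and $\{\sigma_t\}$, where $\sigma_t:=f_0^*\omega^N_t$, agree at $t=0$, are cohomologous at each $t$, and, by concatenating the Duistermaat--Heckman deformation $\{\omega^M_s\}_{s\le t}$ run backwards with $\{\sigma_s\}_{s\le t}$, are symplectic deformation equivalent to one another and to members of the rigid family $\{\omega^M_s\}$. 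The second clause of \Cref{def:rigid} then straightens this deformation, rel endpoints, into an isotopy from $\omega^M_t$ to $\sigma_t$, so Moser's theorem yields $\phi_t\in\mathrm{Diff}_0(B)$ with $\phi_t^*\sigma_t=\omega^M_t$, and $g_t:=f_0\circ\phi_t$ is a symplectomorphism $(M_t,\omega^M_t)\to(N_t,\omega^N_t)$ with $g_t^*e(P^N)=e(P^M)$ (as $\phi_t$ acts trivially on $H^2(B)$). Near $t=0$ one takes instead $\phi_t:=f_0^{-1}\circ f_t$, which lies in $\mathrm{Diff}_0(B)$ (it is a path from the identity) and already satisfies $\phi_t^*\sigma_t=\omega^M_t$ and $g_t=f_t$. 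To assemble these into one \emph{smooth} family $\{\phi_t\}_{t\in[0,1]}$ agreeing with $f_0^{-1}\circ f_t$ near $t=0$, one observes that the set of admissible corrections over level $t$, namely $\{\phi\in\mathrm{Diff}_0(B):\phi^*\sigma_t=\omega^M_t\}$, is a locally trivial bundle over $[0,1]$ with fiber $\mathrm{Symp}(B,\omega^M_t)\cap\mathrm{Diff}_0(B)$ — local triviality by a parametrized Moser argument, connectedness of the fiber being the first clause of \Cref{def:rigid} — hence trivial over the contractible interval $[0,1]$, so the partial section prescribed on $[0,\eps']$ extends smoothly to $[0,1]$. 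With $\{g_t\}_{t\in[0,1]}$ in hand, one finishes as in the first statement — lift by \Cref{lem:lift} over $[\eps',1]$, paste onto $f$ near level $\eps'$ by \Cref{lem:smoothing} — obtaining an \emph{almost symplectic} $\mu$-$S^1$-diffeomorphism $\hat f\colon M\to N$, equal to $f$ near level $0$ and with $[\hat f^*\omega^N]=[\omega^M]$. Finally, run the equivariant, $\mu$-preserving Moser argument on the straight-line path $s\hat f^*\omega^N+(1-s)\omega^M$, whose members are $S^1$-invariant symplectic forms (\Cref{lem:isotopybase}) admitting $\mu$ as momentum map and restricting on each level set to the constant form $\pi^*\omega^M_t$ (since $\hat f$ restricts there to a lift of the symplectomorphism $g_t$); the resulting correcting isotopy preserves the fibers of $\mu$ and is the identity near level $0$ (where $\hat f^*\omega^N=\omega^M$), so composing with $\hat f$ gives the desired isomorphism.

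The main obstacle is the construction in the second statement of the smooth one-parameter family of corrections $\phi_t$ compatibly with the prescribed values near $t=0$: this is exactly where the rigidity hypothesis is used in full — its second clause to convert the Duistermaat--Heckman deformation into an isotopy so that Moser applies, its first clause so that the bundle of admissible corrections has connected fibers and the prescribed partial section extends. The remaining ingredients — the two liftings, the pasting near an interior regular level, and the concluding equivariant Moser correction — are routine given \Cref{lem:lift}, \Cref{lem:isotopybase}, and \Cref{lem:smoothing}.
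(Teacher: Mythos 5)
Your overall architecture is the same as the paper's (extend across $[\eps,1]$ as a $\mu$-$S^1$-diffeomorphism, correct the family of reduced forms using rigidity plus Moser so the map becomes almost symplectic, lift with \Cref{lem:lift}, paste near an interior regular level, and finish with the $\mu$-preserving Moser argument of \Cref{lem:moserwithoutfp}), and your first bullet and your final step are fine. The gap is in the one step that carries all the difficulty: producing the correcting family $\phi_t$ \emph{smoothly in $t$} and compatibly with the prescribed values $f_0^{-1}\circ f_t$ near $t=0$. Your justification — that $\{(t,\phi)\in[0,1]\times\mathrm{Diff}_0(B):\phi^*\sigma_t=\omega^M_t\}$ is a locally trivial bundle with fiber $\mathrm{Symp}(B,\omega^M_t)\cap\mathrm{Diff}_0(B)$, hence trivial over $[0,1]$, so the partial section extends — is not established and is genuinely problematic. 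The fibers are torsors over groups that vary with $t$ and admit no natural identification: the forms $\omega^M_t$ for distinct $t$ are not even cohomologous (Duistermaat--Heckman), so there is no Moser map conjugating $\mathrm{Symp}(B,\omega^M_{t_0})$ to $\mathrm{Symp}(B,\omega^M_t)$, and the parametrized-Moser local sections you invoke exist only through a given point $\phi_0$ on a $t$-interval whose size depends on $\phi_0$, so they do not assemble into product charts. Even granting local sections through every point together with connected fibers, the standard gluing argument breaks down for the same reason: two local sections near a level $t_1$ differ by $c(t)\in\mathrm{Symp}(B,\omega^M_t)\cap\mathrm{Diff}_0(B)$, and a correction $c(t_1)$ cannot be transported to nearby levels $t$, so interpolating the two sections is not a formal consequence of fiber connectedness.

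This is exactly the difficulty the paper's \Cref{lem:equivalent}(2),(3) (proved in the appendix by an open--closed argument in $t$ together with the smoothing \Cref{lem:smoothingfamilies}) is built to overcome: instead of correcting pointwise in $t$ and then trying to recover smoothness, one first produces a two-parameter family $\omega_{s,t}$ of symplectic forms joining the two $t$-families, constant in $s$ for $t$ near $0$, and then runs Moser in $s$ smoothly in $t$, obtaining in one stroke a smooth family $f_{1,t}$ with $f_{1,t}^*\omega'_t=\omega^M_t$ and $f_{1,t}=\mathrm{id}$ near $t=0$. Your argument can be repaired by following this route: define $\sigma_t$ as the pullback of $\omega^N_t$ under the reduced maps of the $\mu$-$S^1$-extension you already built in the first bullet (so that $\sigma_t=\omega^M_t$ on a whole interval $[0,\eps']$, not merely at $t=0$), then quote \Cref{lem:equivalent}(2),(3) and lift the resulting two-parameter family by \Cref{lem:lift}; the remainder of your proof (pasting via \Cref{lem:smoothing}, respectively \Cref{cor:smoothing}, and the concluding application of \Cref{lem:moserwithoutfp}) then goes through and essentially coincides with the paper's proof.
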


In the proof of the proposition we will use Gonzales' definition of equivalence for families of symplectic forms on a compact manifold.

\begin{definition}\label{def:equivalent}\cite[Definition 1.4]{Go11}
        Let $B$ be a compact manifold. Let $\omega_t$ and $\omega'_t$, $t\in I=[t_0,t_1]$, be families of symplectic forms on $B$ such that $[\omega_t]=[\omega'_t]$ in $H^2(M;\R)$ for all $t\in I$. We say that $\omega_t$ and $\omega'_t$ are \textbf{equivalent} if there exists a smooth family $\omega_{s,t}$ of symplectic forms such that
        \begin{equation} \label{eq:equiv}
        \partial_s [\omega_{s,t}]=0,\quad \text{and} \quad \omega_{0,t}=\omega_{t}, \,\, \omega_{1,t}=\omega'_{t}
        \text{
        for all }(s,t)\in [0,1]\times I.
        \end{equation}
    \end{definition}

\begin{lemma}\label{lem:equivalent} (cf. \cite[Lemma 3.4]{Go11}.)
     Let $(B,\omega_t)$ be rigid, and let $\omega'_t$ be any family of symplectic forms on $B$ such that $[\omega_t]=[\omega'_t]$ in $H^2(M;\R)$, for all $t\in I=[t_0,t_1]$, and $\omega_{t_0}=\omega'_{t_0}$.
     \begin{enumerate}
         \item Then $\omega_t'$ is equivalent to $\omega_t$. 
         \item Furthermore, if there exists $R>t_0$ such that $\omega_t=\omega'_t$ for all $t_0\leq t\leq R$, then there exists a smooth family $\omega_{s,t}$ such that $\partial_s\omega_{s,t}=0$ for all $t_0\leq t\leq R$. 
         \item If case (2) holds, there also exists a smooth family $f_{s,t}$ of diffeomorphisms $B\to B$ such that $f_{s,t}=\id$ for $t_0\leq t\leq R-\eps$ and $f_{s,t}^*\omega_{s,t}=\omega_{t_0,t}$.
     \end{enumerate}
\end{lemma}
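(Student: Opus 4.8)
I would treat the three assertions in turn, (1) being the analogue of \cite[Lemma 3.4]{Go11} and (2)--(3) relative refinements of it. Write $c(t):=[\omega_t]=[\omega'_t]\in H^2(B;\R)$ and recall that $\omega_{t_0}=\omega'_{t_0}$. The first observation is that producing a family $\omega_{s,t}$ as in \Cref{def:equivalent} is the same as producing a path, inside the space $\mathcal D$ of smooth families $\{\beta_t\}_{t\in I}$ of symplectic forms on $B$ with $[\beta_t]=c(t)$ for every $t$, from $\{\omega_t\}_t$ to $\{\omega'_t\}_t$: the conditions $\omega_{0,t}=\omega_t$ and $\omega_{1,t}=\omega'_t$ are the endpoints of the path, $\partial_s[\omega_{s,t}]=0$ is membership in $\mathcal D$, and $\omega_{t_0}=\omega'_{t_0}$ makes the requirement at $t_0$ automatic (the loop $s\mapsto\omega_{s,t_0}$ then starts and ends at $\omega_{t_0}$). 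So (1) asserts precisely that $\{\omega_t\}_t$ and $\{\omega'_t\}_t$ lie in one path-component of $\mathcal D$.

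To prove (1) I would reproduce the argument of \cite[Lemma 3.4]{Go11}. Its engine is the second bullet of \Cref{def:rigid}: any \emph{deformation} between two cohomologous symplectic forms that are deformation equivalent to $\omega_{t_0}$ can be homotoped, rel endpoints, into an \emph{isotopy} (a path through cohomologous forms). One applies this to concatenations assembled from the two given families --- for instance the deformation that runs backwards along $\{\omega_t\}$ from $\omega_{t_1}$ to $\omega_{t_0}=\omega'_{t_0}$ and then forwards along $\{\omega'_t\}$ to $\omega'_{t_1}$, whose endpoints $\omega_{t_1},\omega'_{t_1}$ are cohomologous and deformation equivalent to $\omega_{t_0}$ --- to straighten it into an isotopy, and then transports the resulting homotopy back into the $t$-direction. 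At each step where a symplectic form must be moved, one moves it only by the $\mathrm{Diff}_0$-diffeomorphisms supplied by Moser's theorem, which act trivially on $H^2$, so that all cohomology classes stay equal to $c(t)$; a leftover loop of deformations is contracted by invoking rigidity once more, the first bullet of \Cref{def:rigid} entering here to dispose of a $\pi_1$-type obstruction. The output is the desired path in $\mathcal D$, i.e.\ the family $\omega_{s,t}$.

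For (2), suppose in addition $\omega_t=\omega'_t$ on $[t_0,R]$. On that subinterval the two families coincide, so there is nothing to interpolate there; choosing every auxiliary isotopy, homotopy and cut-off used in the construction of (1) to be the identity deformation for $t\le R$ (these were only needed to interpolate between $\{\omega_t\}$ and $\{\omega'_t\}$), the resulting family satisfies $\omega_{s,t}=\omega_t$ for all $s$ and all $t\le R$, that is $\partial_s\omega_{s,t}=0$ there. For (3), I would run Moser's argument with $t$ as a parameter. Since $\partial_s[\omega_{s,t}]=0$, the closed $2$-form $\partial_s\omega_{s,t}$ is exact, so one may choose a smooth family of $1$-forms $\sigma_{s,t}$ with $d\sigma_{s,t}=\partial_s\omega_{s,t}$ that vanish wherever $\partial_s\omega_{s,t}$ vanishes --- for example the Green-operator primitive $d^{*}G(\partial_s\omega_{s,t})$ for a fixed Riemannian metric, or any smooth primitive multiplied by a cut-off in $t$ supported where $\partial_s\omega_{s,t}\ne0$ (the latter choice being what costs the $\eps$). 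Let $X_{s,t}$ be the vector field with $\iota_{X_{s,t}}\omega_{s,t}=-\sigma_{s,t}$ (it exists and is smooth in $(s,t)$ since $B$ is compact and $\omega_{s,t}$ non-degenerate), and let $f_{s,t}$ be its flow in $s$ starting at $f_{0,t}=\mathrm{id}$; then $\{f_{s,t}\}$ is a smooth family of diffeomorphisms of $B$, the identity $\partial_s(f_{s,t}^{*}\omega_{s,t})=f_{s,t}^{*}(\partial_s\omega_{s,t}+d\,\iota_{X_{s,t}}\omega_{s,t})=0$ gives $f_{s,t}^{*}\omega_{s,t}=\omega_{0,t}\,(=\omega_t)$, and by (2) we have $\partial_s\omega_{s,t}=0$, hence $\sigma_{s,t}=0$ and $X_{s,t}=0$, for all $t\le R$ outside a cut-off region of width $\eps$, whence $f_{s,t}=\mathrm{id}$ for $t_0\le t\le R-\eps$.

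The main obstacle is Assertion (1): rigidity only straightens \emph{deformations} into isotopies, and the straightening homotopy does not preserve cohomology classes, so the substance of the proof is the bookkeeping that keeps every form in the prescribed class $c(t)$ while the forms are shuffled around --- precisely the content of \cite[Lemma 3.4]{Go11}, which I would follow. Assertions (2) and (3) are then routine, the only care being where the auxiliary constructions are supported.
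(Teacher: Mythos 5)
Your treatment of Item (3) is essentially the paper's: the paper also runs a parametrized Moser argument (\Cref{rem:Moser}), using the Hodge-theoretic primitive $d|_{\mathrm{im}(d^*)}^{-1}(\partial_s\omega_{s,t})$, which vanishes exactly when $\partial_s\omega_{s,t}$ does, so that the Moser vector field is zero for $t\le R$ (with that choice one even gets $f_{s,t}=\id$ on all of $[t_0,R]$, not just $[t_0,R-\eps]$). Item (1) is in both cases deferred to \cite[Lemma 3.4]{Go11}, so your sketch of that argument is not at issue.

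The genuine gap is in Item (2). You dispose of it by asserting that "every auxiliary isotopy, homotopy and cut-off used in the construction of (1)" can be chosen to be trivial for $t\le R$. But the only tool rigidity gives you (\Cref{def:rigid}) is that a deformation between cohomologous forms can be homotoped, \emph{rel its endpoints}, into an isotopy; it gives no control whatsoever over what that homotopy does on a sub-interval of the $t$-parameter, and in Gonzales' construction the straightening acts on concatenated paths whose parameter range sweeps through all of $[t_0,t_1]$, including $[t_0,R]$ where you need the family to stay put. Neither you nor the paper reproduces Gonzales' proof of (1), so "the auxiliary data were only needed where the families differ" is an unverified claim, and a relative version of the rigidity axiom is exactly what is \emph{not} available. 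The paper instead proves (2) by a separate continuity argument (\Cref{app:extrafour}): it considers the set $\mathcal{D}$ of endpoints $T$ up to which a family $\omega_{s,t}$ with $\partial_s\omega_{s,t}=0$ for $t\le R$ exists, and shows $\mathcal{D}$ is open (linear interpolation near $R$; parametrized Moser plus a reparametrization in $t$ and openness of non-degeneracy in general) and closed (rigidity applied only on $[T-\eps,T]$, i.e.\ away from $[t_0,R]$, then glued to the existing family using \Cref{lem:prepsmoothing} and \Cref{lem:smoothingfamilies}, a gluing that manifestly preserves $s$-independence on $[t_0,R]$). Some argument of this kind — which localizes the use of rigidity away from $[t_0,R]$ and carries the relative condition through each step — is needed; your one-line reduction of (2) to (1) does not supply it.
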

 Item (1) is simply the statement in \cite[Lemma 3.4]{Go11}. The proof of Item (2) is similar to the proof of \cite[Lemma 3.4]{Go11}. Item (3) is by Moser's method.
      We prove Items (2) and (3) in \Cref{app:extrafour}.\\

To prove the proposition we will also use Moser's trick. 
 Since the action is free and the manifold is not closed, we need to argue that Moser's method still works. For the application of Moser's trick in case fixed points exist, see \Cref{rem:moserwithfixedpoints}.
 
\begin{lemma}\label{lem:moserwithoutfp}
Let $f$ be an almost symplectic $\mu-S^1$-diffeomorphism from $(M,\omega^M)$ to $(N,\omega^N)$.
     The isotopy of forms $\omega(s)=sf^*\omega^N+(1-s)\omega^M$  between $f^{*}\omega^N$ and $\omega^M$ integrates to an isotopy of $\mu-S^1$-diffeomorphisms 
  $$\Psi_s\colon M\to M \text{ such that }\Psi_s^*\omega(s)=\omega(0)=\omega^M.$$
  In particular, $f(s):=f\circ \Psi_s$ is an isotopy of $\mu-S^1$-diffeomorphisms $M \to N$ such that $f(1)^*\omega^N=\omega^M$.\\
    If, moreover, $f$ is a symplectomorphism when restricted to $V:=\mu_M^{-1}([0,\eps])$ with $\eps>0$, then the isotopy $\omega(s)$ is constant on $V$, and the corresponding isotopies $\Psi_s$ respectively $f(s)$ on $M$ may be chosen to have support outside $V$ as well.
\end{lemma}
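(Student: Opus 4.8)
The plan is to run Moser's trick in the usual way, but carefully check at each step that the non-compactness and the freeness of the action cause no trouble, and that the support condition can be enforced. Since $f$ is almost symplectic, the family $\omega(s) = s f^*\omega^N + (1-s)\omega^M$ consists of symplectic forms in the fixed cohomology class $[\omega^M]$; hence $\partial_s \omega(s) = f^*\omega^N - \omega^M$ is exact. Here I would like to produce a \emph{canonical} primitive one-form $\sigma_s$ with $d\sigma_s = f^*\omega^N - \omega^M$ that is $S^1$-invariant, annihilates the fundamental vector field $\xi$, and has the property that it vanishes on $V$ whenever $f$ is a symplectomorphism there. The natural construction is to work chart by chart on the neighborhoods $U_\delta = P_t \times (t-\delta, t+\delta)$ with the explicit forms \eqref{omegaU}, where both $f^*\omega^N$ and $\omega^M$ split as $(\omega_t + (t'-t)d\alpha) + \alpha \wedge dt'$; on such a chart one writes the difference in terms of the difference of the reduced forms and of the connection forms, which are cohomologous on the base, and applies a Moser argument downstairs (on the simply-connected or at least finite-type reduced space $M_t$) together with a primitive in the $dt'$-direction. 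One then patches the local primitives with an invariant partition of unity subordinate to the cover by such charts; invariance and the condition $\iota_\xi \sigma_s = 0$ are preserved by averaging over $S^1$ and by contracting appropriately. On $V$, where $f^*\omega^N = \omega^M$, the difference is identically zero, so we may simply take $\sigma_s \equiv 0$ there.

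Given such $\sigma_s$, define the time-dependent vector field $X_s$ by $\iota_{X_s}\omega(s) = -\sigma_s$, which is well-defined since each $\omega(s)$ is non-degenerate. Because $\sigma_s$ is $S^1$-invariant and $\omega(s)$ is $S^1$-invariant, $X_s$ is $S^1$-invariant, so its flow $\Psi_s$ (when it exists) is equivariant. The standard Cartan computation gives $\frac{d}{ds}\Psi_s^*\omega(s) = \Psi_s^*\big(\mathcal{L}_{X_s}\omega(s) + \partial_s\omega(s)\big) = \Psi_s^*\big(d\iota_{X_s}\omega(s) + \partial_s\omega(s)\big) = \Psi_s^*\big(-d\sigma_s + d\sigma_s\big) = 0$, so $\Psi_s^*\omega(s) = \omega^M$. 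For the momentum-map condition: since $\iota_\xi\sigma_s = 0$, we get $\iota_\xi\iota_{X_s}\omega(s) = -\iota_\xi\sigma_s = 0$; on the other hand $\iota_\xi\omega(s) = -d\mu$ because both $f^*\omega^N$ and $\omega^M$ have $\iota_\xi(\cdot) = -d\mu$ (here I use that $f$ is a $\mu$-$S^1$-diffeomorphism, so $f^*\mu_N = \mu_M$ and $f_*\xi = \xi$). Hence $d\mu(X_s) = -\omega(s)(\xi, X_s) = 0$, so $\mu$ is constant along the flow; thus $\Psi_s$ intertwines the momentum map and is a $\mu$-$S^1$-diffeomorphism. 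On $V$ we arranged $\sigma_s \equiv 0$, hence $X_s \equiv 0$ and $\Psi_s$ is the identity there, giving the support statement; outside $V$ the support of $\Psi_s$ is controlled by the support of $\sigma_s$, which we may take to be a prescribed slightly larger neighborhood.

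The main obstacle is \textbf{completeness of the flow of $X_s$}, i.e.\ making sense of $\Psi_s$ on the non-compact $M$ for all $s \in [0,1]$. The key point is that $X_s$ is tangent to the level sets $\mu_M^{-1}(c)$, each of which is compact (since $\mu_M$ is proper with image $[0,1]$); a vector field that preserves a proper function's level sets and is defined on each compact level set has a complete flow. So integrability is automatic once the level-preserving property $d\mu(X_s) = 0$ is established, which is exactly what the computation above gives. The remaining routine points — that $\omega(s)$ stays non-degenerate for all $s$ (given), that one can make the primitives $\sigma_s$ depend smoothly on $s$ (clear from the explicit construction), and that the patched one-form still annihilates $\xi$ after averaging — I would relegate to remarks or the appendix, as the paper does for similar Moser arguments. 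Finally, $f(s) := f \circ \Psi_s$ is then an isotopy of $\mu$-$S^1$-diffeomorphisms with $f(1)^*\omega^N = \Psi_1^*f^*\omega^N = \Psi_1^*\omega(1) = \omega^M$, and $f(s)$ has support outside $V$ whenever $f$ does there, completing the proof.
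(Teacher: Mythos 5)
Your overall Moser strategy (find an invariant primitive of $\partial_s\omega(s)$ that annihilates $\xi$, define the Moser field, check $d\mu(X_s)=0$ so the flow preserves the compact level sets and hence is complete) is exactly the paper's, and those later steps are fine. The gap is in the one step that actually needs an argument: the construction of the global primitive $\sigma_s$. Patching local primitives with a partition of unity does not produce a primitive: if $d\sigma_i=\Omega$ on $U_i$ and $\{\rho_i\}$ is a partition of unity, then $d\bigl(\sum_i \rho_i\sigma_i\bigr)=\Omega+\sum_i d\rho_i\wedge\sigma_i$, and the error term has no reason to vanish; nor can it be removed by adjusting on overlaps, since the overlaps $P_t\times(\text{interval})$ can have nontrivial $H^1$. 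Moreover, the support statement is not a pointwise matter of "taking $\sigma_s\equiv 0$ on $V$": what you need is a \emph{single global} primitive of $\Omega$ vanishing on all of $V$, i.e.\ the vanishing of a class in relative cohomology, and your proposal never establishes this. (Your side remark that invariance and $\iota_\xi\sigma_s=0$ can be arranged "by averaging and contracting appropriately" is also not justified: subtracting $(\iota_\xi\sigma_s)\,\alpha$ for a connection form $\alpha$ changes $d\sigma_s$.)

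The paper's proof handles exactly these points by working on the quotient. Since $f$ is a $\mu$-$S^1$-diffeomorphism, $\iota_\xi\Omega=d f^*\mu_N-d\mu_M=0$, so $\Omega$ is basic and descends to a closed $2$-form $\omega'$ on $M/S^1$; because $f$ is almost symplectic on every reduced space, $[\omega']=0$ in $H^2(M/S^1;\R)$, and when $f$ is symplectic on $V$ one gets $[\omega']=0$ in $H^2(M/S^1,V/S^1;\R)$ because $V/S^1\hookrightarrow M/S^1$ is a homotopy equivalence (here $M\cong P_0\times[0,1]$, $V\cong P_0\times[0,\eps]$). Choosing a primitive $\beta'$ downstairs vanishing on $V/S^1$ and pulling it back gives a primitive $\beta=\pi^*\beta'$ that is automatically invariant, satisfies $\iota_\xi\beta=0$, and vanishes on $V$ — precisely the properties your local construction was meant to deliver. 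If you replace your chart-by-chart patching with this descent-plus-relative-cohomology argument, the rest of your proof goes through as written.
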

\begin{proof}
    We will treat both cases ($f$ being a symplectomorphism on $V$ or not) at the same time. We set
    $$\Omega:=\partial_s \omega(s)=f^*(\omega^N)-\omega^M.$$
    Note that $\Omega$ descends to a form $\omega'$ on $M/S^1$, since it is invariant under the $S^1$-action and $$f^*(\omega^N)(\xi)-\omega^M(\xi)=df^*(\mu_N)-d\mu_M=0.$$
    Since $f$ is almost symplectic, each $f_t \colon M_t \to N_t$ is almost symplectic, and so $0=[\omega']\in H^2(M/S^1;\R)$. If, moreover, $f$ is a symplectomorphism on $V$, then $\omega'$ vanishes on $V/S^1$, so it defines an element in the relative deRham cohomology group $H^2(M/S^1,V/S^1;\R)$. Since the inclusions $V\cong M_0\times [0,\eps]\hookrightarrow M\cong M_0\times [0,1]$ resp.\ $V/S^1\hookrightarrow  M/S^1$ are homotopy equivalences, it follows that $0=[\omega']\in H^2(M/S^1,V/S^1;\R)$. So there is a one-form $\beta'$ on $M/S^1$ with 
    $$d\beta'=\omega'$$
    that vanishes on $V/S^1$ if $\omega'$ does. The pullback $\beta$ of $\beta'$ under $\pi\colon M\to M/S^1$ is then a one-form whose differential is $\Omega$. 
    Define a vector field $X_s$ by 
    $$\beta=\omega(s)(\cdot,X_s).$$ We get that $X_s$ is a Moser vector field for $\omega(s)$, i.e., $\partial_{s}\omega(s)+\mathcal{L}_{X_s}\omega(s)=0$.
   The vector field $X_s$ leaves the momentum map $\mu_M=(1-s)\mu_M+sf^*\mu_N$ invariant, since
    $$d((1-s)\mu_M+sf^*\mu_N)(X_s)=\omega(s)(\xi,X_s)=\beta(\xi)=0.$$
   Since $\mu_M$ is proper, this implies that the flow $\{\Psi_s\}$ of $X_s$ on $M$ is well-defined. Since $X_s$ is a Moser vector field for $\omega(s)$, we have $${\Psi}_s^{*}\omega(s)=\omega^M.$$ Also, the flow preserves the momentum map and thus is a $\mu-S^1$-diffeomorphism. 
   If, moreover, $\beta$ vanishes on $V$, the flow is supported away from $V$. This completes the proof.
\end{proof}

\begin{proof}[Proof of \Cref{prop:extension}]
Consider an isomorphism (or $\mu-S^1$-diffeomorphism) $f\colon \mu_M^{-1}([0,\eps])\to \mu_N^{-1}([0,\eps])$, with $\eps\in (0,1)$.
    Using the gradient flow of $\mu$ to identify $M\cong P_0\times [0,1]$, we can extend $f^{\eps}\colon \mu_M^{-1}(\eps)\to \mu_N^{-1}(\eps)$ to a $\mu-S^1$-diffeomorphism $\mu_M^{-1}([\eps,1])\to \mu_N^{-1}([\eps,1])$. 
    By \Cref{cor:smoothing}, we can piece the latter map and $f$ to obtain a $\mu-S^1-$diffeomoprhism $$f''\colon M\to N$$ that agrees with $f$ near $\mu_M^{-1}(0)$, already showing the first item.\\
    
     For the second, now consider the two families of reduced forms: $\omega^M_t$ and $$\omega'_t:=(f'')^*\omega^N_t,$$ with $t\in I$. By the setting of $f''$ we have $\omega_0=\omega'_0$.  
     Moreover, the Duistermaat-Heckman formula $[\omega_{t}]=[\omega_{t'}]+(t'-t)e(P)$ implies that $[\omega_t]=[\omega'_t]$ for $t\in I$. 
     Hence, since by assumption $(M_0,\omega^M_{t \in [0,1]})$ is rigid, \Cref{lem:equivalent} applies. We deduce that the two families
     are equivalent in the sense of \Cref{def:equivalent},  i.e., there is a smooth family $\omega_{s,t}$ on $M_0$ such that \eqref{eq:equiv} holds.
Moreover, by the second part of the lemma, since $\omega^M_t$ and $\omega'_t$ agree for $t$ near $0$, it may be assumed that the smooth familiy $\omega_{s,t}$ does not depend on $s$ for $t$ near $0$, and furthermore, there is a smooth family of diffeomorphisms $f_{s,t} \colon M_0  \to M_0$ such that $f_{s,t}$ is the identity near $0$ and $f_{s,t}^{*}{\omega_{s,t}}=\omega_{0,t}$. In particular 
      $$f_{1,t}^{*}{\omega'_t}=f_{1,t}^{*}{\omega_{1,t}}=\omega_{0,t}=\omega^M_t.$$
      By \Cref{lem:lift}, the $2$-parameter family $f_{s,t}$ lifts smoothly in $s$ to a  smooth family of $\mu-S^1$-diffeomorphisms $g(s)\colon M\to M$ that is the identity near level $0$. The lift $g(1)$ of the $1$-parameter family $f_{1,t}$ is almost symplectic with respect to $\omega^M$ and $\omega'$. So 
     $$f':=f''\circ g(1)$$ is almost symplectic with respect to $\omega^M$ and $\omega^N$, and equals $f$ near level $0$.
     We now apply \Cref{lem:moserwithoutfp} to get an isotopy $f'(s)$ of $\mu-S^1$-diffeomorphisms $M \to N$ such that $f'(1)^{*}\omega^N=\omega^M$. Moreover, the isotopy $f'(s)$ has support away from level $0$ so $f'(1)$ equals $f$ near $\mu_M^{-1}(0)$, and is an extension as required.

\end{proof}

\section{Isomorphisms of neighborhoods of critical levels}\label{sec:neighborhoods}

Another ingredient in our proof of \Cref{thm:extending-g} is understanding the implications of having the same $*$-small fixed point data on extending a symplectomorphism of reduced spaces below the critical level to a neighbourhood of the critical level, 
as we do in \Cref{lem:psi_textremum}, \Cref{lem:connected<4}, and \Cref{lem:nonextremal-2}. We assume the following setting.

\begin{Setting}
\label{set:merged}

Let $M^1=(M^1,\omega^1,\mu_1)$ and $M^2=(M^2,\omega^2,\mu_2)$ be connected symplectic semi-free Hamiltonian manifolds of dimension six whose momentum maps $\mu_1$ and $\mu_2$ are proper and have bounded images.
     
Assume that $\lambda$ is a critical value for both $\mu_1$ and $\mu_2$, either non-extremal for both or maximal (minimal) for both.
Assume that $M^1$ and $M^2$ have the same $*$-small fixed point data at $\lambda$.
  
  If $\lambda$ is {\bf non-extremal}, assume that the connected components of the fixed point set at $\lambda$ are either points or \textbf{exceptional spheres}, that is, symplectically embedded spheres of self-intersection $-1$ in $M^{i}_{\lambda}$.\\  
If $\lambda$ is {\bf extremal} assume that the fixed point set is simply connected.
\end{Setting}

In some of the claims, we assume that the reduced space of dimension four at the critical level is a symplectic rational surface. 
This assumption  allows us to apply the 
Gromov-Seiberg-Witten-Taubes theory.

\begin{theorem} \cite{Mc96}, \cite[Conclusion in p.\ 17]{Sa13}. \label{thm:cohomologousformsarediffeomorphic}
    Let $(N^1,\omega^1)$ and $(N^2,\omega^2)$ be symplectic rational surfaces and $f\colon N^1\to N^2$ a diffeomorphism that satisfies $f^*[\omega^2]=[\omega^1]$. Then there is a symplectomoprhism $(N^1,\omega^1)\to (N^2,\omega^2)$ whose induced map on homology agrees with that of $f$.
\end{theorem}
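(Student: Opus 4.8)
The plan is to use $f$ to reduce the statement to comparing two \emph{cohomologous} symplectic forms on a single rational surface, and then to quote the uniqueness of symplectic structures on rational surfaces in its strong form (isotopy, not merely diffeomorphism), so that Moser's method produces a symplectomorphism isotopic to the identity, which in particular acts trivially on homology.

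First I would transport $\omega^1$ along $f$: put $\omega':=(f^{-1})^{*}\omega^1$, a symplectic form on $N^2$. Then $f\colon (N^1,\omega^1)\to (N^2,\omega')$ is by construction a symplectomorphism, and the hypothesis $f^{*}[\omega^2]=[\omega^1]$ becomes $[\omega']=(f^{-1})^{*}[\omega^1]=[\omega^2]$ in $H^2(N^2;\R)$ (in particular $\omega'$ and $\omega^2$ induce the same orientation of $N^2$, since $[\omega']^2=[\omega^2]^2$ in $H^4(N^2;\R)$). So it suffices to find a symplectomorphism $\psi\colon (N^2,\omega')\to (N^2,\omega^2)$ that induces the identity on $H_{*}(N^2)$: then $\psi\circ f$ is the desired map, because $(\psi\circ f)_{*}=\psi_{*}\circ f_{*}=f_{*}$.

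Next I would invoke the classification of symplectic structures on rational $4$-manifolds. Any two symplectic forms on a rational $4$-manifold inducing the same orientation lie in one deformation class — this is where Seiberg--Witten--Taubes theory and the results on the symplectic canonical class and the symplectic cone are used — and by McDuff's theorem that on rational (and many ruled) $4$-manifolds symplectic deformation equivalence implies symplectic isotopy (see \cite{Mc96} and the summary in \cite[p.\ 17]{Sa13}), the cohomologous forms $\omega'$ and $\omega^2$ are in fact \emph{isotopic}: there is a smooth path $\{\omega_s\}_{s\in[0,1]}$ of symplectic forms on $N^2$ with $\omega_0=\omega'$, $\omega_1=\omega^2$ and $[\omega_s]=[\omega']$ for all $s$. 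Then, since $N^2$ is closed, Moser's stability theorem applied to $\{\omega_s\}$ gives an isotopy $\{\psi_s\}_{s\in[0,1]}$ of diffeomorphisms of $N^2$ with $\psi_0=\mathrm{id}$ and $\psi_s^{*}\omega_s=\omega'$ for every $s$. Taking $\psi:=\psi_1$ yields $\psi^{*}\omega^2=\omega'$, i.e.\ a symplectomorphism $(N^2,\omega')\to(N^2,\omega^2)$; and as $\psi$ is isotopic to the identity, $\psi_{*}=\mathrm{id}$ on $H_{*}(N^2)$. Hence $\psi\circ f$ is a symplectomorphism $(N^1,\omega^1)\to(N^2,\omega^2)$ with $(\psi\circ f)_{*}=f_{*}$, as required.

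The main obstacle is the black box in the third step — the passage from ``cohomologous'' to ``isotopic'' on a rational surface — which rests on pseudoholomorphic-curve and gauge-theoretic rigidity and is not something to reprove here; everything else (the transport along $f$, the Moser flow, and keeping track of the homology action) is routine bookkeeping. The one point that cannot be skipped is that uniqueness \emph{up to diffeomorphism} alone would not control the induced map on homology, so it is essential to use the isotopy statement — equivalently, one may cite directly the fact that two cohomologous symplectic forms on a rational surface are related by a diffeomorphism acting trivially on $H_{*}$.
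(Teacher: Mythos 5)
Your proposal is correct and is essentially the paper's treatment: the paper gives no proof of this statement, citing exactly the McDuff--Salamon uniqueness results you invoke (two cohomologous symplectic forms on a rational surface are isotopic, hence related by a diffeomorphism acting trivially on homology), and your transport of $\omega^1$ along $f$ followed by Moser stability and the homology bookkeeping is the routine reduction that turns that citation into the stated form. The only caution is your intermediate gloss that equal orientation alone yields a single deformation class -- the canonical class is a deformation invariant and can differ for same-oriented forms -- but this does not affect your argument, since what the cited references supply, and all you actually use, is the cohomologous case.
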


 \subsection*{Case I: the critical value is extremal.}
If $\lambda$ is an extremal critical value, then $M^{i}_{\lambda}$ coincides with the fixed point set $F_i$ at level $\lambda$. 
 First, we describe a neighborhood of $F_i$ as an equivariant symplectic vector bundle.  
  For that, we use the construction of a symplectic form on a $D^{2n}$-bundle with structure group $\U(n)$ over a compact symplectic manifold.
\begin{noTitle}\label{nt:normalbundle}
Let $(B,\omega_B)$ be a compact, simply-connected symplectic manifold. Let $$\pi\colon D\to B$$ be a $D^{2n}$-bundle with structure group $\U(n)$ over $B$. We consider $D^{2n}$ as a subset of $\C^n$ and endow $D$ with any $S^1$-action that fixes $B$ and acts linearly fiberwise linearly; such an action always exists because the structure group is $\U(n)$, for example $\rho(t)( z_1,\hdots,z_n)=(t^{-1}z_1,\hdots,t^{-1}z_n)$.\\

 Let $\{U_1,\hdots,U_k\}$ be an open cover of $B$ such that $\phi_i\colon U_i\times D^{2n}\to \pi^{-1}(U_i)$ is a trivialization of $D$. This gives fiber inclusions $\iota_p\colon D^{2n}\hookrightarrow \pi^{-1}(p)$ for all $p\in B$ that are well-defined up to the action of $\U(n)$. In particular, we can endow $D$ with a fiber metric $\langle \cdot,\cdot \rangle$ that is just the standard Euclidean scalar product on each fiber, such that
$\langle v,v \rangle \leq 1$ for all $v\in D$.\\

 Since the disk fiber is contractible, the cohomology class of the fiber form is $0$. Hence, by a theorem of Thurston (see \cite{Th76} and \cite[Theorem 6.1.4]{MS98}), there exists a closed $2$-form $\eta$ on $D$ that restricts to the standard symplectic form on each fiber, and represents the class $0$ in $H^2(D;\R)\cong H^2(B;\R)$, i.e., its restriction to the $0$-section $B$ is exact\footnote{In general, we can not guarantee that its restriction is $0$.}. By averaging $\eta$ w.r.t. the $S^1$-action on $D$, we further have that $\eta$ is $S^1$-invariant. Moreover, for $c>0$ small enough (to be fixed from now on), the invariant closed form 
 $$\omega=\pi^*(\omega_B)+c\eta$$
 restricts to a symplectic form on $B$. Also, it is clear that $\omega$ is non-degenerate in vertical directions, i.e., on $\ker d\pi(x)$ for all $x$. It follows that there is $r>0$ sufficiently small such that $\omega$ is symplectic in a neighborhood $U$ of $B$ consisting of those $v$ with $\langle v,v\rangle \leq r$. Since $[\eta]=0\in H^2(B;\R)$, the form $\omega$ is isotopic to $\omega_B$ under the standard homotopy.\\
 
 Since $B$ is simply-connected, there is always a momentum map $\mu$ for this symplectic form that restricts to the standard momentum map on each fiber (because $\omega$ restricts to the standard symplectic form on the fiber), which is a disk of radius $r$ with respect to the standard Euclidean metric.\\
 Therefore, if the action is of the form $\rho(t)( z_1,\hdots,z_n)=(t^{-1}z_1,\hdots,t^{-1}z_n)$, there is a surjective, proper momentum map $$\mu\colon U\to [\lambda-\delta,\lambda]$$
 such that $\mu(B)=\lambda$ (for some $\lambda\in \R$) and $\delta={r^2}/2$. After renaming $U$ as $D$, we denote the reduced space at $t\in [\lambda-\delta,\lambda]$ by
$$D_t=\mu^{-1}(t)/S^1.$$
If $n=1$, each reduced space is diffeomorphic to $B$, and scalar multiplication in the $D^{2}$-fiber gives a fixed identification $D/S^1\cong B\times [\lambda-\delta,\lambda]$ that intertwines $\mu$ and the projection $B\times [\lambda-\delta,\lambda]\to [\lambda-\delta,\lambda]$.
\end{noTitle}

In case $\dim F_i=4$, we use the above construction with $n=1$, Weinstein's symplectic neighbourhood theorem (\Cref{thm:Weinstein}), \Cref{thm:cohomologousformsarediffeomorphic}, and Moser's method to get an equivariant symplectomorphism of neighborhoods of the maximal (minimal) level that agrees on homology with a given symplectomorphism of a level below (above) the maximal (minimal) level.
\begin{theorem}\label{thm:Weinstein}{\cite{We71}, \cite{Ca08}}.
    Let $M$ be a manifold equipped with two symplectic forms $\omega_1$ and $\omega_2$ on which a compact Lie group $G$ acts symplectically. Further, let $N$ be a compact, connected manifold also acted on by $G$ smoothly, together with a smooth, equivariant embedding $f\colon N\to M$ such that $f^*\omega_1=f^*\omega_2$.

    Then there exist two open $G$-invariant neighborhoods $U_1,U_2$ of $N=f(N)$ as well as an equivariant diffeomorphism $\psi\colon U_1\to U_2$ such that $\psi^*\omega_2=\omega_1$ and $\psi_{|N}=\text{id}_{|N}$.
    \end{theorem}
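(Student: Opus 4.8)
The plan is to reduce this to the classical (non-equivariant) relative Moser argument, carrying the compact group $G$ along at every step by averaging. First I would replace $M$ by a $G$-invariant tubular neighborhood of $N=f(N)$: averaging an arbitrary Riemannian metric over $G$ yields a $G$-invariant metric, and its normal exponential map identifies a $G$-invariant neighborhood $U_0$ of $N$ in $M$ with a neighborhood of the zero section of the normal $G$-vector bundle $E=TM|_N/TN$ (realized as the invariant orthogonal complement of $TN$ inside $TM|_N$), $G$-equivariantly. Transporting $\omega_1|_{U_0}$ and $\omega_2|_{U_0}$ through this identification, we are reduced to two symplectic forms on a neighborhood of the zero section $N$ of a $G$-bundle which, by the hypothesis $f^*\omega_1=f^*\omega_2$, agree on $TN$ at points of $N$; we want a $G$-equivariant diffeomorphism fixing $N$ pointwise and carrying one to the other.

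Second I would match the two forms on the \emph{full} tangent space along $N$, not just on $TN$. For each $n\in N$ one has $(\omega_1)_n|_{T_nN}=(\omega_2)_n|_{T_nN}$, and a pointwise linear-symplectic argument produces a linear automorphism of $T_nM$ equal to the identity on $T_nN$ and carrying $(\omega_2)_n$ to $(\omega_1)_n$; the point is to make this choice smoothly in $n$ and $G$-equivariantly. This is standard: using $G$-invariant $\omega_i$-compatible almost complex structures (obtained by averaging) together with the usual polar-type normalization (or the contractibility of the relevant bundle of linear maps) one gets a fiberwise-linear $G$-equivariant bundle diffeomorphism $\phi_0$ of a neighborhood of $N$ with $\phi_0|_N=\id$ and $(\phi_0^*\omega_2)_n=(\omega_1)_n$ for all $n\in N$. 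Replacing $\omega_2$ by $\phi_0^*\omega_2$, we may henceforth assume $\omega_1=\omega_2$ along $N$.

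Finally the relative Moser step. Put $\omega_t=(1-t)\omega_1+t\omega_2$; since $\omega_t=\omega_1=\omega_2$ along $N$ and non-degeneracy is an open condition, $\omega_t$ is symplectic on a smaller $G$-invariant neighborhood of $N$ for all $t\in[0,1]$. The closed form $\omega_2-\omega_1$ vanishes at points of $N$, so by the relative Poincaré lemma it admits a primitive $\beta$ with $\beta_p=0$ for $p\in N$; averaging $\beta$ over $G$ keeps these properties and makes $\beta$ $G$-invariant. Defining $X_t$ by $\iota_{X_t}\omega_t=-\beta$ gives a $G$-invariant time-dependent vector field vanishing along $N$, hence with a well-defined flow $\psi_t$ for $t\in[0,1]$ on a neighborhood of $N$; $\psi_t$ fixes $N$ pointwise and $\psi_t^*\omega_t=\omega_1$. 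Then $\psi:=\phi_0\circ\psi_1$ (restricted to suitable invariant neighborhoods $U_1,U_2$ of $N$) is the desired equivariant diffeomorphism. The main obstacle is not conceptual but bookkeeping: verifying that every choice along the way — the metric, the compatible almost complex structures and linear normalization, and the primitive $\beta$ — can be made $G$-equivariant (all by averaging over the compact $G$), and that the Moser vector field genuinely vanishes along $N$ so its flow runs up to time $1$.
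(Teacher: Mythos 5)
The paper offers no proof of this statement---it is quoted from Weinstein and Cannas da Silva---and the intended argument is exactly your first and third steps: an equivariant tubular neighborhood obtained from an averaged metric, followed by a relative Moser argument with a primitive that is averaged over $G$ and vanishes along $N$. Those two steps are correct as you wrote them. The genuine gap is your second step, the claim that one can choose, smoothly in $n$ and equivariantly, a linear automorphism of $T_nM$ fixing $T_nN$ and carrying $(\omega_2)_n$ to $(\omega_1)_n$, ``by contractibility of the relevant bundle of linear maps''. Pointwise such automorphisms exist, but the bundle they form does not have contractible fibers: any automorphism $A$ of $T_nM$ with $A|_{T_nN}=\mathrm{id}$ and $A^*(\omega_2)_n=(\omega_1)_n$ necessarily maps $(T_nN)^{\omega_1}$ onto $(T_nN)^{\omega_2}$, so a global smooth choice of such automorphisms is the same thing as an isomorphism of the symplectic normal bundles of $N$ with respect to $\omega_1$ and $\omega_2$, and the fiber of the bundle of choices is a torsor over a symplectic group, homotopy equivalent to a unitary group rather than contractible.

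This obstruction is real, and in fact the statement with the literal hypothesis $f^*\omega_1=f^*\omega_2$ is false, so no argument can close the gap without strengthening the hypotheses: take $G$ trivial, $M=S^2\times S^2$, $N$ the diagonal, $\omega_1=\sigma\oplus\sigma$ and $\omega_2=3\sigma\oplus(-\sigma)$, where $\sigma$ is an area form on $S^2$. Then $f^*\omega_1=f^*\omega_2=2\sigma$, but the symplectic normal bundles of $N$ for $\omega_1$ and $\omega_2$ are $T S^2$ with opposite fiber orientations (Euler numbers $+2$ and $-2$), so no diffeomorphism of neighborhoods of $N$ fixing $N$ pointwise can pull $\omega_2$ back to $\omega_1$: its differential along $N$ would be an isomorphism of these two symplectic bundles. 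The classical theorem the paper is actually invoking assumes the stronger hypothesis that $(\omega_1)_p=(\omega_2)_p$ as bilinear forms on all of $T_pM$ for every $p\in N$; under that hypothesis your step 2 is unnecessary and your steps 1 and 3 already constitute a complete and correct equivariant proof. Starting only from $f^*\omega_1=f^*\omega_2$, the missing input is precisely an equivariant isomorphism of the symplectic normal bundles of $N$ (compare \Cref{rem:Weinstein} and \Cref{lem:psi_textremum}, where a normal bundle identification is assumed separately, and note that it must be compatible with the symplectic structures); given such an isomorphism, your step 2 can be repaired by realizing it fiberwise and then matching the forms along $N$, rather than by appealing to contractibility.
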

    \begin{remark}\label{rem:Weinstein}
        Often times \Cref{thm:Weinstein} is used in a slightly different setting. Suppose that two symplectic manifolds $(M_i,\omega_i)$ as well as another symplectic manifold $(N,\omega_N)$, all acted on symplectically by $G$, together with equivariant symplectic embeddings $f_i\colon N\to M_i$ are given. Suppose further that the pullback of the normal bundles of the images are equivariantly isomorphic as vector bundles. Therefore, by the usual equivariant tubular neighborhood theorem, there are open neighborhoods $V_1$ and $V_2$ of $N\subset M_1$ resp.\ $N\subset M_2$ as well as an equivariant diffeomorphism $\psi'\colon V_1\to V_2$ such that $\psi\circ f_1=f_2$. By pulling back $\omega_2$ along $\psi'$, we obtain two symplectic forms $\omega_1$ and $(\psi')^*\omega_2$ on $V_1$ that agree on $N$; therefore, by \Cref{thm:Weinstein}, there exist $G$-invariant neighborhoods $U_1$ and $U_2$ of $N\subset V_1$ as well as an equivariant symplectomorphism $\psi\colon (U_1,\omega_1)\to (U_2,(\psi')^*\omega_2)$ that is the identity on $N$. By concatenating $\psi$ and $\psi'$, we find an equivariant symplectomorphism
        $$\psi\colon (U_1,\omega_1)\to (\psi'(U_1),\omega_2) \text{ such that }\psi\circ f_1=f_2.$$
    \end{remark}
\begin{lemma}\label{lem:psi_textremum}
Let $M^1$, $M^2$ be as in \Cref{set:merged}  with momentum images $[\lambda-\eps,\lambda]$ for $\eps>0$, and assume that the critical value $\lambda$ is both maximal and the only critical value of $\mu_1$ and $\mu_2$. Assume that $F_1$ and $F_2$ are of dimension four. Let
$$f_{\lambda-\eps} \colon M^1_{\lambda-\eps} \to M^2_{\lambda-\eps}$$
be a homoemorphism  that intertwines the Euler classes.\\
Then there is $0< \delta\leq \eps$ and an equivariant homeomorphism 
$$g \colon \mu_1^{-1}([\lambda-\delta,\lambda]) \to \mu_2^{-1}([\lambda-\delta,\lambda])$$ such 
that 
$g_{\lambda-\delta}$ and $f_{\lambda-\eps}$, the latter seen as a map $M^1_{\lambda-\delta}\to M^2_{\lambda-\delta}$ using the normalized gradient flow of the momentum map, induce the same map on homology.\\
Moreoever, if the $(F_i,\omega^i)$s are symplectic rational surfaces and $f_{\lambda-\eps}$ is a symplectomorphism then there is such $g$ that is an equivariant symplectomorphism.
\\
The statement holds when $\lambda$ is minimal and $\lambda-\eps$, $\lambda-\delta$ and $[\lambda-\delta,\lambda]$ are replaced with $\lambda+\eps$, $\lambda+\delta$, and $[\lambda,\lambda+\delta]$.
\end{lemma}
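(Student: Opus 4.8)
The plan is to model a neighborhood of the fixed four-manifold $F_i$ on its symplectic normal bundle, push the problem down to $F_i$, solve it there with \Cref{thm:cohomologousformsarediffeomorphic}, and extend the solution back up by the equivariant Weinstein theorem. First I would fix the local model: since $F_i$ is a fixed four-manifold and $\lambda$ is maximal, the local normal form (see \ref{nt:local6}) shows that the equivariant symplectic normal bundle $\nu_i\to F_i$ is a complex line bundle carrying the weight $-1$ circle action $t\cdot z=t^{-1}z$. By the equivariant Weinstein theorem (\Cref{thm:Weinstein}) and the construction in \ref{nt:normalbundle} with $n=1$, for $\delta>0$ small enough $\mu_i^{-1}([\lambda-\delta,\lambda])$ is equivariantly symplectomorphic to a disk subbundle $D(\nu_i)$, on which $\mu_i$ is, up to an additive constant, the fibrewise $\tfrac{1}{2}|z|^2$; let $\psi_i\colon M^i_{\lambda-\delta}\xrightarrow{\sim}F_i$ be the homeomorphism induced by the bundle projection $S(\nu_i)\to F_i$. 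The normalized gradient flow of $\mu_i$ between the regular levels $\lambda-\eps$ and $\lambda-\delta$ is fibrewise radial rescaling in $D(\nu_i)$, so transporting $f_{\lambda-\eps}$ along these flows gives $\widetilde{f}\colon M^1_{\lambda-\delta}\to M^2_{\lambda-\delta}$, and I set $\overline{f}:=\psi_2\circ\widetilde{f}\circ\psi_1^{-1}\colon F_1\to F_2$.

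Next I would record two facts about $\overline{f}$. Under $\psi_i$ the principal $S^1$-bundle $P^i_{\lambda-\delta}\to M^i_{\lambda-\delta}$ becomes the sphere bundle of $\nu_i$ with the weight $-1$ action, whose Euler class is $-c_1(\nu_i)$; since $f_{\lambda-\eps}$ intertwines $e(P^1_{\lambda-\eps})$ and $e(P^2_{\lambda-\eps})$, and Euler classes at the two regular levels are identified by the gradient flow (\Cref{not:elambda}), this forces $\overline{f}^{\,*}c_1(\nu_2)=c_1(\nu_1)$. In the case where $f_{\lambda-\eps}$ is a symplectomorphism, I would compare reduced cohomology classes: the Duistermaat--Heckman formula \eqref{eq:dh} on each $M^i$ relates $[\omega^i_{\lambda-\eps}]$ to $[\omega^i_{\lambda-\delta}]$ by a multiple of $e(P^i_{\lambda-\delta})$, and since $\widetilde{f}$ intertwines both the transported reduced classes and the Euler classes, these multiples cancel and $\widetilde{f}^{\,*}[\omega^2_{\lambda-\delta}]=[\omega^1_{\lambda-\delta}]$. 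Pushing this through $\psi_i$ and using the limiting Duistermaat--Heckman relation $[\omega^i_{\lambda-\delta}]=[\omega^i|_{F_i}]+\delta\,c_1(\nu_i)$ on $F_i$, together with $\overline{f}^{\,*}c_1(\nu_2)=c_1(\nu_1)$, I obtain $\overline{f}^{\,*}[\omega^2|_{F_2}]=[\omega^1|_{F_1}]$.

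With these in hand, I would construct $g$. In the topological case, $\overline{f}^{\,*}c_1(\nu_2)=c_1(\nu_1)$ gives a fibrewise $\C$-linear bundle isomorphism $\nu_1\to\nu_2$ covering $\overline{f}$; it is automatically equivariant for the weight $-1$ actions, and after a fibrewise radial normalization it restricts to an equivariant homeomorphism $g\colon\mu_1^{-1}([\lambda-\delta,\lambda])\to\mu_2^{-1}([\lambda-\delta,\lambda])$ with $\mu_2\circ g=\mu_1$ and $g|_{F_1}=\overline{f}$. In the symplectic case, \Cref{thm:cohomologousformsarediffeomorphic}, applicable because the $(F_i,\omega^i|_{F_i})$ are symplectic rational surfaces, upgrades $\overline{f}$ to a symplectomorphism $\phi\colon(F_1,\omega^1|_{F_1})\to(F_2,\omega^2|_{F_2})$ with $\phi_*=\overline{f}_*$ on homology; then $\phi^*c_1(\nu_2)=c_1(\nu_1)$, so the symplectic normal bundles of $F_1$ and $F_2$ are equivariantly isomorphic over $\phi$, and the equivariant Weinstein theorem in the form of \Cref{rem:Weinstein} yields an equivariant symplectomorphism $g$ of neighborhoods with $g|_{F_1}=\phi$. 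Since an equivariant symplectomorphism between Hamiltonian $S^1$-manifolds intertwines the momentum maps up to a constant, and both are normalized to take the value $\lambda$ on $F_i$, after shrinking $\delta$ this $g$ restricts to an equivariant symplectomorphism $\mu_1^{-1}([\lambda-\delta,\lambda])\to\mu_2^{-1}([\lambda-\delta,\lambda])$.

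Finally I would handle the homology statement. The quotient $\mu_i^{-1}([\lambda-\delta,\lambda])/S^1\cong F_i\times[\lambda-\delta,\lambda]$ deformation-retracts both onto $F_i$ and onto $M^i_{\lambda-\delta}$, and $g$ descends to a map of these quotients compatible with $g|_{F_1}$ on the $\lambda$-end and with $g_{\lambda-\delta}$ on the $(\lambda-\delta)$-end; hence $(g_{\lambda-\delta})_*$ agrees, through $\psi_i$, with $(g|_{F_1})_*=\overline{f}_*$ (topological case) or $\phi_*=\overline{f}_*$ (symplectic case), and $\overline{f}_*$ is by construction the transport of $(\widetilde{f})_*$ through $\psi_i$, which is exactly the claimed agreement on homology. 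The minimal case is proved by the same argument with the normal weight $+1$ in place of $-1$. I expect the main obstacle to lie in the cohomological bookkeeping of the second step: one must track carefully how the reduced symplectic classes and the Euler classes evolve between the level $\lambda-\eps$ where $f$ is given, the nearby regular level $\lambda-\delta$, and the critical level $\lambda$ itself, so that \Cref{thm:cohomologousformsarediffeomorphic} can be applied on $F_i$ and so that its output still carries the correct map on homology.
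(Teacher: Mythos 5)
Your proposal is correct and follows essentially the same route as the paper's proof: identify a neighborhood of $F_i$ with the disk bundle model of \ref{nt:normalbundle} via the equivariant Weinstein theorem, transport $f_{\lambda-\eps}$ to a map $F_1\to F_2$ that preserves the normal Euler class and (via Duistermaat--Heckman) the class of $\omega^i|_{F_i}$, upgrade it with \Cref{thm:cohomologousformsarediffeomorphic}, and lift to an equivariant bundle isomorphism. The only difference is packaging: where you invoke \Cref{rem:Weinstein} at the end, the paper performs the corresponding Moser step by hand, checking non-degeneracy of $s\omega^1+(1-s)\psi^*\omega^2$ along $F_1$ and applying \Cref{rem:moserwithfixedpoints}.
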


\begin{proof}
  We construct $(D^i,\omega,\mu)$  as in \Cref{nt:normalbundle} with $n=1$ and $(B,\omega_B)=(F_i,\omega^{i})$. That way, we have two symplectic embeddings $(F_i,\omega_i)\to (M^i,\omega^i)$ and $(F_i,\omega_i)\to (D^i,\omega)$; the latter is \textbf{not} the standard embedding $(F_i,\omega^{i})=(B,\omega_B)\hookrightarrow D^i$ (because $\omega$ does not necessarily restrict to $\omega_B$ on $B$), but isotopic to it as explained in  \Cref{nt:normalbundle}. Therefore, due to the equivariant symplectic tubular neighborhood theorem \Cref{thm:Weinstein} resp.\ \Cref{rem:Weinstein}, there is $\delta>0$ small enough such that
\begin{align}\label{eq:weinstein}
\mu_i^{-1}([\lambda-\delta,\lambda]) & \text{ is equivariantly symplectomorphic to }\mu^{-1}([\lambda-\delta,\lambda])\subset D^i \nonumber
\end{align}
 and this equivariant symplectomorphism is isotopic to the identity on $F_i$.\\
Thus, $M_{\lambda-\eps}^{i}$ is diffeomorphic to $D_{\lambda-\eps}^{i}$.
   Moreover, using the identifications $D^{i}/S^1\cong F_i\times [\lambda-r,\lambda]$, we get a diffeomorphism from $D^{i}_{\lambda-\eps}$ to $F_i=M_{\lambda}^{i}$.
   Hence, and by the setting of $f_{\lambda-\eps}$, 
   we have a  homeomorphism (diffeomorphism, if $f_{\lambda-\eps}$ is a diffeomorphism)
     $$\psi' \colon F_1\to D^{1}_{\lambda-\eps}\to D^{2}_{\lambda-\eps} \to F_2.$$ 
     The map $\psi'$ intertwines the Euler classes of the normal bundles of $F_1$ and $F_2$ in $M^1$ and $M^2$, since these are just the Euler classes of the principal $S^1$-bundles over $D^{1}_{\lambda-\eps}$ and over $D^{2}_{\lambda-\eps}$ under the identifications 
     $F_i\to D^{i}_{\lambda-\eps}$. 
     So, by \Cref{lem:Eulerclasses}, $\psi'$ lifts to an equivariant  isomorphism (smooth, if $f_{\lambda-\eps}$ is a diffeomorphism) $g$ between the normal bundles and therefore to an equivariant homeomorphism (diffeomorphism) between the neighborhoods $\mu_{i}^{-1}([\lambda-\delta,\lambda])$ of $F_i$.\\
     
     Moreover, if $f_{\lambda-\eps}$ is a symplectomorphism, it follows that $\psi'$ also preserves the cohomology classes of the symplectic forms on $F_1$ and $F_2$. Indeed, by the DH formula \eqref{eq:dh}, these are determined by the Euler class $e(P)$ and the cohomology classes of the symplectic forms on $M^1_{\lambda-\eps}$ resp.\ $M^2_{\lambda-\eps}$, which are intertwined by $f_{\lambda-\eps}$.  If the $(F_i,\omega^i)$s are symplectic rational surfaces, then, by  \Cref{thm:cohomologousformsarediffeomorphic}, there is a symplectomorphism $\psi_{\lambda}\colon F_1\to F_2$ that acts in the same fashion as $\psi'$ on homology. In particular, $\psi_{\lambda}$ intertwines the Euler classes of the normal bundles of $F_i$ in $M^i$ and therefore lifts to an equivariant bundle isomorphism $\psi\colon D^1\to D^2$ by \Cref{lem:Eulerclasses}. Since we can identify $D^i$ with $\mu_i^{-1}([t,\lambda])$ as Hamiltonian $S^1$-manifolds for $t$ sufficiently close to $\lambda$, it is only left to show that $\psi\colon D^1\to D^2$ (after possibly shrinking $D^1$ and $D^2$) can be isotoped into an equivariant symplectomorphism.\\

     The homotopy $\omega(s):=s\omega^1+(1-s)\psi^*\omega^2$ does not degenerate on $TD^1_{|F_1}$. This is since $\omega^1$ and $\psi^*\omega^2$ agree on $F_1$, and for a vertical vector $v\in TD^1_{|F_1}$ we have that $\omega^1(v,i\cdot v)>0$, where $i\in S^1$ is the imaginary unit. Since $\psi_*(v)$ is also vertical in $TD^2_{|F_2}$ and $\psi_*$ commutes with the circle action, we obtain similarly that $\omega^2(\psi_*(v),i\cdot \psi_*(v))>0$, yielding that $\omega(s)$ is indeed non-degenerate.
     Therefore, $s\omega^1+(1-s)\psi^*\omega^2$ does not degenerate near $F_1$, so it defines an isotopy near $F_1$. This implies that there is $0<\delta'\leq\delta$ such that $\mu_1^{-1}([\lambda-\delta',\lambda])$ and $\mu_2^{-1}([\lambda-\delta',\lambda])$ are equivariantly symplectomorphic, using Moser's method as in \Cref{rem:moserwithfixedpoints}. This completes the proof.

\end{proof}

\begin{remark}\label{rem:moserwithfixedpoints}
    If the $S^1$-action on $M$ has a fixed point and the momentum map is proper, it is standard that Moser's method works as usual. We sketch the proof. Consider a  smooth one-parameter family $\omega_t$ of symplectic forms, $t\in [0,1]$, of the form $\omega_t=t\omega_1+(1-t)\omega_0$ for $S^1$-invariant, cohomologuous symplectic forms $\omega_0$ and $\omega_1$. Assume that $\omega_0$ and $\omega_1$ admit momentum maps $\mu_0$ and $\mu_1$ that agree on the components of the fixed point set. This gives rise to an $S^1$-invariant time-dependent vector field $X_t$ such that 
\begin{equation}\label{eq:moser}
    \frac{d}{dt}\omega_t+\mathcal{L}_{X_t}\omega_t=0,
    \end{equation}
    as usual. Moreover, if $\frac{d}{dt}\omega_t\equiv 0$ on $U:=\mu^{-1}(-\infty,r)$  for a certain level $r\in \mu(M)$ such that $H^1(U;\R)=0$, then it may be assumed that $X_t\equiv 0$ on $U$ as well; this is due to the fact that $\Omega:=\frac{d}{dt}\omega_t=\omega_1-\omega_0$ is not only $0$ in $H^2(M;\R)$, but even represents the $0$-class in $H^2(M,U;\R)$. Indeed, the long exact relative cohomology sequence
    \[
    \hdots \to H^1(M;\R)\to H^1(U;\R)\to H^2(M,U;\R)\to H^2(M;\R) \to \hdots
    \]
    tells us that $\Omega$ is in the image of $H^1(U;\R)\to H^2(M,U;\R)$, but $H^1(U;\R)=0$ by assumption. Hence, we may write $\Omega=d\beta$ for an invariant one-form $\beta$ vanishing on $U$, implying that $X_t\equiv 0$ on $U$.\\
    Now, around each point in $M$, the flow $\Psi_t$ of $X_t$, defined by $\frac{d}{d_t}{\Psi_t}=X_t \circ \Psi_t$, is still defined for small $t$. The flow $\Psi_t$ necessarily preserves fixed point components. By \eqref{eq:moser}, $\Psi_t^{*}\omega_t=\omega_0$.
  Denote $\mu_t:=t \mu_1+(1-t)\mu_0$. Note that $\mu_t$ is a momentum map for $\omega_t$ and that the $\mu_t$s agree on the components of the fixed point set.
   We claim that $$\Psi_t^*\mu_t=\mu_0.$$ Indeed, $\Psi_t^*\mu_t$ is \textbf{a} momentum map for $\omega_0(=\Psi_t^{*}\omega_t)$, meaning that $\Psi_t^*\mu_t$ and $\mu_0$ agree up to a shift by a constant. Since $\Psi_t$ leaves fixed point components invariant, the constant has to be zero.\\
    It follows that the vector field $X_t$ leaves the momentum map levels invariant.
   Thus, since we assumed that the momentum map is proper, the flow integrates on $M$; we get an equivariant diffeomorphim $\Psi_1$ such that $\Psi_1^{*}\omega_1=\omega_0$.
Note that if there are no fixed points, $X_t$ might not leave the momentum map levels invariant.
\end{remark}
In case the extremal submanifolds are spheres, 
we let $n=2$ in \cref{nt:normalbundle}. We need further preliminary observations about Hirzebruch surfaces.
\begin{noTitle}\label{nt:Hirzebruch}
Let $E\to S^2$ be a complex vector bundle of rank $2$. It is well-known that there is an integer $k$ such that $E\cong S^3\times_{S^1} (\C_{k}\oplus \C_{0})$ as complex bundles, where $S^1$ acts on $S^3\subset \C^2$ via the standard diagonal action and on each $\C$-summand according to the index. We endow $E$ with the anti-diagonal, fiberwise $S^1$-action. Using \cref{nt:normalbundle} for $n=2$ and $(B,\omega_B)=(S^2,\omega_{S^2})$, where $\omega_{S^2}$ is some  symplectic form on $S^2$, we can endow a neighborhood $U$ of the $0$-section of $E$ with an $S^1$-invariant symplectic form $\omega^E$ that restricts to the standard symplectic form on each disk fiber. For the momentum map $\mu_E\colon U\to \R$ such that the $0$-section of $E$ is sent to $\lambda$, we let $t<\lambda$  such that $\mu^{-1}([t,\lambda])\subset U$. Therefore, $\mu^{-1}([t,\lambda])$ is a closed tubular neighborhood of $E_0$, and $\mu^{-1}(t)$ is the total space of an $S^3$-bundle over $S^2$, and the circle acts on the $S^3$-fiber, only. It follows that the reduced space $E_{t}=\mu^{-1}(t)/S^1$ is an $S^2$-bundle over $S^2$ with structure group $\SO(3)$. Since $\pi_1(\SO(3))=\Z_2$, $E_t$ is diffeomorphic to either $H_0:=S^2\times S^2$, the total space of the trivial bundle, or  $H_1:=\C\PP^2\# \overline{\C\PP^2}$ (the blowup of $\C \PP^2$), which is the total space of the non-trivial $S^2$-bundle over $S^2$. In particular, $E_t$ is simply-connected.\\

    Now, since $E\cong L_k\oplus \underline{\C}$, $\underline{\C}$ being the trivial $\C$-bundle and $L_k=S^3\times_{S^1} \C_k$, there is a section
    \begin{equation}\label{eq:section}
        s'\colon S^2\to \mu^{-1}(t)
    \end{equation}
    coming from the section of the trivial $\C$-bundle. 
    We will use the following facts on the section in the proof of the next lemma.
    \begin{itemize}
        \item The Euler class $e$ of the $S^1$-bundle $S^1\to \mu^{-1}(t) \overset{\pi}{\to} E_{t}$  evaluated on the image of $s\colon S^2\overset{s'}{\to} \mu^{-1}(t)\overset{\pi}{\to} E_{t}$ is $0$.
        \item Therefore, $\int_{s(S^2)}\omega^E_{t'}$ does not depend on $t'\in [t,\lambda]$ due to the DH formula, \cref{eq:dh}.
        \item The normal bundle of $s(S^2)$ in $E_{t}$ can be identified with the complex line bundle $S^3\times_{S^1} (\{0\}\oplus \C_k)$, which, with the orientations on base and fiber induced by the symplectic form $\omega^E_t$, has Chern class $-k$. This is because the tautological bundle $S^3\times_{S^1} (\{0\}\oplus \C_1)$ has Chern class $-1$.
        \item The image $s(S^2)$ and the $S^2$-fiber $F$ generate $H_2(E_{t};\Z)$. This can be seen by looking at the Serre spectral sequence of the bundle, which collapses at the second page because neither base nor fiber have odd integer cohomology; therefore we have a short exact sequence $0\to H_2(F;\Z)\to H_2(E_t;\Z)\to H_2(S^2;\Z)\to 0$ for which the map induced by $s\colon S^2\to E_t$ on $H_2$ is a split.
        \item As always, we give any  symplectic submanifold of $E_t$ (including $E_t$ itself) the orientation induced by the symplectic form $\omega^E_t$, and take intersection numbers of homology classes in $H_2(E_t)$ with respect to these orientations. Then, the intersection number of $s(S^2)$ and any $S^2$-fiber is $1$, and the intersection number of any $S^2$-fiber with itself is $0$, because the fiber has trivial normal bundle. The intersection number of $s(S^2)$ with itself is the Chern class of its normal bundle and hence $-k$.
    \end{itemize}
\end{noTitle}

\begin{figure}[h]
	\begin{center}
		\begin{tikzpicture}
			\filldraw[black] (0,0) circle (2pt);
			\filldraw[black] (6,0) circle (2pt);
			\filldraw[black] (2,2) circle (2pt);
			\filldraw[black] (0,2) circle (2pt);
			\draw[green, thick] (0,0) -- (6,0);
			\draw[blue, thick] (6,0) -- (2,2);
			\draw[red, thick] (2,2) -- (0,2);
			\draw[gray, thick] (0,0) -- (0,2);
		\end{tikzpicture}
	\end{center}
 \caption{The momentum image of a $T^2$-action on a Hirzebruch surface $E_t$. The red line corresponds to $s(S^2)$, and this sphere has self-intersection $-k$ if the blue line is going in direction $(k,-1)$. The sphere corresponding to the green line has self-intersection $k$, where the blue line represents the fiber $F$.}
 \end{figure}
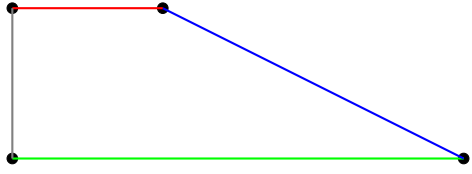

\begin{lemma} \label{lem:connected<4}
For $i=1,2$, let $(M^i,\omega^i,\mu_i)$  
be a connected semi-free, Hamiltonian $S^1$-manifold of dimension six with proper momentum map such that $\lambda$ is the only critical value and $\mu_i(M^i)=[\lambda-\kappa,\lambda]$, for some $\kappa>0$ (the same $\lambda$ and $\kappa$ for $i=1,2$). Assume that the corresponding maximal fixed point set $F_i$ is a sphere. Then the following hold.
\begin{enumerate}
    \item 
    For any $t\in [\lambda-\kappa,\lambda)$, the group $\mathcal{G}_t$ of orientation-preserving homeomorphisms of $M^1_t$ that preserve the Euler class of the bundle $S^1\to \mu_1^{-1}(t)\to M^1_t$ is connected. Consequently, the group $\mathcal{G}^t$ of orientation-preserving equivariant homeomorphisms $\mu_1^{-1}(t)\to \mu_1^{-1}(t)$ is connected.
    \item  If there exists an orientation-preserving equivariant homeomorphism $g\colon \mu_1^{-1}(\lambda-\kappa)\to \mu_2^{-1}(\lambda-\kappa)$, then, after rescaling the symplectic form $\omega^1$ by the factor $\omega^2(F_2)/\omega^1(F_1)$, neighborhoods of $F_1$ and $F_2$ are equivariantly symplectomorphic.
    \item There exists $\theta>0$ such that for any $t\in [\lambda-\theta,\lambda)$, the symplectomorphism group of $M^1_t$ is connected. 
\end{enumerate}
A similar statement holds when $\lambda$ is minimal.
\end{lemma}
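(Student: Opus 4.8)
The plan is to reduce everything to the topology and symplectic geometry of Hirzebruch surfaces. Using the local normal form \Cref{nt:local6} together with the equivariant symplectic neighbourhood theorem (\Cref{thm:Weinstein}, \Cref{rem:Weinstein}), I would first identify a neighbourhood of the maximal fixed sphere $F_i$ in $M^i$ with a neighbourhood of the zero section in the model $(D^i,\omega,\mu)$ of \Cref{nt:normalbundle}--\Cref{nt:Hirzebruch}, built with $n=2$ from the normal bundle $N_i$ of $F_i$ (a rank-two complex bundle over $S^2$ carrying the fibrewise weight $-1$ scalar action). Consequently, for $t$ close to $\lambda$ the reduced space $M^i_t$ equals the Hirzebruch surface $E^i_t=\PP(N_i)$, diffeomorphic to $H_0=S^2\times S^2$ or to $H_1=\C\PP^2\#\overline{\C\PP^2}$ according to the parity of $[s_i(S^2)]^2$, and the Euler class $e(P^i_t)$ is the class singled out by the two facts recorded in \Cref{nt:Hirzebruch}: it vanishes on the section class $[s_i(S^2)]$ and evaluates to $\pm1$ on the fibre class $[F]$ of the ruling $E^i_t\to S^2$. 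Since $\lambda$ is the \emph{only} critical value, the normalized gradient flow of $\mu_i$ gives, for all $t,t'\in[\lambda-\kappa,\lambda)$, an equivariant diffeomorphism $\mu_i^{-1}(t)\cong\mu_i^{-1}(t')$ intertwining the actions and the Euler classes (\Cref{not:elambda}); hence the pair $\big(M^i_t,e(P^i_t)\big)$ is, up to orientation- and Euler-class-preserving diffeomorphism, independent of $t\in[\lambda-\kappa,\lambda)$ and equals $(E^i_t,e)$ for every such $t$.

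For item (1), fix $t$, write $B=M^1_t$ (a closed, simply-connected $4$-manifold, since Hirzebruch surfaces are simply connected, cf. \Cref{nt:Hirzebruch}) and $e=e(P^1_t)$. By the theorem of Freedman and Quinn on self-homeomorphisms of simply-connected $4$-manifolds, the natural map $\pi_0\big(\mathrm{Homeo}^{+}(B)\big)\to\Aut\big(H_2(B;\Z),Q\big)$ is an isomorphism onto the isometry group of the intersection form $Q$, and under it $\pi_0(\mathcal G_t)$ corresponds to the stabiliser of the Poincaré dual $PD(e)$. In the basis $\{[s(S^2)],[F]\}$, with $[s(S^2)]\cdot[F]=1$, $[F]^2=0$ and $[s(S^2)]^2=:-k$, one has $PD(e)=\pm\big([s(S^2)]+k[F]\big)$, so what remains is to verify that in each Hirzebruch surface that can occur here the only orientation-preserving isometry of $Q$ fixing this primitive class is the identity. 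This verification is the main obstacle: it forces one to keep track of exactly which Hirzebruch surfaces and which Euler classes arise. Granting connectedness of $\mathcal G_t$, connectedness of $\mathcal G^t$ is then formal: each $\phi\in\mathcal G_t$ lifts to an equivariant homeomorphism of $\mu_1^{-1}(t)$ because $\phi^{*}e=e$ makes the pulled-back $S^1$-bundle isomorphic to the original, and the lifts of a given $\phi$ form a torsor over $\mathrm{Map}(B,S^1)$, which is connected because $H^1(B;\Z)=0$; hence $\pi_0(\mathcal G^t)\cong\pi_0(\mathcal G_t)$.

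For item (2), an orientation-preserving equivariant homeomorphism $g\colon\mu_1^{-1}(\lambda-\kappa)\to\mu_2^{-1}(\lambda-\kappa)$ descends to an orientation-preserving homeomorphism $g_{\lambda-\kappa}\colon M^1_{\lambda-\kappa}\to M^2_{\lambda-\kappa}$ carrying $e(P^1_{\lambda-\kappa})$ to $e(P^2_{\lambda-\kappa})$; hence the induced isometry of intersection forms sends $PD\,e(P^1)$ to $PD\,e(P^2)$, so these classes have equal self-intersection, which by \Cref{nt:Hirzebruch} equals $c_1(N_i)$. Thus $c_1(N_1)=c_1(N_2)$, and since rank-two complex bundles over $S^2$ are classified by their first Chern class, $N_1$ and $N_2$ are isomorphic, hence $S^1$-equivariantly isomorphic (the fibrewise action being weight $-1$ scalar multiplication in both). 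After rescaling $\omega^1$ by $\omega^2(F_2)/\omega^1(F_1)$ the spheres $F_1,F_2$ acquire equal symplectic area, so $(F_1,\omega^1|_{F_1})$ and $(F_2,\omega^2|_{F_2})$ are symplectomorphic; feeding the equivariant bundle isomorphism $N_1\cong N_2$ and this symplectomorphism into \Cref{rem:Weinstein} yields an equivariant symplectomorphism between a neighbourhood of $F_1$ in $(M^1,\text{rescaled }\omega^1)$ and a neighbourhood of $F_2$ in $(M^2,\omega^2)$.

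For item (3), by the Duistermaat--Heckman formula \eqref{eq:dh} and $e\cdot[s(S^2)]=0$, the area $\omega_t\big([s(S^2)]\big)=\omega^1(F_1)$ is independent of $t$, while $\omega_t([F])\to0$ as $t\to\lambda$ (the disc fibres of $N_1$ shrink to the zero section); hence there is $\theta>0$ with $\omega_t([F])<\omega_t([s(S^2)])$ for all $t\in[\lambda-\theta,\lambda)$, so $\omega_t$ is never the monotone product form on $S^2\times S^2$. For such a symplectic form on a Hirzebruch surface the symplectomorphism group is connected by the results on symplectomorphism groups of rational ruled surfaces (Gromov, Abreu, McDuff, Lalonde--Pinsonnault) collected among the references of \Cref{thm:rigid}; this proves (3). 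Finally, the case of a minimal $\lambda$ follows by applying all of the above to $(M^i,-\omega^i,-\mu_i)$, for which $\lambda$ becomes maximal.
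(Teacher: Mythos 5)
Your items (2) and (3) are essentially correct and follow the paper's own route: for (2) the paper deduces $k_1=k_2$ from the self-intersection of the section classes (the kernels of the Euler classes) rather than of $PD(e)$, and for (3) it excludes the symmetric form on $S^2\times S^2$ by noting that $\omega_t([s_1(S^2)])=\omega^1(F_1)$ is constant while all sphere areas would have to shrink; your variants of both steps are fine (up to a harmless sign slip in (2), where the self-intersection of $PD(e)$ is $+k_i$ while $c_1(N_i)=-k_i$).

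The genuine gap is in item (1). You reduce connectedness of $\mathcal{G}_t$ to the assertion that the only orientation-preserving isometry of $(H_2(M^1_t;\Z),Q)$ fixing $PD(e)=\pm([s(S^2)]+k[F])$ is the identity, and then you explicitly decline to verify it (``the main obstacle \dots Granting connectedness of $\mathcal G_t$''). That verification is the mathematical heart of item (1): it is exactly what the paper's proof supplies, by writing $g\in\mathcal G_t$ in the basis $\{[s(S^2)],[F]\}$, showing $g[s(S^2)]=\pm[s(S^2)]$ and $g[F]=m[s(S^2)]+[F]$, and using the intersection numbers to force triviality on homology, after which only the injectivity statement of Quinn \cite{FQ86} (not Freedman's realization, which you invoke) is needed to conclude isotopy to the identity. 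Moreover, the verification is not routine bookkeeping: the isometry $g[s(S^2)]=-[s(S^2)]$, $g[F]=m[s(S^2)]+[F]$ preserves both $Q$ and $e$ precisely when $km=2$ (with $[s(S^2)]^2=-k$); for $k=2$ it is the factor swap of $S^2\times S^2$, on which $e$ evaluates equally on the two factor classes, and for $k=1$ it is $L\mapsto L$, $E\mapsto -E$ on $\C\PP^2\#\overline{\C\PP^2}$. Since your framework promotes every isometry of $Q$ to an element of $\mathcal G_t$ via Freedman's realization, your argument cannot close without explicitly confronting these cases — note that the paper itself disposes of the possibility $g[s(S^2)]=-[s(S^2)]$ only with the one-line appeal ``because $g$ preserves orientation,'' which by itself gives preservation of $Q$ but not determinant $+1$ on $H_2$, so this is precisely the delicate point your proposal leaves open. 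As written, the central step of item (1) is missing.
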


\begin{proof}
    The normal bundle $E^i$ of $F_i$ is equipped with a fiberwise $S^1$-action, and therefore is a complex bundle. By the equivariant symplectic neighborhood theorem, there is $\delta>0$ such that $\mu_i^{-1}([\lambda-\delta,\lambda])$ is equivariantly symplectomorphic to $(\mu_{E^i})^{-1}([\lambda-\delta,\lambda])$. We are therefore allowed to use the notations (additionally indexed with an $i$) and facts stated in \Cref{nt:Hirzebruch}. In particular,  $M^{i}_t$ is $E^i_t$ and hence simply-connected.
    \begin{enumerate}
    \item 
    Clearly, $\mathcal{G}_t$ and $\mathcal{G}^t$ do not depend on $t \in [\lambda-\kappa,\lambda)$, since the normalized gradient flow of $\mu_1$ gives an equivariant diffeomorphism $M_t\to M_{t'}$ for $t\neq t'\in [\lambda-\kappa,\lambda)$ preserving orientation and the Euler class, so we might as well assume that $t\in [\lambda-\delta,\lambda)$.\\
    First, we argue that the connectedness of $\mathcal{G}^t$ follows from the connectedness of $\mathcal{G}_t$. Given an equivariant self-homeomorphism $f \in \mathcal{G}^t$ of $\mu_1^{-1}(t)$, let $f_{t} \in \mathcal{G}_t$ be the induced orientation-preserving self-homeomorphism on $M^1_{t}$, preserving the Euler class of the $S^1$-bundle. Then the connectedness of $\mathcal{G}_t$ implies that there exists an isotopy $\psi_s\colon M^1_t\to M^1_t$, $s\in [0,1]$, from $\psi_0=f_{t}$ to the identity $\psi_1$ through homeomorphisms in $\mathcal{G}_t$. By \Cref{lem:lift}, this isotopy lifts to an isotopy $\psi^s\colon \mu_1^{-1}(t)\to \mu_1^{-1}(t)$ such that $\psi^0=f$ and $\psi^1$ differs from the identity only by a continuous map $M^1_t\to S^1$. But any such map is nullhomotopic because $\pi_1(M^1_{t})=0$, so there is an isotopy from $\psi^1$ to the identity through equivariant homeomorphisms in $\mathcal{G}^t$.\\
    
    Now let us show that $\mathcal{G}:=\mathcal{G}_t$ is connected.  
     For $g \in \mathcal{G}$, we need to argue that it acts trivially on homology and is thus isotopic to the identity through homeomorphisms, by \cite[Theorem 1.1]{FQ86}.
    Set $H:=M^1_t$, so that $H_2(H)\cong H_2(s(S^2))\oplus H_2(F)\cong \Z\oplus \Z$ (see \Cref{nt:Hirzebruch}). 
   As an homeomorphism $H\to H$ preserving the Euler class $e$, the map $g$ needs to preserve the kernel of $e$, considered as a homomorphism $H_2(H)\to \Z$. Thus, $g$ needs to send $(1,0)$ to $\pm (1,0)$. Also, it has to send the class $(0,1)\in \Z\oplus \Z$ to $(m,1)$, with $m \in \Z$, since otherwise $e$ is not preserved. So $g(1,0)=(1,0)$ because $g$ preserves orientation.\\
   It remains to show that $m=0$. For that, observe that the self-intersection of the class $(m,1)$ needs to be $0$ (since this is so for $(0,1)$), and that its intersection with the class $(1,0)$ has to be $1$ (since this is so for $(0,1)$). Since the self-intersection of the class $(1,0)$ is $-k$, we calculate
    \begin{align*}
    (m,1)\cdot (m,1)&=(m,0)^{2}+(m,0)\cdot (0,1)+(1,0)\cdot (0,m)+(0,1)^{2}=-m^2k+2m\overset{!}{=} 0,\\
    (m,1)\cdot (1,0)&=(m,0)\cdot (1,0)+(0,1)\cdot (1,0)=-km+1\overset{!}{=} 1.
    \end{align*}
    By the second equality, either $m=0$ or $k=0$. In the latter case, by the first equality, $2m=0$, so we are done.
        \item Assume that there is an orientation-preserving equivariant homeomorphism $g\colon \mu_1^{-1}(\lambda-\kappa)\to \mu_2^{-1}(\lambda-\kappa)$. Using the normalized gradient flow, we obtain an orientation-preserving equivariant homeomorphism (also called $g$) $\mu_1^{-1}(\lambda-\delta)\to \mu_2^{-1}(\lambda-\delta)$;  we refer to this $g$ from here  on.\\
        As explained in the beginning of the proof, we identify the equivariant normal bundle of $F_i$ in $M^i$ with $E^i=S^3\times_{S^1} (\C_0\oplus \C_{k_i})$  with the $S^1$-action given by $t\cdot [p,(z_1,z_2)]:=[p,(t^{-1}z_1,t^{-1}z_2)]$. That way, we have equivariant diffeomorphisms $\mu_i^{-1}(t)\to \mu_i^{-1}(\lambda-\delta)$. We therefore view $g$ as an equivariant homeomorphism $g\colon \mu_1^{-1}(t)\to \mu_2^{-1}(t)$, and write $g_{\lambda-\delta}$ for the induced homeomorphism $H_1:=\mu_1^{-1}(t)/S^1\to \mu_2^{-1}(t)/S^1=:H_2$ on the orbit spaces.\\
        By the functoriality of the Euler class, $g_{\lambda-\delta}$ intertwines the Euler classes $e_i$ of the $S^1$-bundles over the $H_i$, and in particular sends the kernels $H_2(s_i(S^2))$ of the homomorphisms $e_i\colon H_2(H_i)\to \Z$ into each other. 
        The normal bundle of $s_i(S^2)\subset H_i$ can be identified with the bundle $S^3\times_{S^1} (\{0\}\oplus \C_{k_i})$ (see \Cref{nt:Hirzebruch}). We claim that $k_1=k_2$, which would immediately imply that there is an isomorphism of the equivariant (smooth) normal bundles of $F_1$ and $F_2$.\\
        Indeed, the self-intersection of $s_i(S^2)$ with itself is $-k_i$. Since $g_{\lambda-\delta}$ preserves orientations and $g_{\lambda-\delta}$ sends a generator of $[s_1(S^2)]$ to a generator of $[s_2(S^2)]$, we have
        \[
        [s_1(S^2)]\cdot [s_1(S^2)]=[\pm s_2(S^2)]\cdot [\pm s_2(S^2)]=[s_2(S^2)]\cdot [s_2(S^2)]
        \]
        and hence $k_1=k_2$.\\

        Finally, due to the equivariant symplectic neighborhood theorem and Moser's characterization of compact symplectic $2$-manifolds, the existence of an isomorphism of the equivariant normal bundles of $F_1$ and $F_2$ implies that neighborhoods of $F_1$ and $F_2$ are equivariantly symplectomorphic after rescaling $\omega_1$ by $\omega_2(F_2)/\omega_1(F_1)$.
        \item The symplectomorphism groups of $H_0$ and $H_1$ endowed with any symplectic form are well studied in \cite{AM00}. Indeed, by \cite[Theoem 1.4]{AM00}, the symplectomorphism group of $H_1$ when endowed with any symplectic form is connected, so we only have to deal with the case that $H=S^2\times S^2$.
    Then, \cite[Theorem 1.1]{AM00} states that the symplectomorphism group of $(H=S^2\times S^2,\omega)$, when endowed with any symplectic form, is connected except if the form is symmetric in the sense that
    $$\omega(S^2\times \{pt.\})=\omega(\{pt.\}\times S^2).$$
    If, however, this is the case for some $(M^1_{t'},\omega^1_{t'})$, then it has to be the case for all $(M^1_t,\omega^1_t)$, $t\in [t',\lambda)$, due to the linear dependence of $\omega^1_{t'}$ with respect to $t'$ (see \Cref{eq:dh}).\\
    The symplectic volume of the fiber of the bundle $M^1_t\to S^2$ approaches $0$ as $t$ approaches $\lambda$, therefore both $\omega^1_t(S^2\times \{pt.\})$ and $\omega_t^1(\{pt.\}\times S^2)$ approach $0$, implying that the symplectic volume $\omega_t^1(S)$ of any sphere $S\subset H$ approaches $0$. But this is not the case, because $\omega^1_t(s_1(S^2))=\omega^1(s'(S^2))$ for $s'$ as in \Cref{eq:section}, and $s'(S^2)$ is homotopic to $F_1$ in $M^1$ (since this is a section of a vector bundle over $F_1$), so $\omega^1_t(s_1(S^2))=\omega^1(F_1)\neq 0$.
    \end{enumerate}
   
\end{proof}

\subsection*{Case II: the critical value $\lambda$ is not extremal}
We begin with giving a meaning to a map being the identity on or near the fixed point sets $F_i$ at level $\lambda$ in $M^i$ or their preimages $F'_{i}$ under the Morse flow.

\begin{Setting}\label{set:identitynearF}
In the situation of \Cref{set:merged}, we choose, for each isolated fixed point $p\in M^i$ at level $\lambda$, a local normal form $$\chi_i^p\colon {\ball}^6\to M^i,$$ where the ball ${\ball}^6$ centered at the origin is considered to be equipped with the standard metric, the standard symplectic form and the standard circle action with weights $(+1,\pm 1,-1)$, depending on the index of the fixed point. We may choose a metric on $M^i$ that agrees with the standard metric of ${\ball}^6$ on all $\chi_i^p({\ball}^6)$. Similarly, for two fixed spheres in $\mu_1^{-1}(\lambda)$ resp.\ $\mu_2^{-1}(\lambda)$ with isomorphic equivariant normal bundles, we may identify neighborhoods of them with the same equivariant local model, which is a $\C^2$-bundle with fiberwise $S^1$-action over $S^2$ (note that the identification of the neighborhoods does not have to be symplectic).\\
Denote the union of all these local models in $M^1$ resp.\ $M^2$ by $\mathcal{U}_1$ resp.\ $\mathcal{U}_2$, and let $\kappa>0$ be small enough such that
\begin{equation} \label{eq:kappa}
\mu_i(\mathcal{U}_i)\cap (-\infty,\lambda)=(\lambda-\kappa,\lambda).
\end{equation}
As before, we denote by $F_i'$ the preimage at some $t$ below $\lambda$ (such that there is no critical value in $[t,\lambda)$) of the set $F_i$ of fixed points in $M^{i}_{\lambda}$ under the Morse flow.
\end{Setting}
\begin{definition}\label{def:identitynearF_1}
Let $t\in (\lambda-\kappa,\lambda]$.
Assume that $f$ is a $\mu-S^1$-diffeomorphism (or equivariant homeomorphism) from $M^1$ to $M^2$ defined near level $t$ that maps $F_1$ bijectively into $F_2$ (or $F_1'$ into $F_2'$) and intertwines the isomorphism type of the equivariant normal bundles of $F_1$ and $F_2$ (or $f_{\Morse}(F_1')$ and $f_{\Morse}(F_2')$). For a fixed, shared local model of $F_1$ and $F_2$, we say that $f$ is {\bf the identity near $F_1$/near $F_1'$/on $F_1'$} if the induced map on a neighborhood of $F_1$/ of $F_1'$ or on $F_1'$ in the local model (as in \Cref{set:identitynearF}) is the identity.\\
In the sequel, when we say that $f$ is the identity near/on a set, we assume that the preliminary assumption on $f$ is satisfied.
\end{definition}

\begin{remark}\label{rem:htoh}
We will frequently make use of the following consideration. Assume that the $\mu-S^1$-diffeomorphism $f$ is the identity near $F_1'$ \textbf{in the orbit space}. Then $f$ is isotopic, through $\mu-S^1$-diffeomorphisms, to a $\mu-S^1$-diffeomorphism that is the identity near $F_1'$ (in the total spaces), and this isotopy may be chosen to have support only in a neighborhood of $F_1'$. Indeed, if $U\subset \mu_1^{-1}((\lambda-\eps,\lambda-\eps/2))$ is a neighborhood of $F_1'$ on which $f$ is the identity in the orbit spaces, then $f$ differs from the identity on $U$ only by a smooth map $h\colon U/S^1\to S^1$. Such a map is homotopic to the constant map $U/S^1\to \{1\}\subset S^1$ via a homotopy $h_t$ with $h_0(U/S^1)=\{1\}$, since $U/S^1$ may be assumed to be a tubular neighborhood of spheres and is therefore simply-connected. For some smooth function $\rho \colon \mu_1^{-1}((\lambda-\eps,\lambda-\eps/2))\to [0,1]$ that equals $1$ near $F_1'$ and has support inside $U/S^1$, we now redefine $f$ to be $h_{\rho(p)t}(\pi(p))\cdot f(p)$, where $\pi$ is the orbit map. Then $f$ is the identity near $F_1'$ (in the total space) as desired.
\end{remark}

Next, we will use the Morse flow to obtain an isomorphism of neighborhoods of the critical level that agrees on homology with a given $h$ that is a
$\mu-S^1$-diffeomorphism of levels below the critical level, assuming that $h$ is the identity near $F'_1$ at some level. 
We will also apply the following variation of \cite[Lemma 3.10]{Go11}, which is based on \cite[Lemma 3.4]{Mc09} and also \cite[Theorem 13.1]{GS89}.

\begin{lemma}\label{lem:go10} (cf. \cite[Lemma 3.10]{Go11} and \cite[Section 3.4, Item 7]{Go11}.)
Let $M^1$ and $M^2$ be connected semi-free Hamiltonian $S^1$-manifolds of dimension six with proper momentum maps onto $[\lambda-\kappa,\lambda+\kappa]$, where $\lambda$ is a common non-extremal critical value and $\kappa>0$ is such that there is no other critical value in $[\lambda-\kappa,\lambda+\kappa]$. Assume that, at level $\lambda$, $M^1$ and $M^2$ have the same amount of isolated fixed points of index $1$, the same amount of isolated fixed points of index $2$, and the same amount of fixed surfaces, all of which are spheres.\\
Suppose that there is a symplectomorphism $$\psi\colon (M^1_{\lambda},\omega^1_{\lambda})\to (M^2_{\lambda},\omega^2_{\lambda})$$ that maps $F_1$ into $F_2$ bijectively,  intertwines the isomorphism type of their equivariant normal bundles in $M^1$ resp.\ $M^2$, is the identity near $F_1$, and intertwines the Euler classes $e^1_-$ and $e^2_-$ at $\lambda$ (where $e_{-}$ is as in  \Cref{not:elambda}).\\
Then there is $\kappa>\delta>0$ and an isomorphism $$g\colon \mu_1^{-1}([\lambda-\delta,\lambda+\delta])\to \mu_2^{-1}([\lambda-\delta,\lambda+\delta])$$ such that $g$ is the identity near $F_1\subset M^1$ and the induced map of $g_{\lambda}$ on homology agrees with that of $\psi$.
    \end{lemma}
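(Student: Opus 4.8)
The plan is to build $g$ from a "symplectic model" of a neighborhood of the critical level $\lambda$ on each side, using the same structure theory that underlies McDuff's and Gonzales' gluing lemmas, and then to match the two models via $\psi$. First I would set up the local picture: near each isolated fixed point of index $1$ or $2$, the equivariant symplectic neighborhood theorem (\Cref{thm:Weinstein}) gives the standard local normal form ${\ball}^6\subset \C_{\pm 1}\oplus\C_{\pm 1}\oplus\C_{\mp 1}$ with its standard momentum map; near each fixed sphere, \Cref{thm:Weinstein}/\Cref{rem:Weinstein} together with the construction in \Cref{nt:normalbundle}--\Cref{nt:Hirzebruch} identifies a neighborhood of the sphere with a neighborhood of the zero section in a $\C^2$-bundle carrying the anti-diagonal $S^1$-action, whose equivariant isomorphism type is recorded by the same integer on both sides by hypothesis. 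So over $\mathcal{U}_1$ and $\mathcal{U}_2$ there is already a canonical isomorphism induced by the identification with the shared local models — this is the content of "$\psi$ is the identity near $F_i$" being extendable near the critical level.

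Next I would handle the complement of the fixed set. Away from $F_i$, the momentum map is regular near $\lambda$, so on $\mu_i^{-1}([\lambda-\delta,\lambda+\delta])\smallsetminus \mathcal{U}_i$ the gradient flow of $\mu_i$ (for the metric fixed in \Cref{set:identitynearF}, which is standard on the local models) trivializes the cobordism and, via the Morse flow $f_{\Morse}$ of \S\ref{rem:criticalvalue}, identifies the reduced spaces at levels near $\lambda$ with $M^i_\lambda$ minus a neighborhood of $F_i$. The given symplectomorphism $\psi\colon M^1_\lambda\to M^2_\lambda$, which is the identity near $F_1$ and intertwines $e^1_-$ and $e^2_-$, then transports to a $\mu$-$S^1$-diffeomorphism of $\mu_1^{-1}([\lambda-\delta,\lambda+\delta])\smallsetminus\mathcal{U}_1$ onto $\mu_2^{-1}([\lambda-\delta,\lambda+\delta])\smallsetminus\mathcal{U}_2$ that agrees with the canonical bundle isomorphism on the overlap region $(\lambda-\kappa,\lambda)\cap(\lambda-\delta,\lambda)$ near $F_i$ — because $\psi$ is the identity there and intertwines the Euler classes, so the lifting of \Cref{lem:lift} / \Cref{lem:Eulerclasses} produces compatible lifts. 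Piecing this with the bundle isomorphism over $\mathcal{U}_i$ gives an equivariant diffeomorphism (a $\mu$-$S^1$-diffeomorphism after a standard reparametrization near the seam, cf.\ \Cref{lem:smoothing}) $g_0\colon \mu_1^{-1}([\lambda-\delta,\lambda+\delta])\to \mu_2^{-1}([\lambda-\delta,\lambda+\delta])$ that is the identity near $F_1$ and whose induced map on $H_2(M^1_\lambda)$ is $(\psi_\lambda)_*$; moreover one checks $g_0^*\omega^2$ and $\omega^1$ are cohomologous (on the complement this follows from the Duistermaat--Heckman formula \eqref{eq:dh} since $\psi$ intertwines $e_-$ and preserves $[\omega_\lambda]$, on $\mathcal{U}_i$ the local models coincide, and \Cref{lem:cohomologyclass} glues these, using $\pi_1=0$ of the reduced spaces).

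Finally I would upgrade $g_0$ from a $\mu$-$S^1$-diffeomorphism to an isomorphism by Moser's method in the form of \Cref{rem:moserwithfixedpoints}: the forms $g_0^*\omega^2$ and $\omega^1$ are $S^1$-invariant, cohomologous, admit momentum maps agreeing on the fixed components (both are identified with the standard local models near $F_1$), and they already coincide near $F_1$, so the relative class in $H^2(\,\cdot\,, \text{nbhd of }F_1;\R)$ vanishes (the inclusion of a level set below $\lambda$ minus $F_1'$ into the cobordism minus $F_1$ is a homotopy equivalence away from the fixed set, and near the fixed set the forms agree); hence the Moser vector field vanishes near $F_1$ and, the momentum maps being proper, integrates to an equivariant symplectomorphism $\Psi$ fixing a neighborhood of $F_1$ with $\Psi^*(g_0^*\omega^2)=\omega^1$. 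Setting $g:=g_0\circ\Psi$ (shrinking $\delta$ as needed so the Moser flow is defined throughout) gives the desired isomorphism: it is the identity near $F_1$, and since $\Psi$ is isotopic to the identity it does not change the induced map on homology, so $g_\lambda$ and $\psi$ agree on $H_2$. The main obstacle is the bookkeeping at the seam between the bundle models $\mathcal{U}_i$ and the flow-trivialized complement: one must verify that the two candidate lifts of $\psi_\lambda|_{\partial\mathcal{U}_i}$ to the total space agree after an isotopy supported near the seam — this is where the hypotheses that $\psi$ is the identity near $F_1$ and intertwines the Euler classes $e^i_-$ are essential, via \Cref{lem:lift}, \Cref{lem:smoothing} and the simple-connectedness of the reduced spaces as in \Cref{rem:htoh}.
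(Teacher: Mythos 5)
Your construction is essentially the paper's own proof: you use the Euler-class hypothesis to lift $\psi$ to the circle bundle at a regular level (\Cref{lem:Eulerclasses}, \Cref{lem:lift}, \Cref{rem:htoh}), push it across $\lambda$ with the Morse flow, letting the ``identity near $F_1$'' hypothesis take care of the singular locus, and then apply Moser's method (\Cref{rem:moserwithfixedpoints}, \Cref{lem:moserwithoutfp}) to upgrade the resulting $\mu-S^1$-diffeomorphism to an isomorphism inducing $\psi_*$ on homology; the only cosmetic difference is that you glue a local-model neighborhood of $F_1$ to the flow-trivialized complement, whereas the paper lifts $\psi_{\lambda-\eps}$ at the single level $\lambda-\eps$ and flows. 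One small imprecision to be aware of: for the fixed spheres the shared local-model identification is not assumed symplectic (\Cref{set:identitynearF}), so $g_0^*\omega^2$ and $\omega^1$ need not literally coincide near those components of $F_1$; what carries the Moser step there, as in the paper, is that $g'$ is the identity in the model and $g'_{\lambda}=\psi$ is symplectic, so the straight-line homotopy is nondegenerate near $F_1$ after shrinking $\delta$, rather than the Moser vector field vanishing identically near all of $F_1$.
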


    The proof is almost the same as in \cite[Lemma 3.10]{Go11}\footnote{\cite[Lemma 3.10]{Go11} was formulated in case all fixed points at $\lambda$ having index $1$, but it was remarked in \cite[Section 3.4, Item 7]{Go11} that this also holds in the case of mixed indices.}. Indeed, the assumptions there were only used in the first paragraph of the proof in order to find the symplectomorphism $\psi$ (there, it was called $\phi$) whose existence is already assumed in our lemma.
\begin{proof}[Sketch of proof]
Since $\psi$ is the identity near $F_1$, there is a unique diffeomorphism $\psi_{\lambda-\eps}\colon M^1_{\lambda-\eps} \to M^2_{\lambda-\eps}$ that is the identity near $F_1'$ and satisfies $f_{\Morse}\circ \psi_{\lambda-\eps}=\psi\circ f_{\Morse}$. The assumption that $\psi$ intertwines $e^1_-$ and $e^2_-$ implies that $\psi_{\lambda-\eps}$ intertwines the Euler classes of $S^1\to \mu_i^{-1}(\lambda-\eps)\to M^i_{\lambda-\eps}$, hence $\psi_{\lambda-\eps}$ lifts to an equivariant diffeomorphism $\psi^{\lambda-\eps} \colon \mu_1^{-1}(\lambda-\eps)\to \mu_2^{-1}(\lambda-\eps)$, which we may assume to be the identity near $F_1'$ due to \Cref{rem:htoh}.

Using the Morse flow, we can extend $\psi^{\lambda-\eps}$ to a $\mu-S^1$-diffeomorphism
$$g'\colon \mu_1^{-1}([\lambda-\eps,\lambda+\eps])\to \mu_2^{-1}([\lambda-\eps,\lambda+\eps])$$
which is the identity near $F_1$ and descends to $\psi$ on $M^1_{\lambda}$.

After possibly shrinking the neighborhoods, $g'$ can be isotoped, through $\mu-S^1$-diffeomorphisms, to an isomorphism $g\colon \mu_1^{-1}([\lambda-\delta,\lambda+\delta])\to \mu_2^{-1}([\lambda-\delta,\lambda+\delta])$ for some $\delta>0$, using Moser's method. This works near $F_1$ since $g'$ is the identity there, and outside a neighborhood of $F_1$ as in \Cref{lem:moserwithoutfp}, because $g_{\lambda}$ is symplectic and hence $g_t$ is almost symplectic for $t$ close enough to $\lambda$.
In particular, $\psi$ and $g_{\lambda}$ induce the same map on homology.
\end{proof}

To apply the above lemma, we need the following two lemmata on symplectomorphisms of four-manifolds. We say that two symplectomorphisms $\psi,\psi'\colon M\to M$ are isotopic through symplectomorphisms if there is a smooth map $f\colon M\times [0,1] \to M\times [0,1]$ preserving the second factor such that $f_0=\psi$, $f_1=\psi'$ and each $f_t$ is a symplectomorphism.
\begin{remark}\label{rem:isotpicvspathconnected}
    It is well-known that isotopy classes of diffeomorphisms of a smooth, compact manifold are identical to $\pi_0(\text{Diff}(M))$, more precisely, for any path $\gamma\colon [0,1]\to \pi_0(\text{Diff}(M))$ there is also a path $\gamma'$ with the same starting and the same end point as $\gamma$ such that the induced map $$M\times [0,1]\to M\times [0,1],\quad (p,t)\mapsto (\gamma'(t)(p),t)$$
    is smooth. Moreover, $\gamma'$ may be chosen in such a way that $\gamma(t)$ and $\gamma'(t)$ are arbitrarily $C^{\infty}$-close, for all $t$.\\
    The same holds for the isotopy classes of symplectomorphisms, i.e., if two symplectomorphisms $\psi,\psi'$ can be connected by a path $\gamma$ through symplectomorphisms, then there is another path $\gamma'$ connecting those that actually represents an isotopy through symplectomorphisms from $\psi$ to $\psi'$. This follows by first choosing a path $\gamma''$ through diffeomorphisms from $\psi$ to $\psi'$ such that $\gamma''(t)$ and $\gamma(t)$ are close enough for all $t$ such that $\gamma''(t)$ is almost symplectomorphic, and then to apply Moser's method in the sense of \Cref{rem:Moser} on the two families $\omega_t\equiv \omega, \omega'_t=\gamma''(t)^*(\omega)$. The resulting path $\gamma'$ connecting $\psi$ and $\psi'$ then represents an isotopy as desired.
\end{remark}

\begin{Setting} \label{set:psi}
Let  $N^1$ and $N^2$ be compact, connected symplectic manifolds of dimension $4$.
Assume that in  $N^i$, for $i=1,2$, there are pairwise disjoint, exceptional  spheres $S_1^i,\hdots,S_k^i \subset N^i$ as well as a symplectomorphism $$\psi\colon N^1\to N^2 \text{ s.t. $\psi$ sends the homology class of $S^1_j$ into that of $S^2_j$ } \, \forall \, 1\leq j\leq k.$$ Also, let $A_1,A_2\subset N^1,N^2$ be finite sets of the same cardinality.
By identifying neighbourhoods of the exceptional spheres resp.\ the points in $A_1,A_2$ with the same local model, there is again meaning to saying that $\psi\colon N^1\to N^2$ is the identity on/near $A_1$ or on/near $S_1^j$. 
\end{Setting}

The first lemma says that there is a path, through symplectomorphisms, from $\psi$ to a symplectomorphism that is the identity near $A_1$. This follows from the well-known fact that the symplectomorphism group of a connected symplectic manifold acts transitively on sets of a given finite cardinality (see \cite[Theorem A]{Bo69}), and  from \cite[Proposition 7.1.22]{MS98}, which gives the path through symplectomorphisms from a symplectomorphism that intertwines $A_1$ and $A_2$ to a symplectomorphism that is the identity near $A_1$.
\begin{lemma}\label{lem:normalization}
Assume  \Cref{set:psi}.
 Let $U_1\subset N^1$ be any connected open subset and define $U_2:=\psi(U_1)$. Assume that $A_1\subset U_1$ and $A_2\subset U_2$. Then there is another symplectomorphism $\psi'\colon U^1\to U^2$ that intertwines $A_1$ and $A_2$ and an isotopy from $\psi$ to $\psi'$ through symplectomorphisms. Further, there is another isotopy $\psi'_s$ through symplectomorphisms intertwining $A_1$ and $A_2$, $s\in [0,1]$, from $\psi'_0=\psi'$ to $\psi'_1=:\psi''$ such that
  $\psi''$ is the identity near $A_1$.\\
  Both isotopies may be assumed to have compact support.
\end{lemma}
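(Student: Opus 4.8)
\textbf{Proof plan for \Cref{lem:normalization}.}

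The plan is to produce the two isotopies in sequence, each supported in a neighborhood of the relevant points. \emph{Step 1: move $A_1$ to $A_2$ through symplectomorphisms.} Write $\psi(A_1)=:B_1\subset U_2$, a finite subset of $U_2$ of the same cardinality as $A_2$. Since $U_2$ is connected (being the image of the connected open set $U_1$ under the homeomorphism $\psi$), the classical fact that the symplectomorphism group of a connected symplectic manifold acts transitively on ordered tuples of distinct points of a fixed cardinality (\cite[Theorem A]{Bo69}) provides a symplectomorphism $\phi\colon U_2\to U_2$, isotopic to the identity through compactly supported symplectomorphisms of $U_2$, with $\phi(B_1)=A_2$ (matching the chosen orderings/labelings so that $\phi$ takes $\psi(A_1^{(i)})$ to $A_2^{(i)}$). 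Set $\psi':=\phi\circ\psi\colon U^1\to U^2$; then $\psi'$ intertwines $A_1$ and $A_2$, and concatenating $\psi$ with the isotopy $\phi_s\circ\psi$ gives an isotopy from $\psi$ to $\psi'$ through symplectomorphisms $U^1\to U^2$. Because $\phi_s$ can be taken with compact support in $U_2$, this isotopy has compact support.

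\emph{Step 2: normalize $\psi'$ to be the identity near $A_1$.} Now $\psi'$ is a symplectomorphism sending $A_1$ to $A_2$ pointwise. Apply \cite[Proposition 7.1.22]{MS98}: for a symplectomorphism carrying a finite set to a finite set point-by-point, there is an isotopy, through symplectomorphisms that continue to fix the correspondence $A_1\leftrightarrow A_2$ setwise (indeed pointwise), from $\psi'$ to a symplectomorphism $\psi''$ that equals the standard linear model near each point of $A_1$; under our convention that neighborhoods of the points of $A_1$ and $A_2$ are identified with the same local Darboux model, this means $\psi''$ is the identity near $A_1$. Denote this isotopy $\psi'_s$, $s\in[0,1]$, with $\psi'_0=\psi'$, $\psi'_1=\psi''$. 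The construction in \loccit{} is local near $A_1$ together with a Moser-type argument, so $\psi'_s$ can be arranged to differ from $\psi'$ only in a small neighborhood of $A_1\cup A_2$; in particular it has compact support, and each $\psi'_s$ intertwines $A_1$ and $A_2$.

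\emph{Remarks on bookkeeping and obstacles.} Two small points need care. First, the exceptional spheres $S^i_j$ play no role in the statement beyond fixing, via \Cref{set:psi}, a preferred local model near points and spheres; we should simply choose the shrinking neighborhoods in Steps 1 and 2 small enough to be disjoint from the $S^i_j$ when the points of $A_i$ are not on those spheres, and when they are, the transitivity statement and \cite[Proposition 7.1.22]{MS98} still apply verbatim since they are purely local near the points. Second, to get genuine \emph{isotopies} (smooth families) rather than mere paths in $\mathrm{Symp}$, invoke \Cref{rem:isotpicvspathconnected} to replace each path through symplectomorphisms by a nearby smooth family with the same endpoints. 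The only mildly delicate point is ensuring that in Step 2 the intermediate symplectomorphisms really do keep $A_1$ mapping to $A_2$ and not merely $A_1$ to some moving finite set; this is exactly what the cited proposition guarantees, since its normalization procedure fixes the image points throughout. I expect no genuine obstacle here: the lemma is a packaging of two standard facts, and the main work is only in matching the local-model conventions of \Cref{set:psi} with the conclusions of the cited results and in keeping track of supports.
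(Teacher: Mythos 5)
Your proposal is correct and follows essentially the same route as the paper, which proves this lemma precisely by combining Boothby's transitivity theorem \cite[Theorem A]{Bo69} (to first arrange $\psi'(A_1)=A_2$) with \cite[Proposition 7.1.22]{MS98} (to then normalize to the identity near $A_1$ through symplectomorphisms preserving the point correspondence). The additional bookkeeping you supply on compact supports and on upgrading paths in $\mathrm{Symp}$ to genuine isotopies via \Cref{rem:isotpicvspathconnected} is consistent with the paper's conventions.
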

In the next lemma, we include the exceptional spheres $S^i_1,\hdots, S^i_k$.
\begin{lemma}\label{lem:normalizationspheres}
  Assume  \Cref{set:psi}.
  There is an isotopy, through symplectomorphisms, from $\psi\colon N^1\to N^2$ to another symplectomorphism $\psi'$ such that $\psi'$ is the identity near $S^1_j$ for all $1\leq j\leq k$. Further, $\psi'$ is isotopic, through symplectomorphisms, to a symplectomorphism $\psi''$ that is the identity near the $S^1_j$'s and $A_1$, and this isotopy may be chosen with support outside a neighborhood of the $S^1_j$'s.
\end{lemma}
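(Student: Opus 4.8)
\emph{Proof plan.} The plan is to proceed in three stages, mirroring the proof of \Cref{lem:normalization} but inserting a $J$-holomorphic input to handle the spheres, and at the end feeding $A_1$ into \Cref{lem:normalization}. All modifications below are through symplectic isotopies, so at each stage the current map is symplectically isotopic to the original $\psi$.

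\emph{Stage 1: arrange $\psi(S^1_j)=S^2_j$.} First observe that the classes $[S^1_1],\dots,[S^1_k]$ are pairwise distinct and pairwise orthogonal: two disjoint symplectically embedded spheres of self-intersection $-1$ cannot be homologous (their homological intersection would be $-1$ while the geometric one is $0$), and for $i\neq j$ disjointness forces $[S^1_i]\cdot[S^1_j]=0$. Hence $\psi(S^1_1),\dots,\psi(S^1_k)$ and $S^2_1,\dots,S^2_k$ are two configurations of pairwise disjoint symplectically embedded exceptional spheres in $(N^2,\omega^2)$ realizing the same tuple of homology classes (with matching areas, since $\psi^*\omega^2=\omega^1$ and $\psi_*[S^1_j]=[S^2_j]$). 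By the uniqueness, up to symplectic isotopy, of configurations of disjoint exceptional spheres in a fixed tuple of classes --- established via $J$-holomorphic curve theory (embedded $J$-holomorphic representatives of exceptional classes, positivity of intersections and Gromov compactness; cf.\ the characterisation of exceptional classes in \cite{KK17}) --- there is a smooth path of such configurations interpolating between them. By the symplectic isotopy extension theorem for families of disjoint embedded symplectic surfaces, this path is induced by an ambient symplectic isotopy $\Phi_t$ of $N^2$ with $\Phi_0=\mathrm{id}$ and $\Phi_1(\psi(S^1_j))=S^2_j$ for all $j$. Replace $\psi$ by $\Phi_1\circ\psi$; now $\psi(S^1_j)=S^2_j$.

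\emph{Stage 2: make $\psi$ the identity near each $S^1_j$ in the local models.} Fix, via Weinstein's symplectic neighbourhood theorem, the standard model $(W_j,\omega_{W_j})$ of a neighbourhood of an area-$\omega^1(S^1_j)$ exceptional sphere together with the symplectic embeddings onto neighbourhoods of $S^1_j\subset N^1$ and $S^2_j\subset N^2$ that define the chosen local models; transporting $\psi$ through these identifications yields a symplectomorphism $\phi_j$ from a neighbourhood of the zero section $Z\subset W_j$ into $W_j$ with $\phi_j(Z)=Z$, and ``$\psi$ is the identity near $S^1_j$'' means $\phi_j=\mathrm{id}$ near $Z$. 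Now $\phi_j|_Z$ is a symplectomorphism of $(S^2,\mathrm{area})$, and $\mathrm{Symp}(S^2)$ is connected, so an ambient symplectic isotopy of $W_j$ supported near $Z$ brings $\phi_j$ to one with $\phi_j|_Z=\mathrm{id}$. The derivative of $\phi_j$ along $Z$ is then a symplectic gauge transformation of the normal bundle $\mathcal{O}(-1)$; since $\mathrm{Sp}(2,\R)$ deformation retracts onto $\U(1)$ and $[S^2,\U(1)]=H^1(S^2;\Z)=0$, such a gauge transformation is isotopic through symplectic gauge transformations to the identity, and a further ambient isotopy of $W_j$ supported near $Z$ makes $\phi_j$ the identity to first order along $Z$. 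Finally, by the Moser argument underlying the uniqueness clause of Weinstein's theorem, a symplectomorphism that is the identity to first order along $Z$ is isotopic to the identity through symplectomorphisms of a (smaller) neighbourhood of $Z$. Extending each of these isotopies of $W_j$ by the identity gives a symplectic isotopy of $N^2$ supported near $S^2_j$, after which $\psi$ is the identity near $S^1_j$; carrying this out for all $j$ with disjoint supports produces $\psi'$.

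\emph{Stage 3: also the identity near $A_1$.} Let $W$ be a small closed neighbourhood of $\bigcup_j S^1_j$ on which $\psi'$ is already the identity, and set $U_1:=N^1\smallsetminus W$, $U_2:=\psi'(U_1)$; then $U_1$ is connected and open, and $A_1\subset U_1$, $A_2\subset U_2$ (we assume, as is implicit in the local-model conventions of \Cref{set:psi}, that $A_1$ is disjoint from the $S^1_j$'s). Applying \Cref{lem:normalization} to $\psi'|_{U_1}\colon U_1\to U_2$ produces an isotopy through symplectomorphisms, compactly supported in $U_1$ hence disjoint from $W$, from $\psi'$ to a symplectomorphism $\psi''$ that is the identity near $A_1$; since the support avoids $W$, $\psi''$ is still the identity near each $S^1_j$. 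This proves the lemma.

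\emph{Main obstacle.} The crux is Stage 1: the uniqueness up to symplectic isotopy of configurations of disjoint exceptional spheres in a prescribed tuple of classes is the one non-soft ingredient, and it rests on $J$-holomorphic curve theory; the rest of Stages 2 and 3 is technical bookkeeping (one must keep every ambient isotopy supported in shrinking neighbourhoods so that it extends by the identity) but uses only standard normal-form and Moser techniques.
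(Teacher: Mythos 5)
Your proposal is correct and follows the same skeleton as the paper's proof: first make $\psi$ carry the configuration $S^1_1,\dots,S^1_k$ onto $S^2_1,\dots,S^2_k$, then normalize $\psi$ to be the identity near the spheres, then dispose of $A_1$ by applying \Cref{lem:normalization} in the complement of a neighborhood of the spheres (your Stage 3 is literally the paper's last step). The difference is in how the middle step is justified. For Stage 1 the paper does not re-prove uniqueness of configurations: it cites \cite[page 6]{AKP24}, where it is shown that $\mathrm{Symp}_0$ of a blowup acts transitively on configurations of disjoint exceptional divisors of the given sizes; your sketch via embedded $J$-holomorphic representatives, positivity of intersections, Gromov compactness and isotopy extension is the standard proof of that fact, but as written it is an assertion rather than an argument, and in the generality of \Cref{set:psi} (arbitrary compact symplectic $4$-manifolds, not only rational surfaces) you should either cite this result or address degenerations along the path of almost complex structures. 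For Stage 2 the paper invokes \cite[Lemma 2.3]{LP04}: the stabilizer of the spheres in the symplectomorphism group is homotopy equivalent to the subgroup of maps that are $\U(2)$-linear near them, and then uses $\pi_0(\U(2))=1$ plus a Hamiltonian (hence cut-off-able, since the tubular neighborhood is simply connected) isotopy to reach the identity near the spheres. You instead re-derive this by hand in three layers (connectedness of $\mathrm{Symp}(S^2)$, connectedness of the symplectic gauge group of $\mathcal{O}(-1)$, connectedness of the group of germs that are the identity to first order along $Z$), each time realizing the lower-order isotopy by an ambient symplectic isotopy supported near $Z$. This is a more self-contained and elementary route, and it is correct, but the realization steps you pass over in a phrase (extending a gauge isotopy by quadratic Hamiltonians, and the fiberwise-dilation-plus-Moser argument behind the germ statement) are exactly the content that the Lalonde--Pinsonnault lemma packages; if you keep your route, those two points deserve a few lines or a reference, since "the Moser argument underlying Weinstein's uniqueness clause" does not by itself give connectedness of the germ group.
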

\begin{remark}\label{rem:isotopictoinvariant}
Before we prove this, let us first establish that $\psi$ is isotopic through symplectomorphisms to a symplectomorphism that maps each $S^1_j$ into $S^2_j$, not necessarily being the identity near or even on them. This follows from \cite[page 6]{AKP24}. There they consider $\widetilde{M}_c$, a $k$-fold blowup of a compact symplectic manifold $M$ by the sizes $c=(c_1,\hdots,c_k)$; they denote by $\Sigma$ the disjoint union of the $k$ exceptional divisors and by $\mathcal{C}_c(\Sigma)$ the configuration of these spheres, that is, symplectic embeddings $\Sigma\to \widetilde{M}_c$ up to parametrization. Then they conclude that the identity component Symp$_0(\widetilde{M}_c)$ of Symp$(\widetilde{M}_c)$ acts transitively on $\mathcal{C}_c(\Sigma)$.\\

Now, in the situation of \Cref{lem:normalizationspheres}, we may view $N^2$ as $\widetilde{M}_c$, where $c=(\omega^2(S^2_1), \hdots, \omega^2(S^2_k))$, $\Sigma$ is the disjoint union of the $S^2_1,\hdots, S^2_k$ and $M$ is the blow-down along the $S^2_1,\hdots, S^2_k$. Then $\psi(S^1_1), \hdots, \psi(S^1_k)$ is another configuration in $\mathcal{C}_c(\Sigma)$, and so there is an isotopy, through symplectomorphisms, from the identity to a symplectomorphism that maps $\psi(S^1_1), \hdots, \psi(S^1_k)$ to $S^2_1,\hdots, S^2_k$.\\
Finally, concatenating $\psi$ with that isotopy yields an isotopy from $\psi$ to a symplectomorphism that intertwines the $S^i_j$.
\end{remark}
\begin{proof}[Proof of \Cref{lem:normalizationspheres}]
    As explained in \Cref{rem:isotopictoinvariant}, there is an isotopy from $\psi$ to a symplectomorphism $\tilde{\psi}$ that leaves the $S^i_j$ invariant. By \cite[Lemma 2.3]{LP04}, the group of symplectomorphisms that leave the $S^i_j$ invariant is homotopy equivalent to the group of symplectomorphisms that are $\U(2)$-linear near the $S^i_j$ \footnote{While the Lemma is stated only for one exceptional divisor $E$, it holds for multiple, since the homotopy can be chosen to have support in a small neighborhood of $E$.}, so $\tilde{\psi}$ may be assumed to act like $\U(2)$ on each connected component of a tubular neighborhood $U$ of the $S^i_j$. Since $\pi_0(\U(2))$ is trivial, we find an isotopy from $\tilde{\psi}_{|U}$ to the identity; since $\pi_1(U)=0$, this isotopy is Hamiltonian and therefore can be extended to an isotopy of the whole manifold, with support arbitrarily close to $U$. This gives the desired isotopy from $\psi$ to $\psi'$.\\

    Now, simply apply \Cref{lem:normalization} on $N^i\smallsetminus \cup_{j=1}^k S^i_j$ to obtain the isotopy from $\psi'$ to $\psi''$. This finishes the proof.
\end{proof}
We will use \Cref{lem:go10} to prove the next lemma, for which the assumption on the fixed points set at the level $\lambda$ becomes important. The main point is that if the fixed point sets $F_1$ and $F_2$ at $\lambda$ contain any fixed surfaces, a symplectomorphism $M^1_{\lambda}\to M^2_{\lambda}$ is not necessarily isotopic, through symplectomorphisms, to a symplectomorphism that sends $F_1$ into $F_2$. However, if all fixed surfaces are exceptional spheres, this is the case by \Cref{lem:normalizationspheres}.
\begin{lemma} \label{lem:nonextremal-2}
Let $M^1$, $M^2$ and $\lambda$ be as in \Cref{set:merged}, and assume that $\lambda$ is a non-extremal critical value for both.
   Let $\kappa>0$ be such that \eqref{eq:kappa} holds, and $0<\eps<\kappa$.
Consider a $\mu-S^1$-diffeomorphism
\begin{equation} 
h\colon \mu_1^{-1}([\lambda-\eps,\lambda-\eps/2])\to \mu_2^{-1}([\lambda-\eps,\lambda-\eps/2])
\end{equation}
with the property that $h_{\lambda-\eps/2}$
is the identity near $F_1'$.

    For $\delta'>0$ small enough such that there is no critical value in $(\lambda,\lambda+\delta']$ for both $\mu_1,\mu_2$, there is a $\mu-S^1$-diffeomorphism
    \begin{equation} \label{eq:g'}
    g'\colon \mu_1^{-1}([\lambda-\eps,\lambda+\delta'])\to \mu_2^{-1}([\lambda-\eps,\lambda+\delta'])
    \end{equation}
    that agrees with $h^{\lambda-\eps}$ at level $\lambda-\eps$ and is the identity near $F_1$.\\
    For any extension of $h$ to a $\mu-S^1$-diffeomorphism
    $$h\colon \mu_1^{-1}([\lambda-\eps,t])\to \mu_2^{-1}([\lambda-\eps,t]),$$
    the maps $g'_{t}$ and $h_{t}$ induce the same map on homology, for any $t \in (\lambda-\eps/2,\lambda)$.

    Further, if $[h_{\lambda-\eps/2}^*\omega_{\lambda-\eps/2}^2]=[\omega_{\lambda-\eps/2}^1]$ and if the $(M^i_{\lambda},\omega^i_{\lambda})$ are symplectic rational surfaces, then there is $\eps>\delta>0$ and an equivariant symplectomorphism 
    \begin{equation*} 
    g\colon \mu_1^{-1}([\lambda-\delta,\lambda+\delta])\to \mu_2^{-1}([\lambda-\delta,\lambda+\delta])
    \end{equation*}
    that is the identity near $F_1$ such that $g'_{\lambda-\delta}$ and $g_{\lambda-\delta}$ induce the same map on homology and so do $g'_t$ and $g_t$ for any $t\in (\lambda-\delta,\lambda)$.
\end{lemma}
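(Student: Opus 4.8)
The strategy is to first produce the $\mu$-$S^1$-diffeomorphism $g'$ extending $h$ over the critical level $\lambda$, and then, under the additional rationality and cohomology hypotheses, to upgrade it to an equivariant symplectomorphism $g$ that agrees with $g'$ on homology of the reduced spaces below $\lambda$. For the first part, I would argue as in the proof of \Cref{lem:go10}: since $h_{\lambda-\eps/2}$ is the identity near $F_1'$, the Morse flow $f_{\Morse}$ (see \S\ref{rem:criticalvalue}) lets us push $h_{\lambda-\eps/2}$ forward to a homeomorphism $h_{\lambda}\colon M^1_\lambda\to M^2_\lambda$ that is the identity near $F_1$ and maps $F_1$ bijectively to $F_2$, intertwining the equivariant normal bundle types (using the hypothesis that $M^1,M^2$ have the same $*$-small fixed point data at $\lambda$ and that all non-isolated fixed components are exceptional spheres). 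Then I would extend $h^{\lambda-\eps}$ via the Morse flow upward through the critical level to a $\mu$-$S^1$-diffeomorphism $g'$ on $\mu_1^{-1}([\lambda-\eps,\lambda+\delta'])$ that is the identity near $F_1$. The statement that $g'_t$ and $h_t$ induce the same map on homology for $t\in(\lambda-\eps/2,\lambda)$ is because both are determined on $M^i_t\smallsetminus F'_i$ by $h$ via \Cref{cor:restdiffeo} (the Morse flow restricts to a diffeomorphism there), and because the inclusion $M_t\smallsetminus F'_i\hookrightarrow M_t$ induces a surjection on $H_2$ after blowing down the index-$2$ spheres — the point being that $g'$ was built by pushing $h$ along $f_{\Morse}$, so $g'_t = (\text{reparametrization along Morse flow})\circ h_t$, acting identically on homology classes supported away from $F'_i$.

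**The symplectic upgrade.** For the second part, the plan is: first use \Cref{lem:go10} to obtain, from a suitable symplectomorphism $\psi\colon M^1_\lambda\to M^2_\lambda$, an \emph{isomorphism} $g$ of neighborhoods of $\lambda$ that is the identity near $F_1$ and whose induced map $g_\lambda$ on homology agrees with $\psi$. To manufacture $\psi$ I would start from a symplectomorphism $M^1_\lambda\to M^2_\lambda$ obtained via \Cref{thm:cohomologousformsarediffeomorphic} (using that the $(M^i_\lambda,\omega^i_\lambda)$ are symplectic rational surfaces) inducing on homology the same map as $g'_\lambda$ — this uses the hypothesis $[h_{\lambda-\eps/2}^*\omega^2_{\lambda-\eps/2}]=[\omega^1_{\lambda-\eps/2}]$ together with the Duistermaat–Heckman formula \eqref{eq:dh} and \Cref{lem:cohomologyconverges} to see that $g'_\lambda$ is cohomology-intertwining. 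Then I would apply \Cref{lem:normalizationspheres} and \Cref{lem:normalization} to isotope $\psi$, through symplectomorphisms of $M^1_\lambda$, to a symplectomorphism that is the identity near $F_1$ (mapping exceptional spheres to exceptional spheres, then isolated fixed points to isolated fixed points), taking care that this isotopy does not change the induced map on homology and preserves the property of intertwining the Euler classes $e^1_-,e^2_-$ — here the rigidity assumption on $M$ over intervals below $\lambda$ (via \Cref{thm:rigid}-type input, but actually just the hypothesis in \Cref{set:merged}) is not directly needed, but the condition of \Cref{lem:go10} that $\psi$ intertwines $e^i_-$ must be checked, and it follows because $e^i_-$ is pulled back from $M^i_{\lambda-\eps}$ along $f_{\Morse}$ (\Cref{not:elambda}) and $g'_{\lambda-\eps}$ intertwines $e(P_{\lambda-\eps})$ since $h$ does. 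Feeding this $\psi$ into \Cref{lem:go10} yields $g$.

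**Matching $g$ and $g'$ on homology.** Finally I would check that $g_t$ and $g'_t$ induce the same map on $H_2(M^i_t)$ for $t\in(\lambda-\delta,\lambda)$, and likewise at $\lambda-\delta$. Both $g_\lambda$ and $g'_\lambda$ induce the map of $\psi$ on homology by construction ($g_\lambda$ by \Cref{lem:go10}, $g'_\lambda$ because $\psi$ was chosen to agree with $g'_\lambda$ on homology). Passing down through the Morse flow: $g_t$ is constructed in \Cref{lem:go10} by lifting $\psi_{\lambda-\eps}$ (the unique map with $f_{\Morse}\circ\psi_{\lambda-\eps}=\psi\circ f_{\Morse}$ that is the identity near $F_1'$) and flowing it up; since $g'$ was built the same way from $h$, and $g'_\lambda$ and $\psi=g_\lambda$ agree on homology, the uniqueness of the "descent along $f_{\Morse}$" on $M_t\smallsetminus F'_i$ together with the surjection $H_2(M_t\smallsetminus F'_i)\to H_2(M_t)$ (after blowing down, \Cref{cl:top-bup}) forces $g_t$ and $g'_t$ to agree on $H_2(M^i_t)$ for $t$ below $\lambda$. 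The main obstacle, and the step requiring the most care, is precisely this homology-bookkeeping across the Morse flow: one must verify that the various constructions ($g'$ from $h$, $g$ from $\psi$ via \Cref{lem:go10}) genuinely produce the \emph{same} induced homomorphism on $H_2$ of the reduced space just below $\lambda$, not merely homomorphisms that agree on $M^i_\lambda$. This rests on understanding exactly how $f_{\Morse}$ relates $H_2(M_{\lambda-\eps})$ to $H_2(M_\lambda)$ — namely it is the blow-down map at the index-$2$ points (\Cref{cl:top-bup}), so a class in $M_{\lambda-\eps}$ is determined by its image in $M_\lambda$ together with its intersection numbers with the exceptional spheres $S_{-\eps}$, and one must check both pieces of data are matched.
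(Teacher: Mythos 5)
Your route is the paper's route: build $g'$ by extending $h$ through the critical level with the Morse flow (identity near $F_1'$ and $F_1$), show $g'_{\lambda}$ intertwines $[\omega^i_{\lambda}]$ and $e^i_-$ via Duistermaat--Heckman, produce $\psi$ with \Cref{thm:cohomologousformsarediffeomorphic} and \Cref{lem:normalizationspheres}, feed it into \Cref{lem:go10}, and then match $g$ with $g'$ on homology below $\lambda$ by separating classes away from $F_1'$ from the exceptional-sphere classes. However, two of your justifications do not work as written. For the claim that $g'_t$ and $h_t$ agree on homology for \emph{every} extension of $h$, your identity $g'_t=(\text{reparametrization along the Morse flow})\circ h_t$ is false: above $\lambda-\eps/2$ the extension of $h$ is unconstrained, so $h_t$ is not determined by $h$ and the Morse flow at all. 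The correct (and simpler) argument is the one the paper uses: over the cylinder of regular values $[\lambda-\eps,t]$ the families $s\mapsto h_s$ and $s\mapsto g'_s$ are isotopies, so $h_t$ and $g'_t$ act on homology exactly as $h_{\lambda-\eps}$ and $g'_{\lambda-\eps}$ do, and these coincide because $g'$ agrees with $h^{\lambda-\eps}$ at level $\lambda-\eps$.

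Second, the inclusion $M_t\smallsetminus F'_1\hookrightarrow M_t$ does \emph{not} induce a surjection on $H_2$: a cycle disjoint from an exceptional sphere has intersection number $0$ with it, while the sphere has self-intersection $-1$, so none of the exceptional classes in $F'_1$ lie in the image. The paper replaces this by the Mayer--Vietoris splitting $H_2(M^1_{t})\cong H_2(M^1_{t}\smallsetminus F'_1)\oplus H_2(F'_1)$, combined with the facts that $g_t$ and $g'_t$ are the identity near $F'_1$ (so they agree on $H_2(F'_1)$) and are Morse-flow conjugates of $g_{\lambda}$ and $g'_{\lambda}$ on the complement of $F'_1$ (\Cref{cor:restdiffeo}, \Cref{cl:top-bup}); your closing remark about tracking intersection numbers with the spheres $S_{-\eps}$ is the right fix, but it must actually be carried out and it replaces, rather than supplements, the surjectivity claim. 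Two smaller points: gluing $h$ on $[\lambda-\eps,\lambda-\eps/2]$ with the Morse-flow extension above $\lambda-\eps/2$ is a priori only continuous at the junction level, which the paper smooths using \Cref{cor:smoothing}; and where you invoke \Cref{lem:cohomologyconverges} for the cohomology intertwining of $g'_{\lambda}$, the paper argues directly with the DH formula on the complement of $F_1$ plus the splitting $H^2(M^i_{\lambda})=H^2(M^i_{\lambda}\smallsetminus F_i)\oplus H^2(F_i)$ -- either works, but the limit argument alone does not handle the classes carried by $F_i$, so you still need that splitting and the fact that $g'_{\lambda}$ is the identity near $F_1$.
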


\begin{proof}
By \Cref{cor:restdiffeo}, the Morse flow $M_{\lambda-\eps/2}^{i} \to M_{\lambda}^{i}$ induced by \eqref{eq:morse}  restricts to a diffeomorphism $M_{\lambda-\eps/2}^{i} \smallsetminus {F}'_i \to M_{\lambda}^{i} \smallsetminus F_i$.
On the complement of $F'_{i}$ (in the total space), we use the restriction of the Morse flow to extend $h^{\lambda-\eps/2}$ to the level $\lambda$, and then continue the flow up to level $\lambda+\delta$.
Near $F_{i}'$ and near $F_{i}$, we define the extension as the identity.
By \Cref{cor:smoothing} with $f_1$ being $h$ and $f_2$ being said extension, the resulting map $$g'\colon \mu_1^{-1}([\lambda-\eps,\lambda+\delta'])\to \mu_2^{-1}([\lambda-\eps,\lambda+\delta'])$$ is smooth at level $\lambda-\eps/2$, so $g'$ is a $\mu-S^1$-diffeomorphism. \\
    The map $g'$ is well-defined in spite of the singularities of the Morse flow at level $\lambda$, since $h$ is the identity near $F_1'$ in the {total space} at level $\lambda-\eps/2$, making $g'$ also the identity near the singularities. 
  For $t \in (\lambda-\eps,\lambda)$, the maps $g'_t$ and $g'_{\lambda-\eps}$ induce the same map on homology, and, after extending $h$, so do $h_t$ and $h_{\lambda-\eps}$, under the identification $\mu_i^{-1}([\lambda-\eps,t])\cong \mu_i^{-1}(\lambda-\eps)\times [\lambda-\eps,t]$. Since $g'_{\lambda-\eps}=h_{\lambda-\eps}$ by construction, $g'_{t}$ and $h_{t}$ induce the same map on homology.
     This proves the first part of the lemma.\\
     
   Now, assume the further assumption on $h_{\lambda-\eps/2}$, $(M_{\lambda}^i,\omega_{\lambda}^{i})$, and $F_i$.
    The map $g'$ might not be almost symplectic. Still, the diffeomorphism $$g'_{\lambda}\colon M^1_{\lambda}\to M^2_{\lambda}$$ maps the cohomology class of $\omega_\lambda^1$ into that of $\omega_\lambda^2$. 
    Indeed, 
    since for each level $t$ between $\lambda-\eps/2$ and $\lambda$ we have that
    $g'_t$ and $h_{\lambda-\eps/2}$ induce the same action on homology, the map $g'_t$ preserves the Euler classes and sends $[\omega_{\lambda-\eps/2}^1]$ to $[\omega_{\lambda-\eps/2}^2]$. So, by the DH formula \eqref{eq:dh}, $g'_t$ sends $[\omega^1_t]$ to $[\omega^2_t]$, and in particular it intertwines those classes when being restricted to a map
    $$M^1_t\smallsetminus F_1'\to M^2_t\smallsetminus F_2'.$$
   Identifying $M^i_{\lambda}\smallsetminus F_i$ with $M^i_t\smallsetminus F_i'$ using the restriction
   of the Morse flow, the map 
   $g'_{\lambda}\colon M^1_{\lambda}\smallsetminus F_1\to M^2_{\lambda}\smallsetminus F_2$ also sends $[\omega^1_{\lambda}]$ to $[\omega^2_{\lambda}]$. Since $g'_{\lambda}$ is the identity near the $F_i$, certainly $(g'_{\lambda})^*[\omega^2_{\lambda}]=[\omega^1_{\lambda}]$ after restriction to the $F_i$. Since each $F_i$ is a finite union of points and exceptional spheres, we have 
   \begin{equation*}\label{eq;MWFi}
       H^2(M^i_{\lambda})=H^2(M^i_{\lambda}\smallsetminus F_i)\oplus H^2(F_i)
   \end{equation*}
   by the Mayer-Vietoris sequence, and hence $g'_{\lambda}$ sends $[\omega^1_{\lambda}]$ on  $M^1_{\lambda}$ to $[\omega^2_{\lambda}]$ on $M^2_{\lambda}$. Similarly, $g'_{\lambda}$ intertwines $e^1_-$ and $e^2_-$, see \Cref{not:elambda}.
    
    Therefore, and since we assumed that the $(M^i_{\lambda},\omega^i_{\lambda})$s are symplectic rational surfaces, we can use \Cref{thm:cohomologousformsarediffeomorphic} to find a symplectomorphism $$\psi \colon M^1_{\lambda}\to M^2_{\lambda}$$ between the reduced spaces whose action on homology agrees with that of $g'_{\lambda}$, in particular sending the classes of the exceptional spheres in $F_1$ bijectively to the classes of exceptional spheres in $F_2$ and intertwining $e^2_-$ and $e^1_-$. By \Cref{lem:normalizationspheres}, since $F_1$ and $F_2$ are disjoint unions of exceptional spheres and finitely many points with the same cardinality by assumption, we may assume that $\psi$ intertwines $F_1$ and $F_2$ and is the identity near them.
    
    Thus, by \cite[Lemma 3.10]{Go11} and \cite[Section 3.4, Item 7]{Go11} as stated in \Cref{lem:go10}, applied to $\psi$, we get an 
    equivariant symplectomorphism 
    $$g\colon \mu_1^{-1}([\lambda-\delta,\lambda+\delta])\to \mu_2^{-1}([\lambda-\delta,\lambda+\delta])$$ as required.

Since $g_{\lambda}$ and $g'_{\lambda}$ act the same way on homology and $g$ and $g'$ are the identity near $F_1$, it follows that $g_{\lambda-\delta}$ and $g'_{\lambda-\delta}$ act the same way on homology. Indeed, by the proof of \Cref{lem:go10},  $g_{\lambda-\delta}$ is isotopic through diffeomorphisms to the map that is obtained from $g_{\lambda}$ using the Morse flow $f_{\Morse}(\delta)$ 
(going in both directions) on $M^1_{\lambda-\delta}\smallsetminus F'_1$. Since the same is true for $g'$ by our construction, their actions on homology agree on $M^1_{\lambda-\delta}\smallsetminus F_1'\cong M^1_{\lambda}\smallsetminus F_1$, and certainly their actions on $H_2(F_1')$ agree. The statement now follows from 
$H_2(M^1_{\lambda-\delta})=H_2(M^1_{\lambda-\delta}\smallsetminus F_1')\oplus H_2(F_1')$, which is again due to the Mayer-Vietoris sequence and the fact that $F_1'$ is a union of isolated points and exceptional spheres in $M^1_{\lambda-\delta}$.\\
  This implies that for  any $t\in (\lambda-\delta,\lambda)$, the maps $g_t$ and $g'_t$ act the same way on homology.
\end{proof}

\begin{corollary}\label{cor:psi_t1}
    Let $M^1$, $M^2$ and $\lambda$ be as in \Cref{set:merged}, and assume that $\lambda$ is a non-extremal critical value for both. Let $\kappa>0$ be such that \eqref{eq:kappa} holds, and $0<\eps<\kappa$.\\
    Consider a $\mu-S^1$-diffeomorphism 
    $$f \colon \mu_{1}^{-1}(-\infty,\lambda-\eps]) \to \mu_{2}^{-1}(-\infty,\lambda-\eps])$$
    such that $f_{\lambda-\eps}$ is isotopic through diffeomorphisms $M^{1}_{\lambda-\eps} \to M^{2}_{\lambda-\eps}$ to a diffeomorphism that is the identity near $F_1'$.\\
    
Then there is $\eps >\delta>0$  and a $\mu-S^1$-diffeomorphism
    \begin{equation} \label{eq:g}
    g\colon \mu_1^{-1}([\lambda-\delta,\lambda+\delta])\to \mu_2^{-1}([\lambda-\delta,\lambda+\delta])
    \end{equation}
    that is the identity near $F_1$,  such that for any $t\in (\lambda-\delta,\lambda)$ and any extension of $f$ to a $\mu-S^1$-diffeomorphism
    $$f\colon \mu_1^{-1}((-\infty,t]) \to \mu_2^{-1}((-\infty,t]),$$
    $f_{t}$ and $g_{t}$ induce the same map on homology. Moreover, 
        if $[f_{\lambda-\eps}^{*}{\omega^{2}_{\lambda-\eps}}]=[\omega^{1}_{\lambda-\eps}]$, 
        then we can require that $g$ in \eqref{eq:g} is symplectic.
\end{corollary}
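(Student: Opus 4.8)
The plan is to reduce the corollary to \Cref{lem:nonextremal-2} by first using the rigidity-free statement and then feeding the output into \Cref{lem:nonextremal-2}. First I would restrict $f$ to $\mu_1^{-1}([\lambda-\eps,\lambda-\eps])$; more precisely, pick $\eps$ small enough that $[\lambda-\eps,\lambda)$ contains no critical value of either $\mu_i$, and consider the induced $\mu-S^1$-diffeomorphism on the regular cobordism. Using the identification $\mu_i^{-1}([\lambda-\eps,\lambda-\eps/2])\cong \mu_i^{-1}(\lambda-\eps)\times[\lambda-\eps,\lambda-\eps/2]$ coming from the normalized gradient flow, extend $f^{\lambda-\eps}$ to a $\mu-S^1$-diffeomorphism on $\mu_1^{-1}([\lambda-\eps,\lambda-\eps/2])$. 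By hypothesis $f_{\lambda-\eps}$ is isotopic through diffeomorphisms to a map that is the identity near $F_1'$; by \Cref{lem:lift} (lifting a smooth family of diffeomorphisms that is automatically compatible with the Euler class once we track it through the isotopy, as the Euler class is isotopy-invariant) this isotopy lifts to the total space, so after composing $f$ with the lift of the reverse isotopy we may assume that $(f')_{\lambda-\eps/2}$ is the identity near $F_1'$ in the orbit space, and then by \Cref{rem:htoh} that it is the identity near $F_1'$ in the total space. This modification changes $f$ only on a neighborhood of level $\lambda-\eps$, so the new map, which I will still call $h\colon \mu_1^{-1}([\lambda-\eps,\lambda-\eps/2])\to \mu_2^{-1}([\lambda-\eps,\lambda-\eps/2])$, satisfies the hypothesis of \Cref{lem:nonextremal-2}, and $h_{\lambda-\eps}$ is isotopic to $f_{\lambda-\eps}$ through diffeomorphisms, hence induces the same map on homology.

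Next I would apply the first part of \Cref{lem:nonextremal-2} to $h$ to obtain $\delta'>0$ and a $\mu-S^1$-diffeomorphism $g'\colon \mu_1^{-1}([\lambda-\eps,\lambda+\delta'])\to\mu_2^{-1}([\lambda-\eps,\lambda+\delta'])$ that is the identity near $F_1$ and agrees with $h^{\lambda-\eps}$ at level $\lambda-\eps$. Shrinking $\delta'$ to some $\delta\in(0,\eps)$ and restricting, this gives the required $g$ of \eqref{eq:g}. For the homology statement: \Cref{lem:nonextremal-2} tells us $g'_t$ and $h_t$ induce the same map on homology for $t\in(\lambda-\eps/2,\lambda)$; since $h_{\lambda-\eps}$ and $f_{\lambda-\eps}$ induce the same map on homology, and since along the regular cobordism $\mu_i^{-1}([\lambda-\eps,t])\cong\mu_i^{-1}(\lambda-\eps)\times[\lambda-\eps,t]$ all the $f_t$ (for any extension of $f$) induce the same map on homology as $f_{\lambda-\eps}$ — and likewise all $g'_t$ agree with $g'_{\lambda-\eps}=h_{\lambda-\eps}$ — it follows that $g_t$ and $f_t$ induce the same map on homology for every $t\in(\lambda-\delta,\lambda)$.

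For the symplectic refinement, suppose $[f_{\lambda-\eps}^*\omega^2_{\lambda-\eps}]=[\omega^1_{\lambda-\eps}]$. Passing through the isotopy of $f_{\lambda-\eps}$ does not change the induced cohomology map, so $(h_{\lambda-\eps})^*[\omega^2_{\lambda-\eps}]=[\omega^1_{\lambda-\eps}]$ as well, and then by the Duistermaat-Heckman formula \eqref{eq:dh} (using that $h$ intertwines the Euler classes, which it does since it induces the same homology map as $f_{\lambda-\eps}$ and the Euler class is a homology-theoretic invariant) we get $(h_{\lambda-\eps/2})^*[\omega^2_{\lambda-\eps/2}]=[\omega^1_{\lambda-\eps/2}]$. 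Thus the hypothesis $[h_{\lambda-\eps/2}^*\omega^2_{\lambda-\eps/2}]=[\omega^1_{\lambda-\eps/2}]$ of the second part of \Cref{lem:nonextremal-2} is met, and since the $(M^i_\lambda,\omega^i_\lambda)$ are symplectic rational surfaces by \Cref{set:merged}-type assumptions in force here, that lemma produces $\delta>0$ and an equivariant symplectomorphism $g$ on $\mu_1^{-1}([\lambda-\delta,\lambda+\delta])$, identity near $F_1$, with $g_t$ and $g'_t$ inducing the same map on homology for $t\in(\lambda-\delta,\lambda)$; combined with the previous paragraph this gives that $g_t$ and $f_t$ induce the same map on homology, as required. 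The only real subtlety — and the step I expect to need the most care — is verifying that all the homology bookkeeping is consistent: that replacing $f$ by the isotoped-and-lifted map $h$ genuinely preserves the induced maps on $H_2$ (for which one uses isotopy-invariance and the product structure of the regular cobordism), and that the Euler-class intertwining survives each of these modifications so that the Duistermaat-Heckman step is legitimate.
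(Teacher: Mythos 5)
Your proposal is correct and follows essentially the same route as the paper: extend $f^{\lambda-\eps}$ over the regular cobordism by lifting the given isotopy via \Cref{lem:lift} (using that all maps in the isotopy intertwine the Euler classes), use the Duistermaat--Heckman formula to transport the cohomology condition to level $\lambda-\eps/2$, and then feed the result into \Cref{lem:nonextremal-2} together with \Cref{rem:htoh}, with the same homology bookkeeping along the product structure of the cobordism. The only cosmetic slip is attributing the rational-surface hypothesis on $(M^i_\lambda,\omega^i_\lambda)$ to \Cref{set:merged} — it actually enters as an extra hypothesis (coming from \Cref{set:intro} in the application), exactly as the paper's own proof treats it.
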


\begin{proof}
    We first extend $f^{\lambda-\eps}$ to a $\mu-S^1$-diffeomorphism 
    \begin{equation} \label{eq:htotal}
h\colon \mu_1^{-1}([\lambda-\eps,\lambda-\eps/2])\to \mu_2^{-1}([\lambda-\eps,\lambda-\eps/2])
\end{equation}
with the property that $h_{\lambda-\eps/2}=h'$ by the following steps.
\begin{itemize}
    \item Since $f$ is an isomorphism, $f_{\lambda-\eps}$ pulls back $e(P^{2}_{\lambda-\eps})$ to $e(P^{1}_{\lambda-\eps})$; thus, so does each of the maps in the given isotopy.
    \item We identify ${\mu_i^{-1}([\lambda-\eps,\lambda-\eps/2])}/{S^1}$ with $M^i_{\lambda-\eps}\times [\lambda-\eps,\lambda-\eps/2]$,  
    using the flow of the gradient vector field of the momentum map. The gradient flow also identifies $e(P^{i}_{\lambda-\eps})$ with $e(P^{i}_{t})$ for all $t \in [\lambda-\eps,\lambda-\eps/2]$. Then the isotopy obtained in the first item gives a smooth family  of diffeomorphisms
    $$h_t\colon M^1_t \to M^2_t, \,\, t \in [\lambda-\eps,\lambda-\eps/2]$$ on the orbit spaces, such that $h_{\lambda-\eps/2}$ is the identity near $F_1'$ and $h_{\lambda-\eps}=f_{\lambda-\eps}$.
    Moreover,
    since $f$ is a $\mu-S^1$-diffeomorphism, $f_{\lambda-\eps}$ pulls back $e(P^{2}_{\lambda-\eps})$ to $e(P^{1}_{\lambda-\eps})$; thus, 
    $h_{t}^{*}(e(P_{\lambda-\eps}^2))=e(P_{\lambda-\eps}^1)$. Therefore, using the above identification, 
    \begin{equation}\label{eq:hep}
        h_{\lambda-\eps/2}^{*}(e(P^{2}_{\lambda-\eps/2}))=e(P^{1}_{\lambda-\eps/2}).
    \end{equation}
    \item  Applying \Cref{lem:lift}, we lift the family ${(h_t)}_{t \in [\lambda-\eps,\lambda-\eps/2]}$ to a $\mu-S^1$-diffeomorphism
    $h$ on the total space, as in \eqref{eq:htotal}.
    \end{itemize}
    If $[f_{\lambda-\eps}^{*}{\omega^{2}_{\lambda-\eps}}]=[\omega^{1}_{\lambda-\eps}]$, then, by \eqref{eq:hep} and the DH formula \eqref{eq:dh}, we have $[h_{\lambda-r/2}^*\omega_{\lambda-r/2}^N]=[\omega_{\lambda-r/2}^M]$. 
   Now, by \Cref{lem:nonextremal-2} and \Cref{rem:htoh}, we have a $\mu-S^1$-diffeomorphism $g$ as required, and, moreover, if the further assumption on $f_{\lambda-\eps}$, $(M_\lambda^i,\omega_\lambda^i)$ holds, then we can further require that $g$ is symplectic.

\end{proof}
 
\section{Extending an isomorphism over a critical level}\label{sec:extend}

In this section we prove \Cref{thm:extending-g}.
 Let $(M,\omega,\mu)$ be a connected semi-free Hamiltonian $S^1$-manifold of dimension six; assume that the momentum map $\mu$ is proper and its image is bounded. 
    Consider a critical value $\lambda$ of $\mu$.
We say that a regular value (or level) $t$ of the momentum map is {\bf right below} the critical value $\lambda$ if there is no critical value in $[t,\lambda)$. 
    Assume that $\lambda$ is not extremal.
    Denote by $\mathcal{D}$  the set of homology classes of degree $2$ over $\Z$ in a reduced space at a regular value $t$ right below $\lambda$ that correspond to the spheres in $F'$ that are sent by the Morse flow to isolated fixed points of index $2$ at level $\lambda$. It follows from the definition of $f_{\Morse}$ in \S \Cref{rem:criticalvalue} that $\mathcal{D}$ is well defined: does not depend on the regular value right below $\lambda$.

\begin{proposition}\label{prop:characterizationD}
For $i=1,2$, let $(M^i,\omega^i,\mu_i)$ and $\lambda$ be as in  \Cref{set:intro}; assume that $\lambda$ is non-extremal, the only critical value of $\mu_i$ and that $M^1$ and $M^2$ have the same $*$-small fixed point data at $\lambda$. Consider an isomorphism 
 $$f \colon \mu_1^{-1}((-\infty,\lambda-r]) \to \mu_2^{-1}((-\infty,\lambda-r])$$
 with $\lambda-r$ right below $\lambda$.
Then $f$ 
    maps $\mathcal{D}_1$ into $\mathcal{D}_2$. 
\end{proposition}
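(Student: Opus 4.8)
The plan is to characterize the set $\mathcal{D}_i$ purely in terms of data that an isomorphism automatically preserves, namely the symplectic form, the Euler class $e(P)$, and the intersection form on $H_2$ of the reduced space right below $\lambda$. The key is that the spheres $S_{-\eps}\subset M^i_{\lambda-r}$ associated to isolated fixed points of index $2$ are embedded symplectic spheres of self-intersection $-1$ and size $r$ (this is recorded in \S\ref{rem:criticalvalue}); moreover, since the reduced space is a symplectic rational surface by \Cref{set:intro}, these are \emph{exceptional} classes in the sense of \cite{KK17}. The first step is therefore to recall the characterization of exceptional classes in symplectic rational surfaces from \cite[Lemma 2.12 and Theorem 3.12]{KK17}: an exceptional class is detected by the conditions $E\cdot E=-1$, $c_1(E)=1$, and having minimal symplectic area among classes with a given property, and such classes are represented by embedded symplectic spheres unique up to Hamiltonian isotopy. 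Since an isomorphism $f\colon\mu_1^{-1}((-\infty,\lambda-r])\to\mu_2^{-1}((-\infty,\lambda-r])$ restricts to a symplectomorphism $f_{\lambda-r}\colon M^1_{\lambda-r}\to M^2_{\lambda-r}$ that intertwines the principal bundle Euler classes $e(P^1)$ and $e(P^2)$, it automatically sends the set of exceptional classes of $M^1_{\lambda-r}$ bijectively to that of $M^2_{\lambda-r}$, and preserves all the numerical data above.

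The second step is to pin down which exceptional classes actually arise as elements of $\mathcal{D}_i$, i.e., to give an intrinsic (isomorphism-invariant) description that distinguishes the spheres coming from index-$2$ isolated fixed points from other exceptional classes. Here I would use \Cref{lem:cohomologyconverges} together with the behaviour of $e(P_t)$ across the critical level: by \Cref{not:elambda}, the class $e_-$ at $M^i_\lambda$ is the pushforward of $e(P_{\lambda-r})$ under the homeomorphism \eqref{eq:morsehomeo1}, and the sphere $S_{-\eps}$ associated to an index-$2$ fixed point is precisely the $f_{\Morse}$-preimage of that fixed point, which becomes a point in $M^i_\lambda$. By \Cref{cl:top-bup}, $f_{\Morse}$ is topologically the blow-down at the isolated fixed points of co-index $1$ (equivalently index $2$ under the complementary convention) — so the classes in $\mathcal{D}_i$ are exactly the exceptional classes that get contracted when passing from $M^i_{\lambda-r}$ to $M^i_\lambda$. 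Since the number of such classes equals the number of index-$2$ isolated fixed points, and since $M^1$ and $M^2$ have the same $*$-small fixed point data at $\lambda$ (hence the same number of index-$2$ isolated fixed points, via the diffeomorphism $\eta_\lambda$ intertwining the index function), the two sets $\mathcal{D}_1$ and $\mathcal{D}_2$ have the same cardinality.

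The third step is to verify that $f_{\lambda-r}$ sends $\mathcal{D}_1$ \emph{into} $\mathcal{D}_2$, not just exceptional classes to exceptional classes. For this I would argue by characterizing $\mathcal{D}_i$ as a distinguished subset of the set of all exceptional classes: an exceptional class $E$ lies in $\mathcal{D}_i$ if and only if, after performing the (topological) blow-down $M^i_{\lambda-r}\to M^i_\lambda$ along all of $\mathcal{D}_i$, the image is smooth at the corresponding point and the remaining fixed-point structure at $\lambda$ (isolated fixed points of index $1$ and exceptional fixed spheres, which are themselves distinguished by $e_-$ and by their area, both preserved by $f$) matches. More directly: $f$ is an isomorphism, so it intertwines $e^1_-$ with $e^2_-$ and the symplectic forms; by \Cref{lem:cohomologyconverges}, the size-$r$ exceptional spheres in $\mathcal{D}_i$ are precisely the exceptional classes $E$ with $\omega_{\lambda-r}(E)=r$ that are disjoint from (have zero intersection with) all exceptional fixed spheres $C'$ of $\mathcal{D}^i_{\sph}$ and that are \emph{not} in $\mathcal{D}^i_{\sph}$ itself. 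Since $f$ already sends $\mathcal{D}^1_{\sph}$ into $\mathcal{D}^2_{\sph}$ by hypothesis (it is an isomorphism of the sub-level sets so it respects the Morse-flow preimages of fixed spheres — this is where the $*$-small fixed point data at $\lambda$ enters, guaranteeing the fixed surfaces are matched), and preserves areas and intersection numbers, it must send $\mathcal{D}_1$ into $\mathcal{D}_2$.

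\emph{Main obstacle.} The delicate point is the third step: ensuring that $f_{\lambda-r}$ cannot map an element of $\mathcal{D}_1$ to an exceptional class of $M^2_{\lambda-r}$ that is \emph{not} in $\mathcal{D}_2$ — for instance one coming from an exceptional fixed sphere, or a ``spurious'' exceptional class of the rational surface unrelated to the $S^1$-action. The resolution must combine the area constraint (all the $S_{-\eps}$ have the same size $r$, which is small and tends to $0$ as $r\to 0$, by \Cref{lem:cohomologyconverges} / \Cref{lem:volumedisk}) with the topological blow-down picture of \Cref{cl:top-bup} and the hypothesis that $f$ respects $\mathcal{D}^i_{\sph}$; one has to check carefully that these together force the correct matching, rather than merely a matching of cardinalities. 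I expect this to require invoking the uniqueness of exceptional sphere configurations in symplectic rational surfaces (as in \cite[Theorem 3.12]{KK17} and the configuration results of \cite{AKP24}), so that the collection of all exceptional classes of a given size and intersection pattern is genuinely rigid under symplectomorphism.
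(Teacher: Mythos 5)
Your overall strategy---characterize $\mathcal{D}_i$ intrinsically as a distinguished set of exceptional classes in $M^i_{\lambda-r}$ and then use that an isomorphism preserves such a characterization---is exactly the paper's strategy, but the step that carries all the weight is missing. The easy inclusion is that each class in $\mathcal{D}_i$ is exceptional of area $t$ at level $\lambda-t$; the hard direction, which your proposal only flags as the ``main obstacle'' and defers to \cite{KK17}/\cite{AKP24}, is to show there are \emph{no} spurious exceptional classes with this property, i.e.\ that every exceptional class $A$ with $\omega_{\lambda-t}(A)=t$ for all small $t$ is actually contracted by the Morse flow. The paper proves this ($\mathcal{E}'_i\subseteq\mathcal{D}_i$ in its notation) by a concrete contradiction argument that your sketch does not contain: first extend $f$ arbitrarily close to $\lambda$ using rigidity (\Cref{prop:extension}), so that \Cref{lem:E'}(5) applies and the classes in question are of minimal size with pairwise zero intersection; then, by positivity of intersections (\Cref{lem:dis-rep}), represent a putative extra class $A$ by an embedded symplectic sphere disjoint from the Morse spheres $C^i_1,\dots,C^i_\ell$; blow down along those spheres (topologically this is $f_{\Morse}$, \Cref{cl:top-bup}) to obtain an embedded symplectic sphere in class $A$ in $M^i_\lambda$, hence by \Cref{lem:expchar} an $\omega^i_\lambda$-symplectic sphere; finally, \Cref{lem:cohomologyconverges} forces $\omega^i_\lambda(A)=\lim_{t\to 0}\omega_{\lambda-t}(A)=0$, a contradiction. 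Without some version of this argument (or another proof that the area condition pins down exactly the contracted classes), matching cardinalities and preserving intersection numbers does not rule out $f_{\lambda-r}$ sending a class of $\mathcal{D}_1$ to an exceptional class of $M^2_{\lambda-r}$ outside $\mathcal{D}_2$.

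Two further specific problems. First, your third step invokes the hypothesis that $f$ sends $\mathcal{D}^1_{\sph}$ into $\mathcal{D}^2_{\sph}$ and claims this is automatic from the same $*$-small fixed point data (``guaranteeing the fixed surfaces are matched''). That hypothesis belongs to \Cref{thm:extending-g}, not to this proposition, and it is emphatically \emph{not} automatic: \Cref{ex:openDpoly} is built precisely to show that an isomorphism below the critical level need not respect the Morse-flow preimage classes of fixed spheres. The proposition neither has nor needs this hypothesis; in the paper's argument the fixed-sphere preimages are excluded from $\mathcal{E}'$ for free, because their areas at level $\lambda-t$ do not tend to $0$ (by Duistermaat--Heckman, they converge to the positive area of the fixed sphere in $M_\lambda$), so no condition on $f$ relative to $\mathcal{D}^i_{\sph}$ enters. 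Second, your characterization uses the area condition only at the single level $\lambda-r$ (``$\omega_{\lambda-r}(E)=r$''), with $r$ not small; this is weaker than the condition defining $\mathcal{E}'$ (area $t$ at level $\lambda-t$ for \emph{all} small $t$, which via \eqref{eq:dh} forces $e(P)(E)=-1$), and the minimality and zero-intersection facts you would need from \cite{KK17} only hold after shrinking to levels right below $\lambda$---which is why the paper must first extend $f$ upward using the rigidity assumption before running the argument.
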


Note that the proposition holds in a more general setting than that of \Cref{thm:extending-g}: we do not need to assume that the fixed point set at level $\lambda$ is a finite set of points;it might include fixed surfaces of arbitrary genus.

For $\eps>0$ small enough, $\mathcal{D}$ consists of
exceptional classes in $(M_{\lambda-\eps},\omega_{\lambda-\eps})$ on which the
evaluation of $\omega_{\lambda-\eps}$ equals $\eps$.
To prove the proposition, we will show that $\mathcal{D}$ equals the set of the exceptional classes with this property.
Recall that a homology class of degree $2$ over $\Z$ in a symplectic manifold is {\bf exceptional} if it is represented by a symplectically embedded sphere and its self-intersection number is $-1$. 
 We will use the following notations.

\begin{Notation} \label{not:E'}
    Let $\eps>0$.
    Denote by $$\mathcal{E}^{\eps}$$ the set of exceptional classes in $H_{2}(M_{\lambda-\eps};\Z)$ w.r.t.\ the symplectic form $\omega_{\lambda-\eps}$. 
Denote by $$\mathcal{E}_{\min}^{\omega_{\lambda-\eps}}$$ the set of exceptional classes of minimal size in $(M_{\lambda-\eps},\omega_{\lambda-\eps})$.
\end{Notation}

The gradient flow of the momentum map and the Duistermaat-Heckman Theorem allow us to understand the set $\mathcal{E}^{\eps}$ and a subset of it for $\eps>0$ small enough.
\begin{lemma}\label{lem:E'}
Let $(M,\omega,\mu)$ be a connected semi-free Hamiltonian $S^1$-manifold of dimension six; assume that the momentum map $\mu$ is proper and its image is bounded.
Let $\lambda$ be a non-extremal critical value of the momentum map. 
Assume that for any $t>0$ such that $\lambda-t$ is a regular value right below $\lambda$, the symplectic reduced space $(M_{\lambda-t},\omega_{\lambda-t})$ is a symplectic rational surface.

\begin{enumerate}
\item 
The set $\mathcal{E}^{t}$ of exceptional classes, as a set of classes in $H_{2}$ of the diffeomorphism type of $M_{\lambda-t}$, does not depend on $t>0$ as long as $\lambda-t$ is right below $\lambda$; we denote it $\mathcal{E}$.
   \item The subset ${\mathcal{E}^{\eps}}'$
   of $\mathcal{E}^{\eps}$ of exceptional classes such that
    \begin{itemize}
        \item[(*)] the evaluation of $\omega_{\lambda-t}$ on the class equals $t$ for all $0<t\leq\eps$, 
 \end{itemize}
   as a set of classes in $H_{2}$ of the diffeomorphism type of $M_{\lambda-\eps}$, 
   does not depend on $\eps>0$ as long as $\lambda-\eps$ is right below $\lambda$; we denote it $\mathcal{E}'$.
    \item The Euler class $e(P)$ of the principal $S^1$-bundle over a reduced space $M_{s}$ for $s$ right below $\lambda$ evaluates on every class in $\mathcal{E}'$ with value $-1$.
    \item For an exceptional class $E$ in $\mathcal{E}$, if $\omega_{\lambda-t_{E}}(E)<t_{E}$ for some $t_E>0$ s.t.\ $\lambda-t_E$ is right below $\lambda$ then there exists $0<\eps<t_E$ such that $\omega_{\lambda-t}(E)\geq t$ for every $0<t\leq \eps$.
    \item 
    For $\eps>0$ small enough, the set $\mathcal{E}'$ is a subset of the set $\mathcal{E}_{\min}^{\omega_{\lambda-\eps}}$ and each two disjoint classes in $\mathcal{E}'$ have intersection number zero.

\end{enumerate}

\end{lemma}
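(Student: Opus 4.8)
\textbf{Proof plan for \Cref{lem:E'}.}

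The plan is to establish the five items essentially in order, exploiting the Duistermaat-Heckman formula \eqref{eq:dh} $[\omega_{t}]=[\omega_{t'}]+(t-t')e(P)$ together with the fact that on a symplectic rational surface the set of \emph{smooth} exceptional classes (homology classes represented by a smoothly embedded sphere of self-intersection $-1$) coincides with the set of classes that are represented by an $\omega$-symplectic exceptional sphere for \emph{some} symplectic form, and that this latter property is governed by Gromov--Witten / Seiberg--Witten invariants (see \cite[Lemma 2.12, Theorem 3.12]{KK17}). For Item (1): the normalized gradient flow gives an equivariant diffeomorphism $M_{\lambda-t}\cong M_{\lambda-t'}$ for any two regular values right below $\lambda$, under which the family $\omega_{\lambda-t}$ becomes a smooth family of cohomologous-up-to-$e(P)$ forms; since being an exceptional class is a property of the diffeomorphism type and the (deformation class of the) symplectic form, and since symplectic rational surface forms on a fixed manifold are all deformation equivalent, $\mathcal{E}^{t}$ is independent of $t$. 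For Item (2): the condition (*) is a condition on the linear function $t\mapsto \omega_{\lambda-t}(E)$, which by \eqref{eq:dh} is $\omega_{\lambda-t}(E)=\omega_{\lambda-t_0}(E)-(t-t_0)e(P)(E)$; requiring this to equal $t$ identically for small $t$ forces $\omega_{\lambda-t_0}(E)=t_0$ (setting $t=0$ formally, or comparing at two values) and $e(P)(E)=-1$. Hence ${\mathcal{E}^{\eps}}'$ is cut out from $\mathcal{E}$ by the two linear conditions $e(P)(E)=-1$ and $\omega_{\lambda}(E)=0$ (thinking of $\omega_{\lambda-t}$ as $\omega_\lambda - t\,e(P)$ via \eqref{eq:dh}), neither of which depends on $\eps$; this proves (2) and simultaneously gives Item (3).

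For Item (4): suppose $E\in\mathcal{E}$ with $\omega_{\lambda-t_E}(E)<t_E$. Write $\omega_{\lambda-t}(E)=a - b t$ where $a=\omega_{\lambda-t_E}(E)+b t_E$ and $b=e(P)(E)$. We must rule out $\omega_{\lambda-t}(E)<t$ for all small $t>0$, i.e.\ $a-bt<t$, i.e.\ $a<(b+1)t$ for all small $t$; since $a=\lim_{t\to 0^+}\omega_{\lambda-t}(E)$ and $\omega_{\lambda-t}(E)>0$ (it is the symplectic area of an $\omega_{\lambda-t}$-symplectic representative), we have $a\ge 0$. If $a>0$ then $a<(b+1)t$ fails for $t$ small, done. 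If $a=0$, then $\omega_{\lambda-t}(E)=-bt$ is a positive multiple of $t$, so $b\le -1$; but if $b=-1$ then $\omega_{\lambda-t}(E)=t$, contradicting $\omega_{\lambda-t_E}(E)<t_E$, so $b\le -2$ and then $\omega_{\lambda-t}(E)=-bt\ge 2t>t$, again contradicting the hypothesis. (The point $a=0$ requires a short argument that $\omega_{\lambda-t}(E)$ remains strictly positive down to $t=0$; this is where \Cref{cor:restdiffeo1} and \Cref{lem:cohomologyconverges}, describing the limit of $\omega_{\lambda-t}$ under the Morse flow, get used to justify taking the limit.) So in all cases there is $\eps\in(0,t_E)$ with $\omega_{\lambda-t}(E)\ge t$ for $0<t\le\eps$.

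For Item (5): fix an exceptional class $E\in\mathcal{E}$. By Items (2)--(3), either $E\in\mathcal{E}'$, in which case $\omega_{\lambda-t}(E)=t$ for all small $t$, or $E\notin\mathcal{E}'$; in the latter case, by Item (4) and its proof, either $\omega_{\lambda-t}(E)>t$ for all sufficiently small $t$, or $\omega_{\lambda-t}(E)<t$ for all sufficiently small $t$ but then (reapplying Item (4)) actually $\omega_{\lambda-t}(E)\ge t$ eventually — so the only way to have $\omega_{\lambda-t}(E)<t$ persistently is excluded. Since $\mathcal{E}$ is a \emph{finite} set (finitely many exceptional classes in a symplectic rational surface with a fixed cohomology class, as the area bounds them), we may choose $\eps>0$ small enough to work simultaneously for all $E\in\mathcal{E}$: then for every $E\notin\mathcal{E}'$ we have $\omega_{\lambda-\eps}(E)>\eps$ while every $E\in\mathcal{E}'$ has $\omega_{\lambda-\eps}(E)=\eps$, so $\mathcal{E}'\subseteq\mathcal{E}^{\omega_{\lambda-\eps}}_{\min}$. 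Finally, for two disjoint classes $E_1,E_2\in\mathcal{E}'$: they are represented by disjoint $\omega_{\lambda-\eps}$-symplectic exceptional spheres (here one invokes the pseudoholomorphic-curve characterization of exceptional classes, \cite[Lemma 2.12, Theorem 3.12]{KK17}, to upgrade ``disjoint as classes'' — meaning they arise as Morse-flow preimages of distinct isolated fixed points, hence have disjoint embedded representatives by \S\ref{rem:criticalvalue} — to a statement about intersection numbers), so $E_1\cdot E_2\ge 0$; and the adjunction-type constraint for exceptional classes in a rational surface (two distinct exceptional classes $E_1\ne E_2$ satisfy $E_1\cdot E_2\in\{0,1\}$, with $=1$ ruled out when they admit disjoint embedded representatives) forces $E_1\cdot E_2=0$.

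\textbf{Main obstacle.} The delicate point is Item (4) and the part of Item (5) relying on it: controlling the sign of the linear function $t\mapsto\omega_{\lambda-t}(E)$ near $t=0$ requires knowing that $\omega_{\lambda-t}(E)$ does not become negative or vanish prematurely, which means one genuinely needs the analytic input from \Cref{cor:restdiffeo1} and \Cref{lem:cohomologyconverges} (the Morse flow pushes $\omega_{\lambda-t}$ to something converging to $\omega_\lambda$ on $M_\lambda\smallsetminus F$) rather than pure linear algebra. The second subtle point is the transition from ``disjoint classes'' in the sense used to define $\mathcal{D}_{\sph}$ to ``intersection number zero'': this is where the $J$-holomorphic theory for symplectic rational surfaces is essential, since a priori distinct embedded exceptional spheres coming from the Morse flow could represent classes with positive intersection, and one must use the characterization of exceptional classes via Seiberg--Witten--Taubes to conclude that minimality of size forces the intersection to vanish.
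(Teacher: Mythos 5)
Your Items (1)--(4) essentially reproduce the paper's argument: (1) via the gradient flow together with the deformation-invariance of exceptional classes (\Cref{lem:expchar}), (2)--(3) from the linearity in $t$ of the Duistermaat--Heckman formula, and (4) by a case analysis equivalent to the paper's. Note, though, that contrary to your ``main obstacle'' remark, Item (4) needs no analytic input from \Cref{cor:restdiffeo1} or \Cref{lem:cohomologyconverges}: positivity of $\omega_{\lambda-t}(E)$ at every regular level, which is already guaranteed by Item (1), is all your own computation uses.

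The genuine gap is in Item (5), in two places. First, you assert that $\mathcal{E}$ is finite ``as the area bounds them''; this is false in general, since a blowup of $\C\PP^2$ at $k\geq 9$ points carries infinitely many exceptional classes and nothing in \Cref{set:intro} bounds $k$. Hence you cannot obtain a uniform $\eps$ by minimizing over $\mathcal{E}$. The paper avoids this by passing to a reduced basis (\cite[Theorem 1.4]{KK17}) and using that $E_k$ realizes the minimal area (\cite[Lemma 3.10]{KK17}), so Item (4) only has to be applied to finitely many classes. Second, and more seriously, the intersection statement is not proved. In \Cref{prop:characterizationD} Item (5) is applied to two arbitrary \emph{distinct} classes of $\mathcal{E}'$, one of which is precisely \emph{not yet known} to come from a Morse-flow sphere --- that is what the proposition establishes --- so interpreting ``disjoint'' as ``preimages of distinct fixed points, hence with disjoint embedded representatives'' is circular; and if both classes did admit disjoint representatives, their intersection number would be zero trivially, with no content left. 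Your fallback claim that two distinct exceptional classes always satisfy $E_1\cdot E_2\in\{0,1\}$ is also false: for example $E_1$ and $3L-2E_1-E_2-\cdots-E_7$ are exceptional classes with intersection number $2$. What is actually needed, and what the paper proves, is that for $\eps$ small the set $\mathcal{E}_{\min}^{\omega_{\lambda-\eps}}$ is one of the explicit configurations $\{E_{j+1},\hdots,E_k\}$ or $\{L-E_1-E_2,E_3,\hdots,E_k\}$ (with the analogous short lists for $k\leq 2$), in each of which distinct classes meet trivially; this uses \cite[Theorem 3.12 and Remark 3.15]{KK17} together with the volume convergence of \Cref{cor:restdiffeo1} to keep $\alpha^{\eps}$ and $\alpha^{\eps}-\delta^{\eps}_1$ bounded away from $0$ while the minimal size tends to $0$. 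That is where the analytic input genuinely enters --- in Item (5), not in Item (4).
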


We will also use the description of the set of exceptional classes of minimal symplectic size in blowups of $\C \PP^2$ by \cite{KK17}. In particular, we will apply  the following characterization of exceptional classes, which follows from McDuff’s “C1 lemma” \cite[Lemma 3.1]{Mc90}, Gromov’s compactness theorem \cite[1.5.B]{Gr85}, and the adjunction formula \cite[theorem 2.6.4]{MS12}. 

\begin{lemma} \label{lem:expchar} \cite[Lemma 2.12]{KK17}.
For a homology class $E$ of self-intersection $-1$ in
$H_2(\C \PP^2 \# k \overline{\C \PP^2};\Z)$, if 
there exists a blowup form $\omega$ such that the class $E$ is represented by an embedded $\omega$-symplectic sphere then for every blowup form $\omega$, the class $E$ is represented by an embedded $\omega$-symplectic sphere.
\end{lemma}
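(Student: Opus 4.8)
The statement to be proved is \Cref{lem:expchar}, the characterization of exceptional classes in blowups of $\C\PP^2$: a class $E$ with $E\cdot E=-1$ is represented by an embedded symplectic sphere for one blowup form if and only if it is for every blowup form. Since this is quoted verbatim from \cite[Lemma 2.12]{KK17}, the plan is to reproduce the standard Gromov--McDuff argument rather than to invent anything new.

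\medskip

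\textbf{Plan of proof.} The key point is that the property ``$E$ is represented by an embedded $\omega$-symplectic sphere'' is, for a class of self-intersection $-1$, equivalent to a purely homological condition together with a Gromov--Witten-type nonvanishing that is insensitive to the choice of blowup form. Concretely, I would proceed as follows. First, fix a blowup form $\omega_0$ for which $E$ is represented by an embedded symplectic sphere $C_0$. Since $C_0^2=-1$ and $C_0$ is a sphere, the adjunction formula \cite[Theorem 2.6.4]{MS12} forces $c_1(E)=1$; in particular $\langle c_1,E\rangle=1$ and $E\cdot E=-1$ are intrinsic homological facts. Next, for an arbitrary blowup form $\omega$, choose an $\omega$-compatible almost complex structure $J$ that is generic (in the sense of being regular for all somewhere-injective $J$-holomorphic spheres, which is possible since the relevant moduli spaces have the expected dimension $2\langle c_1,E\rangle + \dim - 4 \ge 0$ and we may perturb). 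Because a blowup form tames a ``standard'' $J$ for which $E$ has a holomorphic representative in a reference model, and because the space of tamed $J$ is connected, one runs McDuff's ``$C^1$ lemma'' \cite[Lemma 3.1]{Mc90}: along a path of almost complex structures, a $J$-holomorphic sphere in class $E$ persists, possibly degenerating into a cusp curve (a stable map with several components).

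\medskip

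\textbf{The main step.} The heart of the argument is ruling out, or rather controlling, bubbling. Using Gromov compactness \cite[1.5.B]{Gr85}, a sequence of embedded $J_n$-holomorphic spheres in class $E$ converges to a stable map whose components represent classes $E_1,\dots,E_m$ with $\sum E_i = E$ (with multiplicities), each $E_i$ having positive $\omega$-area. One then uses positivity of intersections and the constraint $\langle c_1, E\rangle = 1$, $E^2 = -1$: writing each $E_i$ in terms of the standard basis $L, E_1,\dots,E_k$ and applying the adjunction inequality to each somewhere-injective component, one shows that the only possibility compatible with the arithmetic is that there is a single component, necessarily an embedded sphere, in class $E$. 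This is exactly the mechanism by which a class of ``minimal'' character cannot decompose. Hence $E$ is represented by an embedded $J$-holomorphic, and therefore $\omega$-symplectic, sphere. Since $\omega$ was an arbitrary blowup form, this proves the claim; symmetry in $\omega_0$ and $\omega$ is automatic.

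\medskip

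\textbf{Expected obstacle.} The delicate part is the genericity/regularity bookkeeping in the $C^1$ lemma: one must ensure that for a path $(\omega_s, J_s)$ of tamed pairs the wall-crossing phenomena are limited to the controlled cusp-curve degenerations, and that after perturbing, somewhere-injective curves in the relevant classes are cut out transversally. In dimension four this is standard (automatic transversality for embedded spheres of non-negative index, plus the usual index computations for multiple covers), so the obstacle is essentially expository rather than conceptual --- one just has to cite McDuff's lemma, Gromov compactness, and adjunction in the right order, exactly as the statement of \Cref{lem:expchar} already advertises. Since the excerpt explicitly attributes the lemma to \cite[Lemma 2.12]{KK17} and names the three ingredients, the cleanest route is to invoke that reference and sketch only the decomposition-exclusion step above.
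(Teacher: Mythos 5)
Your proposal is correct and follows exactly the route the paper intends: the paper gives no proof of its own but quotes \cite[Lemma 2.12]{KK17}, which it says follows from McDuff's $C^1$ lemma, Gromov compactness, and adjunction — precisely the three ingredients you assemble, with the standard index/positivity argument excluding multi-component degenerations since $\langle c_1,E\rangle=1$. Aside from minor looseness about connecting the two blowup forms by a deformation (rather than just varying tamed $J$ for a fixed form), your sketch matches the cited argument.
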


\begin{proof}[Proof of \Cref{lem:E'}]

The gradient flow of the momentum map (w.r.t.\ some invariant metric) gives an equivariant diffeomorphism between $M_{\lambda-t}$ and $M_{\lambda-t'}$ for $0<t<t'$ 
such that there is no critical value in $[\lambda-t',\lambda)$. 
If the diffeomorphism type is $S^2 \times S^2$ or $\C \PP^2$, the set $\mathcal{E}^{t}$ is empty, and items (1)--(5) hold in the empty sense. So we assume that the diffeomorphism type is $\C \PP^2 \# k \overline{\C \PP^2}$ and $k \geq 1$.
\begin{enumerate}
\item 
The gradient flow of the momentum map sends the blowup form $\omega_{\lambda-t}$ to some blowup form $\omega'_{\lambda-t'}$ on $M_{\lambda-t'}$. Moreover, it sends an embedded $\omega_{\lambda-t}$-symplectic sphere of self-intersection $-1$ to an embedded $\omega'_{\lambda-t'}$-symplectic sphere of self-intersection $-1$.
Item (1) now follows from \Cref{lem:expchar}.

\item Item (2) follows from item (1) and the definition of ${\mathcal{E}^{\eps}}'$.

\item The equivariant diffeomorphism between $M_{\lambda-t}$ and $M_{\lambda-t'}$, given by the gradient flow of the momentum map, allows us to identify $e(P_{\lambda-t})$ and $e(P_{\lambda-t'})$ and denote it by $e(P)$.
The Duistermaat-Heckman formula in \cref{eq:dh},
and property (*) of the classes in $\mathcal{E}'$ imply item (3).

\item  Let $E$ be a class in $\mathcal{E}$. Assume that $\omega_{\lambda-t_{E}}(E)<t_{E}$ for some $t_E>0$ such that 
there is no critical value in $[\lambda-t_E,\lambda)$. 
 By \eqref{eq:dh}, for $0<t<t_E$,
   $$\omega_{\lambda-t}(E)=\omega_{\lambda-t_E}(E)+(t_E-t)e(P)(E).$$ The number $e(P)(E)$ is an integer. If it is not negative, it is clear that, eventually, $\omega_{\lambda-t}(E)\geq t$. If it is negative, then there exists $t>0$ such that $\omega_{\lambda-t}(E)=0$, which is impossible since $E$ is represented by an embedded symplectic sphere.

   \item  
   If $\mathcal{E}'$ is empty, item (5) holds in the empty sense. So assume that $\mathcal{E}'$ is not empty.
   By \cite[Theorem 1.4]{KK17}, if $k \geq 3$, we can assume, up to a change of basis of $H_{2}$, that the vector $(a^{\eps},b^{\eps},\delta^{\eps}_{1},\ldots,\delta^{\eps}_{k})$ encoding\footnote{A vector $(\lambda,\delta_1,\ldots,\delta_k)$ in $\R^{1+k}$ \emph{encodes} a blowup form $\omega$ if $\frac{1}{2\pi} \langle [\omega],L \rangle =\lambda$ and $\frac{1}{2\pi} \langle [\omega],E_i \rangle =\delta_i$ for $i=1,\ldots,k$, where $\langle \cdot,\cdot \rangle$ is the pairing between cohomology and homology on $\C \PP^2 \# k \overline{\C \PP^2}$.} the blowup form $\omega_{\lambda-\eps}$ is reduced. This means that $\delta^{\eps}_{1}+\delta^{\eps}_{2}+\delta^{\eps}_{3} \leq \alpha^{\eps}$ and $\delta^{\eps}_1 \geq \delta^{\eps}_{2} \geq \ldots \geq \delta^{\eps}_{k} > 0$.  Therefore, by \cite[Lemma 3.10]{KK17}, if $k \geq 3$, the class $E_k$ is in the set $\mathcal{E}^{\omega_{\lambda-\eps}}_{\min}$ of exceptional classes of minimal size.  If $k\in \{1,2\}$, the set $\mathcal{E}$ is finite, see \cite{De80}. Therefore, by item (4), applied to $E_k$ if $k \geq 3$ and to the finitely many classes in $\mathcal{E}$ if $k \in \{1,2\}$, the set $\mathcal{E}'$ is a subset of $\mathcal{E}^{\omega_{\lambda-\eps}}_{\min}$ for $\eps>0$ small enough. It remains to show that for $\eps>0$ small enough, two disjoint classes in $\mathcal{E}^{\omega_{\lambda-\eps}}_{\min}$ have intersection number zero. 
   
 Since we assumed that $\mathcal{E}'$ is not empty, 
 the minimal size of an exceptional class converges to $0$ as $\eps$ goes to $0$. 
 If $k=1$, 
 $$\mathcal{E}^{\omega_{\lambda-\eps}}_{\min}=\mathcal{E}=\{E_1\};$$ 
 if $k=2$,  the set of exceptional classes is $\{E_1,E_2,L-E_1-E_2\}$ \cite{De80}. So if $k=2$, 
the minimal size equals either $\delta^{\eps}_2$ or $\alpha^{\eps}-\delta^{\eps}_1-\delta^{\eps}_2$.
By \Cref{cor:restdiffeo1},
the symplectic volume $\frac{1}{2}({\alpha^{\eps}}^2-{\delta^{\eps}_1}^2-{\delta^{\eps}_2}^2)=(\alpha^{\eps}-{\delta^{\eps}_1})(\alpha^{\eps}-\delta^{\eps}_2)-\frac{1}{2}(\alpha^{\eps}-\delta^{\eps}_1-\delta^{\eps}_2)^2$  of the reduced space  $(M_{\lambda-\eps},\omega_{\lambda-\eps})$ approaches the symplectic volume of the reduced space $(M_{\lambda},\omega_{\lambda})$. 
 Therefore, we have
 $$\alpha^{\eps} \text{ and }\alpha^{\eps}-\delta^{\eps}_1\text{ do not approach }0 \text{ as }\eps \to 0.$$
 Thus for $\eps>0$ small enough, we cannot have $\delta^{\eps}_2=\frac{\alpha^{\eps}-\delta^{\eps}_1}{2}=(\alpha^{\eps}-\delta^{\eps}_1-\delta^{\eps}_2)$. Therefore, by \cite[Remark 3.15]{KK17}, 
  $$\mathcal{E}^{\omega_{\lambda-\eps}}_{\min}=\{E_2\} \text{ or }\mathcal{E}^{\omega_{\lambda-\eps}}_{\min}=\{E_1,E_2\} \text{ or }\mathcal{E}^{\omega_{\lambda-\eps}}_{\min}=\{L-E_1-E_2\}.$$
If $k \geq 3$, 
the minimal size equals $\delta^{\eps}_k$ \cite[Lemma 3.10]{KK17}. So 
 $$\delta^{\eps}_k \to 0 \text{ as } \eps \to 0.$$ Since the symplectic volume $\frac{1}{2}({\alpha^{\eps}}^2-{\delta^{\eps}_1}^2-\cdots-{\delta^{\eps}_k}^2)$ of the reduced space  $(M_{\lambda-\eps},\omega_{\lambda-\eps})$ approaches the symplectic volume of the reduced space $(M_{\lambda},\omega_{\lambda})$, 
 we have
 $$\alpha^{\eps} \text{ does not approach }0 \text{ as }\eps \to 0.$$ Similarly, since the symplectic volume $\frac{1}{2}({\alpha^{\eps}}^2-{\delta_1^{\eps}}^2-\cdots-{\delta^{\eps}_k}^2)$ equals $(\alpha^{\eps}-\delta^{\eps}_1)(\alpha^{\eps}-\delta^{\eps}_2)-\frac{1}{2}[(\alpha^{\eps}-\delta^{\eps}_1-\delta^{\eps}_2)^2-{\delta^{\eps}_3}^2-\cdots-{\delta^{\eps}_k}^2]$, 
 $$\alpha^{\eps}-\delta^{\eps}_1 \text{ does not approach }0 \text{ as }\eps \to 0.$$ 
 So, if $k \geq 3$, we can assume that $$\delta^{\eps}_k<\min \Big\{\frac{\alpha^{\eps}}{3},\frac{\alpha^{\eps}-\delta^{\eps}_1}{2}\Big\}.$$
Therefore, by \cite[Theorem 3.12]{KK17},
$$\mathcal{E}^{\omega_{\lambda-\eps}}_{\min}=\{E_{j+1},\hdots,E_k\} \text{ or }\mathcal{E}^{\omega_{\lambda-\eps}}_{\min}=\{L-E_1-E_2,E_3,\hdots,E_k\},$$
where $j$ is the smallest non-negative integer for which $\delta_{j+1}^{\eps} = \ldots  \delta_{k}^{\eps}$.
In all the above options for $\mathcal{E}^{\omega_{\lambda-\eps}}_{\min}$, the intersection number of disjoint classes is zero.

   \end{enumerate}
\end{proof}

\begin{remark}\label{rem:divisors}
    Let $C_1,\hdots,C_\ell$, for $\ell \geq 1$, be pairwise disjoint exceptional spheres of minimal area representing different classes in $\mathcal{E}'$ in the rational symplectic surface $(M^i_{\lambda-\eps},\omega^i_{\lambda-\eps})$. Then $(M^i_{\lambda-\eps},\omega^i_{\lambda-\eps})$ is a $k$-blowup of $(\CP^2,\lambda \omega_{\operatorname{FS}})$ with exceptional divisors in the classes $(E_1,\hdots,E_k)$, and 
    if $\eps>0$ is small enough, then,  
    by item (5) of \Cref{lem:E'} and its proof,  the set $\{[C_1],\hdots,[C_{\ell}]\}$ is a subset of either the set of classes of the exceptional divisors of minimal area or of $\{L-E_1-E_2,E_3,\hdots,E_k\}$. In the latter case $(M^i_{\lambda-\eps},\omega^i_{\lambda-\eps})$ is symplectomorphic to a $(k-1)$-blowup of $S^2 \times S^2$ with exceptional divisors in the classes $(\tilde{E}_{1},\ldots,\tilde{E}_{k-1})$, 
    by a symplectomorphism that sends $(L-E_1-E_2,E_3,\hdots,E_k)$ to  $(\tilde{E}_{1},\ldots,\tilde{E}_{k-1})$. 
   Indeed, it is enough to see this for $k=2$, and then it follows from Delzant's theorem \cite{De88}, since the two manifolds admit Hamiltonian $T^2$-actions with the same momentum map image. See \Cref{fig:cropped-two-cuts}.
By  \cite[page 6]{AKP24}, 
the identity component of the  symplectomorphisms group acts transitively on the space of configurations of disjoint exceptional spheres in  the classes of exceptional divisors.
Thus, we may assume that $C_1,\hdots,C_\ell$ are exceptional divisors of $(M^i_{\lambda-\eps},\omega^i_{\lambda-\eps})$.

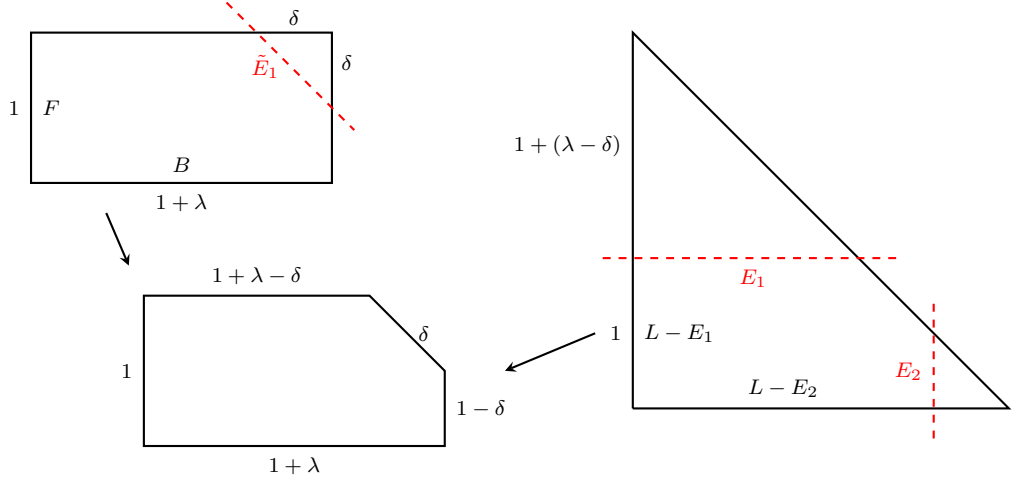
\begin{figure}
\begin{tikzpicture}[thick, >=stealth]
	\draw (0,3) -- node[below]{\tiny $1+\lambda$} node[above]{\tiny $B$} (4,3) -- (4,5) -- 
	(0,5) -- node[left]{\tiny $1$} node[right]{\tiny $F$} cycle;
	\draw (3.5,5.2) node{\tiny $\delta$};
	\draw (4.2,4.6) node{\tiny $\delta$};
	\draw[red, dashed] (2.6,5.4) -- node[left]{\tiny $\tilde{E}_1$}+(-45:2.4cm); 
	\draw (8,0) -- node[above]{\tiny $L-{E}_2$}(12,0) -- (13,0) -- (8,5) -- node[left]{\tiny $1+(\lambda-\delta)$}node[right]{}(8,2) -- node[left]{\tiny $1$} node[right]{\tiny $L-E_1$}(8,0); 
	\draw[red, dashed] (7.6,2) -- node[below]{\tiny $E_1$} (11.6,2) 
	(12,-0.4) -- node[left]{\tiny ${E}_2$}(12,1.4);
	\draw (1.5,-0.5) -- node[below]{\tiny $1+\lambda$} 
	(5.5,-0.5) -- node[right]{\tiny $1-\delta$}
	(5.5,0.5) -- node[right]{\tiny $\delta$} 
	(4.5,1.5) -- node[above]{\tiny $1+\lambda-\delta$}
	(1.5,1.5) -- node[left]{\tiny $1$} cycle; 
	\draw[->] (1,2.6) to (1.3,1.9); 
	\draw[->] (7.5,1) to (6.3,0.5);
\end{tikzpicture}
\caption{Toric moment polytopes of blowups in two ways.}\label{fig:cropped-two-cuts}
\end{figure}

\end{remark}
We will also need the following corollary of positivity of intersections of $J$-holomorphic curves in an almost complex manifold of dimension four.

\begin{lemma}\label{lem:dis-rep}
Let $B=\C \PP^2 \# k \overline{\C \PP^2}$ and $\omega$ a blowup form on $B$.
Consider a finite set $\{C_1,\ldots,C_{\ell}\}$ of pairwise disjoint embedded symplectic spheres, all of self-intersection $-1$. Let $A$ be an exceptional class of minimal size.

If the intersection number of $A$ with each of the classes of the given symplectic spheres is $0$, the class $A$ is represented by an embedded symplectic sphere that is disjoint from any of the given symplectic spheres. 
\end{lemma}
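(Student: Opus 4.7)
My plan is to use $J$-holomorphic techniques, in particular positivity of intersections in dimension four. First, I would choose an $\omega$-tame almost complex structure $J$ on $B$ such that every one of the given spheres $C_1,\ldots,C_\ell$ is simultaneously $J$-holomorphic; this is possible because the $C_i$ are pairwise disjoint symplectically embedded spheres, so a compatible $J$ making each of them holomorphic can be constructed locally near each $C_i$ and then extended. Moreover, by perturbing $J$ away from $\bigcup_i C_i$, I may assume $J$ is generic in the complement.

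Next, I would invoke the standard fact, stemming from the work of Gromov, McDuff, and Taubes, that any exceptional class in a symplectic $4$-manifold admits a $J$-holomorphic representative (a cusp curve, a priori possibly reducible) for any $\omega$-tame $J$. Apply this to $A$ to produce a $J$-holomorphic stable map $u$ in class $A$.

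The crucial step is to show that this representative is actually an embedded sphere. Suppose it decomposes as $A = \sum_{j} m_j B_j$, where the $B_j$ are the homology classes of the simple components of $u$ with multiplicities $m_j\ge 1$ and $\omega$-areas $\omega(B_j) > 0$. Because each $B_j$ is represented by a $J$-holomorphic sphere, $\omega(B_j)$ is positive and $\sum_j m_j \omega(B_j) = \omega(A)$, which equals the minimum of $\omega$ on exceptional classes. Using the adjunction inequality and the fact that $A\cdot A = -1$, a standard argument (see the proof of \cite[Lemma 2.12]{KK17}) shows that at least one component $B_{j_0}$ is itself exceptional, with $\omega(B_{j_0})\le \omega(A)$. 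Minimality of $\omega(A)$ then forces equality and $A = B_{j_0}$, so $u$ is irreducible and, by the adjunction formula, an embedded $J$-holomorphic sphere $C$.

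Finally, I would apply positivity of intersections to $C$ and each $C_i$: since both are $J$-holomorphic and $[C]\cdot [C_i] = A\cdot [C_i] = 0$, each geometric intersection contributes a strictly positive integer, so there are no intersections, unless $C$ and $C_i$ share a component. As $C$ and $C_i$ are irreducible and $[C] = A \neq [C_i]$ (because $A\cdot A = -1$ and $A\cdot[C_i]=0$), they must in fact be disjoint. The main obstacle is the irreducibility step, where one has to rule out a decomposition of $A$ into several $J$-holomorphic classes — here the minimality of the $\omega$-area of $A$ among exceptional classes, together with the adjunction formula, is exactly what is needed.
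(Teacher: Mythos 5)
Your overall strategy matches the paper's: construct an $\omega$-tame $J$ making each $C_i$ $J$-holomorphic, produce an embedded $J$-holomorphic sphere in class $A$ for \emph{that same} $J$, then apply positivity of intersections to conclude disjointness. The genuine divergence is in the middle step. The paper simply cites \cite[Corollary 2.4]{KKP15} (noting it was also proved by Pinsonnault \cite{Pi08} via Seiberg--Witten--Taubes theory) for the fact that an exceptional class of \emph{minimal size} is represented by an embedded $J$-holomorphic sphere for \emph{every} $\omega$-tame $J$; you instead attempt to re-derive this via a Gromov-compactness / cusp-curve argument.

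Your re-derivation, as written, has a gap at the pivotal point. You assert that, given a stable-map decomposition $A=\sum_j m_j B_j$, ``adjunction and $A\cdot A=-1$'' force at least one component $B_{j_0}$ to be exceptional. This does not follow from adjunction alone: for a non-generic $J$ such as the one you constructed to make all the $C_i$ holomorphic, the simple components $B_j$ may have $c_1(B_j)\leq 0$ (their moduli spaces are allowed to be of negative expected dimension), and there is no a priori bound forcing one of them to have $B_j^2=-1$, $c_1(B_j)=1$, and genus zero. Ruling this out is exactly the nontrivial content of \cite[Corollary 2.4]{KKP15} and \cite{Pi08}, where one needs either Gromov--Witten positivity combined with a case analysis of degenerations in rational surfaces, or Seiberg--Witten wall-crossing. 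Your citation of ``the proof of \cite[Lemma 2.12]{KK17}'' is also misplaced: that lemma (recalled as \Cref{lem:expchar} above) states that exceptionality of a class is independent of the chosen blowup form, which is a different (and weaker) statement than what you need here. The rest of your argument --- that minimality of $\omega(A)$ then forces $A=B_{j_0}$, that multiple covers are excluded by $A^2=-1$, and that the adjunction equality makes the irreducible representative embedded --- is correct once that component is known to be exceptional. Replacing the heuristic step with the citation to \cite[Corollary 2.4]{KKP15} (as the paper does) closes the gap without changing your architecture.
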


We sketch the proof for completeness. Recall that an \emph{almost complex structure} on a manifold $B$ is an automorphism $J \colon TB \to TB$ such that $J^{2}=-\id$. It is \emph{tamed by a symplectic form} $\omega$ if $\omega(v,Jv)>0$ for all $v \neq 0$. A \emph{$J$-holomorphic sphere} is a map $u \colon S^2 \to B$ such that
$$du \circ j_{S^2}=J \circ du,$$ where $j_{S^2}$ is the almost complex structure induced from a complex atlas on $S^2$. 
If $u$ is an embedding, we call its image $u(S^2)$ an \emph{embedded $J$-holomorphic sphere}.

\begin{proof}[Sketch of proof.]
    By the positivity of intersections of $J$-holomorphic curves in a four-dimensional almost complex manifold, for
 distinct embeddings of $J$-holomorphic spheres  $u_0, u_1 \colon S^2 \to B$,
     $$\sharp\{(z_0,z_1) \in S^2 \times S^2 \,|\, u_0(z_0)=u_1(z_1)\}\leq [u_0(S^2)] \cdot [u_1(S^2)].$$ 
     See \cite[Theorem 2.6.3]{MS12}.
     Note that if $J$ is $\omega$-tame then an embedded $J$-holomorphic sphere is symplectic. 
Thus, it is enough to show that there is an $\omega$-tame almost complex structure $J$ such that
 each of the given embedded symplectic spheres is $J$-holomorphic and 
 $A$ is represented by an embedded $J$-holomorphic sphere.
 This follows from the following facts:
 \begin{itemize}
 \item Since $A$ is an exceptional class with minimal $\omega$-symplectic size, for every $\omega$-tame almost complex structure $J$ there exists an embedded $J$-holomorphic sphere in the class $A$. See \cite[Corollary 2.4]{KKP15}\footnote{The proof is using Gromov-Witten invariants. This result was also obtained by \cite{Pi08}, for more general four-manifolds, using Seiberg-Witten-Taubes theory.}.

   \item There is an $\omega$-tame almost complex structure $J^*$ such that each $C_i$ is an embedded $J^*$-holomorphic sphere. We construct $J^{*}$ by first defining ${J^*}|_{T{C_i}}$ for each $C_i$ such that the symplectic embedding $S^2 \to B$ whose image is $C_i$ is holomorphic. For every $i$, extend ${J^*}|_{T{C_i}}$ to an $\omega$-tame fiberwise complex structure on the symplectic vector bundle ${TB}|_{C_i}$. 
   Then extend the obtained structure 
 to an $\omega$-tame almost complex structure on $B$. See \cite[Section 2.6]{MS98}.

\end{itemize}
 
\end{proof}

\begin{proof}[Proof of  \Cref{prop:characterizationD}]

Consider the isomorphism
   $$f \colon \mu_1^{-1}((-\infty,\lambda-r]) \to \mu_2^{-1}((-\infty,\lambda-r]).$$
 We claim that $f$ maps  $\mathcal{D}_1$ into  $\mathcal{D}_2$.
 Since $f$ is an isomorphism, it maps $\mathcal{E}'_1$ into $\mathcal{E}'_2$, where $\mathcal{E}'_i$ is as denoted in \Cref{lem:E'}(2).
     Thus, it is enough to show that $\mathcal{D}_i=\mathcal{E}'_i$. We can apply \Cref{lem:E'}, since, by assumption, for any regular value $\lambda-\eps$ right below $\lambda$,
the reduced space $(M^{i}_{\lambda-\eps},\omega^{i}_{\lambda-\eps})$ is 
as required. By the rigidity assumption on $(M^i,\omega^i,\mu_i)$, we can apply \Cref{prop:extension}  to extend $f$ to an isomorphism 
$$f \colon \mu_1^{-1}((-\infty,\lambda-\eps]) \to \mu_2^{-1}((-\infty,\lambda-\eps]),$$
with $\eps>0$ as small as required for \Cref{lem:E'}(5).

By definition, the classes in $\mathcal{D}_{i}$ are represented by the spheres $$C^{i}_1,\ldots,C^{i}_{\ell}$$  in $M^{i}_{\lambda-\eps}$ that are sent to the isolated fixed points of index $2$ at $M^{i}_{\lambda}$ by the Morse flow $f_{\Morse}$.
These spheres are pairwise disjoint $\omega^{i}_{\lambda-\eps}$-symplectically embedded spheres of self-intersection $-1$ and size $\eps$. Moreover, the evaluation of $\omega_{\lambda-t}$ on the class of $C_{j}^{i}$ in $M_{\lambda-t}$ equals $t$ for all $0<t\leq\eps$. See  \S \Cref{rem:criticalvalue}.
Thus, $\mathcal{D}_i$ is contained in $\mathcal{E}'_i$.

 Assume that $A \in \mathcal{E}^{'}_{i}\smallsetminus \mathcal{D}_i$.
By \Cref{lem:E'}(5), for any two different classes $E,E' \in \mathcal{E}^{'}_{i}$ we have $[E]\cdot [E']=0$. 
Therefore, by \Cref{lem:dis-rep}, 
the homology class $A \in \mathcal{E}^{'}_{i}\smallsetminus \mathcal{D}_i$ is represented by an embedded symplectic sphere $S$ of self-intersection $-1$
 in $(M^i_{\lambda-\eps},\omega^{i}_{\lambda-\eps})$ that is disjoint from $C^{i}_1,\ldots,C^{i}_\ell$.
  By a small perturbation, we can assume that $S$ is also disjoint from the  points in ${F'}^{i}$ that are sent by the Morse flow to isolated fixed points of index $1$ at $\lambda$.

 By \Cref{cl:top-bup}, the manifold $M^{i}_{\lambda-\eps}$ is homeomorphic to the blowup of $M^{i}_{\lambda}$ at the isolated fixed points of co-index $1$, with the exceptional divisors in the classes $[C^{i}_1],\ldots,[C^{i}_{\ell}]$.
 So we can present 
 \begin{equation}\label{eq:present}
     H_{2}(M^{i}_{\lambda-\eps})=H_{2}(M^{i}_{\lambda})\oplus[C_1]\oplus \cdots \oplus [C_\ell].
 \end{equation}
 Since $C^{i}_1,\ldots,C^{i}_{\ell}$ are of minimal area, $(M^i_{\lambda-\eps},\omega_{\lambda-\eps}^i)$ is symplectomorphic to a symplectic rational surface for which the $C^{i}_1,\ldots,C^{i}_{\ell}$ are exceptional divisors, see \Cref{rem:divisors}. Symplectically blowing down $(M^{i}_{\lambda-\eps},\omega^{i}_{\lambda-\eps})$ along $C^{i}_1,\ldots,C^{i}_{\ell}$ therefore gives a symplectic rational surface homeomorphic to $M^{i}_{\lambda}$, and thus diffeomorphic to it because $M^{i}_{\lambda}$ is also assumed to be a symplectic rational surface,  with a form ${\omega'}^{i,\eps}_{\lambda}$.
The continuous blowup map $f_{\Morse}\colon M^{i}_{\lambda-\eps}\to M^{i}_{\lambda}$ is a diffeomorphism into its image on the complement of $F'^i$.
Since $S$ is disjoint from $F'^i$, the image of $S$ under this map
is a smoothly embedded sphere of self-intersection $-1$; it is symplectic with respect to ${\omega'}^{i,\eps}_{\lambda}$. Thus, by \Cref{lem:expchar},
the class of the image of $S$ is represented by an embedded symplectic sphere in $(M^{i}_{\lambda},\omega^{i}_{\lambda})$. 
The presentation \eqref{eq:present} of $H_{2}(M_{\lambda-\eps}^{i})$ allows us to identify $A$ with the class in $H_{2}(M^{i}_{\lambda})$ of the image of $S$ under the blowup map.

 However, since the class $A$ is in $\mathcal{E}^{'}_{i}$, it has coupling $t$ with $[\omega^{i}_{\lambda-t}]$ as a class in $H_{2}(M^{i}_{\lambda-t})$ for all $0<t\leq\eps$.  
 By construction, $S \subset M^{i}_{\lambda-\eps} \smallsetminus {F'}^{i}_{\iso}$. 
 If $F^{i}_{\iso}=F^{i}$ then the form $(f_{\Morse}|_{M^{i}_{\lambda-\eps} \smallsetminus {F'}^{i}_{\iso}})_{*}{\omega^{i}_{\lambda-\eps}}$ converges to ${\omega^{i}_{\lambda}}|_{M^{i}_{\lambda} \smallsetminus F^{i}}$ as $\eps$ converges to $0$, by \Cref{cor:restdiffeo1}. If $F^{i}_{\iso} \subsetneq F^{i}$ then still the cohomology class $(f_{\Morse}|_{M^{i}_{\lambda-\eps}\smallsetminus {F'}^{i}_{\iso}})_{*}[\omega^{i}_{\lambda-\eps}]$ converges to ${[\omega^{i}_{\lambda}]}$ on ${M^{i}_{\lambda} \smallsetminus F^{i}_{\iso}}$ as $\eps$ goes to $0$, by \Cref{lem:cohomologyconverges}.
 Hence the coupling of $A$ in $H_{2}(M^{i}_{\lambda})$ 
 with $[\omega^{i}_{\lambda}]$ is $0$.
 We get a contradiction, showing that $\mathcal{E}^{'}_{i} \smallsetminus \mathcal{D}_i$ is empty. This completes the proof.
\end{proof}
We are almost ready to prove \Cref{thm:extending-g}. The assumption on $f_{\lambda-r}$ intertwining $\mathcal{D}^1_{\sph}$ and $\mathcal{D}^2_{\sph}$, and also $\mathcal{D}^1$ and $\mathcal{D}^2$ by \Cref{prop:characterizationD}, is not yet enough to apply \Cref{lem:nonextremal-2} and \Cref{cor:psi_t1}; we still need to argue that $f$ can be manipulated in such a way that $f_{\lambda-r}$ is the identity near $F_1'$. For this, we certainly need to show the following lemma.
\begin{lemma}\label{lem:isomorphicnormalbundles}
    In the situation of \Cref{thm:extending-g}, let $C_1,C_2$ be fixed spheres in $M^1_{\lambda},M^2_{\lambda}$ such that $[f_{\lambda-r}(C_1')]=[C_2']$. Then the equivariant normal bundles of $C_1$ and $C_2$ in $M^1$ and $M^2$ are isomorphic. 
\end{lemma}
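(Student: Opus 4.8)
The plan is to relate the equivariant normal bundle of a non-extremal fixed sphere $C_i\subset M^i_\lambda$ to the data visible on the reduced space just below $\lambda$, namely the preimage $C_i'=C_i'\subset M^i_{\lambda-r}$ together with the Euler class $e(P)$ of the principal $S^1$-bundle over $M^i_{\lambda-r}$. For a non-extremal fixed surface in dimension six the local normal form of \S\ref{nt:local6} tells us the weights are $\pm(0,1,-1)$, so a neighbourhood of $C_i$ is an equivariant $\C^2$-bundle $\nu_i\to C_i$ that splits (non-canonically, fiberwise over $S^2$) as a sum $\nu_i^-\oplus\nu_i^+$ of complex line bundles, where $S^1$ acts with weight $-1$ on $\nu_i^-$ and weight $+1$ on $\nu_i^+$. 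Since $C_i$ is a sphere, complex line bundles over it are classified by their first Chern number, so the equivariant isomorphism type of $\nu_i$ is determined by the pair of integers $(\deg\nu_i^-,\deg\nu_i^+)$. Hence it suffices to show that $f_{\lambda-r}$, given that it carries $[C_1']$ to $[C_2']$, forces $\deg\nu_1^-=\deg\nu_2^-$ and $\deg\nu_1^+=\deg\nu_2^+$.

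\textbf{Key steps.} First I would use \S\ref{rem:criticalvalue}: the intersection of the unstable (negative normal) bundle $\mathcal{U}(C_i)$ with the level $\lambda-r$ is an $S^1$-bundle over $C_i$ whose quotient is exactly $C_i'$, and under $C_i'\to C_i$ the Euler class $e(\nu_i^-)=e(C_i)_-$ pulls back to the restriction of $e(P_{\lambda-r})$ to $C_i'$. In particular $e(P)\cdot[C_i']=\deg\nu_i^-$ computed in $H_2(M^i_{\lambda-r})$. Second, I would compute the self-intersection of $[C_i']$ in $M^i_{\lambda-r}$: the Morse flow $f_{\Morse}$ is, in the topological category (Claim \ref{cl:top-bup}), the blowup at the co-index-one isolated fixed points, which does not affect the fixed sphere; so $f_{\Morse}$ embeds $C_i'$ diffeomorphically onto $C_i$ away from those points, and the normal bundle of $C_i'$ in $M^i_{\lambda-r}$ is $\nu_i^-$, i.e.\ $[C_i']\cdot[C_i']=\deg\nu_i^-$ as well. (These two computations are consistent because $e(P)$ restricts to the Euler class of the normal $S^1$-bundle of $C_i'$, as in Notation \ref{not:elambda}.) Third, since $f$ is an isomorphism it intertwines $e(P_{\lambda-r}^1)$ with $e(P_{\lambda-r}^2)$ and the intersection forms on $H_2$, so from $(f_{\lambda-r})_*[C_1']=[C_2']$ we get
\[
\deg\nu_1^- \;=\; e(P^1)\cdot[C_1'] \;=\; e(P^2)\cdot (f_{\lambda-r})_*[C_1'] \;=\; e(P^2)\cdot[C_2'] \;=\; \deg\nu_2^-.
\]
Finally, to pin down $\deg\nu_i^+$, I would invoke the same argument applied to $-\mu_i$ (so that the co-index-one data at $\lambda$ become index-one data): the positive normal bundle $\nu_i^+$ over $C_i$ is the negative normal bundle for $-\mu_i$, and its preimage under the corresponding Morse flow at level $\lambda+\delta$ carries the relevant Euler class. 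Alternatively, one observes that $\deg\nu_i^+$ is determined by $\deg\nu_i^-$ together with the symplectic data we already control: the symplectic area $\omega^i_\lambda(C_i)$ is preserved (this is part of the same $*$-small fixed point data, and by rigidity / Lemma \ref{lem:nonextremal-2} is matched by $f$), and by \S\ref{rem:criticalvalue} the size of $C_i'$ differs from that of $C_i$ by $r$ times $e(P)\cdot[C_i']=\deg\nu_i^-$, so equality of sizes below plus equality of $\deg\nu^-$ gives equality of $\omega^i_\lambda(C_i)$; but what actually controls $\deg\nu_i^+$ intrinsically is the self-intersection of $C_i$ inside $M^i$ (not just inside the reduced space), which again is a homological quantity intertwined by $f$ once the extension to a neighbourhood is carried out. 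In the write-up I would present only the cleanest of these: $\deg\nu_i^-$ from $e(P)\cdot[C_i']$ and $\deg\nu_i^+$ from the $-\mu_i$ mirror of the same computation.

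\textbf{Main obstacle.} The subtlety is step four — extracting $\deg\nu_i^+$ purely from the data at and below $\lambda$. The hypothesis as stated only gives us $f$ on $\mu_1^{-1}((-\infty,\lambda-r])$ and the bijection on $\mathcal D_{\sph}$, i.e.\ information on the \emph{negative} side of the critical level. One must either (a) note that the pair $(\deg\nu_i^-, \deg\nu_i^+)$ is constrained by the symplectic area $\omega^i_\lambda(C_i)$, which is pinned by the $*$-small fixed point data together with the volume-convergence statement of Corollary \ref{cor:restdiffeo1} and Lemma \ref{lem:cohomologyconverges} (so that $\deg\nu^-$ matched plus area matched forces $\deg\nu^+$ matched), or (b) argue that for a semi-free $S^1$-action in dimension six the two weights $(0,1,-1)$ leave only a one-parameter family of possibilities once $\deg\nu^-$ and the genus (here $0$) and area are fixed, which follows from the classification of such neighbourhoods. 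Option (a) is the one I expect to use, since it stays within the machinery already developed in \S\ref{sec:pre}; the care needed is to make sure that "same $*$-small fixed point data at $\lambda$" really does give $\omega^1_\lambda(C_1)=\omega^2_\lambda(C_2)$ for the matched spheres, which is where the diffeomorphism $\eta_\lambda$ preserving the index function, combined with the rigidity-driven matching of symplectic forms on reduced spaces furnished by Lemma \ref{lem:nonextremal-2}, must be threaded together.
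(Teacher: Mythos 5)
Your treatment of the negative normal bundle is exactly the paper's: identify $\deg\nu_i^-$ with the Euler class of the principal $S^1$-bundle restricted to $C_i'$, and use that the isomorphism $f$ intertwines Euler classes together with $[f_{\lambda-r}(C_1')]=[C_2']$ to conclude $\deg\nu_1^-=\deg\nu_2^-$. The gap is in how you handle the positive normal bundle. Your cleanest proposed route, the ``$-\mu_i$ mirror,'' needs an isomorphism (or at least a homological matching) of the data at level $\lambda+\delta$, which the hypotheses of \Cref{thm:extending-g} do not provide — $f$ only lives below $\lambda-r$, and producing a matching above $\lambda$ is precisely what this lemma is a step towards, so that route is circular. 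Your fallback (a) does not work either: the symplectic area $\omega^i_\lambda(C_i)$ is recoverable from the data below $\lambda$, but it carries no information about $\deg\nu_i^+$ (the positive weight only influences how areas vary \emph{above} $\lambda$), and ``same $*$-small fixed point data'' at a non-extremal level only gives a diffeomorphism intertwining the index function, not areas. Fallback (b) likewise does not pin anything down: once $\deg\nu^-$, the genus and the area are fixed, the remaining one-parameter family is parametrized exactly by $\deg\nu^+$, the quantity you still need to match. Finally, the quantity you point to, ``the self-intersection of $C_i$ inside $M^i$,'' is not meaningful in the needed sense (the rank-four normal bundle $\nu_i^-\oplus\nu_i^+$ over a $2$-sphere has vanishing Euler number), and invoking ``once the extension to a neighbourhood is carried out'' again begs the question.

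What the paper does instead is prove a bundle relation you are missing: \Cref{lem:relationlinebundles} shows that for the local model $\underline{\C_{-1}}\oplus\underline{\C_{1}}$ over the fixed sphere, the Chern numbers satisfy $c_-+c_+=c$, where $c$ is the degree of the normal bundle of $C_i$ inside the \emph{reduced space} $M^i_\lambda$, i.e.\ the self-intersection of $C_i$ in $M^i_\lambda$. Hypothesis (i) of \Cref{thm:extending-g} requires the fixed spheres at a non-extremal $\lambda$ to be exceptional, so $c=-1$ for both $C_1$ and $C_2$ with no input from $f$ at all; combined with $\deg\nu_1^-=\deg\nu_2^-$ this forces $\deg\nu_1^+=\deg\nu_2^+$ and hence isomorphic equivariant normal bundles. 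So the missing idea is the identity $c_-+c_+=c$ together with the explicit use of the exceptionality assumption, rather than any argument involving areas or data above the critical level.
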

This, in turn, will be a consequence of the next lemma.
\begin{lemma}\label{lem:relationlinebundles}
    Let $V'=\underline{\C_{-1}}\oplus \underline{\C_1}$ be a sum of two complex line bundles over a symplectic surface $(\Sigma,\omega_{\Sigma})$ on which $S^1$ acts fiberwise as $t\dot (z_1,z_2)=(t^{-1}z_1,tz_2)$. Equip $V'$ with a fiber metric just as in \Cref{nt:normalbundle}, and let $V$ be a neighborhood of the $0$-section such that there exists an $S^1$-invariant symplectic form on $V$ whose momentum map agrees with $\mu\colon V'\to \R, \; \mu(z_1,z_2)=|z_2|^2-|z_1|^2$ on $V$.\\
    Then, if $c_{\pm}$ denotes the first Chern class of $\underline{\C_{\pm}}$ and $c$ denotes the first Chern class of the normal bundle of $\Sigma$ in $\mu^{-1}(0)/S^1$, we have $c_-+c_+=c$.
\end{lemma}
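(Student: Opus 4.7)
The plan is to identify the normal bundle of $\Sigma$ in $\mu^{-1}(0)/S^1$ with the tensor product line bundle $\underline{\C_{-1}}\otimes \underline{\C_1}$ over $\Sigma$; the desired identity $c=c_-+c_+$ will then follow immediately from multiplicativity of the first Chern class under tensor products.

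To set up this identification, I would consider the smooth, fiberwise bilinear bundle map
\[
\Phi \colon V' = \underline{\C_{-1}}\oplus\underline{\C_1} \longrightarrow \underline{\C_{-1}}\otimes\underline{\C_1}, \quad (z_1,z_2)\mapsto z_1\otimes z_2,
\]
and check that it is $S^1$-invariant: under $(z_1,z_2)\mapsto (t^{-1}z_1,tz_2)$, the weights on the two factors of the tensor cancel, so $z_1\otimes z_2$ is fixed. Restricting $\Phi$ to $\mu^{-1}(0)\cap V=\{|z_1|=|z_2|\}$ and descending to the quotient gives a continuous map $\bar\Phi\colon \mu^{-1}(0)/S^1 \to \underline{\C_{-1}}\otimes \underline{\C_1}$ which is the identity on $\Sigma$, the zero section on both sides.

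Next I would verify that $\bar\Phi$ is a bijection from a neighborhood of $\Sigma$ onto a neighborhood of the zero section: in a local trivialization, a nonzero fiber element $w$ of the tensor product corresponds under $\Phi$ to pairs $(z_1,z_2)$ with $z_1z_2=w$ and $|z_1|=|z_2|$, and such pairs form precisely one $S^1$-orbit of the given action (uniquely determined up to the action by $|z_i|=\sqrt{|w|}$ and $\arg z_1+\arg z_2=\arg w$).

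The only point that really needs care is that $\bar\Phi$ is a diffeomorphism with respect to the smooth structure that has been placed on $\mu^{-1}(0)/S^1$ near $\Sigma$. I expect this to be the main---but mild---obstacle, and it is resolved by recalling (cf.\ \cite[Section 3.2]{Mc09} and \cite[Section 3.3.1]{Go11}) that this smooth structure is defined precisely so that the invariant function locally represented by $z_1z_2$ serves as a smooth transverse complex coordinate to $\Sigma$. With this in hand, $\bar\Phi$ is a smooth fiberwise $\C$-linear isomorphism along $\Sigma$ between the normal bundle of $\Sigma$ in $\mu^{-1}(0)/S^1$ and $\underline{\C_{-1}}\otimes \underline{\C_1}$, yielding the Chern class equality $c = c_1(\underline{\C_{-1}}\otimes\underline{\C_1}) = c_-+c_+$.
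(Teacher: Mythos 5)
Your proposal is correct and takes essentially the same approach as the paper: both identify the normal bundle of $\Sigma$ in $\mu^{-1}(0)/S^1$ with $\underline{\C_{-1}}\otimes\underline{\C_1}$ via the $S^1$-invariant product $(z_1,z_2)\mapsto z_1\otimes z_2$. The paper instead constructs the inverse map $\iota\colon\underline{\C_{-1}}\otimes\underline{\C_1}\to\mu^{-1}(0)/S^1$, $z_1\otimes z_2\mapsto[z_1z_2:|z_1z_2|]$, and verifies compatibility with transition functions in local trivializations; this direction sidesteps the smooth-structure question on the quotient that you flag as the main obstacle, which in any case is not essential since the first Chern class is a topological invariant and a continuous bundle isomorphism already suffices.
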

\begin{proof}
    Let $p$ be a point of $\Sigma$ and $U_1$ a closed disk around it. Denote by $U_2$ the closure of $\Sigma\smallsetminus U_1$. Then $V'=\underline{\C_{-1}}\oplus \underline{\C_1}$ admits trivializations over $U_1$ and $U_2$, and the transition function $\gamma\colon \partial U_1=\partial U_2=S^1\to S^1\times S^1\subset \U(2)$ is of the form
    $$\gamma(t)(z_1,z_2)=(t^{c_-},t^{c_+})(z_1,z_2)=(t^{c_-}z_1,t^{c_+}z_2).$$
    Similarly, for the bundle $\underline{\C_{-1}}\otimes \underline{\C_1}$, the transition function is (by definition)
    $$\gamma^{\otimes}(t)(z_1\otimes z_2)=(t^{c_-}z_1 \otimes t^{c_+}z_2).$$
    It is enough to construct an embedding $\iota\colon \underline{\C_{-1}}\otimes \underline{\C_1}\to \mu^{-1}(0)/S^1$ as
    \[
    z_1\otimes z_2 \mapsto [z_1z_2:|z_1z_2|],
    \]
    where $[z_1:z_2]$ denotes a representative of $(z_1,z_2)$ in $V/S^1$. This is clearly well-defined over the interior of $U_1$ and $U_2$, so we need to check that this map is compatible with the transition functions whenever $z_1\otimes z_2$ is in a fiber over $t\in \partial U_1=\partial U_2$. This means that $\gamma^{\otimes}(t)(z_1\otimes z_2)$ must be mapped to $\gamma(t)(\iota([z_1\otimes z_2]))$. We calculate
    \[
    \iota(t^{c_-}z_1\otimes t^{c_+}z_2)=[t^{c_-+c_+}z_1z_2:|z_1,z_2|]=[t^{c_-}z_1z_2:t^{c_+}|z_1,z_2|]=\gamma(t)(\iota([z_1\otimes z_2])).
    \]
    This finishes the proof.
\end{proof}
\begin{proof}[Proof of \Cref{lem:isomorphicnormalbundles}]
    We know that the Euler class of the  negative normal bundle equals the Euler class $e^i$ of the principal $S^1$-bundle $S^1\to \pi^{-1}(C_i')\to C_i'$ (where $\pi$ is the orbit map $\mu_i^{-1}(\lambda-r)\to M^i_{\lambda-r}$) under the diffeomorphism $C_i'\to C_i$. Since $[f_{\lambda-r}(C_1')]=[C_2']$, the Euler classes $e^i$ agree, so $C_1$ and $C_2$ already have negative normal bundles with the same first Chern class, hence they have isomorphic negative normal bundles.\\
    Now, by assumption, the first Chern classes of $C_1$ and $C_2$ in $M^1_{\lambda}$ resp.\ $M^2_{\lambda}$, with respect to their symplectic orientations, are $-1$, and in particular agree. Using \Cref{lem:relationlinebundles}, we conclude that they also have isomorphic positive line bundles.
\end{proof}

\begin{corollary}\label{cor:psi_t2}
    Let $\kappa>0$ be such that \eqref{eq:kappa} holds.
For $i=1,2$, let $(M^i,\omega^i,\mu_i)$ and $\lambda$ be as in \Cref{set:intro}, and assume that they have the same $*$-small fixed point data at $\lambda$. Assume that $\lambda$ is non-extremal, the only critical value of $\mu_i$ and that the fixed surfaces  of $M^i$ are all spheres of self-intersection $-1$ in $M^i_{\lambda}$.\\

 Consider an isomorphism 
 $$f \colon \mu_1^{-1}((-\infty,\lambda-\eps]) \to \mu_2^{-1}((-\infty,\lambda-\eps])$$
 with $\lambda-\eps$ right below $\lambda$ and with $\eps \in (0,\kappa)$.\\

    Then, if $f_{\lambda-\eps}$ intertwines $\mathcal{D}^1_{\sph}$ and $\mathcal{D}^2_{\sph}$,
    there is an isotopy of symplectomorphisms $M^{1}_{\lambda-\eps} \to M^{2}_{\lambda-\eps}$ connecting $f_{\lambda-\eps}$ and a symplectomorphism that is the identity near $F'_1$ (see \Cref{def:identitynearF_1}).
    \end{corollary}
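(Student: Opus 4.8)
The plan is to combine \Cref{prop:characterizationD}, which tells us that $f_{\lambda-\eps}$ already sends $\mathcal{D}^1$ into $\mathcal{D}^2$, with the theory of configurations of exceptional spheres in symplectic rational surfaces. First I would apply \Cref{prop:characterizationD} (whose hypotheses are exactly those assumed here, using the rigidity assumption of \Cref{set:intro}) to deduce that $f_{\lambda-\eps}$ also intertwines $\mathcal{D}^1$ and $\mathcal{D}^2$, not merely $\mathcal{D}^1_{\sph}$ and $\mathcal{D}^2_{\sph}$. Thus the combined set $F_i'$ consists, in the orbit space $M^i_{\lambda-\eps}$, of finitely many isolated points (the preimages of index-$1$ isolated fixed points, which are actual points since the Morse flow is a point-to-point map there) together with the exceptional spheres in the classes of $\mathcal{D}^i$ (preimages of index-$2$ isolated fixed points) and the exceptional spheres $C_i'$ in the classes of $\mathcal{D}^i_{\sph}$ (preimages of the fixed spheres $C_i\subset M^i_\lambda$). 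Since $M^i_{\lambda-\eps}$ is a symplectic rational surface by \Cref{set:intro}, and by \Cref{lem:E'}(5) together with \Cref{rem:divisors} these spheres are all of minimal area and pairwise disjoint, $f_{\lambda-\eps}$ sends the homology classes of all of these exceptional spheres in $M^1_{\lambda-\eps}$ bijectively to the corresponding classes in $M^2_{\lambda-\eps}$.

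Next I would invoke \Cref{lem:isomorphicnormalbundles} to ensure that, for each fixed sphere $C_1\subset M^1_\lambda$ and its image sphere $C_2\subset M^2_\lambda$ with $[f_{\lambda-\eps}(C_1')]=[C_2']$, the equivariant normal bundles of $C_1$ and $C_2$ agree, so that the local models chosen in \Cref{set:identitynearF} for $F_1$ and $F_2$ really are shared; the same holds trivially for the isolated fixed points, whose local models depend only on the index, which is preserved by the $*$-small fixed point data. This is what gives meaning to the phrase ``the identity near $F_1'$'' for a map $M^1_{\lambda-\eps}\to M^2_{\lambda-\eps}$.

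The heart of the argument is then a configuration-transitivity statement. The set $F_1'$ is a disjoint union of finitely many points and disjoint embedded exceptional spheres of minimal area in the classes $[C_1'],\dots$ (collectively, the classes in $\mathcal{D}^1\cup\mathcal{D}^1_{\sph}$), and likewise for $F_2'$, and $f_{\lambda-\eps}$ matches up these homology classes and the finite point sets bijectively. By \Cref{rem:divisors} we may assume all these spheres are exceptional divisors of $M^i_{\lambda-\eps}$ viewed as a blowup of $\CP^2$ or $S^2\times S^2$. I would apply \Cref{lem:normalizationspheres} (with $N^i=M^i_{\lambda-\eps}$, the $S^i_j$ the exceptional spheres coming from $F_i'$, and $A_i$ the finite point set $F_i'\cap(\text{isolated part})$) to produce an isotopy through symplectomorphisms from $f_{\lambda-\eps}$ to a symplectomorphism $\psi'$ that is the identity near each sphere in $F_1'$ and near the isolated points; this uses the transitivity of $\mathrm{Symp}_0$ on configurations of exceptional spheres from \cite{AKP24} (as in \Cref{rem:isotopictoinvariant}) and \Cref{lem:normalization} for the isolated points. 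The map $\psi'$ is then, by \Cref{def:identitynearF_1}, the identity near $F_1'$, and it is isotopic to $f_{\lambda-\eps}$ through symplectomorphisms.

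The main obstacle I expect is the bookkeeping needed to be sure that \Cref{lem:normalizationspheres}, which is stated for a symplectomorphism $\psi\colon N^1\to N^2$ that already sends $[S^1_j]$ to $[S^2_j]$, actually applies to $f_{\lambda-\eps}$: one must check that the various exceptional spheres comprising $F_i'$ are genuinely pairwise disjoint and of minimal area (so that their classes lie in the configuration space to which transitivity applies), that there is no ``double count'' of classes — guaranteed by the standing assumption that all fixed spheres are exceptional and by \Cref{lem:E'}(5) — and that the point set $F_i'\cap(\text{isolated, index }1)$ can be handled in the same package by \Cref{lem:normalizationspheres} (which already incorporates $A_1$). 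Once these are in place, the isotopy is assembled by concatenating the isotopy from \Cref{lem:normalizationspheres} with $f_{\lambda-\eps}$, and the resulting endpoint is the identity near $F_1'$, as required.
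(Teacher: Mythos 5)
Your proposal is correct and follows essentially the same route as the paper: apply \Cref{prop:characterizationD} to upgrade the hypothesis to $f_{\lambda-\eps}$ intertwining $\mathcal{D}^1$ and $\mathcal{D}^2$ as well, then invoke \Cref{lem:normalizationspheres} (which already packages the isolated points via \Cref{lem:normalization}) to isotope $f_{\lambda-\eps}$ through symplectomorphisms to one that is the identity near $F_1'$, with \Cref{lem:isomorphicnormalbundles} justifying the shared local model. The extra claim that the $\mathcal{D}^i_{\sph}$-spheres have minimal area is not actually needed (and need not hold for them), but it is harmless since \Cref{lem:normalizationspheres} only requires disjoint exceptional spheres whose classes are matched.
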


    \begin{proof}
By \Cref{prop:characterizationD}, $f_{\lambda-\eps}$ maps $\mathcal{D}_1$ bijectively into $\mathcal{D}_2$. 
    Therefore,  using \Cref{lem:normalizationspheres} and the assumption on $f_{\lambda-\eps}$,
    we find an isotopy connecting $f_{\lambda-\eps} \colon M^{1}_{\lambda-\eps} \to M^{2}_{\lambda-\eps}$ and a symplectomorphism that is the identity near $F_1'$.
    \end{proof}

We can now prove \Cref{thm:extending-g}.
    \begin{proof}[Proof of \Cref{thm:extending-g}]
    Let $\lambda$ be a critical value of $\mu_1$ and $\mu_2$, either non-extremal for both or maximal for both. Let  $$f \colon \mu_1^{-1}((-\infty,\lambda-r]) \to \mu_2^{-1}((-\infty,\lambda-r])$$ be an isomorphism,  where $\lambda-r$ is right below $\lambda$. 
Use  the rigidity assumption and \Cref{prop:extension} to extend $f$, i.e., to replace it 
by an isomorphism
\begin{equation}\label{eq:f}
    f\colon \mu_1^{-1}((-\infty,\lambda-\eps]) \to \mu_2^{-1}((-\infty,\lambda-\eps]) \text{ with } 0<\eps<r 
\end{equation}
that agrees with the given isomorphism on $\mu_1^{-1}((-\infty,\lambda-(r+\eps')])$ for $\eps'>0$ arbitrarily small; in case $\lambda$ is non-extremal, we know that \Cref{lem:isomorphicnormalbundles} holds and thus also ask that $\eps$ is in $(0,\kappa)$, for $\kappa>0$ such that \eqref{eq:kappa} holds. If $\lambda$ is maximal and $\dim F_1=2$, then we choose $\eps$ such that $0<\eps<\theta$,  where $\theta$ is as in \Cref{lem:connected<4}. 

We claim that for some $\eps>\delta>0$, 
we have an isomorphism
$$g\colon \mu_1^{-1}([\lambda-\delta,\lambda+\delta])\to \mu_2^{-1}([\lambda-\delta,\lambda+\delta]).$$ Moreover, for any $t\in (\lambda-\delta,\lambda)$ and any extension of $f$ to  an isomorphism
    $$f\colon \mu_1^{-1}((-\infty,t]) \to \mu_2^{-1}((-\infty,t]),$$
\begin{itemize}
\item  $f_{t}$ and $g_{t}$ induce the same map on homology, and, 
\item $f_t$ and $g_t$ are furthermore isotopic through symplectomorphisms $(M^1_{t},\omega^{1}_{t}) \to (M^{2}_{t},\omega^{2}_{t})$ if $\lambda$ is maximal and $\dim F_1<4$.
\end{itemize}
This follows from
\begin{itemize}
   \item \Cref{cor:psi_t2} and \Cref{cor:psi_t1}, if $\lambda$ is non-extremal.
    \item  \Cref{lem:psi_textremum}, if $\lambda$ is maximal and $\dim F_1=4$.
    \item \Cref{lem:connected<4}, if $\lambda$ is maximal and $\dim F_1=2$.
\item the fact that if $\lambda$ is maximal and $F_1$ is a point then the weights of the $S^1$-action at the points $F_1,F_2$ are $-1,-1,-1$, so neighborhoods of $F_1$ and $F_2$ are equivariantly symplectomorphic to a neighborhood of $0$ in $\C_{-1}\oplus \C_{-1} \oplus \C_{-1}$ endowed with the standard symplectic form, see \S \ref{nt:localnormalform} and \S \ref{nt:local6}; 
in particular, the reduced space of $M^i$  at $t \in (\lambda-\delta,\lambda)$  is symplectomorphic to $\C \PP^2$ endowed with a multiple of the Fubini-Study form $\omega_{\operatorname{FS}}$;
the symplectomorphism group of $(\C \PP^2,\alpha \omega_{\operatorname{FS}})$ retracts onto the
isometry group $\PU(3)$ of $\C \PP^2$, by \cite[Remark in 0.3.C]{Gr85}, and hence is connected.
\end{itemize}

Now, extend $f$ to an isomorphism $\mu_1^{-1}(-\infty,\lambda-\eta] \to \mu_2^{-1}(-\infty,\lambda-\eta]$ for $\delta>\eta>0$ using \Cref{prop:extension}. 
By assumption, $M_{\lambda-\eta}^{i}$ is a symplectic rational surface.
If $M_{\lambda-\eta}^{1}=S^2\times S^2$, the fact that the symplectomorphism 
 $(g_{\lambda-\eta})^{-1} \circ f_{\lambda-\eta} \colon (M^1_{\lambda-\eta},\omega^1_{\lambda-\eta}) \to (M^1_{\lambda-\eta},\omega^1_{\lambda-\eta})$
acts as the identity on homology implies that it is isotopic to the identity through symplectomorphisms, by \cite{Gr85}.
If $M_{\lambda-\eta}^{1}$ is a $k$-blowup of $\C \PP^{2}$, then, by \cite[Theorem A.1]{LLW22}, a symplectomorphism that acts as the identity on homology 
 is isotopic to the identity through diffeomorphisms. Therefore, the rigidity assumption implies that $f_{\lambda-\eta}$ and $g_{\lambda-\eta}$ are isotopic through symplectomorphisms $(M^1_{\lambda-\eta},\omega^1_{\lambda-\eta})\to (M^2_{\lambda-\eta},\omega^2_{\lambda-\eta})$ also if $\lambda$ is non-extremal and if $\lambda$ is maximal and $\dim F_1=4$.

Thus we can use \Cref{cor:smoothing}  to paste $f$ and $g$  on level $\lambda-\eta$ to an almost symplectic $\mu-S^1$-diffeomorphism
$$ g'\colon \mu_1^{-1}((-\infty,\lambda+\delta])\to \mu_2^{-1}((-\infty,\lambda+\delta]) $$
whose restriction to $\mu_1^{-1}((-\infty,\lambda-\eta])$ resp.\ $\mu_1^{-1}([\lambda-\eta,\lambda+\delta])$  differs from $f$ resp.\ $g$ only near level $\lambda-\eta$. 
In particular, the forms in the standard homotopy
\begin{equation}\label{eq:g'as}
    \omega(s)=s(g')^*\omega^2+(1-s)\omega^1, \quad s \in[0,1]
\end{equation}
are all symplectic and represent the same cohomology class on $\mu_1^{-1}((-\infty,\lambda+\delta])$.
Note that the restriction of $\omega(s)$ to  $\mu_1^{-1}((-\infty,\lambda-\eps''])$ coincides with $\omega^1$ for $\eps''>\eps$ arbitrarily close to $\eps$, and in particular on  $\mu_1^{-1}((-\infty,\lambda-(r+\eps')])$.

Finally, apply Moser's method, see \Cref{rem:moserwithfixedpoints}, to get a family $\Psi_t$ of equivariant diffeomorphisms $\mu_1^{-1}((-\infty,\lambda+\delta]) \to \mu_1^{-1}((-\infty,\lambda+\delta])$ such that $\Psi_t^{*}\omega(t)=\omega^1$ and  ${\Psi_{t}}|_{\mu_1^{-1}((-\infty,\lambda-(r+\eps')])}=\id$. 
The map $h=g' \circ \Psi_1$ is the required isomorphism.

 \end{proof}

\appendix
\section{Local Data} \label{Local Data}
The emphasis of our paper is on whether fixed point data determine the isomorphism type. However, we note that in our counter example the non-isomorphic semi-free Hamiltonian $S^1$-manifolds also have the same local data. See  \Cref{lem:mm'local}. In \Cref{rem:proofthm2.6}, we address the problem in \cite{Go11}'s proof that the local data determine the isomorphism type. Here we recall \cite{Go11}'s definition of local data. We begin with the notions of cobordism, regular slice, and gluing map.
\begin{definition}\label{def:cobordism}
\cite[Definition 2.2]{Go11}.
Let $\eps_0>0$ and $\lambda \in \mathbb{R}$. A \textbf{cobordism at $\lambda$} $(Y,H,\eps)$, $0<\eps<\eps_0$, consists
of a Hamiltonian $S^1$-manifold $Y$ with 
momentum map $H\colon Y\to (\lambda-\eps,\lambda+\eps)$,
such that $\lambda$ is the only critical value.
 If $\lambda$ is not a minimum nor a maximum, we require that the momentum map $H$ is onto $ (\lambda-\eps,\lambda+\eps)$. If $\lambda$ is a minimum (maximum), we require that $H(Y)=[\lambda,\lambda+\eps)$ ($H(Y)=(\lambda-\eps,\lambda])$.\\
Two cobordisms $(Y,H,\eps)$ and $(Y',H',\eps')$ at a non-extremal $\lambda$ are \textbf{equivalent} if there is $0<\eps''<\min\{\eps,\eps'\}$ such that $(H^{-1}(\lambda-\eps'',\lambda+\eps''),H)$ and $({H'}^{-1}(\lambda-\eps'',\lambda+\eps''),H')$ are isomorphic as Hamiltonian $S^1$-manifolds. Equivalence is defined similarly if $\lambda$ is a maximum or a minimum.\\
A \textbf{critical germ} $G(\lambda,\eps_0)$ is an equivalence class of cobordisms at 
$\lambda$. 
\end{definition}
Note that if $0<\delta_0<\eps_0$, there is a natural restriction map $G(\lambda,\eps_0) \to G(\lambda,\delta_0)$.

\begin{definition}\label{def:slice}
\cite[Definition 2.3]{Go11}.
    Let $I$ be an open interval. A \textbf{regular slice} $(Z,K,I)$ consists of
    a \emph{free} Hamiltonian $S^1$-manifold $Z$ with surjective momentum map $K\colon Z\to I$.\\
    We say that regular slices $(Z,K,I)$ and $(Z',K',I)$ are \textbf{equivalent} if $(Z,K)$ and $(Z',K')$ are isomorphic as Hamiltonian $S^1$-manifolds.
    We denote by $F(I)$ an equivalence class of such slices.
    \end{definition}

     \begin{definition}\label{def:gluing}
    \cite[Definition 2.4]{Go11}.
        Let $I=(\lambda,\lambda')$ and $0<\eps_0<\lambda'-\lambda$. 
        Let $(Z,K,I)$ be in $F(I)$ and $(Y,H,\eps)$ be in $G(\lambda,\eps_0)$.
        A \textbf{gluing map} $(\phi,\delta)\colon (Y,H,\delta) \to (Z,K,I)$ consists of a positive $\delta<\eps$
        and an isomorphism $\phi \colon H^{-1}(\lambda,\lambda+\delta) \to K^{-1}(\lambda,\lambda+\delta)$.
        \\
        
Two gluing maps $$(\phi,\delta) \colon (Y,H,\delta) \to (Z,K,I)$$
and
        $$(\phi',\delta')\colon (Y',H',\delta') \to (Z',K',I),$$
        where $(Z',K',I)$ is in $F(I)$ and $(Y',H',\eps')$ is in $G(\lambda,\eps_0)$, are \textbf{equivalent} if there is $0<\delta''<\delta, \delta'$
        as well as isomorphisms 
        \[
        f\colon Y\to Y', \quad g\colon Z\to Z'
        \]
        such that $g\circ \phi=\phi'\circ f$
        on $H^{-1}(\lambda,\lambda+\delta'')$.
        A \textbf{gluing class} $\Phi\colon G(\lambda,\eps_0)\to F(I)$ is an equivalence class of such gluing maps. \\
We similarly define gluing maps and class for $F(\lambda',\lambda)$ and $G(\lambda,\eps_0)$; in that case $\phi \colon H^{-1}(\lambda-\delta,\lambda)$. 
    \end{definition}

The gluing map allows us to glue a regular slice and a cobordism to get a Hamiltonian $S^1$-manifold.
For a regular slice $(Z,K,(\lambda,\lambda'))$,
    a cobordism $(Y,H,\eps)$ and a gluing map $\phi \colon \colon H^{-1}(\lambda,\lambda+\delta) \to K^{-1}(\lambda,\lambda+\delta)$ with $0<\delta<\eps$, consider the manifold $$Y\cup_{(\phi,\delta)} Z,$$ that is 
    $$Y \sqcup Z/\sim \text{ where }x \sim y \text{ iff } \phi(x)=y.$$ 
    Since $\phi$ is an isomorphism, the symplectic forms, the $S^1$-actions and the momentum maps on $Z$ and $Y$ induce well defined symplectic form, $S^1$-action, and momentum map on $Y\cup_{(\phi,\delta)} Z$. Moreover, the gluing of a critical germ and a regular slice along a gluing class is well defined up to isomorphism \cite[Lemma 2.5]{Go11}. 
   \begin{definition}\label{def:localdata}
    Let $(M,\omega,\mu)$ be 
    a semi-free Hamiltonian $S^1$-manifold
with a proper momentum map $\mu \colon M \to \R$ whose image is bounded. Its set of \textbf{local data} consists of the following:
        \begin{itemize}
            \item Its critical levels $\lambda_0<\hdots < \lambda_k$.
            \item The critical germs $G(\lambda_i,\eps_i)$, where $\eps_i>0$ is small enough such that $\lambda_i$ is the only critical value in $(\lambda_i-\eps_i,\lambda_i+\eps_i)$, defined by $\mu^{-1}(\lambda_i-\delta,\lambda_i+\delta)$ with $0<\delta<\eps_i$, for all $i$.
            \item The equivalence classes of regular slices $F(\lambda_i,\lambda_{i+1})$ defined by $\mu^{-1}(\lambda_{i},\lambda_{i+1})$, for all possible $i$.
            \item The gluing classes $\Phi_i^-$, $\Phi_i^+$ from $G(\lambda_i,\eps_i)$  to $F(\lambda_{i-1},\lambda_{i})$, $F(\lambda_{i},\lambda_{i+1})$ for all possible $i$.
        \end{itemize}

     \end{definition}

\section{Proofs of results in \Cref{sec:almostsymplectic}}\label{app:extrafour}

We prove results that are used in \Cref{sec:almostsymplectic}. The first is classic.

\begin{lemma}\label{lem:Eulerclasses}
    Let $\pi\colon P\to B$ and $\pi'\colon P'\to B'$ be principal $S^1\cong \SO(2)$-bundles, and denote by $e(P)$ resp.\ $e(P')$ their Euler classes in $H^2(B;\Z)$ resp.\ $H^2(B',\Z)$. Let $f\colon B\to B'$ be a diffeomorphism (homeomorphism) that intertwines $e(P)$ and $e(P')$. Then $f$ lifts to a smooth (continuous) bundle isomorphisms $\tilde{f}\colon P\to P'$ equivariant with respect to the $\SO(2)$-action on each fiber, that is, we have $\pi'\circ \tilde{f}=f\circ \pi$.\\
    The same holds for $D^2$-bundles with structure group $\SO(2)$.
\end{lemma}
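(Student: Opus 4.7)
The plan is to prove the statement by invoking the classification of principal $S^1$-bundles by their Euler class, and constructing the lift $\tilde{f}$ as a composition of an isomorphism over $B$ with the tautological map of a pullback bundle.

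First I would form the pullback principal $S^1$-bundle $f^{*}P'\to B$ together with its canonical equivariant bundle map $\hat{f}\colon f^{*}P'\to P'$ that covers $f$. By the naturality of the Euler class, $e(f^{*}P')=f^{*}e(P')=e(P)$. Principal $S^1$-bundles over a paracompact Hausdorff base are classified up to isomorphism by their Euler class in $H^{2}(B;\Z)$ (via the identification with homotopy classes of maps to $BS^{1}\simeq \C\PP^{\infty}$); in the smooth category the classification is the same. So there exists an equivariant bundle isomorphism $\phi\colon P\to f^{*}P'$ over $\id_{B}$, continuous in the homeomorphism case and smooth in the diffeomorphism case. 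Setting $\tilde{f}:=\hat{f}\circ \phi$ gives the desired equivariant lift, since $\pi'\circ \hat{f}=f\circ \pi_{f^{*}P'}$ and $\pi_{f^{*}P'}\circ \phi=\pi$.

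For the $D^{2}$-bundle version, I would associate to each $D^{2}$-bundle its principal $\SO(2)$-frame bundle (equivalently, pass to the unit $S^{1}$-bundle), apply the first part of the lemma to obtain an equivariant isomorphism between the associated principal bundles, and then form the associated $D^{2}$-bundle map. Because the $\SO(2)$-action on $D^{2}$ is the standard linear one, the resulting map is automatically a bundle isomorphism of $D^{2}$-bundles covering $f$.

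The only real subtlety is to make sure the classification result is available at the smoothness level required. In the continuous case this is standard Steenrod classification; in the smooth case one uses that the smooth and continuous classifications of principal $S^{1}$-bundles agree on a smooth manifold (smooth $\SO(2)$-bundles over a smooth manifold are classified by $H^{2}(B;\Z)$ via the Euler class, and any continuous isomorphism of smooth principal $S^{1}$-bundles is homotopic to a smooth one through bundle isomorphisms, which upgrades $\phi$ to a smooth equivariant isomorphism). So once these classification inputs are cited, the construction of $\tilde{f}$ is a one-line composition and there is no technical obstacle to overcome.
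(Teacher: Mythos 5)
Your proposal is correct and takes essentially the same route as the paper: both arguments pull back $P'$ along $f$, identify $e(f^*P')=f^*e(P')=e(P)$ by naturality, invoke the classification of principal $S^1$-bundles by Euler class to obtain an isomorphism $P\cong f^*P'$ over the identity, and compose with the canonical bundle map $f^*P'\to P'$; the $D^2$-bundle case is reduced to the underlying $\SO(2)$-bundles in both as well.
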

\begin{proof}
    We only need to show this for the $\SO(2)$-bundles, since the statement about the $D^2$-bundles immediately follows from that by applying it to the underlying $\SO(2)$-bundles.\\
    Let $f^*(P')\to B$ be the pullback-bundle of $P'\to B'$ with respect to $f$. Then $f$ lifts to a smooth (continuous) bundle isomorphism $f'\colon f^*(P')\to P'$ by the universal property of the pullback bundle. Further, we have $e(f^*(P'))=f^*e(P')=e(P)$, so $f^*(P')\to B$ is isomorphic to the bundle $P\to B$, preserving base-points. By concatenating this bundle isomorphism with $f'$, we obtain the desired lift $\tilde{f}$ of $f$. 
\end{proof}

The next lemma is used in the proof of \Cref{cor:smoothing}.
\begin{lemma}\label{lem:cohomologyclass}
    Let $(M,\omega,\mu)$ be a connected Hamiltonian $S^1$-manifold (of any dimension) with proper momentum map whose image is either $\mu(M)=[\lambda_{min},\lambda+\eps)$ with $\eps>0$ or $\mu(M)=[\lambda_{min},\lambda]$, where $\lambda$ is the highest critical value of $M$ in both cases (that is, $\lambda$ is maximal in the second case but not in the first case, in which $M$ is not compact). Let $\delta>0$ be small enough so that there is no critical value in $[\lambda-\delta,\lambda)$. Set 
    $$U:=\mu^{-1}((-\infty,\lambda-\delta])$$
     and
    $$V:=\begin{cases}
        \mu^{-1}([\lambda-\delta,\lambda+\eps)) & \text{ if }\mu(M)=[\lambda_{min},\lambda+\eps)
        \\
        \mu^{-1}([\lambda-\delta,\lambda]) & \text{ if }\mu(M)=[\lambda_{min},\lambda]
    \end{cases}.$$
    Then any two symplectic forms $\omega,\,\omega'$ on $M$ with momentum maps $\mu,\mu'$ that are cohomologous as diferential forms in $U$ and in $V$ are also cohomologous in $M$. 
\end{lemma}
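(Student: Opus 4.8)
\textbf{Proof plan for \Cref{lem:cohomologyclass}.}

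The plan is to use a Mayer--Vietoris argument, but one has to be careful because $U$ and $V$ as defined are closed, not open, and they overlap only in the slice $\mu^{-1}(\lambda-\delta)$. So first I would thicken them slightly: choose $\delta'>0$ a bit larger than $\delta$ (still with no critical value in $[\lambda-\delta',\lambda)$) and set $\widetilde U:=\mu^{-1}((-\infty,\lambda-\delta'/2))$ and $\widetilde V:=\mu^{-1}((\lambda-\delta',\lambda+\eps))$ (or the obvious variant when $\mu(M)=[\lambda_{\min},\lambda]$, using a half-open collar $\mu^{-1}((\lambda-\delta',\lambda])$, which retracts onto $V$). These are open, cover $M$, and their intersection $\widetilde U\cap \widetilde V=\mu^{-1}((\lambda-\delta',\lambda-\delta'/2))$ deformation retracts onto the regular level $\mu^{-1}(\lambda-\delta')$; moreover $\widetilde U$ (resp.\ $\widetilde V$) deformation retracts onto $U$ (resp.\ $V$) via the normalized gradient flow of $\mu$, which is available since the relevant subintervals contain no critical value except possibly the endpoint $\lambda$. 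Since $\omega$ and $\omega'$ are cohomologous on $U$ and on $V$, they are cohomologous on $\widetilde U$ and on $\widetilde V$.

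Now consider $\alpha:=[\omega]-[\omega']\in H^2(M;\R)$. By the previous paragraph $\alpha$ restricts to $0$ in $H^2(\widetilde U;\R)$ and in $H^2(\widetilde V;\R)$. The Mayer--Vietoris sequence
\begin{equation*}
H^1(\widetilde U\cap \widetilde V;\R)\xrightarrow{\partial} H^2(M;\R)\xrightarrow{\ \rho\ } H^2(\widetilde U;\R)\oplus H^2(\widetilde V;\R)
\end{equation*}
shows that $\alpha\in\ker\rho=\operatorname{im}\partial$, so it suffices to show $\partial=0$, equivalently that the restriction map $H^1(\widetilde U;\R)\oplus H^1(\widetilde V;\R)\to H^1(\widetilde U\cap \widetilde V;\R)$ is surjective. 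This is where the connectedness hypothesis and the structure of $M$ come in: $\widetilde U\cap \widetilde V$ is homotopy equivalent to the regular level set $P:=\mu^{-1}(\lambda-\delta')$, and $\widetilde V$ is homotopy equivalent to $V$. I would argue that the inclusion $P\hookrightarrow \widetilde V$ induces a surjection on $H^1$. When $\lambda$ is not maximal, $\widetilde V\simeq P$ already, so this is trivial. When $\lambda$ is maximal, $\widetilde V=\mu^{-1}((\lambda-\delta',\lambda])$ is a cobordism at the maximum, and by the local normal form (\Cref{nt:localnormalform}, \Cref{nt:local6}) $\widetilde V$ is obtained from a tubular neighborhood of the fixed set $F$ at $\lambda$ by attaching a collar; the inclusion $P\hookrightarrow \widetilde V$ is then the boundary of a disk-bundle neighborhood, and using the Gysin/Thom--Gysin sequence of the sphere bundle $P\cap(\text{neighborhood of }F)\to F$ together with the fact that the fixed components are symplectic submanifolds one gets the needed surjectivity on $H^1$ — alternatively, and more cleanly, one observes that $\omega-\omega'$ already vanishes on $F$ (both momentum maps, hence both forms, can be normalized to agree there by the equivariant Darboux/Weinstein setup, but actually we only need that $[\omega]-[\omega']$ restricts to zero on $V\supset F$, which is a hypothesis), so $\alpha$ defines a class in the relative group $H^2(M,\widetilde V;\R)$, and the long exact sequence of the pair $(M,\widetilde V)$ combined with excision ($H^*(M,\widetilde V)\cong H^*(\widetilde U,\widetilde U\cap \widetilde V)$) reduces everything to showing $\alpha$ is zero already on $\widetilde U$, which is a hypothesis.

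The cleanest route, which I would write up, is this last one: since $[\omega]=[\omega']$ on $V$ (hence on $\widetilde V$), the class $\alpha=[\omega]-[\omega']$ lifts to a relative class $\bar\alpha\in H^2(M,\widetilde V;\R)$ mapping to $\alpha$ under $H^2(M,\widetilde V;\R)\to H^2(M;\R)$. By excision, $H^2(M,\widetilde V;\R)\cong H^2(\widetilde U,\widetilde U\cap \widetilde V;\R)$, and the composite $H^2(\widetilde U,\widetilde U\cap\widetilde V;\R)\to H^2(\widetilde U;\R)$ sends $\bar\alpha$ to $\alpha|_{\widetilde U}=0$ by hypothesis; from the long exact sequence of the pair $(\widetilde U,\widetilde U\cap\widetilde V)$, $\bar\alpha$ then comes from $H^1(\widetilde U\cap\widetilde V;\R)$. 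To conclude $\alpha=0$ in $H^2(M;\R)$ I then need that the image of $\bar\alpha$ in $H^2(M;\R)$, which factors through $H^1(\widetilde U\cap\widetilde V;\R)\xrightarrow{\partial} H^2(M;\R)$ (the Mayer--Vietoris connecting map), is zero — and $\partial$ factors as $H^1(\widetilde U\cap\widetilde V)\to H^1(\widetilde V)\to$ (nothing), i.e.\ $\operatorname{im}\partial$ is the obstruction to extending classes off $\widetilde U\cap\widetilde V$ over $\widetilde V$. So the real content is exactly: \emph{the restriction $H^1(\widetilde V;\R)\to H^1(\widetilde U\cap\widetilde V;\R)$ is surjective}, equivalently $H^1(V;\R)\to H^1(P;\R)$ is onto. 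I expect this — the topological statement about the cobordism $V$ at the top critical level — to be the main obstacle, and I would handle it by the local normal form: $V$ retracts onto $P\cup_{\,\partial(\text{disk bundle})}(\text{disk bundles over the fixed components})$, and each disk bundle has the homotopy type of its base, so $H^1(V)\to H^1(P)$ is surjective iff the classes in $H^1(P)$ pulled back from the fixed components... are hit, which follows since $P\to V$ composed with the collapse onto the disk-bundle bases is, on the relevant pieces, the sphere-bundle projection, whose fibers ($S^1$ or $S^3$) have $H^1$ either generated by a class extending over the disk bundle or vanishing. Carrying this out carefully for the three possible local models ($\mathbb C_{-1}^{\oplus 3}$, $\mathbb C_0\oplus\mathbb C_{-1}^{\oplus 2}$ over a surface, $\mathbb C_{-1}$ over a four-manifold) is the routine-but-fiddly part I would relegate to a short case check.
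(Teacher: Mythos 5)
Your route is genuinely different from the paper's: the paper disposes of this in a few lines by passing to equivariant cohomology (the equivariantly closed extensions $\omega-\mu$ and $\omega'-\mu'$ have equal restrictions to the fixed set $F$, and Kirwan's injectivity $H^*_{S^1}(M;\R)\to H^*_{S^1}(F;\R)$ then forces the equivariant, hence the ordinary, classes to agree). Your Mayer--Vietoris reduction, however, breaks at exactly the step you single out as "the main obstacle": the surjectivity of $H^1(V;\R)\to H^1(P;\R)$ (equivalently of $H^1(\widetilde U)\oplus H^1(\widetilde V)\to H^1(\widetilde U\cap\widetilde V)$) is simply false in general. Take $M=X\times S^2$ with $X$ a closed (even simply connected) symplectic four-manifold, the circle rotating the $S^2$-factor and $\mu$ the height of that factor: $\lambda$ is the maximum, $F=X\times\{\mathrm{north}\}$ has codimension two with trivial normal bundle, $P\cong X\times S^1$, and $H^1(V)\cong H^1(X)\to H^1(P)\cong H^1(X)\oplus\R$ misses the fiber class, while $H^1(U)\cong H^1(X)$ adds nothing; so $\partial\neq 0$ and your sufficient condition fails. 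Your concluding case check would not have caught this, because the sub-claim that the $S^1$-fiber "has $H^1$ generated by a class extending over the disk bundle" is wrong: $H^1(D^2)\to H^1(S^1)$ is zero, and $H^1(\text{disk bundle})\to H^1(\text{circle bundle})$ is onto only when the Euler class of the normal bundle is non-torsion. (Separately, in the non-maximal case the assertion $\widetilde V\simeq P$ is also incorrect -- crossing the critical level attaches the disk bundles of the negative normal bundles and changes the homotopy type -- but that is not where the argument fails most visibly.)

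More fundamentally, no argument that, like yours, uses the momentum maps only through the topology of $U$, $V$ and $P$ can prove this lemma, because the purely topological statement is false: in the example above, $\omega=\omega_X\oplus\omega_{S^2}$ and $\omega'=\omega_X\oplus 2\omega_{S^2}$ (an invariant symplectic form with momentum map $2\mu$) are cohomologous on $U$ and on $V$ (their difference is pulled back from a disk in $S^2$) but not on $M$. So the class $\alpha$ really can lie in $\operatorname{im}\partial$, and what rules this out in the intended application (\Cref{cor:smoothing}, where $\omega'=f^*\omega^N$ has the \emph{same} momentum map as $\omega$) is precisely the equivariant data: the momentum values at the fixed components pin down the areas of the cycles generating $\operatorname{im}\partial$ (in the example, $\int_{\{x\}\times S^2}\omega'=\mu_{\max}-\mu_{\min}$). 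That is the input the paper feeds in via equality of the equivariant classes on $F$ plus Kirwan injectivity. To salvage your approach you would have to add exactly this ingredient -- e.g.\ evaluate $[\omega]-[\omega']$ on representatives of $\operatorname{im}\partial$ using the common momentum map -- at which point you are essentially redoing the equivariant argument; note also that, as literally stated with unrelated $\mu$ and $\mu'$, the lemma needs the momentum maps to agree on the fixed set for its conclusion (and its proof) to hold.
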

\begin{proof}
If $\omega$ and $\omega'$ are cohomologous in $U$ and $V$, their restrictions on the fixed point set $F$ have to agree. Hence, the restriction of their equivariant extensions $\omega-\mu$ resp.\ $\omega'-\mu'$ in the Cartan model of $H^{*}_{S^1}(U\cup V;\R)$ also agree. Since $H^*_{S^1}(U\cup V;\R)\to H_{S^1}^*(F;\R)$ is injective by \cite{Ki84}, it follows that $\omega-\mu=\omega'-\mu'$ in $H^*_{S^1}(U\cup V;\R)$, and hence $\omega$ and $\omega'$ are cohomologous.

\end{proof}

Now, we prove Items (2) and (3) of \Cref{lem:equivalent}. For that, we need a parametrized version of Moser's method.
{ Recall that the following is used in Moser's method.}
\begin{remark}\label{rem:hodgetheory}
    Pick a metric on a smooth, oriented manifold $M$. Let $(\alpha,\beta)$ be the scalar product $\langle\alpha, \beta \rangle$ of $k$-forms $\alpha,\beta \in \Omega^k$ induced by this metric. Then, w.r.t.\ that scalar product, the differential $d\colon \Omega^1\to \Omega^{2}$ has an adjoint $d^*\colon \Omega^{2}\to \Omega^1$, and $d_{|\text{im}(d^*)}\colon \text{im}(d^*)\to d\Omega^1$ is an isomorphism. Indeed, it is surjective because of the Hodge decomposition
    $$\Omega^2(M)=d\Omega^1\oplus d^* \Omega^3\oplus \mathcal{H}^2,$$
    where $\mathcal{H}^2$ denotes the two-forms on $M$ that are both $d$- and $d^*$-closed (see, e.g., \cite[Theorem 6.8]{Wa71}). It is injective because if $dd^*\alpha=0$, then $0=\langle dd^*\alpha, \alpha \rangle=\langle d^*\alpha, d^*\alpha \rangle$, so $d^*\alpha$ already vanishes.\\
    Therefore, for any family of exact two forms $\omega_s$ paramterized by a smooth manifold, we find a corresponding smooth family $\beta_s$ such that $d\beta_s=\omega_s$ by setting $\beta_s:=d_{|\text{im}(d^*)}^{-1}(\omega_s)$. Further, $\beta_s=0$ if and only if $\omega_s=0$.
\end{remark}
\begin{remark}\label{rem:Moser}
    Let $B$ be a compact manifold and $\omega_t$, $\omega_t'$ be smooth families, parametrized by some smooth manifold, of two-forms on it. Let $\omega_{s,t}$, $s\in [0,1]$, be another smooth family of two-forms such that, for each $t$, $\omega_{0,t}=\omega_t$, $\omega_{1,t}=\omega'_t$ and $\omega_{s,t}$ defines an isotopy between $\omega_{0,t}$ and $\omega_{1,t}$.\\
    Using \Cref{rem:hodgetheory}, we find a smooth family $\alpha_{s,t}$ of one-forms whose differential equals $\partial_s \omega_{s,t}$, meaning that we can apply Moser's method for each isotopy of forms $s\mapsto \omega_{s,t}$ independently and obtain a smooth (because $\alpha_{s,t}$ is smooth) family of diffeomorphisms $f_{s,t}\colon B\to B$ such that, for each $(s,t)$, $f_{s,t}^*(\omega_{s,t})=\omega_{0,t}$ (in particular $f_{1,t}^*(\omega'_t)=\omega_t$), and $f_{0,t}=\text{id}_B$. Further, for any $t$ such that $\partial_s \omega_{s,t}\equiv 0$, it may be assumed that $f_{s,t}=\text{id}_B$ for all $s$.
\end{remark}

\begin{proof}[Sketch of proof of Items (2) and (3) of \Cref{lem:equivalent}]
 Assume w.l.o.g. that $[t_0,t_1]=[0,1]$.
Item (3) is \Cref{rem:Moser}.

    To prove Item (2),
    define
    \[
    \mathcal{D}:=\left \{T\colon \left(\exists \; {(\omega_{s,t})}_{s\in [0,1], t\in [0,T]}: \quad \partial_s [\omega_{s,t}]=0,\;
    \omega_{0,t}=\omega_t, \omega_{1,t}=\omega'_t \text{ and } \left(\partial_s\omega_{s,t}=0\; \forall t\leq R \right) \right)\right \}
    \]
    (where it is understood that $T\in [0,1]$). We want to show that $\mathcal{D}=[0,1]$; we  will show that $\mathcal{D}$ is open and closed. Clearly, $R\in \mathcal{D}$, and $\mathcal{D}$ is open around $R$ since
    $$\omega_{s,t}=s\omega'_t+(1-s)\omega_t, \; (s,t)\in [0,1]\times [0,R+\delta]$$
    is an isotopy for $\delta>0$ small enough.
    
    Let us show that this is open in general. For any $R<T\in \mathcal{D}$ and the corresponding family $(\omega_{s,t})_{s\in [0,1], t\in [0,T]}$, we consider the isotopy $f_{s,t}$, $(s,t)\in [0,1]\times [0,T]$, obtained by Moser's method (see \Cref{rem:Moser}). This has the property that $f_{s,t}^*\omega_{s,t}=\omega_t$, $(s,t)\in [0,1]\times [0,T]$, and $f_{s,t}=\text{id}$ for $(s,t)\in [0,1]\times [0,R]$.\\
    We let $0<\delta<T-R$ such that $[0,T+\delta]\subset [0,1]$ and define, for $t\in [0,T+\delta]$, $f'_{t}:=f_{1,\rho(t)}$ where
    $\rho\colon [0,1]\to [0,1]$ is a smooth, monotone function such that
    \begin{enumerate}
        \item $\rho$ is the identity on $[0,T-\delta]$.
        \item $\rho(t)=T$ for $t\in [T-\delta/2,T+\delta]$.
        \item the distance of $\rho$ to the identity function with respect to the maximum norm is at most $\delta$.
    \end{enumerate}
        Then, since non-degeneracy is an open condition, all the forms
        $$ s (f'_{t})^*\omega'_{t} +(1-s) \omega_{t}=s f_{1,\rho(t)}^*\omega'{t} +(1-s) \omega_{t}, \; (t,s)\in[0,T+\delta]\times  [0,1],$$
        are indeed symplectic if $\delta$ is close enough to $0$ (compare with \Cref{rem:almostsymplecticopen}). Applying Moser's method again (\Cref{rem:Moser}), we obtain another smooth family $\tilde{f}_{s,t}$, $(s,t)\in [0,1]\times [0,T+\delta]$, with
        \begin{enumerate}
            \item $\tilde{f}_{s,t}={f}_{s,t}$ for $(s,t)\in [0,1]\times [0,T-\delta]$, in particular $\tilde{f}_{s,t}=\text{id}$ for $(s,t)\in [0,1]\times [0,R]$.
            \item $\tilde{f}_{1,t}^*\omega'_{t}=\omega_t$ for $t\in [0,T+\delta]$.
        \end{enumerate}
        This implies that $(\omega_t)_{t\in [T+\delta]}$ and $(\omega'_t)_{t\in [T+\delta]}$ are also equivalent via $\tilde{f}_{s,t}$, so $\mathcal{D}$ is open.

        That $\mathcal{D}$ is closed is by the same arguments as in the proof of \cite[Lemma 3.4]{Go11}. We sketch them here. If $0<T\leq 1$ is such that all $T-\eps$, $T>\eps>0$, are in $\mathcal{D}$, then Gonzales finds a smooth family $\hat{\beta}_{s,t}$, where $(s,t)\in [0,1]\times [0,T-\eps,T]$, of symplectic forms interpolating between $\omega_t$ and $\omega'_t$ using the rigidity. Following that, he takes the smooth family $\omega_{s,t}$, $(s,t)\in [0,1]\times [T-\eps]$, provided by the fact that $T-\eps \in \mathcal{D}$, and extends it with the help of $\hat{\beta}_{s,t}$ 'after smoothing' (we provide the details in  \Cref{lem:smoothingfamilies}). Any such operation would not change the way $\omega_{s,t}$ looks like for $t\leq R$. This shows that $\mathcal{D}$ is closed.
    \end{proof}

\begin{lemma}\label{lem:smoothingfamilies}
    Let $\omega_{s,t}$, $(s,t)\in [0,1]^2$, be a family of symplectic forms on a compact manifold $B$ with the property that
    \begin{enumerate}
        \item $\omega_{s,t}$ depends continuously on $(s,t)$ for all $(s,t)\in [0,1]^2$. 
        \item smoothly on $s$ for all $(s,t)\in [0,1]^2$ and smoothly on $t$ for all $(s,t)\in [0,1]\times ([0,1]\smallsetminus (1/2))$.
        \item $\partial_s [\omega_{s,t}]=0\in H^2(B;\R)$ for all $(s,t)\in [0,1]^2$.
        \item there is a cohomology class $[c]\in H^2(B;\R)$ with $\partial_t [\omega_{s,t}]=[c]$ \footnote{It would be enough to assume that $[\omega_{s,t}]$ is continuously differentiable, but we do not need this.}.
    \end{enumerate}
    Then we find a smooth family $\omega'_{s,t}$ of symplectic forms on $B$ such that
    \begin{enumerate}
        \item $\partial_s [\omega'_{s,t}]=0\in H^2(B;\R)$ for all $(s,t)\in [0,1]^2$.
        \item $\omega'_{0,t}=\omega_t$ and $\omega'_{1,t}=\omega'_t$ for all $t\in [0,1]$.
    \end{enumerate}
    If, furthermore, there is $0\leq R< 1/2$ such that $\omega_{s,t}$ does not depend on $s$ whenever $0\leq t\leq R$, then the same may be assumed for $\omega'_{s,t}$ as well. 
\end{lemma}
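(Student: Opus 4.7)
The plan is to smooth $\omega_{s,t}$ by convolving in the $t$-direction with a symmetric mollifier whose bandwidth tapers off to zero at the boundaries $s=0,1$ and away from the kink $t=1/2$, while exploiting the fact that the cohomology class is pinned down by the two derivative conditions. Fix a symmetric smooth kernel $\phi\colon\R\to[0,\infty)$ with $\operatorname{supp}\phi\subset(-1,1)$ and $\int\phi=1$, and choose a smooth cutoff $\delta(s,t)\geq 0$ that vanishes for $s\in[0,\eta_0]\cup[1-\eta_0,1]$, vanishes for $|t-1/2|\geq \eta_1$ (and also for $t\leq R$ in the supplementary case), is positive in a neighbourhood of $\{t=1/2\}\cap\{\eta_0<s<1-\eta_0\}$, and is small enough that $t\pm\delta(s,t)\in[0,1]$. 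Define
\[
\omega'_{s,t}:=\int_{-1}^{1}\omega_{s,\,t+\delta(s,t)\tau}\,\phi(\tau)\,d\tau.
\]

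The boundary-value and $R$-support conditions on $\delta$ give $\omega'_{0,t}=\omega_{0,t}$, $\omega'_{1,t}=\omega_{1,t}$, and $\omega'_{s,t}=\omega_{0,t}$ for $t\leq R$. Combining the two derivative hypotheses yields $[\omega_{s,t}]=[\omega_{0,0}]+t[c]$, and so
\[
[\omega'_{s,t}]=[\omega_{0,0}]+t[c]+\delta(s,t)[c]\int_{-1}^{1}\tau\,\phi(\tau)\,d\tau=[\omega_{0,0}]+t[c],
\]
by symmetry of $\phi$; in particular $\partial_s[\omega'_{s,t}]=0$. For non-degeneracy, the family $\omega_{s,t}$ is uniformly continuous and symplectic on the compact $B\times[0,1]^2$, so has a uniform positive lower bound on $\omega_{s,t}(v,Jv)$ for some continuous taming almost complex structure $J$; shrinking the overall scale of $\delta$ makes $\omega'_{s,t}$ uniformly close to $\omega_{s,t}$ and hence symplectic.

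The main obstacle is joint smoothness of $\omega'_{s,t}$ in $(s,t)$, and in particular at points on the boundary $\{\delta=0\}$ where the mollification degenerates. Where $\delta(s,t)>0$, the substitution $u=t+\delta(s,t)\tau$ recasts the integral as convolution of $\omega_{s,\cdot}$ with the smooth compactly-supported kernel $u\mapsto\phi\bigl((u-t)/\delta(s,t)\bigr)/\delta(s,t)$, which turns even a merely continuous integrand into a smooth function of $t$; smoothness in $s$ comes from smooth $s$-dependence of both $\omega_{s,\cdot}$ and $\delta$. On the open set where $\delta\equiv 0$, we have $\omega'_{s,t}=\omega_{s,t}$, and by the design of $\delta$ this open set is contained in the region where $\omega_{s,t}$ is already jointly smooth (it avoids $t=1/2$). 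At points in the boundary of $\{\delta>0\}$, one verifies that all partial derivatives of the integrand have integrable limits via dominated convergence, using joint continuity of $\omega_{s,t}$, smoothness in $s$ everywhere, and smoothness in $t$ away from $t=1/2$ — the bandwidth $\delta$ vanishes precisely in the directions where the original family fails to be smooth in $t$, so no singularity is ever mollified into a derivative calculation. This gives a fully smooth $\omega'_{s,t}$ with all the required properties.
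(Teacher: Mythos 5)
Your mollification idea is an attractive alternative to the paper's reparametrization, and the cohomology and non-degeneracy calculations (using symmetry of $\phi$, the Duistermaat–Heckman-style identity $[\omega_{s,t}]=[\omega_{0,0}]+t[c]$, and a uniform bound) are correct. However, the smoothness argument has a real gap, and the sentence you rely on — ``the bandwidth $\delta$ vanishes precisely in the directions where the original family fails to be smooth in $t$, so no singularity is ever mollified into a derivative calculation'' — is not true as stated.

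The difficulty is at the set $\{\delta=0\}\cap\{t=1/2\}$. You ask $\delta(s,t)$ to vanish on an entire $s$-collar $[0,\eta_0]\cup[1-\eta_0,1]$, yet the hypothesis only gives smoothness of $\omega_{s,\cdot}$ in $t$ at $s=0$ and $s=1$ (implicitly, for the conclusion to be consistent) — not for $0<s\leq\eta_0$. For those $s$ you obtain $\omega'_{s,1/2}=\omega_{s,1/2}$ with the kink intact, so $\omega'$ fails to be smooth. Even if you shrink to $\eta_0=0$, so that $\delta>0$ for $s\in(0,1)$, the second $t$-derivative of $\omega'_{s,t}$ near $t=1/2$ is comparable to (jump in $\partial_t\omega_{s,\cdot}$ at $1/2$)$/\delta(s,1/2)$, which blows up as $s\to 0$ unless the jump vanishes faster than $\delta$, and the hypotheses give no such control. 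Dominated convergence controls integrability of the \emph{integrand's} derivatives, not boundedness of derivatives of the \emph{mollification} as the bandwidth collapses; that distinction is where the proposal breaks.

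The paper sidesteps this by a different mechanism. It composes with a smooth $\rho\colon[0,1]\to[0,1]$ that is \emph{constant} (not merely close to the identity) on a neighborhood of $t=1/2$; then $\omega''_{s,t}:=\omega_{s,\rho(t)}$ is locally constant in $t$ near the kink, hence trivially smooth there, and smooth elsewhere because $\rho(t)\neq 1/2$. The cohomological drift is corrected by subtracting $(\rho(t)-t)c$. Crucially, this does \emph{not} produce $\omega'_{0,t}=\omega_{0,t}$; instead, the paper shows $\omega'_{0,t}$ and $\omega_{0,t}$ are isotopic under the standard homotopy (for $\rho$ close enough to the identity) and then concatenates using \Cref{lem:prepsmoothing}. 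If you want to keep your mollification framework, you should similarly drop the requirement that $\delta$ vanish at the $s$-boundary — keep $\delta$ uniformly positive near $t=1/2$ to get genuine smoothness — and then repair the boundary values $\omega'_{0,t}\neq\omega_{0,t}$ by the same concatenation device. Without that additional step, or without stronger hypotheses controlling the rate at which the $t$-kink dies out as $s\to 0,1$, the argument as written does not establish joint smoothness of $\omega'_{s,t}$.
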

Before we prove the lemma, we need some preparation.
\begin{lemma}\label{lem:prepsmoothing}
    Let $B$ be a manifold and $\omega_t$, $\omega'_t$ and $\omega''_t$ a smooth family of symplectic forms on $B$, parametrized by some smooth manifold. Assume further that there are smooth families $\omega^1_{s,t}$ and $\omega^2_{s,t}$, $s\in [0,1]$, such that $\omega^1_{s,t}$ is an isotopy between $\omega_t$ and $\omega'_t$, and $\omega^2_{s,t}$ is an isotopy between $\omega'_t$ and $\omega''_t$.\\
    Then $\omega_t$ is isotopic to $\omega''_t$. To be precise, for any $1/2>\eps>0$ there is an isotopy $\omega_{s,t}$ between them that equals
    \[
    \omega''_{s,t}:=\begin{cases}
        \omega_{s,t}=\omega^1_{2s,t} & \text{if } (s,t)\in [0,1/2]\times [0,1]\\
        \omega_{s,t}=\omega^2_{2(s-1/2),t} & \text{if } (s,t)\in [1/2,1]\times [0,1]
    \end{cases}
    \]
    whenever $s\notin [1/2-\eps,1/2+\eps]$, and if there is $0\leq R\leq 1$ such that $\omega_t=\omega'_t=\omega''_t$ for all $0\leq t\leq R$, then $\omega_{s,t}$ does not depend on $s$ for all $0\leq t\leq R$.
\end{lemma}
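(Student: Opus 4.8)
The plan is to interpolate by the obvious continuous concatenation of the two given isotopies and then to repair its failure of smoothness at the seam $s=1/2$ by the standard trick of flattening the $s$-reparametrisation there, arranging the modification to be confined to $[1/2-\eps,1/2+\eps]$. Set
\[
\omega''_{s,t}=\begin{cases}\omega^1_{2s,t}, & 0\le s\le 1/2,\\ \omega^2_{2s-1,t}, & 1/2\le s\le 1.\end{cases}
\]
This family of symplectic forms is continuous in $(s,t)$, smooth on $\{s\ne 1/2\}$, satisfies $\omega''_{0,t}=\omega_t$ and $\omega''_{1,t}=\omega''_t$, and has $[\omega''_{s,t}]$ independent of $s$, since $[\omega_t]=[\omega'_t]=[\omega''_t]$ because $\omega^1$ and $\omega^2$ are isotopies. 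The one defect is that the two branches need not agree to first order in $s$ at $s=1/2$, so $\omega''_{s,t}$ is there merely $C^0$.

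First I would fix, for the given $\eps\in(0,1/2)$, smooth nondecreasing functions $\phi,\psi\colon[0,1]\to[0,1]$ with $\phi=\text{id}$ on $[0,1-2\eps]$ and $\phi\equiv 1$ near $1$, and with $\psi=\text{id}$ on $[2\eps,1]$ and $\psi\equiv 0$ near $0$; such functions exist because $1-2\eps>0$ and $2\eps<1$, and in particular $\phi(0)=\psi(0)=0$ and $\phi(1)=\psi(1)=1$. Then I would define
\[
\omega_{s,t}:=\begin{cases}\omega^1_{\phi(2s),t}, & 0\le s\le 1/2,\\ \omega^2_{\psi(2s-1),t}, & 1/2\le s\le 1.\end{cases}
\]
Because $\phi(2s),\psi(2s-1)\in[0,1]$, each $\omega_{s,t}$ is symplectic; the chain rule together with $\partial_u[\omega^1_{u,t}]=\partial_u[\omega^2_{u,t}]=0$ gives $\partial_s[\omega_{s,t}]=0$; and $\omega_{0,t}=\omega^1_{0,t}=\omega_t$ while $\omega_{1,t}=\omega^2_{1,t}=\omega''_t$. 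On a neighbourhood of $s=1/2$ both branches equal $\omega^1_{1,t}=\omega'_t=\omega^2_{0,t}$ identically, so there $\omega_{s,t}$ is smooth in $(s,t)$ (it is literally constant in $s$), while off $s=1/2$ it is a composition of smooth maps; hence $\omega_{s,t}$ is a smooth isotopy from $\omega_t$ to $\omega''_t$. Finally, for $s\le 1/2-\eps$ one has $2s\le 1-2\eps$, so $\phi(2s)=2s$ and $\omega_{s,t}=\omega^1_{2s,t}=\omega''_{s,t}$; by the symmetric argument $\omega_{s,t}=\omega''_{s,t}$ also for $s\ge 1/2+\eps$. Thus $\omega_{s,t}$ agrees with the naive concatenation outside $[1/2-\eps,1/2+\eps]$, as required.

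For the last clause, observe that the construction reparametrises only the first argument of the $\omega^i$. Hence, if $\omega_t=\omega'_t=\omega''_t$ for $0\le t\le R$ and the given interpolating families $\omega^1_{s,t},\omega^2_{s,t}$ are independent of $s$ for those $t$ (which is the case when the lemma is applied), then $\omega_{s,t}$, being $\omega^1_{\phi(2s),t}$ on $[0,1/2]$ and $\omega^2_{\psi(2s-1),t}$ on $[1/2,1]$, is again independent of $s$ for $0\le t\le R$. I expect no serious obstacle: the statement is purely a smoothing assertion, and the only point requiring attention is the bookkeeping that keeps the correction supported in $[1/2-\eps,1/2+\eps]$ (which forces $\phi$ to be the identity on $[0,1-2\eps]$ and flat near $1$, and dually for $\psi$) while not disturbing the $s$-independence over $\{t\le R\}$.
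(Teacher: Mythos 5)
Your proof is correct and takes essentially the same approach as the paper's: smooth the naive concatenation by flattening the $s$-parameter near the seam $s=1/2$ with a monotone reparametrization that is the identity outside the $\eps$-band (the paper uses a single reparametrization applied to $\omega''_{s,t}$, you use $\phi,\psi$ on the two branches separately -- the same device). Your explicit observation that the final clause really needs the given interpolating families $\omega^1_{s,t},\omega^2_{s,t}$ to be $s$-independent for $t\le R$ is a point the paper's one-line proof glosses over, and it is indeed satisfied in the lemma's application.
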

\begin{proof}
    For $1/2>\eps>\delta>0$ let $\rho\colon [0,1]\to [0,1]$ be a monotone smooth function with the properties:
    \begin{itemize}
        \item $\rho(t')=1/2$ for all $t'\in [1/2-\delta',1/2+\delta']$,
        \item $\rho$ is the identity outside $[1/2-\eps',1/2+\eps']$.
    \end{itemize}
    Define $\omega_{s,t}:=\omega''_{s,\rho(t)}$. This has all the properties we want it to have.
\end{proof}
\begin{proof}[Proof of \Cref{lem:smoothingfamilies}]
    Let $c$ be a representative of $[c]$. Then there is $\delta>0$ such that $\omega_{s,t}- \kappa c$ is a symplectic form for all $(s,t)$ and $0\leq |\kappa| \leq \delta$. For $1/2>\eps'>\delta'>0$, let $\rho\colon [0,1]\to [0,1]$ be a monotone smooth function with the properties:
    \begin{itemize}
        \item $\rho(t')=1/2$ for all $t'\in [1/2-\delta',1/2+\delta']$,
        \item $\rho$ is the identity outside $[1/2-\eps',1/2+\eps']$.
    \end{itemize}
    Note that $\rho$ can be chosen arbitrarily close under the maximum norm to the identity map when $\eps'$ is chosen to be small enough. In particular, $\rho$ can be chosen to have distance less than $\delta$ to the identity. For that choice, we set
    $$\omega''_{s,t}:=\omega_{s,\rho(t)}.$$
    We note that $\omega''_{s,t}$ now depends smoothly on $s,t$, and that $\rho$ can also be chosen in such a way that $\omega''_{s,t}$ does not depend on $s$ for $t<R$ if that is true for $\omega_{s,t}$.\\
    We now define
    $$\omega'_{s,t}:=\omega''_{s,t}-(\rho(t)-t)c,$$
    which is symplectic by choice of $\rho$, since $|\rho(t)-t|<\delta$. Also, we note that $[\omega_{s,t}]=[\omega'_{s,t}]$ for all $(s,t)$ due to the assumption that $\partial_t[\omega_{s,t}]=c$, and again $\omega'_{s,t}$ does not depend on $s$ for $t<R$ if that is true for $\omega''_{s,t}$.\\
    Therefore, $\omega'_{s,t}$ now defines an isotopy between $\omega'_{0,t}$ and $\omega'_{1,t}$, but it does not hold necessarily $\omega'_{0,t}=\omega_{0,t}$ and $\omega'_{1,t}=\omega_{1,t}$. However, we claim that $\omega'_t:=\omega'_{0,t}$ and $\omega_t:=\omega_{0,t}$, for example, are isotopic under the standard homotopy if $\delta$ from above is chosen to be small enough; this would allow us to immediately finish the proof with \Cref{lem:prepsmoothing}.\\
    To see this, we write explicitly for $a\in [0,1]$
    $$a\omega'_t+(1-a)\omega_t= a (\omega_{\rho(t)}-(\rho(t)-t)c)+(1-a)\omega_t=[a\omega_{\rho(t)}+(1-a)\omega_t]+(1-a)(\rho(t)-t)c.$$
    It is now clear that, for each individual $t$, $\delta$ can be chosen small enough such that the above form is non-degenerate for all $a$, and so we find $\delta$ also for all $t$ due to compactness of $[0,1]$.
\end{proof}

\bibliographystyle{amsalpha}

\end{document}